\definecolor{ao(english)}{rgb}{0.0, 0.5, 0.0}
\def\dover#1{\underline{\underline{#1}}}
\tikzset{snake it/.style={decorate, decoration=snake}}
\tikzset{zigzag/.style={decorate, decoration=zigzag}}
\definecolor{darkgreen}{rgb}{0.0, 0.5, 0.0}
	\definecolor{eng}{rgb}{0.0, 0.5, 0.0}
\definecolor{apple}{rgb}{0.55, 0.71, 0.0}
\definecolor{cadmium}{rgb}{0.0, 0.42, 0.24}
\definecolor{darkspringgreen}{rgb}{0.09, 0.45, 0.27}
\definecolor{amethyst}{rgb}{0.6, 0.4, 0.8}
\definecolor{ao}{rgb}{0.0, 0.0, 1.0}
\definecolor{atomictangerine}{rgb}{1.0, 0.6, 0.4}
\definecolor{carmine}{rgb}{0.59, 0.0, 0.09}
\newcommand{\mtau}{{\mathscr{P}^\ctau_{(W,P)}}}
\newcommand{\mtausmall}{{\mathscr{P}_{(W^\ctau,P^\ctau)}}}
 \newcommand{\blor}{
 \begin{tikzpicture}
\draw[orange,->](0,0) to(0.2,0);
\draw[blue,->](0,0) to(-0.2,0);
  \end{tikzpicture}}
 \newcommand{\orbl}{
 \begin{tikzpicture}
\draw[blue,->](0,0) to(0.2,0);
\draw[orange,->](0,0) to(-0.2,0);
  \end{tikzpicture}}
\newcommand{\blbl}{
 \begin{tikzpicture}
\draw[blue,->](0,0) to(0.2,0);
\draw[blue,->](0,0) to(-0.2,0);
  \end{tikzpicture}}
\newcommand{\oror}{
 \begin{tikzpicture}
\draw[orange,->](0,0) to(0.2,0);
\draw[orange,->](0,0) to(-0.2,0);
  \end{tikzpicture}}
\definecolor{toggle}{rgb}{1.0, 0.94, 0.96}
 \newcommand{\plus}{ }
 \newcommand{\pDelta}{\Delta}
 \newcommand{\dom}{{\sf dom}}
  \newcommand{\cod}{{\sf cod}}
  \newcommand{\fuck}{{1'}}
  \newcommand{\fuckfuck}{{1''}}
 \newcommand{\gsigma}{{{{\color{gray}\sigma}}}}
 \newcommand{\cpi}{{{{\color{orange}\pi}}}}
 \newcommand{\csigma}{{{{\color{magenta}\sigma}}}}
 \newcommand{\ctau}{{{{\color{cyan}\tau}}}}
\newcommand{\csigmaw}{{{{\color{magenta}\sigma}}}}
\newcommand{\ctauw}{{{{\color{cyan}\tau}}}}
 \newcommand{\al}{{{{\color{magenta} \sigma}}}}
 \newcommand{\crho}{{{{ \color{darkgreen}\rho}}}}
 \newcommand{\diag}{{\sf e}}
\newcommand{\exx}{{b_\al }}
\newcommand{\deltax}{{{\color{magenta}\delta_\alpha}}}
\newcommand{\SSTX}{{{\color{black}\sf T}}}
\newcommand{\eps}{ \varepsilon}
\newcommand{\fork}{{\sf fork}}
\newcommand{\isit}{{i}}
\newcommand{\isitone}{\al(i+1)}
\newcommand{\Shl}{\widehat{\mathfrak{S}}_{h\ell}}
\newcommand{\w}{{\underline{w}}}
\newcommand{\vvv}{{\underline{v} }}
\newcommand{\uuu}{{\underline{u} }}
\newcommand{\y}{{\underline{y}}}
\newcommand{\x}{{\underline{x}}}
\newcommand{\unvvv}{{{z}}}
\newcommand{\grade}{v}
\newcommand{\dgrm}{\mathcal{D}}
\newcommand{\dgrmdeg}{\mathcal{D}^{\rm deg}}
\newcommand{\dgrmf}{\mathcal{D}_{\rm f}}
\newcommand{\dgrmBS}{\mathcal{D}_{\rm BS}}
\newcommand{\dgrmBSdeg}{\mathcal{D}_{\rm BS}^{\rm deg}}
\newcommand{\dgrmBSdegsum}{\mathcal{D}_{\rm BS}^{{\rm deg},\oplus}}
\newcommand{\dgrmBSF}{\mathcal{D}_{\rm BS}^F}
\newcommand{\dgrmF}{\mathcal{D}^F}
\newcommand{\dgrmBSsumshift}{\mathcal{D}_{\rm BS}^{\oplus,(-)}}
\newcommand{\dgrmstd}{\mathcal{D}_{\rm std}}
\newcommand{\dgrmBSstd}{\mathcal{D}_{{\rm BS},{\rm std}}}
\newcommand{\dgrmBSpast}{\mathcal{D}_{{\rm BS},p|\ast}}
\newcommand{\dgrmBSpastsumshift}{\mathcal{D}_{{\rm BS},p|\ast}^{\oplus,\langle - \rangle}}
\newcommand{\dgrmpast}{\mathcal{D}_{p|\ast}}
\newcommand{\Wf}{W_{\rm f}}
\newcommand{\W}{W}
\newcommand{\Wp}{\W_p}
\newcommand{\Ssf}{S_{{\rm f}}}
\newcommand{\Ss}{S}
\newcommand{\Ssp}{\Ss_p}
\newcommand{\Sspexpr}{\expr{\Ss}_p}
\newcommand{\Sspone}{\Ss_{p \cup 1}}
\newcommand{\sh}{s_{\rm h}}
\newcommand{\linkexpr}{\expr{\Ss}_{p|1}}
\newcommand{\Wpcosets}{\prescript{p}{}{\W}}
\newcommand{\Alc}{\text{\bf Alc}}
 \newcommand{\Pdiptwo}{{\sf M}_{i,i+2}}
 \newcommand{\Pdipj}{{{\sf M}_{i,j}}}
 \newcommand{\nodelabel}{1'}
\def\down{\vee}
\def\up{\wedge}
\tikzset{
  variable line width/.style={
    every variable line width/.append style={#1},
    to path={%
      \pgfextra{%
        \draw[every variable line width/.try,line width=\pgfkeysvalueof{/tikz/thickness}] (\tikztostart) -- (\tikztotarget);
      }%
      (\tikztotarget)
    },
  },
  thickness/.initial=0.6pt,
  every variable line width/.style={line cap=round, line join=round},
}
\newcommand{\warning}[1]{\todo[color=red!75]{#1}}
\newlength{\superthick}
\newlength{\cornerradius}
\tikzstyle{corner}=[rounded corners=\cornerradius]
\tikzstyle{dot}=[circle, inner sep=0pt, minimum size=4.8pt]
\tikzstyle{string}=[line width=\superthick]
\tikzstyle{std}=[string,dash pattern=on 0.9pt off 0.9pt]
\definecolor{realcyan}{rgb}{0,1,1}
\mathchardef\mhyphen="2D
\definecolor{mediumblue}{rgb}{0.0, 0.0, 0.8}
\newcommand{\Res}{{\rm Res}}
\newcommand{\Rem}{{\rm Rem}}
\newcommand{\Add}{{\rm Add}}
\newcommand{\Ind}{{\rm Ind}}
\newcommand{\hstar}{\mathfrak{h}^*}
\newcommand\mptn[2]{\mathscr{P}^{#1}_{#2}}
\renewcommand{\geq}{\geqslant}
\renewcommand{\leq}{\leqslant}
 \newcommand{\Q}{{\mathbb Q}}
    \tikzset{wei/.style={black,double=white,thick,double
distance=1.5pt}}
\newcommand{\fA}{\mathfrak{A}}
\newcommand{\fB}{\mathfrak{B}}
\newcommand{\fC}{\mathfrak{C}}
\newcommand{\fD}{\mathfrak{D}}
\newcommand{\fE}{\mathfrak{E}}
\newcommand{\fF}{\mathfrak{F}}
\newcommand{\fG}{\mathfrak{G}}
\newcommand{\fH}{\mathfrak{H}}
\newcommand{\fI}{\mathfrak{I}}
\newcommand{\fJ}{\mathfrak{J}}
\newcommand{\fK}{\mathfrak{K}}
\newcommand{\fL}{\mathfrak{L}}
\newcommand{\fM}{\mathfrak{M}}
\newcommand{\fN}{\mathfrak{N}}
\newcommand{\fO}{\mathfrak{O}}
\newcommand{\fP}{\mathfrak{P}}
\newcommand{\fQ}{\mathfrak{Q}}
\newcommand{\fR}{\mathfrak{R}}
\newcommand{\fS}{\mathfrak{S}}
\newcommand{\fT}{\mathfrak{T}}
\newcommand{\fU}{\mathfrak{U}}
\newcommand{\fV}{\mathfrak{V}}
\newcommand{\fW}{\mathfrak{W}}
\newcommand{\fX}{\mathfrak{X}}
\newcommand{\fY}{\mathfrak{Y}}
\newcommand{\fZ}{\mathfrak{Z}}
\newcommand{\fa}{\mathfrak{a}}
\newcommand{\fb}{\mathfrak{b}}
\newcommand{\fc}{\mathfrak{c}}
\newcommand{\fd}{\mathfrak{d}}
\newcommand{\fe}{\mathfrak{e}}
\newcommand{\ff}{\mathfrak{f}}
\newcommand{\ffg}{\mathfrak{g}}
\newcommand{\fh}{\mathfrak{h}}
\newcommand{\ffi}{\mathfrak{i}}
\newcommand{\fj}{\mathfrak{j}}
\newcommand{\fk}{\mathfrak{k}}
\newcommand{\fl}{\mathfrak{l}}
\newcommand{\fm}{\mathfrak{m}}
\newcommand{\fn}{\mathfrak{n}}
\newcommand{\fo}{\mathfrak{o}}
\newcommand{\fp}{\mathfrak{p}}
\newcommand{\fq}{\mathfrak{q}}
\newcommand{\fr}{\mathfrak{r}}
\newcommand{\fs}{\mathfrak{s}}
\newcommand{\ft}{\mathfrak{t}}
\newcommand{\fu}{\mathfrak{u}}
\newcommand{\fv}{\mathfrak{v}}
\newcommand{\fw}{\mathfrak{w}}
\newcommand{\fx}{\mathfrak{x}}
\newcommand{\fy}{\mathfrak{y}}
\newcommand{\fz}{\mathfrak{z}}
\newcommand{\sA}{\mathscr{A}}
\newcommand{\sB}{\mathscr{B}}
\newcommand{\sC}{\mathscr{C}}
\newcommand{\sD}{\mathscr{D}}
\newcommand{\sE}{\mathscr{E}}
\newcommand{\sF}{\mathscr{F}}
\newcommand{\sG}{\mathscr{G}}
\newcommand{\sH}{\mathscr{H}}
\newcommand{\sI}{\mathscr{I}}
\newcommand{\sJ}{\mathscr{J}}
\newcommand{\sK}{\mathscr{K}}
\newcommand{\sL}{\mathscr{L}}
\newcommand{\sM}{\mathscr{M}}
\newcommand{\sN}{\mathscr{N}}
\newcommand{\sO}{\mathscr{O}}
\newcommand{\sP}{\mathscr{P}}
\newcommand{\sQ}{\mathscr{Q}}
\newcommand{\sR}{\mathscr{R}}
\newcommand{\sS}{\mathscr{S}}
\newcommand{\sT}{\mathscr{T}}
\newcommand{\sU}{\mathscr{U}}
\newcommand{\sV}{\mathscr{V}}
\newcommand{\sW}{\mathscr{W}}
\newcommand{\sX}{\mathscr{X}}
\newcommand{\sY}{\mathscr{Y}}
\newcommand{\sZ}{\mathscr{Z}}
\newcommand{\hd}{\operatorname{hd}}
\newcommand{\cA}{\mathcal{A}}
\newcommand{\cB}{\mathcal{B}}
\newcommand{\cC}{\mathcal{C}}
\newcommand{\cD}{\mathcal{D}}
\newcommand{\cE}{{\rm M}}
\newcommand{\cF}{\mathcal{F}}
\newcommand{\cG}{\mathcal{G}}
\newcommand{\cH}{\mathcal{H}}
\newcommand{\cI}{\mathcal{I}}
\newcommand{\cJ}{\mathcal{J}}
\newcommand{\cK}{\mathcal{K}}
\newcommand{\cL}{\mathcal{L}}
\newcommand{\cLR}{\mathcal{LR}}
\newcommand{\cM}{\mathcal{M}}
\newcommand{\cN}{\mathcal{N}}
\newcommand{\cO}{\mathcal{O}}
\newcommand{\cP}{\mathcal{P}}
\newcommand{\cQ}{\mathcal{Q}}
\newcommand{\cR}{\mathcal{R}}
\newcommand{\cS}{\mathcal{S}}
\newcommand{\cT}{\mathcal{T}}
\newcommand{\cU}{\mathcal{U}}
\newcommand{\cV}{\mathcal{V}}
\newcommand{\cW}{\mathcal{W}}
\newcommand{\cX}{\mathcal{X}}
\newcommand{\cY}{\mathcal{Y}}
\newcommand{\cZ}{\mathcal{Z}}
\newcommand{\Z}{\mathbb{Z}}
\newcommand{\R}{\mathbb{R}}
\newcommand{\N}{\mathbb{N}}
\newcommand{\C}{\mathbb{C}}
\DeclareMathOperator{\reg}{reg}
\DeclareMathOperator{\image}{im}
 \newcommand{\alphar}{{{\color{magenta}\boldsymbol \alpha}}}
 \newcommand{\bet}{{{\color{cyan}\boldsymbol \tau}}}
  \newcommand{\gam}{{{{\color{orange!95!black}\boldsymbol\pi}}}}
 \newcommand{\betar}{{{\color{green!70!black}\boldsymbol \beta}}}
 \tikzset{wei2/.style={gray,double=gray!15!white,
distance=1pt}}
\numberwithin{equation}{section}
\newtheorem{thm}{Theorem}[section]
\newtheorem{cor}[thm]{Corollary}
\newtheorem{lem}[thm]{Lemma}
\newtheorem{prop}[thm]{Proposition}
\newtheorem*{prop*}{Proposition}
\newtheorem*{thmA}{Theorem A}
\newtheorem*{thmB*}{Theorem B}
\newtheorem*{thmC*}{Theorem C}\newtheorem*{thm*}{Theorem D}
\newtheorem*{cor*}{Corollary}
\newtheorem*{conj*}{Conjecture A}
\newtheorem*{conj1*}{Conjecture B}
\newtheorem*{Acknowledgements*}{Acknowledgements}
\theoremstyle{rmk}
\theoremstyle{defn}
\newtheorem{rmk}[thm]{Remark}
\newtheorem{defn}[thm]{Definition}
\newtheorem{eg}[thm]{Example}
\newcommand{\great}{>}
\newcommand{\less}{<}
\newcommand{\greatoreq}{\geq}
\newcommand{\lessoreq}{\leq}
\newcommand{\codeg}{\mathrm{codeg}}
\newcommand{\triv}{\mathrm{triv}}
\newcommand{\id}{\mathrm{id}}
\newcommand{\TL}{\mathrm{TL}}
\newcommand{\rad}{\mathrm{rad}}
\newcommand{\res}{\mathrm{res}}
\newcommand{\ik}{{k}}
\newcommand{\Std}{{\rm Std}}
\newcommand{\SStd}{{\rm Path}}
\renewcommand{\det}{{\rm det}}
\newcommand{\epsilonLIRONdontchange}{\epsilon}
\newcommand{\Shape}{\operatorname{Shape}} 
\newcommand{\Path}{{\rm Path}}
\newcommand{\NPath}{{\rm NPath}}
\newcommand{\BPath}{{\rm BPath}}
\newcommand{\CPath}{{\rm Path}_{\underline{w}}}
\newcommand{\la}{\lambda}
\newcommand{\I}{i}
\newcommand{\J}{j}
\newcommand{\K}{k}
\newcommand{\M}{m}
\renewcommand{\L}{l}
\def\Ca{\mathcal C}
\newcommand{\Lead}{\operatorname{Lead}}
\newcommand{\SSTS}{\mathsf{S}}
\newcommand{\tSSTT}{\overline{\mathsf{T}}} 
\newcommand{\tla}{\overline{\x}}
\newcommand{\tmu}{\overline{\y }}
\newcommand{\SSTT}{\mathsf{T}}  
\newcommand{\SSTP}{\mathsf{P}}  
\newcommand{\SSTU}{\mathsf{U}}  
\newcommand{\SSTV}{\mathsf{V}}  
\newcommand{\SSTQ}{\mathsf{Q}}  
\newcommand{\SSTR}{\mathsf{R}}  
\newcommand{\sts}{\mathsf{s}}  
\newcommand{\stt}{\mathsf{t}}  
\newcommand{\stu}{\mathsf{u}}  
\newcommand{\stv}{\mathsf{v}}  
\newcommand{\ZZ}{{\mathbb Z}}
\newcommand{\NN}{{\mathbb N}}
\newcommand{\g}{\ell}
\newcommand{\CC}{{\mathbb{C}}}
\newcommand{\RR}{{\mathbb R}}
\newcommand{\Hyp}{{\mathbb E}_{\alpha,me}}
\newcommand{\length}{{t}}
\DeclareMathOperator\noedge{\:\rlap{\hspace*{0.25em}/}\text{---}\:}
\DeclareMathOperator{\Hom}{Hom}
\def\Mod{\textbf{-Mod}}
\newcommand\Dec[1][A]{\mathbf{D}_{#1}(t)}
\newcommand\Cart[1][A]{\mathbf{C}_{#1}(t)}
\newcommand{\north}{top }
\newcommand{\northT}{{\sf T}}
\newcommand{\south}{bottom } 
\newcommand{\southT}{{\sf B}}
\newcommand\mydots{\makebox[1em][c]{\color{cyan}.\hfil\color{magenta}.\hfil\color{cyan}.\hfil\color{magenta}.}}
\tikzset{
ultra thin/.style= {line width=0.05pt},
very thin/.style=  {line width=0.2pt},
thin/.style=       {line width=0.1pt},
semithick/.style=  {line width=0.6pt},
thick/.style=      {line width=0.8pt},
very thick/.style= {line width=1.2pt},
ultra thick/.style={line width=1.6pt}
}
\crefname{defn}{Definition}{Definitions}
\crefname{thm}{Theorem}{Theorems}
\crefname{prop}{Proposition}{Propositions}
\crefname{lem}{Lemma}{Lemmas}
\crefname{cor}{Corollary}{Corollaries}
\crefname{conj}{Conjecture}{Conjectures}
\crefname{section}{Section}{Sections}
\crefname{subsection}{Subsection}{Subsections}
\crefname{eg}{Example}{Examples}
\crefname{figure}{Figure}{Figures}
\crefname{rem}{Remark}{Remarks}
\crefname{rmk}{Remark}{Remarks}
\crefname{equation}{equation}{equation}
\Crefname{defn}{Definition}{Definitions}
\Crefname{thm}{Theorem}{Theorems}
\Crefname{prop}{Proposition}{Propositions}
\Crefname{lem}{Lemma}{Lemmas}
\Crefname{cor}{Corollary}{Corollaries}
\Crefname{conj}{Conjecture}{Conjectures}
\Crefname{section}{Section}{Sections}
\Crefname{subsection}{Subsection}{Subsections}
\Crefname{eg}{Example}{Examples}
\Crefname{figure}{Figure}{Figures}
\Crefname{rem}{Remark}{Remarks}
\Crefname{rmk}{Remark}{Remarks}
  \newcommand{\Mull}{{\rm M}}
\newcommand\Dim[2][t]{\text{\rm Dim}_{#1}#2}
\newcommand\REMOVETHESE[2]{{{{\mathsf{M}}_{#1}^{#2}	}}}
\newcommand\ADDTHIS[2]{{{{\mathsf{P}}_{#1}^{#2}}}}
 \newcommand{\Spotzero}{\SSTS_{0,\al}}
  \newcommand{\Spotone}{\SSTS_{1,\al}}
   \newcommand{\Spottwo}{\SSTS_{2,\al}}
    \newcommand{\Spotthree}{\SSTS_{3,\al}}
 \newcommand{\Spotq}{\SSTS_{q,\al}}
  \newcommand{\Spotb}{\SSTS_{\exx,\al}}
  \newcommand{\Spotqplus}{\SSTS_{q+1,\al}}
 \newcommand{\Forkq}{{\sf F}_{q, \al}}
  \newcommand{\Forkqplus}{{\sf F}_{q+1, \al}}
 \newcommand{\Forkone}{{\sf F}_{1, \al}}
 \newcommand{\Forkzero}{{\sf F}_{0, \al}} 
  \newcommand{\Forktwo}{{\sf F}_{2, \al}}
    \newcommand{\Forkthree}{{\sf F}_{3, \al}}
  \newcommand{\Forkb}{{\sf F}_{\exx, \al}}
 \newcommand{\TForkq}{{\sf F}_{q,\al }}
  \newcommand{\TForkqplus}{{\sf F}_{q+1,\al}}
 \newcommand{\TForkone}{{\sf F}_{1,\al }}
 \newcommand{\TForkzero}{{\sf F}_{0,\al }} 
  \newcommand{\TForktwo}{{\sf F}_{2,\al }}
\renewcommand{\labelitemi}{$\circ $}
\def\Item{\item\abovedisplayskip=0pt\abovedisplayshortskip=5pt~\vspace*{-\baselineskip}} 
 \author{Chris Bowman}
       \address{Department of Mathematics, 
University of York, Heslington, York,  UK}
\email{Chris.Bowman-Scargill@york.ac.uk}
 \author{Maud De Visscher}
	\address{Department of Mathematics, City, University of London,   London, UK}
\email{Maud.DeVisscher@city.ac.uk}
    \author{Niamh Farrell}
\address{
Leibniz Universit\"at Hannover,
Welfengarten 1, D-30167 Hannover, Germany
 }
\email{nifarrel@tcd.ie}
		\author{Amit  Hazi}
       \address{Department of Mathematics, 
University of York, Heslington, York,  UK}
\email{Amit.Hazi@york.ac.uk}
 \author{Emily Norton}
\address{
School of Mathematics, Statistics and Actuarial Science, University of Kent, Canterbury, UK  }
 \email{E.Norton@kent.ac.uk}
\newcommand\NF[1]{{\color{violet}{#1}}}
\begin{document}

 \title[Oriented Temperley--Lieb algebras  and combinatorial  Kazhdan--Lusztig theory  ] {Oriented Temperley--Lieb algebras   
  \\ and combinatorial  Kazhdan--Lusztig theory  
 } 
   \maketitle 
  
\vspace{-0.3cm}
 \begin{abstract}
   We define oriented Temperley--Lieb algebras for Hermitian symmetric spaces. 
   This allows us to  explain the existence of closed combinatorial formulae for the  Kazhdan--Lusztig polynomials for these spaces. 
 \end{abstract}

\section{Introduction} 

To each  parabolic Coxeter system,  $(W,P)$,
  we have  an associated family of ``anti-spherical Kazhdan--Lusztig polynomials",  $n_{\la,\nu}(q)$, indexed by pairs of cosets for $P\leq W$.  
  These polynomials are 
some of the most important combinatorial objects in 
  Lie theory  and representation theory
    and they can be computed  (at least in theory) via a recursive, non-positive formula.  
  Deodhar 
 proposed a (non-recursive!) combinatorial  
 approach to studying  these polynomials in \cite{MR1065215}.

  Libedinsky--Williamson categorified the anti-spherical  Kazhdan--Lusztig polynomials by interpreting them  as composition factor  multiplicities of simple modules within standard modules for  the {\em anti-spherical Hecke category}, $\mathcal{H}_{(W,P)}$ \cite{antiLW}.      
   In more detail:  we first fix a reduced word $\underline{\mu}$ for each $\mu \in {^PW}$,
{\color{black}for $\la  \in {^PW}$ the  
standard $\mathcal{H}_{(W,P)}$-module  $\Delta(\la)$ has {\em light leaves basis}}   enumerated by    
$\cup_{\mu\in {^PW}}\Path(\la,\underline{\mu})$
 the set of all paths (or ``Bruhat strolls")   in the coset graph  for $(W,P)$ which  terminate at $\la$ such that the  steps in these paths are ``coloured by" $\underline{\mu}$. 
 This basis is graded according to the  degree statistic for the underlying paths, we record this in the matrix  
 \[
\Delta^{(W,P)}:=( \Delta_{\la,\underline{\mu}}(q))_{\la,\mu\in {{^P}W}}
\qquad \qquad
\Delta_{\la,\underline{\mu}}(q)=\sum_{\SSTS \in {\rm Path}(\la,\underline{\mu})}\!\!\!\!\!\! q^{\deg(\SSTS)}
\]
which is a  (square) lower uni-triangular matrix.  
This matrix can be factorised {\em uniquely} as a product of  
lower uni-triangular matrices 
\[N^{(W,P)}:=(n_{\la,\nu}(q))_{\la,\nu\in {{^P}W}}
\qquad 
B^{(W,P)}:=(b_{\nu,\underline{\mu}}(q))_{\nu,\mu\in {{^P}W}} 
\]
such that $n_{\la,\nu}(q)\in q\ZZ[q]$ 
for $\la \neq \nu$ 
and 
$b_{\nu,\underline{\mu}}(q)\in  \ZZ[q+q^{-1}]$. 
The polynomial  $n_{\la,\nu}(q)$ is the anti-spherical  Kazhdan--Lusztig polynomial for $\la \leq \nu \in {^PW}$.  Over the complex field,    the
polynomial  $n_{\la,\nu}(q)$ counts the graded composition factor multiplicity    $[\Delta(\la):L(\nu)]$ and the 
polynomials  $b_{\nu,\underline{\mu}}(q)$ describe the graded character 
of the simple module $L(\nu)$ \cite{MR3245013,antiLW}.  
This provides an {\em innately  positive} interpretation of the  coefficients of the polynomial  $n_{\la,\nu}(q)$ and thus proves the famous Kazhdan--Lusztig positivity conjecture \cite{MR3245013} and its anti-spherical counterpart \cite{antiLW}. 

Libedinsky--Williamson proposed that this extra $\mathcal{H}_{(W,P)}$-structure should provide new insight toward Deodhar's goal of a counting formula for the Kazhdan--Lusztig polynomials.   They    
ask in \cite[Problem 1.2]{MR4166587} whether it is possible to construct
  an explicit  
   basis of ``canonical light  leaves" for 
a $\ZZ$-module $\mathbb N_{\la,\underline\nu}$ 
whose graded rank   is equal to    $n_{\la,\nu}(q)$.  
Each canonical light leaf basis element of degree $k$ would  
then be a   generator of some composition factor  of $\Delta(\la)$ isomorphic to  $L(\nu)\langle k \rangle $.    
   We solve this problem in the case of  
Hermitian symmetric pairs (see \cref{coxeterlabelD2}) by introducing an oriented Temperley--Lieb algebra of type $(W,P)$  for all Hermitian symmetric pairs $(W,P)$.

\begin{thmA}\label{theoremA}
Let $(W,P)$ be a Hermitian symmetric pair. 
 For all $\la,  \nu  \in {^PW}$, the space 
 $\mathbb N_{\la,\underline\nu}$ has basis indexed  by the set of 
  ``standard" basis elements in the anti-spherical module for the oriented Temperley--Lieb algebra
  of type $(W,P)$.
 These elements can be described in a closed combinatorial (non-iterative) fashion.  
 Moreover, this construction is entirely independent of the choice of a reduced word $\underline{\nu}$. 
 \end{thmA}

    \begin{figure}[ht!]
\[\begin{tikzpicture}[scale=0.525]

\draw[very thick]( -8,3)--++(180:0.8)
coordinate(hi);
\draw[very thick]( -11,3)--++(0:0.8)
coordinate(hi2);
\draw[very thick,densely dotted] (hi) -- (hi2); 
\draw[very thick]( -8,3)--(-4,3); 
\draw[very thick]( -1,3)--++(180:0.8)
coordinate(hi);
\draw[very thick]( -4,3)--++(0:0.8)
coordinate(hi2);
\draw[very thick,densely dotted] (hi) -- (hi2); 
\draw[very thick]( -11,3)--(-13,3); 
\draw[very thick]( -1,3)--(1,3);

\draw[very thick, fill=white ] ( -13,3) coordinate  (hi) circle (6pt) ; 

\path(hi)--++(-90:0.62) node {\scalefont{0.9} $1$}; 

\draw[very thick, fill=white ] ( -11,3) coordinate (hi) circle (6pt); 

\path(hi)--++(-90:0.62) node {\scalefont{0.9} $2$}; 

\draw[very thick,  fill=white] ( -8,3) coordinate (hi) circle (6pt);  

\path(hi)--++(-90:0.62) node {\scalefont{0.9} $k\!-\!1$}; 

\draw[very thick, fill=magenta] ( -6,3) coordinate (hi) circle (6pt); 
 
 \path(hi)--++(-90:0.62) node {\scalefont{0.9} $k$}; 
 
\draw[very thick, fill=white ] ( -4,3) coordinate (hi) circle (6pt); 

 \path(hi)--++(-90:0.62) node {\scalefont{0.9} $k\!+\!1$}; 

\draw[very thick, fill=white ] ( -1,3) coordinate (hi) circle (6pt); 

 \path(hi)--++(-90:0.62) node {\scalefont{0.9} $n\!-\!1$}; 

\draw[very thick, fill=white ] ( 1,3) coordinate (hi) circle (6pt); 

 \path(hi)--++(-90:0.62) node {\scalefont{0.9} $n\!  $}; 
 \end{tikzpicture}\]
%
%
%
%
 \[
 \begin{minipage}{6cm}
 \begin{tikzpicture}[scale=0.525]
 \draw[very thick]( -1,3)--++(180:0.8); 
\draw[very thick]( -1,3)--++(180:0.8)
coordinate(hi);
\draw[very thick]( -4,3)--++(0:0.8)
coordinate(hi2);
\draw[very thick,densely dotted] (hi) -- (hi2); 
 \draw[very thick]( -1,3)--(1,3); 

 \draw[very thick] ( -6,3)--( -4,3); 

 \draw[very thick] ( -6,3.1)--( -8,3.1); 
 \draw[very thick] ( -6,2.9)--( -8,2.9);

\draw[very thick, fill=magenta ] ( -8,3) coordinate (hi) circle (6pt); 

 \path(hi)--++(-90:0.62) node {\scalefont{0.9} $\fuck$}; 

 \path(hi)--++(0:1) node {\scalefont{2}$\mathbf >$};

\draw[very thick, fill=white ] ( -6,3) coordinate (hi) circle (6pt); 

 \path(hi)--++(-90:0.62) node {\scalefont{0.9} $2$};

\draw[very thick, fill=white ] ( -4,3) coordinate (hi) circle (6pt); 

 \path(hi)--++(-90:0.62) node {\scalefont{0.9} $3$}; 

\draw[very thick, fill=white ] ( -1,3) coordinate (hi) circle (6pt); 

 \path(hi)--++(-90:0.62) node {\scalefont{0.9} $n\!-\!1$}; 

\draw[very thick, fill=white ] ( 1,3) coordinate (hi) circle (6pt); 

 \path(hi)--++(-90:0.62) node {\scalefont{0.9} $n   $}; 
 \end{tikzpicture}
   \end{minipage}\qquad \quad
 \begin{minipage}{6cm}
 \begin{tikzpicture}[scale=0.525]
 
\draw[very thick]( -1,3)--++(180:0.8); 
\draw[very thick]( -1,3)--++(180:0.8)
coordinate(hi);
\draw[very thick]( -4,3)--++(0:0.8)
coordinate(hi2);
\draw[very thick,densely dotted] (hi) -- (hi2); 
 \draw[very thick]( -1,3)--(1,3); 

 \draw[very thick] ( -6,3)--( -4,3); 

 \draw[very thick] ( -6,3.1)--( -8,3.1); 
 \draw[very thick] ( -6,2.9)--( -8,2.9);

\draw[very thick, fill=white] ( -8,3) coordinate (hi) circle (6pt); 

 \path(hi)--++(0:1) node {\scalefont{2}$\mathbf <$};

 \path(hi)--++(-90:0.62) node {\scalefont{0.9} $\fuck$};

\draw[very thick, fill=white ] ( -6,3) coordinate (hi) circle (6pt); 

 \path(hi)--++(-90:0.62) node {\scalefont{0.9} $2$};

\draw[very thick, fill=white ] ( -4,3) coordinate (hi) circle (6pt); 

 \path(hi)--++(-90:0.62) node {\scalefont{0.9} $3$}; 

\draw[very thick, fill=white ] ( -1,3) coordinate (hi) circle (6pt); 

 \path(hi)--++(-90:0.62) node {\scalefont{0.9} $n\!-\!1$}; 

\draw[very thick, fill=magenta  ] ( 1,3) coordinate (hi) circle (6pt); 

 \path(hi)--++(-90:0.62) node {\scalefont{0.9} $n    $}; 
 \end{tikzpicture}
 \end{minipage}
\]
\vspace{-0.1cm} 
 \[
  \begin{minipage}{8.25cm}
\begin{tikzpicture}[scale=0.525]
\clip(-9.5,0.5) rectangle  (3,5.5); 
\draw[white](-9.5,0.5) rectangle  (3,5.5); 
\draw[very thick]( -1,3)--++(180:0.8)
coordinate(hi);
\draw[very thick]( -4,3)--++(0:0.8)
coordinate(hi2);
\draw[very thick,densely dotted] (hi) -- (hi2); 
 \draw[very thick]( -1,3)--(1,3); 

 \draw[very thick](-8,4.5)--(-6,3);
 \draw[very thick](-8,1.5)--(-6,3);
  \draw[very thick](-4,3)--(-6,3);

\draw[very thick, fill=magenta ] (-8,4.5) coordinate (hi) circle (6pt); 

 \path(hi)--++(180:0.6) node {\scalefont{0.9}$ \fuckfuck $};

\draw[very thick, fill=white ] (-8,1.5) coordinate (hi) circle (6pt); 

 \path(hi)--++(180:0.6) node {\scalefont{0.9}$1$};

\draw[very thick, fill=white ] ( -6,3) coordinate (hi) circle (6pt); 

 \path(hi)--++(-90:0.62) node {\scalefont{0.9} $2$};

\draw[very thick, fill=white ] ( -4,3) coordinate (hi) circle (6pt); 

 \path(hi)--++(-90:0.62) node {\scalefont{0.9} $3$}; 

\draw[very thick, fill=white ] ( -1,3) coordinate (hi) circle (6pt); 

 \path(hi)--++(-90:0.62) node {\scalefont{0.9} $n\!-\!1$}; 

\draw[very thick, fill=white ] ( 1,3) coordinate (hi) circle (6pt); 

 \path(hi)--++(-90:0.62) node {\scalefont{0.9} $n   $}; 
 \end{tikzpicture}
 \end{minipage} 
   \begin{minipage}{7cm}
\begin{tikzpicture}[scale=0.525]
\clip(-9.5,0.5) rectangle  (3,5.5); 
\draw[white](-9.5,0.5) rectangle  (3,5.5); 
\draw[very thick]( -1,3)--++(180:0.8)
coordinate(hi);
\draw[very thick]( -4,3)--++(0:0.8)
coordinate(hi2);
\draw[very thick,densely dotted] (hi) -- (hi2); 
 \draw[very thick]( -1,3)--(1,3); 

 \draw[very thick](-8,4.5)--(-6,3);
 \draw[very thick](-8,1.5)--(-6,3);
  \draw[very thick](-4,3)--(-6,3);

\draw[very thick, fill=white ] (-8,4.5) coordinate (hi) circle (6pt); 

 \path(hi)--++(180:0.6) node {\scalefont{0.9}$ \fuckfuck    $};

\draw[very thick, fill=white ] (-8,1.5) coordinate (hi) circle (6pt); 

 \path(hi)--++(180:0.6) node {\scalefont{0.9}$1$};

\draw[very thick, fill=white ] ( -6,3) coordinate (hi) circle (6pt); 

 \path(hi)--++(-90:0.62) node {\scalefont{0.9} $2$};

\draw[very thick, fill=white ] ( -4,3) coordinate (hi) circle (6pt); 

 \path(hi)--++(-90:0.62) node {\scalefont{0.9} $3$}; 

\draw[very thick, fill=white ] ( -1,3) coordinate (hi) circle (6pt); 

 \path(hi)--++(-90:0.62) node {\scalefont{0.9} $n\!-\!1$}; 

\draw[very thick, fill=magenta] ( 1,3) coordinate (hi) circle (6pt); 

 \path(hi)--++(-90:0.62) node {\scalefont{0.9} $n   $}; 
 \end{tikzpicture}
 \end{minipage} 
 \]

 \vspace{-0.4cm}
\[ \qquad\qquad \begin{minipage}{8cm}
\begin{tikzpicture}[scale=0.525]
\draw[very thick]( -1,3)--++(180:0.8)
coordinate(hi);
\draw[very thick]( -3,3)--++(0:0.8)
coordinate(hi2);
\draw[very thick] (hi) -- (hi2); 
 \draw[very thick]( -1,3)--(1,3); 

   \draw[very thick](-3,3)--(-7,3);

   \draw[very thick]( -3,5)--++(-90:2); 

\draw[very thick, fill=magenta] ( -7,3) coordinate (hi) circle (6pt); 

 \path(hi)--++(-90:0.62) node {\scalefont{0.9}$1  $};

\draw[very thick, fill=white ] ( -5,3) coordinate (hi) circle (6pt); 

 \path(hi)--++(-90:0.62) node {\scalefont{0.9}$2$};

\draw[very thick, fill=white ] ( -3,3) coordinate (hi) circle (6pt); 

 \path(hi)--++(-90:0.62) node {\scalefont{0.9}$3$};

\draw[very thick, fill=white ] ( -3,5) coordinate (hi) circle (6pt); 

 \path(hi)--++(0:0.6) node {\scalefont{0.9}$6$};

\draw[very thick, fill=white ] ( -1,3) coordinate (hi) circle (6pt); 

 \path(hi)--++(-90:0.62) node {\scalefont{0.9}$4$}; 

\draw[very thick, fill=white ] ( 1,3) coordinate (hi) circle (6pt); 

 \path(hi)--++(-90:0.62) node {\scalefont{0.9}$5  $};

 \end{tikzpicture}
 \end{minipage}\!\!\!\!\!\!\!\!\!\! \begin{minipage}{8.25cm}
\begin{tikzpicture}[scale=0.525]
\draw[very thick]( -1,3)--++(180:0.8)
coordinate(hi);
\draw[very thick]( -3,3)--++(0:0.8)
coordinate(hi2);
\draw[very thick] (hi) -- (hi2); 
 \draw[very thick]( -1,3)--(3,3); 

   \draw[very thick](-3,3)--(-7,3);

   \draw[very thick]( -1,5)--++(-90:2); 

\draw[very thick, fill=magenta] ( -7,3) coordinate (hi) circle (6pt); 

 \path(hi)--++(-90:0.62) node {\scalefont{0.9}$1  $};

\draw[very thick, fill=white ] ( -5,3) coordinate (hi) circle (6pt); 

 \path(hi)--++(-90:0.62) node {\scalefont{0.9}$2$};

\draw[very thick, fill=white ] ( -3,3) coordinate (hi) circle (6pt); 

 \path(hi)--++(-90:0.62) node {\scalefont{0.9}$3$};

\draw[very thick, fill=white ] ( -1,5) coordinate (hi) circle (6pt); 

 \path(hi)--++(0:0.6) node {\scalefont{0.9}$7$};

\draw[very thick, fill=white ] ( -1,3) coordinate (hi) circle (6pt); 

 \path(hi)--++(-90:0.62) node {\scalefont{0.9}$4$}; 

\draw[very thick, fill=white ] ( 1,3) coordinate (hi) circle (6pt); 

 \path(hi)--++(-90:0.62) node {\scalefont{0.9}$5  $};

 \draw[very thick, fill=white ] ( 3,3) coordinate (hi) circle (6pt); 

 \path(hi)--++(-90:0.62) node {\scalefont{0.9}$6  $};

 \end{tikzpicture}
 \end{minipage} 
 \]
 
 \caption{ 
 Enumeration of nodes in the parabolic Dynkin diagram of the Hermitian symmetric pairs. 
 Namely, types $(A_{n },   A_{k-1} \times A_{n-k } )$, 
 $(C_n, A_{n-1})  $ and $(B_n, B_{n-1}) $, 
  $(D_{n+1} , A_{n})  $ and $(D_{n+1},  D_{n }) $ and $(E_6 , D_5 )  $ and $(E_7 , E_6) $ respectively.  
  The single node not belonging to the parabolic is highlighted in pink in each case.  
 }
\label{coxeterlabelD2}
\end{figure}

The use of Temperley--Lieb style combinatorics  for  calculating  Kazhdan--Lusztig polynomials goes back to work of Brundan and Stroppel  in type  
$(A_n, A_k\times A_{n-k-1})$ \cite{MR2918294,MR2600694,MR2781018,MR2955190} and Cox and De Visscher in type  $(D_n, A_{n-1})$ \cite{MR2813567}.  
 In this paper, we generalise these ideas to all Hermitian symmetric pairs 
 and lift the combinatorics  to a higher structural level; we do this by interpreting these
 polynomials as graded-dimensions of ``anti-spherical modules" for oriented Temperley--Lieb algebras of type $(W,P)$.
   The definition of these algebras and their  anti-spherical modules 
  is simple and uniformly given in terms of the underlying root system, see \cref{maindef,maindef2}.  We also relate these newly defined oriented Temperley--Lieb algebras of type $(W,P)$ to the generalised Temperley--Lieb algebras of type $W$ introduced by Fan and Graham. In Section 4, we give a proof of Theorem A for types $(D_n, D_{n-1})$, $(B_n, B_{n-1})$ and the exceptional types $(E_6, D_5)$ and $(E_7, E_6)$.  From Section 5 onwards we focus solely on the remaining types, namely $(W,P)=(A_n, A_k\times A_{n-k-1})$, $(D_n, A_{n-1})$ and $(C_n, A_{n-1})$. 
 In \cref{results1} we prove that the oriented  Temperley--Lieb algebras admit a diagrammatic 
visualisation, and use this in \cref{sectiongrading} to understand the graded structure of the algebra 
by way of  {\em closed combinatorial formulas}.  
In \cref{results2} we  apply these ideas to the anti-spherical module and hence prove Theorem A for the remaining types.

Further rewards of our approach will be harvested in the companion paper \cite{compan}.   In   \cite{compan}, we establish isomorphisms interrelating  Hecke categories   and use these isomorphisms in order  to construct the basic algebras of these Hecke categories   and prove that they   are  standard Koszul  
  (this uses the results of this paper in order to deduce   the required 
graded vector space dimension counts) and to prove that the $p$-Kazhdan--Lusztig polynomials are entirely independent of the prime $p\geq 0$.

\begin{Acknowledgements*}
 The first   and fourth  authors are grateful for funding from EPSRC  grant EP/V00090X/1 and the Royal Commission for the Exhibition of 1851, respectively.
 The authors are thankful to 
  Jon Brundan and Catharina Stroppel for interesting and informative conversations and to   the MFO workshop ``Character Theory and Categorification" for providing an excellent environment for collaboration.   

We are also  very thankful to the anonymous referee for their incredibly   detailed comments, which improved the exposition of the paper a great deal.
\end{Acknowledgements*}

\section{Kazhdan--Lusztig polynomials and Deodhar's defect}\label{reduced-defn}
  Let $(W, S_W)$ be a Coxeter system:  $W$ is the group generated by the finite set $S_W$ subject to the relations $(st)^{m_{st}} = 1$ for  
 $s,t\in S_W$, $ {m_{st}}\in \NN\cup\{\infty\}$ 
 satisfying ${m_{st}}= {m_{ts}}$, and ${m_{st}}=1$ if and only if  $s= t$.  
   Let $\ell : W \to \mathbb{N}$  be the corresponding length function. 
  Consider $S_P \subseteq S_W$ a  subset and $(P, S_P)$
 its corresponding Coxeter system. We say that $P$ is the parabolic subgroup corresponding to  $S_P\subseteq S_W$.
Let  $^ PW \subseteq W$ denote a set of minimal length coset representatives in $P\backslash W$. 
 For $\w=s_{i_1}s_{i_2}\cdots  s_{i_\ell}$ an expression in the generators $s_{i_j} \in S_W$   for $0\leq j \leq \ell$, we define a subexpression  of $\underline{w}$ to be an expression of the form  $\w^{\underline{k}}  :=s_{i_1}^{k_1}s_{i_2}^{k_2}\cdots s_{i_\ell}^{k_\ell}$ where
$\underline{k}=(k_1,k_2,\dots ,k_\ell)\in\{0,1\}^\ell$. 
 We let $\leq $ denote the (strong)  Bruhat order on ${^PW}$: namely $y\leq w$ if for some  
reduced expression $\w$ for $w$, there exists a reduced expression  $\y$ for $y$ such that  $\y$ is a subexpression of $\w$.   

We define a directed graph $\mathcal{G}_{(W,P)}$ with vertex set ${^PW}$ and edges defined as follows. For $\la, \mu\in {^PW}$ we have an edge $\la \rightarrow \mu$ if $\mu = \la s_i > \la$ for some $s_i\in S_W$. (Note that this is the Hasse diagram of the poset $({^PW}, \leq_r)$ where $\leq_r$ denotes the (weak) right Bruhat order). Examples are given in \cref{Bruhatexample-KL,howTworks}. 

The identity element $1 \in W$ is the minimal coset representative of the identity coset $P$, and for convenience, we will denote it by $\varnothing$ instead (the empty word in the generators).

\begin{figure}[ht!]
 \[
 \begin{tikzpicture} [yscale=0.51,xscale=-0.51]

\draw[magenta, line width=3] (2,0.5)--(2,3) coordinate (hi);

\draw[cyan, line width=3] (hi)--(6,5.5) coordinate (hi2);
\draw[darkgreen, line width=3] (hi)--(-2,5.5) coordinate (hi3);
\draw[cyan, line width=3] (hi3)--(2,8) coordinate (hi4);
\draw[darkgreen, line width=3] (hi2)--(2,8) coordinate (hi4);
  \draw[magenta, line width=3] (hi4)--++(90:2.5) coordinate (hi7);

\path (hi7) --++(180:2)coordinate  (origin); 
 \draw[fill=white,rounded corners](origin) rectangle ++(4,1) 
 node [midway] {\scalefont{0.9}$\color{magenta}s_2\color{darkgreen}s_3  \color{cyan}s_1 \color{magenta}s_2 $};

\path (0,0)  coordinate (origin) ; 
\draw[fill=white,rounded corners](origin) rectangle ++(4,1)  node [midway] {$\varnothing$};

\path (0,2.5) coordinate (origin) ; 
\draw[fill=white,rounded corners](origin) rectangle ++(4,1) 
 node [midway] {\scalefont{0.9}$\color{magenta}s_2 $};

\path (-4,5) coordinate (origin); 
 
\draw[fill=white,rounded corners](origin) rectangle ++(4,1) 
 node [midway] {\scalefont{0.9}$\color{magenta}s_2\color{darkgreen}s_3 $};

\path (4,5) coordinate (origin); 
 
\draw[fill=white,rounded corners](origin) rectangle ++(4,1) 
 node [midway] {\scalefont{0.9}$\color{magenta}s_2\color{cyan}s_1 $};

\path (0,7.5) coordinate (origin); 
 
\draw[fill=white,rounded corners](origin) rectangle ++(4,1) 
 node [midway] {\scalefont{0.9}$\color{magenta}s_2\color{darkgreen}s_3 \color{cyan}s_1 $};

\draw[very thick]( -7+1.8,3-1)--(-4+1.8,3-1); 
 
\draw[very thick, fill=darkgreen] ( -7+1.8,3-1) coordinate circle (7pt);  

\draw[very thick, fill=magenta] ( -5.5+1.8,3-1) coordinate circle (7pt); 
 
\draw[very thick, fill=cyan] ( -4+1.8,3-1) coordinate circle (7pt); 

%

\draw[very thick, rounded corners] (-6.5+1.8,2.5-1) rectangle (-7.5+1.8,3.5-1); 
\draw[very thick, rounded corners] (-4.5+1.8,2.5-1) rectangle (-3.5+1.8,3.5-1);

\end{tikzpicture}
\quad
\begin{tikzpicture} [yscale=0.51,xscale=-0.51]

\draw[magenta, line width=3] (2,0.5)--(2,3) coordinate (hi);

\draw[cyan, line width=3] (hi)--(6,5.5) coordinate (hi2);
\draw[darkgreen, line width=3] (hi)--(-2,5.5) coordinate (hi3);
\draw[cyan, line width=3] (hi3)--(2,8) coordinate (hi4);
\draw[darkgreen, line width=3] (hi2)--(2,8) coordinate (hi4);
\draw[orange, line width=3] (hi3)--(-6,8) coordinate (hi5);
\draw[cyan, line width=3] (hi5)--(-2,10.5) coordinate (hi6);
\draw[orange, line width=3] (hi4)--(-2,10.5) coordinate (hi6);
\draw[magenta, line width=3] (hi4)--(6,10.5) coordinate (hi7);

\draw[magenta, line width=3] (hi6)--(2,13) coordinate (hi8);
\draw[orange, line width=3] (hi7)--(2,13) coordinate (hi8);
\draw[darkgreen, line width=3] (hi8)--(2,15) ;

\path (0,0)  coordinate (origin) ; 
\draw[fill=white,rounded corners](origin) rectangle ++(4,1)  node [midway] {$\varnothing$};

\path (0,2.5) coordinate (origin) ; 
\draw[fill=white,rounded corners](origin) rectangle ++(4,1) 
 node [midway] {\scalefont{0.9}$\color{magenta}s_2 $};

\path (-4,5) coordinate (origin); 
 
\draw[fill=white,rounded corners](origin) rectangle ++(4,1) 
 node [midway] {\scalefont{0.9}$\color{magenta}s_2\color{darkgreen}s_3 $};

\path (4,5) coordinate (origin); 
 
\draw[fill=white,rounded corners](origin) rectangle ++(4,1) 
 node [midway] {\scalefont{0.9}$\color{magenta}s_2\color{cyan}s_1 $};

\path (0,7.5) coordinate (origin); 
 
\draw[fill=white,rounded corners](origin) rectangle ++(4,1) 
 node [midway] {\scalefont{0.9}$\color{magenta}s_2\color{darkgreen}s_3 \color{cyan}s_1 $};

\path (-8,7.5) coordinate (origin); 
 
\draw[fill=white,rounded corners](origin) rectangle ++(4,1) 
 node [midway] {\scalefont{0.9}$\color{magenta}s_2\color{darkgreen}s_3 \color{orange}s_4 $};

\path (-4,10) coordinate (origin); 
 \draw[fill=white,rounded corners](origin) rectangle ++(4,1) 
 node [midway] {\scalefont{0.9}$\color{magenta}s_2\color{darkgreen}s_3\color{orange}s_4  \color{cyan}s_1 $};

\path (4,10) coordinate (origin); 
 \draw[fill=white,rounded corners](origin) rectangle ++(4,1) 
 node [midway] {\scalefont{0.9}$\color{magenta}s_2\color{darkgreen}s_3  \color{cyan}s_1 \color{magenta}s_2 $};

\path (0,12.5) coordinate (origin);

\draw[fill=white,rounded corners](origin) rectangle ++(4,1) 
 node [midway] {\scalefont{0.9}$\color{magenta}s_2\color{darkgreen}s_3\color{orange}s_4  \color{cyan}s_1 \color{magenta}s_2$};

\path (0,15) coordinate (origin);

\draw[fill=white,rounded corners](origin) rectangle ++(4,1) 
 node [midway] {\scalefont{0.9}$\color{magenta}s_2\color{darkgreen}s_3\color{orange}s_4  \color{cyan}s_1 \color{magenta}s_2\color{darkgreen}s_3$};

%
%
%
%
%
%

\draw[very thick]( -8.5+1.8,3-1)--(-4+1.8,3-1); 
 
\draw[very thick, fill=darkgreen] ( -7+1.8,3-1) coordinate circle (7pt);  

\draw[very thick, fill=magenta] ( -5.5+1.8,3-1) coordinate circle (7pt); 
 
\draw[very thick, fill=cyan] ( -4+1.8,3-1) coordinate circle (7pt); 

%
 \draw[very thick, fill=orange] ( -8.5+1.8,3-1) coordinate circle (7pt);

\draw[very thick, rounded corners] (-6.5+1.8,2.5-1) rectangle (-9+1.8,3.5-1); 
\draw[very thick, rounded corners] (-4.5+1.8,2.5-1) rectangle (-3.5+1.8,3.5-1);

\end{tikzpicture}
\]

\caption{The  graph   $\mathcal{G}_{(W,P)}$ for  $(W,P)=(A_3,A_1 \times A_1  )$ and $(A_4,A_1 \times A_2  )$ respectively. (We haven't drawn the direction on the edges but all arrows are pointing upwards). } 
\label{Bruhatexample-KL}
\end{figure} 
 
We now define $\widehat{\mathcal{G}}_{(W,P)}$  to be the directed graph having the same set of vertices as $\mathcal{G}_{(W,P)}$ but replacing each edge in $\mathcal{G}_{(W,P)}$  between $\lambda$ and $\lambda s_i$ by four directed edges 
\[\lambda \xrightarrow{ \ s_i \ }\lambda ,  \quad \lambda s_i  \xrightarrow{ \ s_i \ }\lambda s_i,
\quad  \lambda \xrightarrow{ \ s_i \ }\lambda s_i , \quad   \lambda s_i \xrightarrow{ \ s_i \ }\lambda \]
and, in order to keep the notation to a minimum, we will simply  label the edge
 by the subscript of the reflection (not the reflection itself). 
  We assign a degree to each edge in $\widehat{\mathcal{G}}_{(W,P)}$ by setting 
\[{\rm deg}(\lambda \xrightarrow{ \ i \ } \lambda s_i) = {\rm deg}(\lambda s_i  \xrightarrow{ \ i \ } \lambda) = 0  
\qquad {\rm deg}(\lambda \xrightarrow{ \ i \ } \lambda) = \left\{ \begin{array}{ll} 1 & \mbox{if $\lambda s_i > \lambda$}\\ -1 & \mbox{if $\lambda s_i < \lambda$}\end{array}\right.\]
 Given   a path   (or ``Bruhat stroll") on $\widehat{\mathcal{G}}_{(W,P)}$
\[\SSTT \, : \, \lambda_1 \xrightarrow{i_1} \lambda_2 \xrightarrow{i_2} \lambda_3 \xrightarrow{i_3} \ldots \xrightarrow{i_{k-1}} \lambda_k,\]
we say that the degree  ${\rm deg}(\SSTT)$ is the sum of the degrees of each edge in $\SSTT$. 
(The degree is also sometimes known as the ``Deodhar defect".)  
We also define the {\sf weight of $\SSTT$}, denoted by $\omega(\SSTT)$ to be the expression 
\[\omega(\SSTT) :=  {s_{i_1}s_{i_2}s_{i_3} \ldots s_{i_{k-1}}}.\]

We write ${\rm Path}_{(W,P)}$ for the set of all paths on $\widehat{\mathcal{G}}_{(W,P)}$. For $\la , \nu \in {^PW}$, we let $\Path (\la\to \nu)$ denote the set of all paths in ${\rm Path}_{(W,P)}$ beginning at
  $\la$ and ending at $\nu$. When $\la = \varnothing$ 
 we set $\Path ( \nu):=\Path (\varnothing\to \nu)$. 
Let $\underline{w}$ be an expression in the generators $S_W$. We set   $\Path (\la \to \nu,\w)$ to be the set of paths $\SSTT\in {\rm Path}(\la \to \nu)$ with $\omega(\SSTT) = \underline{w}$. When $\la = \varnothing$ we set ${\rm Path}(\nu , \underline{w}) := {\rm Path}(\varnothing \to \nu, \underline{w})$. 
 
 Throughout the paper, we fix one reduced expression $\underline{\mu}$ for each $\mu \in {^PW}$.  The set of paths ${\rm Path}(\la, \underline{\mu})$ for $\la, \mu \in {^PW}$ will play a crucial role. Examples of such paths are  given in \cref{pppspspspsps}.
We will see in particular that,  for Hermitian symmetric pairs, the set $
\Path (\la,\underline{\mu})$ consists of either 0 or 1 elements.

\begin{figure}[ht!]
 \[ \scalefont{0.7}
\begin{tikzpicture} [scale=0.5]

\draw[magenta, line width=3] (2,-5)--(2,-1.5)  ;

\draw[darkgreen, line width=3] (2,0.5)--(2,-1.5)  ;

\draw[cyan, line width=3] (2,0.5)--(2,3) coordinate (hi);

\draw[orange, line width=3] (hi)--(5,5.5) coordinate (hi2);
\draw[gray, line width=3] (hi)--(-1,5.5) coordinate (hi3);
\draw[orange, line width=3] (hi3)--(2,8) coordinate (hi4);
\draw[gray, line width=3] (hi2)--(2,8) coordinate (hi4);
  
\draw[cyan, line width=3] (hi4)--++(90:2) coordinate (hi5);
\draw[darkgreen, line width=3] (hi5)--++(90:2.5) coordinate (hi6);

\draw[magenta, line width=3] (hi6)--++(90:2.5) ;

\path (0,-5) --++(-90:0.1)--++(180:0.252)coordinate (origin); 
\draw[fill=white,rounded corners](origin) rectangle ++(4.55,1.2) node [midway] {$\varnothing$};

\path (0,-2.5) --++(-90:0.1)--++(180:0.252)coordinate (origin); 
 
 \draw[fill=white,rounded corners](origin) rectangle ++(4.55,1.2) node [midway] {$\color{magenta}s_4$};

\path (0,0) --++(-90:0.1)--++(180:0.252)coordinate (origin); 

 \draw[fill=white,rounded corners](origin) rectangle ++(4.55,1.2) node [midway] {$
 \color{magenta}s_4
   \color{green!80!black}s_{3}
 $};

\path (0,2.5) --++(-90:0.1)--++(180:0.252)coordinate (origin); 

 \draw[fill=white,rounded corners](origin) rectangle ++(4.55,1.2) node [midway] {$
 \color{magenta}s_4
   \color{green!80!black}s_{3}
   \color{black} 
     \color{cyan}s_2
 $};

\path (-3,5) --++(-90:0.1) coordinate (origin); 
\draw[fill=white,rounded corners](origin) rectangle ++(4,1.2);

 \draw[fill=white,rounded corners](origin) rectangle ++(4.55,1.2) node [midway] {$
 \color{magenta}s_4
   \color{green!80!black}s_{3}
   \color{black} 
     \color{cyan}s_2
          \color{gray}s_1
 $};

\path (3,5) --++(-90:0.1) coordinate (origin);

 \draw[fill=white,rounded corners](origin) rectangle ++(4,1.2) node [midway] {$
 \color{magenta}s_4
   \color{green!80!black}s_{3}
   \color{black} 
     \color{cyan}s_2
          \color{orange}s_{1\!'\!'}
 $};

\path (0,7.5) --++(-90:0.1)--++(180:0.252)coordinate (origin);

 \draw[fill=white,rounded corners](origin) rectangle ++(4.55,1.2) node [midway] {$
 \color{magenta}s_4
   \color{green!80!black}s_{3}
   \color{black} 
     \color{cyan}s_2
      \color{gray}s_1       \color{orange}s_{1\!'\!'}
 $};

\path (0,10) --++(-90:0.1)--++(180:0.252)coordinate (origin);

 \draw[fill=white,rounded corners](origin) rectangle ++(4.55,1.2) node [midway] {$
 \color{magenta}s_4
   \color{green!80!black}s_{3}
   \color{black} 
     \color{cyan}s_2
      \color{gray}s_1       \color{orange}s_{1\!'\!'}     \color{cyan}s_2
 $};

\path (0,12.5) --++(-90:0.1)--++(180:0.252)coordinate (origin);

 \draw[fill=white,rounded corners](origin) rectangle ++(4.55,1.2) node [midway] {$
 \color{magenta}s_4
   \color{green!80!black}s_{3}
   \color{black} 
     \color{cyan}s_2
      \color{gray}s_1       \color{orange}s_{1\!'\!'}     \color{cyan}s_2   \color{green!80!black}s_{3}
 $};

\path (0,15) --++(-90:0.1)--++(180:0.35)coordinate (origin);

 \draw[fill=white,rounded corners](origin) rectangle ++(4+0.35*2,1.2) node [midway] {$
 \color{magenta}s_4
   \color{green!80!black}s_{3}
   \color{black} 
     \color{cyan}s_2
      \color{gray}s_1       \color{orange}s_{1\!'\!'}  \!   \color{cyan}s_2   \color{green!80!black}s_{3} \color{magenta}s_4
 $};

\path ( -2,0) coordinate (himid);

\draw[very thick] (himid)--++(180:2) coordinate (hi3); 
 
\draw[very thick ] (himid) --++(60:2) coordinate (hi2); 

\draw[very thick ] ( himid) --++(-60:2) coordinate (hi);

\draw[very thick] (hi3)--++(180:2) coordinate (hi4) ; 

\draw[very thick, fill=magenta] (hi4)  coordinate circle (7pt);

\draw[very thick, fill=orange] (hi2)  coordinate circle (7pt); 

\draw[very thick, fill=gray] (hi)  coordinate circle (7pt); 
 \draw[very thick, fill=darkgreen] ( hi3) coordinate circle (7pt); 

\draw[very thick, fill=cyan] (himid) coordinate circle (7pt);


\path (hi3)--++(180:0.5) coordinate (hi3TLX); 
\path (hi3)--++(120:0.5) coordinate (hi3TL);
\path (himid)--++(-120:0.5) coordinate (himidBL);
\path (himid)--++(0:0.5) coordinate (himidBL2);
\path (hi)--++(-120:0.5) coordinate (X);
\path (hi)--++(-60:0.5) coordinate (Y);
\path (hi)--++(0:0.5) coordinate (Z);
\path (hi2)--++(0:0.5) coordinate (hi2L);
\path (hi2)--++(60:0.5) coordinate (hi2LX);
\path (hi2)--++(120:0.5) coordinate (hi2T);
\path (himid)--++(120:0.5) coordinate (himidT);
\path (hi3)--++(-120:0.5) coordinate (hi3BL);
\path (hi3)--++(-120:0.5)--++(0:0.3) coordinate (hi3BL2);

\draw[thick, rounded corners,smooth] (hi3BL2)--(himidBL)
--(X)--(Y)--(Z)--(himidBL2)
--(hi2L)--(hi2LX)--(hi2T)--(himidT)--(hi3TL)--(hi3TLX)-- (hi3BL)--++(0:1); 
 \end{tikzpicture} \qquad
 \begin{tikzpicture} [scale=0.51]

%
%
%
%
%
%
%

\draw[magenta, line width=3] (2,-5)--(2,-1.5)  ;

\draw[darkgreen, line width=3] (2,0.5)--(2,-1.5)  ;

\draw[cyan, line width=3] (2,0.5)--(2,3) coordinate (hi);

 \draw[orange, line width=3] (hi)--(2,5.5) coordinate (hi2);
  
\draw[cyan, line width=3] (hi2)--++(90:2) coordinate (hi5);
\draw[darkgreen, line width=3] (hi5)--++(90:2.5) coordinate (hi6);

\draw[magenta, line width=3] (hi6)--++(90:2.5) ;

\path (0,-5) --++(-90:0.1)--++(180:0.252)coordinate (origin); 
\draw[fill=white,rounded corners](origin) rectangle ++(4.55,1.2) node [midway] {$\varnothing$};

  \path (0,-2.5) --++(-90:0.1)--++(180:0.252)coordinate (origin); 
 \draw[fill=white,rounded corners](origin) rectangle ++(4.55,1.2) node [midway] {$\color{magenta}s_4$};

%
%
%
%

\path (0,0) --++(-90:0.1)--++(180:0.252)coordinate (origin); 

 \draw[fill=white,rounded corners](origin) rectangle ++(4.55,1.2) node [midway] {$
 \color{magenta}s_4
   \color{green!80!black}s_{3}
 $};

\path (0,2.5) --++(-90:0.1)--++(180:0.252)coordinate (origin); 

 \draw[fill=white,rounded corners](origin) rectangle ++(4.55,1.2) node [midway] {$
 \color{magenta}s_4
   \color{green!80!black}s_{3}
   \color{black} 
     \color{cyan}s_2
 $};

\path (0,5) --++(-90:0.1)--++(180:0.252)coordinate (origin); 
  \draw[fill=white,rounded corners](origin) rectangle ++(4.55,1.2) node [midway] {$
 \color{magenta}s_4
   \color{green!80!black}s_{3}
   \color{black} 
    \color{cyan}s_2    \color{orange}s_{1'}
 $};

\path (0,7.5) --++(-90:0.1)--++(180:0.252)coordinate (origin);

   \draw[fill=white,rounded corners](origin) rectangle ++(4.55,1.2) node [midway] {$
  \color{magenta}s_4
   \color{green!80!black}s_{3}
   \color{black} 
    \color{cyan}s_2    \color{orange}s_{1'}
        \color{cyan}s_2   
 $};

\path (0,10) --++(-90:0.1)--++(180:0.252)coordinate (origin);

   \draw[fill=white,rounded corners](origin) rectangle ++(4.55,1.2) node [midway] {$
     \color{magenta}s_4
   \color{green!80!black}s_{3}
   \color{black} 
    \color{cyan}s_2    \color{orange}s_{1'} \color{cyan}s_2    \color{green!80!black}s_{3} 
 $};

\path (0,12.5) --++(-90:0.1)--++(180:0.252)coordinate (origin);

   \draw[fill=white,rounded corners](origin) rectangle ++(4.55,1.2) node [midway] {$
     \color{magenta}s_4
   \color{green!80!black}s_{3}
   \color{black} 
    \color{cyan}s_2    \color{orange}s_{1'} \color{cyan}s_2    \color{green!80!black}s_{3} 
     \color{magenta}s_4 $};

\path (0,15) --++(-90:0.1)--++(180:0.252)coordinate (origin); 
\fill[white,rounded corners](origin) rectangle ++(4.2,1.2);

\path ( -2.75,0) coordinate (himid);

\draw[very thick] (himid)--++(180:2) coordinate (hi3); 
 
\path (himid) --++(0:2) coordinate (hi2); 

\draw[very thick ] (hi2)--++(90:0.1)   --++(180:2) 
--++(-90:0.2)   --++(0: 2)  ; 

\draw[very thick] (hi3)--++(180:2) coordinate (hi6);

\draw[very thick, fill=magenta] (hi6)  coordinate circle (7pt);

\draw[very thick, fill=orange] (hi2)  coordinate circle (7pt); 

 \draw[very thick, fill=darkgreen] ( hi3) coordinate circle (7pt); 

\draw[very thick, fill=cyan] (himid) coordinate circle (7pt);


\path (hi3)--++(180:0.5) coordinate (hi3TLX); 
\path (hi3)--++(120:0.5) coordinate (hi3TL);
\path (hi2)--++(-60:0.5) coordinate (himidBR);
\path (hi2)--++(-60:0.5) coordinate (hi2L);
\path (hi2)--++(0:0.5) coordinate (hi2LX);
\path (hi2)--++(60:0.5) coordinate (hi2T);
\path (hi2)--++(120:0.5) coordinate (himidT);
\path (hi3)--++(-120:0.5) coordinate (hi3BL);
\path (hi3)--++(-120:0.5)--++(0:0.3) coordinate (hi3BL2);

\draw[thick, rounded corners,smooth] (hi3BL2) --(hi2L)--(hi2LX)--(hi2T)--(himidT)--(hi3TL)--(hi3TLX)-- (hi3BL)--++(0:1);  \end{tikzpicture}
\]
\caption{ The graph $\mathcal{G}_{(W,P)}$ for types  $(W,P)=(D_5, D_4)$ 
	and $(B_4 , B_3)$. The general case {$(D_{n+1},D_n)$ and $(B_n,B_{n-1})$ }is no more difficult (see  \cite[Section 1]{compan}) -- merely extend the top and bottom {vertical chains} of the graph.}  
\label{howTworks}
\end{figure}

\begin{figure} [H]
\[\qquad\quad
\begin{tikzpicture} [yscale=0.425,xscale=-0.425]
 
\path[magenta, line width=3] (2,0.5)--(2,3) coordinate (hi);

\path[cyan, line width=3] (hi)--(6,5.5) coordinate (hi2);
\path[darkgreen, line width=3] (hi)--(-2,5.5) coordinate (hi3);
\path[cyan, line width=3] (hi3)--(2,8) coordinate (hi4);
\path[darkgreen, line width=3] (hi2)--(2,8) coordinate (hi4);
\path[orange, line width=3] (hi3)--(-6,8) coordinate (hi5);
\path[cyan, line width=3] (hi5)--(-2,10.5) coordinate (hi6);
\path[orange, line width=3] (hi4)--(-2,10.5) coordinate (hi6);
\path[magenta, line width=3] (hi4)--(6,10.5) coordinate (hi7);

\path[magenta, line width=3] (hi6)--(2,13) coordinate (hi8);
\path[orange, line width=3] (hi7)--(2,13) coordinate (hi8);
\path[darkgreen, line width=3] (hi8)--(2,15.5) ;

\path (0,0) coordinate (origin); 



\draw[magenta, line width=3] (2,0.5)--(2,3) coordinate (hi);

\path (0,0) coordinate (origin); 



\path (0,2.5) coordinate (origin); 



\draw[darkgreen, line width=3] (hi)--(-2,5.5) coordinate (hi3);
 
\path (0,2.5) coordinate (origin); 



\path (-4,5) coordinate (origin); 



\draw[cyan, line width=3] (hi)--(6,5.5) coordinate (hi2);

\path (0,2.5) coordinate (origin); 



\path (4,5) coordinate (origin); 



\draw[cyan, line width=3] (hi3)--(2,8) coordinate (hi4);
\draw[darkgreen, line width=3] (hi2)--(2,8) coordinate (hi4);

\path (-4,5) coordinate (origin); 



\path (4,5) coordinate (origin); 



\path (0,7.5) coordinate (origin); 



\draw[orange, line width=3] (hi3)--(-6,8) coordinate (hi5);

\path (-4,5) coordinate (origin); 



\path (4,5) coordinate (origin); 



\path (-8,7.5) coordinate (origin); 



\draw[cyan, line width=3] (hi5)--(-2,10.5) coordinate (hi6);
\draw[orange, line width=3] (hi4)--(-2,10.5) coordinate (hi6);

\path (-8,7.5) coordinate (origin); 



\path (0,7.5) coordinate (origin); 



\path (-4,10) coordinate (origin); 



 \draw[magenta, line width=3] (hi4)--(6,10.5) coordinate (hi7);

\path (-8,7.5) coordinate (origin); 



\path (0,7.5) coordinate (origin); 



\path (4,10) coordinate (origin); 



\draw[magenta, line width=3] (hi6)--(2,13) coordinate (hi8);
\draw[orange, line width=3] (hi7)--(2,13) coordinate (hi8);

\path (-4,10) coordinate (origin); 



\path (4,10) coordinate (origin); 



\path (0,12.5) coordinate (origin); 



\draw[darkgreen, line width=3] (hi8)--(2,15) ; 

\path (0,12.5) coordinate (origin); 



\path (0,15) coordinate (origin); 




\fill(hi)circle (8pt);

     \draw[darkgreen, line width=3] (hi8)--++(90:2.5)  ;  
  \path (hi8)--++(90:2.5) coordinate (ghfj)
;  \fill  (ghfj)
circle (8pt);

      \draw[line width=3, wei ,zigzag ] (2,0.5)--++(90:2.5)--(-2,5.5)
 --(-6,8) --(-2,10.5)--(2,13)--++(90:2.5)
 coordinate(hi);

\draw[fill=black] (hi)circle  (22pt); 

\foreach \i in {2,3,...,8}
{\fill(hi\i)circle (8pt);	}

\draw  (hi)
node { \color{white}$  \alpha$};

\fill(2,0.5)circle (8pt);
  \fill(2,3)circle (8pt);

 \path (2,15.5) circle (22pt);
\end{tikzpicture}\qquad
\quad\begin{tikzpicture} [yscale=0.425,xscale=-0.425]
 
\path[magenta, line width=3] (2,0.5)--(2,3) coordinate (hi);

\path[cyan, line width=3] (hi)--(6,5.5) coordinate (hi2);
\path[darkgreen, line width=3] (hi)--(-2,5.5) coordinate (hi3);
\path[cyan, line width=3] (hi3)--(2,8) coordinate (hi4);
\path[darkgreen, line width=3] (hi2)--(2,8) coordinate (hi4);
\path[orange, line width=3] (hi3)--(-6,8) coordinate (hi5);
\path[cyan, line width=3] (hi5)--(-2,10.5) coordinate (hi6);
\path[orange, line width=3] (hi4)--(-2,10.5) coordinate (hi6);
\path[magenta, line width=3] (hi4)--(6,10.5) coordinate (hi7);

\path[magenta, line width=3] (hi6)--(2,13) coordinate (hi8);
\path[orange, line width=3] (hi7)--(2,13) coordinate (hi8);
\path[darkgreen, line width=3] (hi8)--(2,15.5) ;

\path (0,0) coordinate (origin); 



\draw[magenta, line width=3] (2,0.5)--(2,3) coordinate (hi);

\path (0,0) coordinate (origin); 



\path (0,2.5) coordinate (origin); 



\draw[darkgreen, line width=3] (hi)--(-2,5.5) coordinate (hi3);
 
\path (0,2.5) coordinate (origin); 



\path (-4,5) coordinate (origin); 



\draw[cyan, line width=3] (hi)--(6,5.5) coordinate (hi2);

\path (0,2.5) coordinate (origin); 



\path (4,5) coordinate (origin); 



\draw[cyan, line width=3] (hi3)--(2,8) coordinate (hi4);
\draw[darkgreen, line width=3] (hi2)--(2,8) coordinate (hi4);

\path (-4,5) coordinate (origin); 



\path (4,5) coordinate (origin); 



\path (0,7.5) coordinate (origin); 



\draw[orange, line width=3] (hi3)--(-6,8) coordinate (hi5);

\path (-4,5) coordinate (origin); 



\path (4,5) coordinate (origin); 



\path (-8,7.5) coordinate (origin); 



\draw[cyan, line width=3] (hi5)--(-2,10.5) coordinate (hi6);
\draw[orange, line width=3] (hi4)--(-2,10.5) coordinate (hi6);

\path (-8,7.5) coordinate (origin); 



\path (0,7.5) coordinate (origin); 



\path (-4,10) coordinate (origin); 



 \draw[magenta, line width=3] (hi4)--(6,10.5) coordinate (hi7);

\path (-8,7.5) coordinate (origin); 



\path (0,7.5) coordinate (origin); 



\path (4,10) coordinate (origin); 



\draw[magenta, line width=3] (hi6)--(2,13) coordinate (hi8);
\draw[orange, line width=3] (hi7)--(2,13) coordinate (hi8);

\path (-4,10) coordinate (origin); 



\path (4,10) coordinate (origin); 



\path (0,12.5) coordinate (origin); 



\draw[darkgreen, line width=3] (hi8)--(2,15) ; 

\path (0,12.5) coordinate (origin); 



\path (0,15) coordinate (origin); 



  
\draw[line width=3  ,zigzag,wei]  (2,0.5)--++(90:2.5) coordinate(hi);
    
\draw[line width=3, wei ,zigzag ] (2,0.5)--++(90:2.5)--(-2,5.5)
coordinate(hi);
    
\draw[line width=3  ,wei ] (hi) to
 [out=180,in=-120] 
(-4,6.75) 
 to [out=60,in=90] (-2,5.5) 
;
   
 \draw[line width=3 ,wei ,zigzag ] (-2,5.5) --(2,8);

  \draw[line width=3 ,wei ]   (2,8)  
to [out=0,in=-60] 
(4,9.25) 
  to [out=120,in=90] (2,8) 
; 
\fill(hi)circle (8pt);
   
  \draw[line width=3 ,wei,zigzag ]   (2,8)--(6,5.5)  
  coordinate (hi);

     \draw[darkgreen, line width=3] (hi8)--++(90:2.5)  ;\foreach \i in {2,3,...,8}
{\fill(hi\i)circle (8pt);	}
 
  \path (hi8)--++(90:2.5) coordinate (ghfj)
;  \fill  (ghfj)
circle (8pt);

\draw[fill=black] (hi)circle  (22pt);

\draw  (hi)
node { \color{white}$  \beta$};

\fill(2,0.5)circle (8pt);
  \fill(2,3)circle (8pt);

 \path (2,15.5) circle (22pt);
\end{tikzpicture}
\]  
\caption{On the left we depict the unique path in ${\rm Path}(\alpha, \underline{\alpha})$ corresponding with a choice of reduced word $\underline{\alpha}$ and on the right we depict 
the unique element of ${\rm Path}(\beta,\underline{\alpha})$ for $\alpha=\color{magenta}s_2\color{darkgreen}s_3\color{orange}s_4  \color{cyan}s_1 \color{magenta}s_2\color{darkgreen}s_3$ and 
$\beta=\color{magenta}s_2   \color{cyan}s_1  $.  
These are paths  on $\widehat{\mathcal{G}}_{ {(A_4, A_1\times A_2)} } $   (also known as ``Bruhat strolls") but we depict only the edges in $\mathcal{G}_{ {(A_4,A_1\times A_2)} }$ (for readability).
  }
\label{pppspspspsps}
\end{figure}

%
%
%

    The following path theoretic  definition of Kazhdan--Lusztig polynomials was for a long time talked about implicitly in the literature, see for example \cite{MR1065215} (in particular Proposition 3.5 and Section 4, and also Section 5 for the parabolic setting).  It is explicitly proven to be equivalent to the classical definition of these polynomials in \cite[Proposition 3.3]{Soergel}.

   \begin{defn}\label{troll}
Let $\la,\mu \in {{^P}W}$.  
We set 
$b_{\la,\underline{\la}}(q)=1=n_{\la,\la}(q)$.  
For $\la\neq \mu$, we recursively define the polynomials
\[
b_{\la,\underline{\mu}}(q)\in \ZZ [q+q^{-1}] \qquad
n_{\la,\mu}(q)\in q\ZZ [q]
\]
  by induction on the Bruhat order $\leq$ as follows
\begin{equation}\label{kl-pol-d}
b_{\la,\underline{\mu}}(q) +n_{\lambda,\mu}(q) =
\!\!\! \sum_{\SSTS \in {\rm Path}(\la,\underline{\mu})}\!\!\!\!\!\! q^{\deg(\SSTS)} -
 \!\!\sum_{
\begin{subarray}c
\la <  \nu <  \mu\end{subarray}
}   \!\!\!   n_{\lambda,\nu}(q)
b_{\nu,\underline{\mu}}(q).  
\end{equation}
The polynomials $n_{\la,\mu}(q)$ are called the {\sf anti-spherical Kazhdan--Lusztig polynomials} associated to $\la,\mu \in {^PW}$.
   \end{defn}

We can reformulate the above in terms of matrix multiplication.  
We define the {\sf matrix of light leaves polynomials}
\[
\Delta^{(W,P)}:=( \Delta_{\la,\underline{\mu}}(q))_{\la,\mu\in {{^P}W}}
\qquad \qquad
\Delta_{\la,\underline{\mu}}(q)=\sum_{\SSTS \in {\rm Path}(\la,\underline{\mu})}\!\!\!\!\!\! q^{\deg(\SSTS)}
\]
to be the (square) lower uni-triangular matrix whose entries record the degrees of paths in ${\rm Path}(\la ,\underline{\mu})$.   
This matrix can be factorised {\em uniquely} as a product 
$\Delta^{(W,P)}= N^{(W,P)} \times B^{(W,P)}$ of  
lower uni-triangular matrices 
\[N^{(W,P)}:=(n_{\la,\nu}(q))_{\la,\mu\in {{^P}W}}
\qquad 
B^{(W,P)}:=(b_{\nu,\underline{\mu}}(q))_{\nu,\mu\in {{^P}W}}
\]
such that $n_{\la,\nu}(q)\in q\ZZ[q]$ 
for $\la \neq \nu$ 
and 
$b_{\nu,\underline{\mu}}(q)\in  \ZZ[q+q^{-1}]$.

\begin{eg}
The matrix $\Delta$ in type $(A_3,A_1\times A_1)$ is depicted below. \[
 \begin{array}{r|cccccc}
\Delta  
&  \color{magenta}s_2\color{cyan}s_1  \color{green!80!black}s_3 \color{magenta}s_2
    &  \color{magenta}s_2\color{cyan}s_1  \color{green!80!black}s_3 
&  \color{magenta}s_2 \color{cyan}s_1 
    &\color{magenta}s_2 \color{green!80!black}s_3   
&  \phantom{s_2}\color{magenta}s_2 \phantom{s_2}  
& \phantom{s_2} \varnothing \phantom{s_2}      
         \\ \hline
 \color{magenta}s_2\color{cyan}s_1  \color{green!80!black}s_3 \color{magenta}s_2 
   & 1 & \cdot  & \cdot  & \cdot  & \cdot  & \cdot  \\
 	 \color{magenta}s_2\color{cyan}s_1  \color{green!80!black}s_3 			 & q & 1 & \cdot  & \cdot  & \cdot  & \cdot  \\
 \color{magenta}s_2\color{cyan}s_1  & \cdot  & q & 1 & \cdot  & \cdot  & \cdot  \\
 \color{magenta}s_2  \color{green!80!black}s_3    & \cdot  & q & \cdot  & 1 & \cdot  & \cdot  \\
  \color{magenta}s_2  & q & q^2 & q & q & 1 & \cdot  \\
\varnothing   & q^2 & \cdot  & \cdot  & \cdot  & q & 1
\end{array}\]
  The factorisation of this matrix is trivial, with $N=\Delta$ and $B={\rm Id}_{6\times 6}$ the identity matrix.
 \end{eg}

\begin{eg}\label{bad6}
For $(C_3,A_2)$ the factorisation $\Delta= N\times B$ is given below.  The rows of the matrix can be taken to be ordered with respect to 
any total refinement of the Bruhat order (there are two such total orders), see \cref{BruhatexampleD} for the corresponding graph $\mathcal{G}_{(W,P)}$.  
\[\hspace{-0.2cm}\scalefont{0.9}
\left(\begin{array}{cccccccc}
1  & \cdot & \cdot & \cdot & \cdot & \cdot & \cdot & \cdot \\
q  &1 & \cdot & \cdot & \cdot & \cdot & \cdot & \cdot \\
\color{gray}1  & q & 1& \cdot & \cdot & \cdot & \cdot & \cdot \\
\color{gray}q  & \cdot & q & 1 & \cdot & \cdot & \cdot & \cdot \\
\color{gray}q  & \cdot & q & \cdot & 1& \cdot & \cdot & \cdot \\
\color{gray}q^2  & q & q^2 & q& q & 1 & \cdot & \cdot \\
q  & q^2 & \cdot & \cdot & \color{gray}1 & q & 1 & \cdot \\
q^2  & \cdot & \cdot & \cdot &\color{gray}q & \cdot & q & 1 \\
 \end{array}\right)
 =
 \left(\begin{array}{cccccccc}
1  & \cdot & \cdot & \cdot & \cdot & \cdot & \cdot & \cdot \\
q  &1 & \cdot & \cdot & \cdot & \cdot & \cdot & \cdot \\
\cdot  & q & 1& \cdot & \cdot & \cdot & \cdot & \cdot \\
\cdot  & \cdot & q & 1 & \cdot & \cdot & \cdot & \cdot \\
\cdot  & \cdot & q & \cdot & 1& \cdot & \cdot & \cdot \\
\cdot  & q & q^2 & q& q & 1 & \cdot & \cdot \\
q  & q^2 & \cdot & \cdot & \cdot & q & 1 & \cdot \\
q^2  & \cdot & \cdot & \cdot & \cdot & \cdot & q & 1 \\
 \end{array}\right)
\left(\begin{array}{cccccccc}
 1  & \cdot & \cdot & \cdot & \cdot & \cdot & \cdot & \cdot \\
  \cdot  & 1 & \cdot & \cdot & \cdot & \cdot & \cdot & \cdot \\
1  & \cdot & 1 & \cdot & \cdot & \cdot & \cdot & \cdot \\
 \cdot  & \cdot & \cdot & 1& \cdot & \cdot & \cdot & \cdot \\
  \cdot  & \cdot & \cdot & \cdot &1 & \cdot & \cdot & \cdot \\
   \cdot  & \cdot & \cdot & \cdot & \cdot & 1& \cdot & \cdot \\
    \cdot  & \cdot & \cdot & \cdot & 1 & \cdot &  1& \cdot \\
     \cdot  & \cdot & \cdot & \cdot & \cdot & \cdot & \cdot & 1 \\
 \end{array}\right)
  \]

\end{eg}

\begin{eg}
The  matrices $\Delta$ for exceptional types of Hermitian symmetric pairs are given in \cref{exceptional}.  Again, we have that $B$ is the identity matrix in these cases.  
\end{eg}

The paths  $\SSTS \in \Path(\la,\underline{\mu})$ enumerate a   ``light leaf basis"  of the Hecke category.  
We refer to \cite{compan} for an algorithmic construction of these basis elements in the language of this paper, 
see also \cite{antiLW}.     
 We are now able to  restate Libedinsky--Williamson's goal (from the introduction) more precisely using the language of paths.  
They ask if it  is possible to produce (via a closed combinatorial algorithm) 
 a set  $\NPath (\la,\underline{\nu}) \subseteq \Path (\la,\underline{\nu})$  and a canonical basis for a space 
\[
\bigoplus_{\sts  \in \NPath (\la,\underline{\nu})}
\mathbb{R}\sts 
\xrightarrow{ \sim }
\mathbb N_{\la,\nu}
\]
so that, upon taking graded dimensions, we get 
\[
n_{\la,\nu} = \sum _{\sts \in \NPath (\la,\underline{\nu})} q^{\deg(\sts)}.
\]
In this paper, we answer this question for $(W,P)$ a Hermitian symmetric pair (see \cref{coxeterlabelD2} for a list of such pairs).
In fact, we go further and produce closed combinatorial descriptions of canonical bases for spaces 
 \[
\bigoplus_{\sts  \in    \NPath (\la,\underline{\nu})}
\mathbb{R}\sts 
\xrightarrow{ \sim }
\mathbb N_{\la,\nu}
\qquad\qquad
\bigoplus_{\sts  \in    \BPath (\la,\underline{\nu})}
\mathbb{R}\sts 
\xrightarrow{ \sim }
\mathbb B_{\la,\underline{\nu}}
\]
so that, upon taking graded dimensions, we get 
\[
n_{\la,\nu} = \sum _{\sts \in   \NPath (\la,\underline{\nu})} q^{\deg(\sts)}
\qquad
\qquad
b_{\nu,\underline{\mu}} = \sum _{\sts \in   \BPath (\nu,\underline{\mu})} q^{\deg(\sts)} 
\]
for subsets $\NPath (\la,\underline{\nu}) \subseteq \Path (\la,\underline{\nu})$ and 
$ \BPath (\nu,\underline{\mu}) \subseteq  \Path (\nu,\underline{\mu})$. 

We note further that, for Hermitian symmetric pairs, the subsets  $\NPath (\la,\underline{\nu})$ and $\BPath (\nu,\underline{\mu})$ are independent of the choice of reduced expressions for $\mu$ and $\nu$. This follows from the fact that the elements of ${^PW}$ are fully commutative (see Section 3.1 below).

 \section{The oriented Temperley--Lieb algebras}

We will assume from now on that $(W,P)$ is a {\sf Hermitian symmetric pair}, that is it is one of the following  infinite families  $(A_n  ,  A_{k-1} \times A_{n-k}  )$  with $1 \leq k \leq n$,
    $(D_n , A_{n-1}  )$,  $(D_n , D_{n-1}  )$,  $(B_n , B_{n-1}  )$, 
     $(C_n , A_{n-1}  )$ for  $n\geq 2$ or  is one of the exceptional cases $(E_6,D_5)$,   $(E_7,E_6)$. 
     The corresponding Coxeter graphs of these pairs are recorded in \cref{coxeterlabelD2}.

\subsection{The oriented Temperley--Lieb algebras and strong full commutativity } For $\underline{w}, \underline{w}'$  two expressions in the generators $s_i\in S_W$, we say that $\underline{w'}$ is a subword of $\underline{w}$ if there are expressions $\underline{u}$ and $\underline{v}$ such that $\underline{w} = \underline{u} \, \underline{w'} \,\underline{v}$.
One of the crucial property of Hermitian symmetric pairs for this paper is that the elements of $^PW$ are {\sf fully commutative} (as defined by Stembridge \cite[Introduction]{MR1406459}).  We recall that an element $w\in W$ is called fully commutative if and only if any two reduced expression  of $w$ are related by applying only the commutation relations in $W$.  Equivalently, no reduced expressions of $w$ contains $s_is_js_i$ as a subword for $m_{i,j}=3$ or $s_is_js_is_j$ as a subword for $m_{i,j}=4$. 
In fact, 
the elements of ${^PW}$ satisfy the following slightly stronger property.

\begin{defn}\label{strongly} We say that an element $w\in W$ is {\sf strongly fully commutative} if no reduced expression of $w$ contains $s_is_js_i$ as a subword for any  $s_i,s_j\in S_W$ with either $m_{i,j}=3$ or $m_{i,j}=4$ 
when $\alpha_i$ is a short root. We denote by $W_{sfc}$ the set of all strongly fully commutative elements of $W$.
\end{defn}

\begin{lem}\label{fullcomm} Let $(W,P)$ be a Hermitian symmetric pair. Then every element of ${^PW}$ is strongly fully commutative. 
\end{lem}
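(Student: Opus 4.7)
My plan is to first invoke the classical result that for every Hermitian symmetric pair $(W,P)$ the minimal coset representatives $^PW$ consist of fully commutative elements in the sense of Stembridge \cite{MR1406459}. Full commutativity immediately yields the $m_{i,j}=3$ portion of strong full commutativity: a subword $s_is_js_i$ in a reduced expression of $w$ would admit a length-three braid move to $s_js_is_j$, producing a second reduced expression not related to the first by commutations alone -- contradicting full commutativity.

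Scanning the pairs listed in \cref{coxeterlabelD2}, the only Hermitian symmetric pairs containing any bond with $m_{i,j}=4$ are $(C_n,A_{n-1})$ and $(B_n,B_{n-1})$, and in each case the diagram carries a unique such pair $\{s_i,s_j\}$ with $\alpha_i$ short and $\alpha_j$ long. For these two pairs I plan to argue via the heap of $w$: in a fully commutative element, the atoms labelled by either $s_i$ or $s_j$ form a totally ordered chain in the heap whose sequence of labels is strictly alternating (between two same-labelled atoms the intervening letters of any reduced expression must contain a non-commuting generator, and the Dynkin diagram together with full commutativity force this generator to be the other of $s_i,s_j$) and of length at most three (since full commutativity rules out the length-four braid $s_is_js_is_j$). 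A consecutive subword $s_is_js_i$ therefore forces the entire $\{s_i,s_j\}$-chain to be exactly $(s_is_js_i)$, and in particular to begin with the short root $s_i$.

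To rule out this last possibility I would argue case by case. For $(C_n,A_{n-1})$ the long root $s_j$ is the unique simple reflection outside $S_P$, so every nonempty reduced expression of $w\in{^PW}$ begins with $s_j$, and thus the chain begins with $s_j$ rather than $s_i$. For $(B_n,B_{n-1})$ both $s_i$ and $s_j$ lie in $S_P$ but the short node $s_i=s_{1'}$ is a leaf of the Dynkin diagram whose only neighbour is $s_j$; if the chain began with $s_i$, its first $s_i$-atom would have no predecessor in the heap and would hence be minimal, producing a reduced expression of $w$ starting with $s_i\in S_P$ and contradicting $w\in{^PW}$. The main obstacle will be the heap argument establishing strict alternation and the length-three bound, where one must track carefully that atoms labelled $s_k\notin\{s_i,s_j\}$ cannot obstruct the braid and commutation moves used to extract the forbidden subword.
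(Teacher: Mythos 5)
Your reduction to the two non-simply-laced pairs $(C_n,A_{n-1})$ and $(B_n,B_{n-1})$, and your endgame (deriving a contradiction from where the $\{s_i,s_j\}$-chain must start, using that $s_{1'}$ is a leaf and that reduced words of coset representatives begin with the unique generator outside $S_P$), is a sensible strategy. The problem is the pivotal heap lemma you rest it on.

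You claim that for a fully commutative $w$ the $\{s_i,s_j\}$-chain in the heap is strictly alternating of length at most $3$; both halves are inadequately justified, and the length bound is in fact \emph{false} for fully commutative elements. Full commutativity forbids the braid $s_is_js_is_j$ only as a \emph{convex} chain in the heap (Stembridge's criterion), not as an arbitrary chain. Concretely, in the Coxeter group of type $B_3$ with the leaf generator $s_1$ ($m_{12}=4$, $m_{23}=3$, $m_{13}=2$), take $w=s_2s_3s_1s_2s_1$. This is reduced of length $5$, its only other reduced word is $s_2s_1s_3s_2s_1$ (obtained by the single available commutation $s_1s_3=s_3s_1$), and neither word contains a braid subword, so $w$ is fully commutative. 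It contains $s_1s_2s_1$ as a consecutive subword, and its $\{s_1,s_2\}$-chain has length $4$, namely $(s_2,s_1,s_2,s_1)$: this chain is alternating but not convex (the $s_3$-atom lies in the interval between its endpoints), so Stembridge's criterion is not violated. Hence neither the length bound nor the inference ``consecutive $s_is_js_i$ $\Rightarrow$ chain is exactly $(s_i,s_j,s_i)$'' holds for fully commutative elements in general. (This $w$ is not in $^PW$ for either relevant parabolic, so the Lemma itself survives; but it shows the $^PW$ hypothesis has to be used \emph{before} your final case analysis, which as written is only reached after the chain has already been pinned down to length $3$.) Your parenthetical justification for alternation is also incomplete: between two $s_2$-atoms the intervening non-commuting generator need not be $s_{1'}$ --- it can be $s_3$ --- and full commutativity alone does not exclude this.

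The paper sidesteps all of this by reading the claim off the explicit coset-diagram/tiling description of $^PW$ developed in Sections 4--5, where the verification is a short local check. For $(C_n,A_{n-1})$: any upward path with consecutive steps $s_2, s_{1'}, s_2$ forces the intermediate state to have $\up\up$ in positions $2,3$, on which $s_2$ acts trivially, so the third step cannot exist. For $(B_n,B_{n-1})$: the elements of $^PW$ are $e,\ s_n,\ s_ns_{n-1},\ \ldots,\ s_n\cdots s_2,\ s_n\cdots s_2s_{1'},\ s_n\cdots s_2s_{1'}s_2,\ \ldots$, in each of which $s_{1'}$ occurs at most once (and full commutativity preserves letter multiplicities), so $s_{1'}s_2s_{1'}$ never occurs. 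Replacing your heap lemma with one of these direct verifications --- or with a genuinely $^PW$-aware heap argument that exploits the uniqueness of the minimal heap element, rather than full commutativity alone --- would close the gap.
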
 
   
 \begin{proof}
 This is well-known and can be seen for example from the explicit description of the elements of $^PW$ in terms of tilings given in \cite[Appendix: Diagrams of Hermitian types]{MR3363009}.
 \end{proof} 
 
 \begin{defn}\label{maindef}
Let  $(W,P)$ be a Hermitian symmetric pair.  
The  {\sf oriented Temperley--Lieb algebra of type  $(W,P)$}, ${\rm TL}_{(W,P)}(q)$,  is defined to be the unital associative  $\mathbb{Z}[q,q^{-1}]$-algebra generated by elements
\[\{{\sf 1}_\la \mid \lambda \in {^PW}\} \cup \{  E_i \mid s_i\in S_W\}
\]
subject to the following relations.  The idempotent relations,
 \begin{equation}
 \label{idempotentrel}
\begin{split} {\sf 1}= \textstyle\sum_{\la\in {^PW}} {\sf 1}_\la, 
 \quad  
  {\sf 1}_\la E_i {\sf 1}_\la= 0   \text{ if }  	\la s_i \notin {^PW},
 %
\\
{\sf 1}_\la  1_\mu =\delta_{\la,\mu}1_\la ,  \quad
 {\sf 1}_\la E_i {\sf 1}_\mu= 0   \text{ if }  	\mu \not \in \{\la,\la s_i \} 
 \end{split}
\end{equation}
for all $\lambda, \mu \in {^PW}$.
For all $s_i\in S_W$, any $\lambda\in {^PW}$ with $\la s_i  \in {^PW}$ and $\mu, \nu \in \{\lambda, \lambda s_i\}$ we have
\begin{equation}\label{Esquare}
{\sf 1}_\mu E_{ i}  {\sf 1}_\la   E_{ i}  {\sf 1}_\nu  =  
 q^{  \ell(\la s_i)-\ell(\la)} {\sf 1}_\mu E_i  {\sf 1}_\nu.
 \end{equation}
 If $m_{i,j}=2$ or $3$, then
\begin{equation}\label{commutation}
 E_i E_j=E_jE_i, \qquad\quad 
 E_iE_jE_i = E_i,
\end{equation}respectively.  
If $m_{i,j}=4$ and $\alpha_i$ is a short root then   we have that 
\begin{equation}\label{mequals4}
{\sf 1}_\mu E_i {\sf 1}_\lambda E_j {\sf 1}_\lambda E_i {\sf 1}_\nu =  {\sf 1}_\mu E_i {\sf 1}_\nu,
\end{equation}
for any $\lambda\in {^PW}$ with $\lambda s_i, \lambda s_j\in {^PW}$ and $\mu, \nu \in \{\lambda, \lambda s_i\}$.

 \end{defn}
 
 \begin{rmk}\label{altgen} It follows from the relations (\ref{idempotentrel}) that ${\rm TL}_{(W,P)}(q)$ is generated by the elements  ${\sf 1}_\la$ for $\la \in {^PW}$ and ${\sf 1}_\la E_i {\sf 1}_\mu$ for all $\la \in {^PW}$ with $\la s_i\in {^PW}$ and $\mu \in \{ \la , \la s_i\}$.
 \end{rmk} 
  
 \begin{rmk}\label{rmkcommutation}
 Note that for $m_{i,j}=2$ we have
 ${\sf 1}_\lambda E_i {\sf 1}_\mu E_j {\sf 1}_\nu \neq 0$ if and only if $\lambda, \lambda s_i, \lambda s_j\in {^PW}$ and either $\mu = \lambda$ and $\nu\in \{\lambda, \lambda s_j\}$, or $\mu = \lambda s_i$ and $\nu \in \{ \lambda s_i, \lambda s_i s_j\}$. So we have that the first relation in (\ref{commutation}) is equivalent to 
 \[{\sf 1}_\lambda E_i {\sf 1}_\lambda E_j {\sf 1}_\nu = {\sf 1}_\lambda E_j {\sf 1}_\nu E_i {\sf 1}_\nu,
\quad \text{ and } \quad  {\sf 1}_\lambda E_i {\sf 1}_{\lambda s_i} E_j {\sf 1}_\nu = {\sf 1}_\lambda E_j {\sf 1}_{\nu s_i} E_i {\sf 1}_\nu\]
 for $\nu \in \{ \lambda, \lambda s_j\}$ and  $\nu \in \{ \lambda s_i, \lambda s_is_j\}$ respectively, and every $\la\in {^PW}$.
 \end{rmk}   
 
 \begin{rmk}\label{rmkmequals3}
  Note that for $m_{i,j}=3$, using \cref{fullcomm}, we have
 $ E_i {\sf 1}_\mu E_j {\sf 1}_\nu E_i \neq 0$ implies $\nu = \mu$ and $\mu s_i, \mu s_j\in {^PW}$. Now the second relation in   (\ref{commutation}) 
   is equivalent to
 \[{\sf 1}_\lambda E_i  E_j  E_i {\sf 1}_\eta = {\sf 1}_\lambda E_i {\sf 1}_\eta\]
 for all $\la, \eta \in {^PW}$ with $\la s_i, \eta s_i\in {^PW}$. Note that for each such $\la$, using \cref{fullcomm}, we have either $\la s_j\in {^PW}$ or $\la s_i s_j\in {^PW}$ but not both. Thus  the second relation in   (\ref{commutation}) 
 is also  equivalent to 
\[{\sf 1}_\lambda E_i {\sf 1}_\la E_j {\sf 1}_\la E_i {\sf 1}_\eta = {\sf 1}_\lambda E_i {\sf 1}_\eta\]
 for all $\lambda\in {^PW}$ with $\lambda s_i, \la s_j\in {^PW}$ and $\eta = \{\lambda, \la s_i\}$, and 
 \[{\sf 1}_\lambda E_i {\sf 1}_{\la s_i} E_j {\sf 1}_{\la s_i} E_i {\sf 1}_\eta = {\sf 1}_\lambda E_i {\sf 1}_\eta\]
 for all $\lambda\in {^PW}$ with $\lambda s_i, \la s_i s_j\in {^PW}$ and $\eta = \{\lambda , \la s_i\}$.
 \end{rmk}
 
 \begin{rmk}\label{rmkmequals4}
 Note that for $m_{i,j}=4$ with $\alpha_i$ being a short root, we have 
 \[{\sf 1}_\lambda E_i  E_j  E_i {\sf 1}_\mu  = {\sf 1}_\lambda E_i {\sf 1}_\lambda E_j {\sf 1}_\lambda E_i {\sf 1}_\mu + {\sf 1}_\lambda E_i {\sf 1}_{\lambda s_i} E_j {\sf 1}_{\lambda s_i} E_i {\sf 1}_\mu =  2 ({\sf 1}_\lambda E_i {\sf 1}_\mu)\]
 for $\lambda, \lambda s_i, \lambda s_j , \lambda s_i s_j\in {^PW}$ and $\mu \in \{\lambda, \lambda s_i\}$.
 This implies that $E_iE_jE_i = 2E_i$ and so $E_iE_jE_iE_j = 2E_iE_j$ and $E_jE_iE_jE_i = 2E_jE_i$. 
 \end{rmk}

\subsection{Path basis and the anti-spherical module} 
 
For any path
\[\SSTT : \lambda_1 \xrightarrow{i_1} \lambda_2 \xrightarrow{i_2} \lambda_3 \xrightarrow{i_3} \ldots \xrightarrow{i_{k-1}} \lambda_k
 \] on $\widehat{\mathcal{G}}_{(W,P)}$, we write 
 \[E_\SSTT := 
  {\sf 1}_{\lambda_1}
 E_{i_1}
 {\sf 1}_{\lambda_2}  
  E_{i_2}  {\sf 1}_{\lambda_3} \ldots    {\sf 1}_{\lambda_{k-1}}  
  E_{i_{k-1}}    {\sf 1}_{\lambda_{i_k}}
 \] to be the corresponding element in ${\rm TL}_{(W,P)}(q)$. 
 
 Recall that we denote by  $W_{sfc}$ the set of all strongly fully commutative elements of $W$.  We now fix one reduced expression $\underline{w}$ for each $w\in W_{sfc}$.

\begin{thm}\label{spanningset}
The algebra ${\rm TL}_{(W,P)}(q)$ has a $\mathbb{Z}[q,q^{-1}]$-basis given by the set 
\[\{ E_{\SSTT} \, : \, \SSTT\in{\rm Path}_{(W,P)} \,\, \text{with} \,\,  \omega(\SSTT)=\underline{w} \,\, \text{for some  $w\in W_{sfc}$}\}.\]
\end{thm}

\begin{proof}
It is clear from relations (\ref{idempotentrel})  that 
\[ 
  {\sf 1}_{\lambda_1}
 E_{i_1}
 {\sf 1}_{\lambda_2}  
  E_{i_2}   {\sf 1}_{\lambda_3}   \ldots    {\sf 1}_{\lambda_{k-1}}  
  E_{i_{k-1}}    {\sf 1}_{\lambda_{k}}
  \neq 0
 \]
  implies that \[\SSTT : \lambda_1 \xrightarrow{i_1} \lambda_2 \xrightarrow{i_2} \lambda_3 \xrightarrow{i_3} \ldots \xrightarrow{i_{k-1}} \lambda_k
 \] is a path on $\widehat{\mathcal{G}}_{(W,P)}$.  Thus we have that ${\rm TL}_{(W,P)}(q)$ is spanned by elements of the form $E_\SSTT$ where $\SSTT\in {\rm Path}_{(W,P)}$. Now it follows from relations (\ref{Esquare})--(\ref{mequals4}) that any such $E_\SSTT = q^x E_\SSTS$ for some $x\in \mathbb{Z}$ and some path $\SSTS$ such that $\omega(\SSTS)=\underline{w}$ for some $w\in W_{sfc}$. Set ${\rm Path}_{(W,P)}^{sfc}$ to be the set of all $\SSTT \in {\rm Path}_{(W,P)}$ such that $\omega(\SSTT) = \underline{w}$ for some $w\in W_{sfc}$.  It remains to show that the set $\{ E_{\SSTT} \, : \, \omega(\SSTT) \in {\rm Path}_{(W,P)}^{sfc} \}$ is linearly independent. We do this by constructing a formal ${\rm TL}_{(W,P)}(q)$-module $M$ with $\mathbb{Z}[q,q^{-1}]$-basis labelled by $[\SSTT]$ for $\SSTT\in{\rm {Path}}_{(W,P)}^{sfc}$.  To define the action on this module we will need to \lq reduce' any path on  ${\widehat{\mathcal{G}}_{(W,P)}}$ using the following local operations.
\begin{itemize}[leftmargin=*]
\item If $m_{ij}=2$ then for $\nu \in \{\la, \la s_j\}$ we set
\[[\la \xrightarrow{i} \la \xrightarrow{j} \nu ] \qquad \implies  \qquad  [ \la \xrightarrow{j} \nu \xrightarrow{i} \la]\]
 and for $\nu \in \{\la s_i, \la s_i s_j\}$ we set
\[[\la \xrightarrow{i} \la s_i \xrightarrow{j} \nu] \qquad \implies \qquad [\la \xrightarrow{j} \nu s_i \xrightarrow{i} \nu].\]
\item If $m_{ij}=3$  or $m_{ij} = 4$ and $\alpha_i$ is a short root then for $\eta \in \{\la  , \la s_i\}$ we set
\[ [ \la \xrightarrow{i} \la \xrightarrow{j} \la \xrightarrow{i} \eta] \qquad \implies \qquad [\la \xrightarrow{i} \eta]\]
and 
\[[ \la \xrightarrow{i} \la s_i  \xrightarrow{j} \la s_i  \xrightarrow{i} \eta] \qquad \implies \qquad [\la \xrightarrow{i} \eta].\]

\item  For for $\mu, \nu \in \{\la , \la s_i\}$, we set
\[[\mu \xrightarrow{i} \la \xrightarrow{i} \nu ] \qquad \implies \qquad q^{\ell (\la s_i) - \ell (\la)} [\mu \xrightarrow{i} \nu].\]
\end{itemize}
(Note that these follow exactly the relations given in \cref{maindef}, see also \cref{rmkcommutation}, \cref{rmkmequals3} and \cref{rmkmequals4}.)
It is clear that starting with any path $\SSTT\in {\rm Path}_{(W,P)}$, applying these operations repeatedly, we obtain a uniquely defined power $q^{x(\SSTT)}$ and a unique path ${\rm rex}(\SSTT)\in{\rm {Path}}_{{(W,P)}}^{sfc}$.  Now, for any $\SSTT \in {\rm {Path}}_{{(W,P)}}^{sfc}$ with 
\[\SSTT : \lambda_1 \xrightarrow{i_1} \lambda_2 \xrightarrow{i_2} \lambda_3 \xrightarrow{i_3} \ldots \xrightarrow{i_{k-1}} \lambda_k =\mu
 \] we set 
 \[[\SSTT] {\sf 1}_\nu = \delta_{\mu \nu} [\SSTT] \quad \mbox{and} \quad [\SSTT] {\sf 1}_\mu E_i {\sf 1}_\nu = q^{x(\SSTT')} [{\rm rex}(\SSTT')]\]
 for $\nu \in \{\mu , \mu s_i\}$ and $\mu, \mu s_i\in {^PW}$, where
 \[\SSTT'  =  \lambda_1 \xrightarrow{i_1} \lambda_2 \xrightarrow{i_2} \lambda_3 \xrightarrow{i_3} \ldots \xrightarrow{i_{k-1}} \lambda_k = \mu \xrightarrow{i} \nu .
 \]
 As the relations used to reduce the path correspond precisely to the relations defining the oriented Temperley--Lieb algebra, it is clear that this turns $M$ into a ${\rm TL}_{(W,P)}(q)$-module. 
 
We are now ready to prove that the set $\{ E_{\SSTT} \, : \, \SSTT\in{\rm {Path}}_{{(W,P)}}^{sfc}\}$ is linearly independent. Assume that 
\[\sum_{\SSTT} a_\SSTT E_\SSTT = 0 \qquad \mbox{for some $a_\SSTT \in \mathbb{Z}[q,q^{-1}]$}.\]
We need to show that $a_\SSTT = 0$ for all $\SSTT$. Fix $\la \in {^PW}$ and consider the trivial path $[\la]\in M$. Then we have 
\[[\la] \left( \sum_{\SSTT} a_\SSTT E_\SSTT \right) =
 \sum_{
 \begin{subarray}{c}
 {\nu\in {^PW}}\\
 \SSTT \in {\Path(\la\to\nu)}
 \end{subarray}
 } a_\SSTT [\SSTT] = 0.\]
As $\{[\SSTT] \, : \, \SSTT\in 	 {\Path(\la\to\nu)}, \nu \in {^PW}			\}$ is linearly independent in $M$, we deduce that $a_\SSTT = 0$ for all $\SSTT\in {\Path(\la\to\nu)}$, all $\nu \in {^PW}$. But this holds for all $\la \in {^PW}$ so we are done.
\end{proof}

\begin{defn} \label{maindef2}
We define the anti-spherical ${\rm TL}_{(W,P)}(q)$-module to be the right module
 $ {\sf 1}_\varnothing {\rm TL}_{(W,P)}(q)  $.  
\end{defn}

\begin{cor}\label{spanningset2}
The anti-spherical module ${\sf 1}_\varnothing {\rm TL}_{(W,P)}(q)  $  has a $\mathbb{Z}[q,q^{-1}]$-basis given by 
\[\{E_\SSTT \mid \SSTT \in \Path(\la, \underline{\mu}) \, : \, \la,\mu   \in {^PW}\}.\]
 \end{cor}

\begin{proof}
By \cref{spanningset} we have that the anti-spherical module has a basis given by all $E_\SSTT$ where $\SSTT$ is a path on $\widehat{\mathcal{G}}_{(W,P)}$ starting at $\varnothing$ with $\omega(\SSTT)=\underline{w}$ some strongly fully commutative element $w\in W$. Note that $\underline{w}$ must start with the unique $s\notin S_P$, as $\SSTT$ starts at $\varnothing$. Moreover, as $w$ is fully commutative, any other reduced expression $\underline{w'}$ for $w$ is obtained from $\underline{w}$ by applying only the commutation relations and so $\underline{w'}$ is also the weight of a path on $\widehat{\mathcal{G}}_{(W,P)}$ starting at $\varnothing$. In particular, $\underline{w'}$ also starts with the unique $s\notin S_P$. This implies that $w = \mu \in {^PW}$.
\end{proof}

\subsection{Grading} 

We can view ${\rm TL}_{(W,P)}(q)$ as a $\mathbb{Z}$-algebra in the usual way, by considering $q$ and $q^{-1}$ as additional central generators. The next proposition shows that, as such, ${\rm TL}_{(W,P)}(q)$ is a $\mathbb{Z}$-graded algebra. 

\begin{prop}\label{abstractdegree} 
Set ${\rm deg}({\sf 1}_\la )= 0$ for all $\lambda \in {^PW}$, 
\[{\rm deg}({\sf 1}_\la E_i{\sf 1}_{\la s_i}) = 0, \qquad {\rm deg}({\sf 1}_\la E_i{\sf 1}_{\la})  = \left\{ \begin{array}{ll} 1 & \mbox{if $\la s_i >\la$}\\ -1 & \mbox{if $\la s_i < \la$} \end{array}\right. \]
for all $\la\in {^PW}$, $s_i\in S_W$ with $\la s_i \in {^PW}$,
${\rm deg}(q) = 1$ and ${\rm deg}(q^{-1}) = -1$. \\
This defines a $\mathbb{Z}$-grading on the $\mathbb{Z}$-algebra ${\rm TL}_{(W,P)}(q)$. In particular, we have $\deg (E_\SSTT) = \deg (\SSTT)$ for all $\SSTT\in {\rm Path}_{(W,P)}$.
\end{prop}

\begin{proof}
We need to check that the defining relations (\ref{idempotentrel})--(\ref{mequals4}) are (equivalent to) homogeneous relations.
For relations (\ref{idempotentrel}), there is nothing to prove. We now consider  relation  (\ref{Esquare}). We need to show that for any $\mu,\nu\in \{\la, \la s_i\}$ we have
\[{\rm deg}({\sf 1}_\mu E_i {\sf 1}_\la E_i {\sf 1}_\nu) = (\ell(\la s_i) - \ell(\la)) + {\rm deg}({\sf 1}_\mu E_i {\sf 1}_\nu).\]
If $\mu = \nu = \la$ we have 
\begin{align*}
{\rm deg}({\sf 1}_\la E_i {\sf 1}_\la E_i {\sf 1}_\la) 
&= 2(\ell(\la s_i) - \ell(\la)), \quad  \text{ and }
\quad  {\rm deg}({\sf 1}_\la E_i {\sf 1}_\la )  = \ell(\la s_i) - \ell(\la)
 \end{align*}
as required. If $\mu = \nu = \la s_i$ we have 
\begin{align*}
{\rm deg}({\sf 1}_{\la s_i} E_i {\sf 1}_\la E_i {\sf 1}_{\la s_i}) &= 0
, \quad  \text{ and }
\quad  
 {\rm deg}({\sf 1}_{\la s_i} E_i {\sf 1}_{\la s_i})  = \ell(\la) - \ell(\la s_i)\end{align*}
as required. Finally if $\mu \neq \nu$ then we have
\begin{align*}
{\rm deg}({\sf 1}_\mu E_i {\sf 1}_\la E_i {\sf 1}_\nu) = \ell(\la s_i) - \ell(\la)
, \quad  \text{ and }
\quad  
 {\rm deg}({\sf 1}_\mu E_i {\sf 1}_\nu ) = 0
 \end{align*}
as required.
Next consider the leftmost relation in (\ref{commutation}). By \cref{rmkcommutation}, we need to show that the relations
\begin{align*}
{\sf 1}_\la E_i {\sf 1}_\la E_j {\sf 1}_\nu &=  {\sf 1}_\la E_j {\sf 1}_\nu E_i {\sf 1}_\nu \quad \mbox{for $\nu \in \{ \la , \la s_j\}$}
, \quad  \text{ and } \\
 {\sf 1}_\la E_i {\sf 1}_{\la s_i} E_j {\sf 1}_\nu &=  {\sf 1}_\la E_j {\sf 1}_{\nu s_i} E_i {\sf 1}_\nu \quad  \mbox{for $\nu \in \{ \la s_i , \la s_i s_j\}$}
 \end{align*}
are homogeneous. The former is trivial for $\nu = \la$ and follows from the fact that $\la s_j s_i > \la s_j$ if and only if $\la s_i > \la$ for $\nu = \la s_j$. 
The latter is trivial for $\nu = \la s_i s_j$ and follows from the fact that $\lambda s_is_j>\la s_i$ if and only if $\la s_j>\la$ for $\nu = \la s_i$.

We now consider the rightmost  relation in  (\ref{commutation}).  By \cref{rmkmequals3}, this relation  is equivalent to 
\[{\sf 1}_\lambda E_i {\sf 1}_\la E_j {\sf 1}_\la E_i {\sf 1}_\eta = {\sf 1}_\lambda E_i {\sf 1}_\eta\]
 for all $\lambda\in {^PW}$ with $\lambda s_i, \la s_j\in {^PW}$ and $\eta = \{\lambda, \la s_i\}$, and 
 \[{\sf 1}_\lambda E_i {\sf 1}_{\la s_i} E_j {\sf 1}_{\la s_i} E_i {\sf 1}_\eta = {\sf 1}_\lambda E_i {\sf 1}_\eta\]
 for all $\lambda\in {^PW}$ with $\lambda s_i, \la s_i s_j\in {^PW}$ and $\eta = \{\lambda , \la s_i\}$.
The former is homogeneous as $\lambda s_i >\lambda$ if and only if $\lambda s_j < \lambda$ and so 
\[{\rm deg}( {\sf 1}_\lambda E_i {\sf 1}_\la E_j {\sf 1}_\la) =0.\]
To see that the latter is also homogeneous, observe that $\lambda s_is_j > \lambda s_i$ if and only if $\lambda s_i > \lambda$ and so 
\[{\rm deg} ({\sf 1}_{\la s_i} E_j {\sf 1}_{\la s_i} ) = {\rm deg} ({\sf 1}_\lambda E_i {\sf 1}_\lambda) \quad \mbox{and} \quad {\rm deg}( {\sf 1}_{\la s_i} E_j {\sf 1}_{\la s_i} E_i {\sf 1}_{\lambda s_i}) = 0.\]
Finally, consider relation (\ref{mequals4}). For $\mu = \lambda$ and $\nu\in\{ \lambda, \lambda s_i\}$ we have to show that
\[{\sf 1}_\lambda E_i {\sf 1}_\la E_j {\sf 1}_\la E_i {\sf 1}_\nu = {\sf 1}_\lambda E_i {\sf 1}_\nu\]
is homogeneous, and for $\mu = \lambda s_i$  and $\nu\in\{ \lambda, \lambda s_i\}$ we need to show that 
\[{\sf 1}_{\lambda s_i} E_i {\sf 1}_\la E_j {\sf 1}_\la E_i {\sf 1}_\nu = {\sf 1}_{\lambda s_i} E_i {\sf 1}_\nu\]
is homogeneous. These look exactly the same as the relations for $m_{i,j}=3$ above (replacing $\lambda s_i$ with $\lambda$ for the second equation) and the same arguments apply here.
\end{proof}

We immediately obtain the following result.

\begin{cor}\label{abstractDelta} \begin{enumerate}
\item The anti-spherical module ${\sf 1}_\varnothing {\rm TL}_{(W,P)}(q)$ is a graded ${\rm TL}_{(W,P)}(q)$-module with homogeneous basis $\{ E_\SSTT \,: \, \SSTT \in {\rm Path}(\la, \underline{\mu}), \la, \mu \in {^PW}\}$ satisfying 
\[\deg E_\SSTT = \deg \SSTT.\]
\item The light leaves matrix can be computed follows. For any $\la , \mu \in {^PW}$ we have
\[\Delta_{\la , \mu } = \left\{ \begin{array}{ll} \sum_{\SSTT\in {\rm Path}(\la , \underline{\mu})} q^{\deg (E_{\SSTT})} & \mbox{ if ${\rm Path}(\la , \underline{\mu})\neq \emptyset$}\\ 0 & \mbox{otherwise.} \end{array}\right.\]
\end{enumerate}
\end{cor}

Thus the anti-spherical module for the oriented Temperley--Lieb algebra gives us a model to study the light leaves matrix and its factorisation for all Hermitian symmetric pairs. This will be done in details in each type $(W,P)$  in the next few sections but first we take a short detour to relate our oriented Temperley--Lieb algebras to the generalised Temperley--Lieb algebras associated to $W$.

\subsection{Relationship with Fan--Graham's Temperley--Lieb algebras}  
We now relate our $(W,P)$-Temperley--Lieb algebra  to the generalised Temperley--Lieb algebra associated to $W$ introduced by Fan (in the simply-laced type)  and Graham (in the non-simply laced type).
 \begin{defn}\label{genTL}
 The generalised Temperley--Lieb algebra ${\rm TL}_W(q)$  is defined as the $\mathbb{Z}[q, q^{-1}]$-algebra generated by 
 \[\{U_i \mid   s_i\in S_W\}\]
 subject to the following relations:
 For all $s_i\in S_W$ we have
 \begin{equation}\label{TL1}
 U_i^2 = (q+q^{-1})U_i. 
 \end{equation}
Furthermore, we have that 
 \begin{align}\label{TL234}
 U_iU_j = U_jU_i, 
 \quad \quad 
 U_iU_jU_i =U_i,
  \quad \quad 
  U_iU_jU_iU_j = 2U_iU_j 
  \end{align}
  for $m_{i,j} = 2, 3 $ or $4$, respectively.  
   \end{defn}
 
 \begin{prop}
 There is a $\mathbb{Z}[q,q^{-1}]$-algebra homomorphism from ${\rm TL}_W(q)$ to ${\rm TL}_{(W,P)}(q)$ defined by $U_i\mapsto E_i$ for all $s_i\in S_W$.
 \end{prop}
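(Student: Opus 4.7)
The plan is to verify that each defining relation of $\mathrm{TL}_W(q)$ is satisfied by the elements $E_i$ inside $\mathrm{TL}_{(W,P)}(q)$. The cases $m_{i,j}\in\{2,3\}$ are essentially free: the commutation relation $E_iE_j=E_jE_i$ is given directly by \eqref{commutation}, and the braid relation $E_iE_jE_i=E_i$ for $m_{i,j}=3$ is also explicit in \eqref{commutation}, which is exactly $U_iU_jU_i\mapsto E_iE_jE_i=E_i$. Likewise, the braid relation $U_iU_jU_iU_j=2U_iU_j$ for $m_{i,j}=4$ is handled by \cref{rmkmequals4}: once one knows $E_iE_jE_i=2E_i$ (with $\alpha_i$ the short root), multiplication by $E_j$ on either side yields both $E_iE_jE_iE_j=2E_iE_j$ and $E_jE_iE_jE_i=2E_jE_i$, which covers the required image relation regardless of which generator is labelled $i$.

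The only nontrivial verification is the quadratic relation $E_i^2 = (q+q^{-1})E_i$. To handle this, I will insert the resolution of identity $\mathsf{1}=\sum_{\lambda\in{}^PW}\mathsf{1}_\lambda$ between the two factors of $E_i$, so that
\[
E_i^2 \;=\; \sum_{\kappa,\lambda,\nu\in{}^PW}\mathsf{1}_\kappa E_i\mathsf{1}_\lambda E_i\mathsf{1}_\nu .
\]
The idempotent relation \eqref{idempotentrel} forces any nonzero summand to satisfy $\kappa,\lambda,\nu\in\{\mu,\mu s_i\}$ for a common $\mu\in{}^PW$ with $\mu s_i\in{}^PW$. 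For fixed outer idempotents $\kappa,\nu$, the middle index $\lambda$ ranges over the two-element set $\{\mu,\mu s_i\}$, and in both cases \eqref{Esquare} applies (with $\lambda$ and with $\lambda s_i$ as the middle, respectively). Summing the two contributions yields the exponent combination $q^{\ell(\mu s_i)-\ell(\mu)}+q^{\ell(\mu)-\ell(\mu s_i)}=q+q^{-1}$, since adjacent cosets in ${}^PW$ differ in length by exactly one. Thus $\mathsf{1}_\kappa E_i^2\mathsf{1}_\nu=(q+q^{-1})\mathsf{1}_\kappa E_i\mathsf{1}_\nu$, and summing over $\kappa,\nu$ gives the desired relation.

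With all four families of $\mathrm{TL}_W(q)$-relations established in the image, the universal property of $\mathrm{TL}_W(q)$ produces the required $\mathbb{Z}[q,q^{-1}]$-algebra homomorphism $U_i\mapsto E_i$. The main (and really only) obstacle is the careful bookkeeping of the idempotent decomposition in the verification of the quadratic relation, in particular making sure one invokes \eqref{Esquare} once with middle idempotent $\mathsf{1}_\mu$ and once with middle idempotent $\mathsf{1}_{\mu s_i}$, so that both powers $q$ and $q^{-1}$ appear and combine correctly; summands where $\mu s_i\notin{}^PW$ do not contribute because then no adjacent coset exists for the $E_i$-arrows to land on.
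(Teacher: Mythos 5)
Your argument is correct and takes essentially the same route as the paper: the relations for $m_{i,j}\in\{2,3\}$ are read off from \eqref{commutation}, the $m_{i,j}=4$ case from \cref{rmkmequals4}, and the quadratic relation is obtained by resolving $E_i^2$ over idempotents and applying \eqref{Esquare} twice (once with each middle idempotent) to produce $q+q^{-1}$. Your three-index bookkeeping with $\kappa,\lambda,\nu$ is just a more explicit version of the paper's decomposition $E_i=\sum_\lambda \mathsf{1}_\lambda E_i(\mathsf{1}_\lambda+\mathsf{1}_{\lambda s_i})$.
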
 

\begin{proof}
We need to check that the $E_i$'s satisfy the relations (\ref{TL1}) and (\ref{TL234}). 
The leftmost two relations of (\ref{TL234}) are given by (\ref{commutation}). 
The rightmost  relation in (\ref{TL234}) follows from \cref{rmkmequals4}.
 It remains to show (\ref{TL1}). We have 
\[E_i = \sum_\la {\sf 1}_\la E_i ({\sf 1}_\la + {\sf 1}_{\la s_i})\]
where the sum is taken over all $\lambda \in {^PW}$ such that $\la s_i\in {^PW}$. Now we have 
\begin{align*}
E_i^2  &= \textstyle \sum_\la {\sf 1}_\la E_i ({\sf 1}_\la + {\sf 1}_{\la s_i})E_i ({\sf 1}_\la + {\sf 1}_{\la s_i})   
 =  (q+q^{-1})  \textstyle\sum_\la {\sf 1}_\la E_i ({\sf 1}_\la + {\sf 1}_{\la s_i})   
 =  (q+q^{-1})E_i
\end{align*}
as required.  Here the first equality follows   {by (\ref{idempotentrel})} and the second {by (\ref{Esquare})}, the third is trivial.
\end{proof}  
  
\begin{rmk} Note that this homomorphism is not injective in general. To see this, take for example $W$ of type $A_3$ and $P$ of type $A_2$, then $^PW = \{ 1, s_3, s_3s_2, s_3s_2s_1\}$. Then we claim that $E_1E_3=0$. To see this, note that $E_1 {\sf 1}_\la \neq 0$ implies that $\lambda = s_3s_2$ or $s_3s_2s_1$ but ${\sf 1}_\la E_3\neq 0$ implies $\lambda = 1$ or $s_3$. However, $U_1U_3\neq 0$ in ${\rm TL}_W(q)$.
\end{rmk}

\section{Light leaves matrix factorisation for the trivial and exceptional types} \label{exceptional}

\cref{abstractDelta} provides a way of studying the light leaves matrix using the oriented Temperley--Lieb algebra and its anti-spherical module for all Hermitian symmetric pairs $(W,P)$. In Sections 5 to 8, we will construct a diagrammatic version of  the oriented Temperley--Lieb algebras in types $(A_n, A_k\times A_{n-k-1})$, $(D_n, A_{n-1})$ and $(C_n, A_{n-1})$ which will provide closed combinatorial formulas for the light leaves matrix and its factorisation. This could also be done in types $(B_n, B_{n-1})$ and $(D_n, D_{n-1})$ but the extra effort is unwarranted as the  light leaves matrices and their factorisation can be easily described without them.   This will be done in this section, together with the exceptional types $(E_6, D_5)$ and $(E_7, E_6)$, which are best tackled with a computer (although  $(E_6,E_5)$ is  manageable by hand as well).  This provides a proof of Theorem A in these trivial and exceptional types.

 \subsection{Exceptional types}
 We first consider the exceptional Hermitian symmetric pairs.  
 One can calculate the $\Delta$ matrix for type $(E_6,D_5)$ easily by hand.  For 
 type  $(E_7,E_6)$ this is a much larger calculation, but can be readily done  using the Coxeter 3 package in SAGE which wraps 
 Folko Ducloux's  original work in $C^{++}$.  
The  matrices $\Delta$ are recorded in  \cref{tableExc,tableExc2}. In both cases, note that all off-diagonal entries belong to $q\NN_0[q]$ and so $\Delta=  N$ and $B={\rm Id}$. This gives a proof of Theorem A in these cases by setting every basis element $E_{\SSTT}\in {\sf 1}_{\varnothing}{\rm TL}_{(W,P)}(q)$ to be standard.

 \begin{figure}[ht!]
$$ \scriptsize  
\begin{array}{cccccccccccccccccccccccccccccccccccc}
&0& \cdot& \cdot& \cdot& \cdot& \cdot& \cdot& \cdot& \cdot& \cdot& \cdot& \cdot& \cdot& \cdot& \cdot& \cdot& \cdot& \cdot& \cdot& \cdot& \cdot& \cdot& \cdot& \cdot& \cdot& \cdot& \cdot\\
& 1& 0& \cdot& \cdot& \cdot& \cdot& \cdot& \cdot& \cdot& \cdot& \cdot& \cdot& \cdot& \cdot& \cdot& \cdot& \cdot& \cdot& \cdot& \cdot& \cdot& \cdot& \cdot& \cdot& \cdot& \cdot& \cdot\\& 
\cdot& 1& 0& \cdot& \cdot& \cdot& \cdot& \cdot& \cdot& \cdot& \cdot& \cdot& \cdot& \cdot& \cdot& \cdot& \cdot& \cdot& \cdot& \cdot& \cdot& \cdot& \cdot& \cdot& \cdot& \cdot& \cdot\\& 
\cdot& \cdot& 1& 0& \cdot& \cdot& \cdot& \cdot& \cdot& \cdot& \cdot& \cdot& \cdot& \cdot& \cdot& \cdot& \cdot& \cdot& \cdot& \cdot& \cdot& \cdot& \cdot& \cdot& \cdot& \cdot& \cdot\\& 
\cdot& \cdot& \cdot& 1& 0& \cdot& \cdot& \cdot& \cdot& \cdot& \cdot& \cdot& \cdot& \cdot& \cdot& \cdot& \cdot& \cdot& \cdot& \cdot& \cdot& \cdot& \cdot& \cdot& \cdot& \cdot& \cdot\\& 
\cdot& \cdot& \cdot& 1& \cdot& 0& \cdot& \cdot& \cdot& \cdot& \cdot& \cdot& \cdot& \cdot& \cdot& \cdot& \cdot& \cdot& \cdot& \cdot& \cdot& \cdot& \cdot& \cdot& \cdot& \cdot& \cdot\\& 
\cdot& \cdot& \cdot& 2& 1& 1& 0& \cdot& \cdot& \cdot& \cdot& \cdot& \cdot& \cdot& \cdot& \cdot& \cdot& \cdot& \cdot& \cdot& \cdot& \cdot& \cdot& \cdot& \cdot& \cdot& \cdot\\& 
\cdot& \cdot& \cdot& \cdot& \cdot& 1& \cdot& 0& \cdot& \cdot& \cdot& \cdot& \cdot& \cdot& \cdot& \cdot& \cdot& \cdot& \cdot& \cdot& \cdot& \cdot& \cdot& \cdot& \cdot& \cdot& \cdot\\& 
\cdot& \cdot& \cdot& \cdot& \cdot& 2& 1& 1& 0& \cdot& \cdot& \cdot& \cdot& \cdot& \cdot& \cdot& \cdot& \cdot& \cdot& \cdot& \cdot& \cdot& \cdot& \cdot& \cdot& \cdot& \cdot\\& 
\cdot& \cdot& 2& 1& \cdot& \cdot& 1& \cdot& \cdot& 0& \cdot& \cdot& \cdot& \cdot& \cdot& \cdot& \cdot& \cdot& \cdot& \cdot& \cdot& \cdot& \cdot& \cdot& \cdot& \cdot& \cdot\\& 
\cdot& \cdot& \cdot& \cdot& \cdot& \cdot& 2& \cdot& 1& 1& 0& \cdot& \cdot& \cdot& \cdot& \cdot& \cdot& \cdot& \cdot& \cdot& \cdot& \cdot& \cdot& \cdot& \cdot& \cdot& \cdot\\& 
\cdot& \cdot& \cdot& \cdot& 2& \cdot& 1& \cdot& \cdot& \cdot& 1& 0& \cdot& \cdot& \cdot& \cdot& \cdot& \cdot& \cdot& \cdot& \cdot& \cdot& \cdot& \cdot& \cdot& \cdot& \cdot\\& 
\cdot& 2& 1& \cdot& \cdot& \cdot& \cdot& \cdot& \cdot& 1& \cdot& \cdot& 0& \cdot& \cdot& \cdot& \cdot& \cdot& \cdot& \cdot& \cdot& \cdot& \cdot& \cdot& \cdot& \cdot& \cdot\\& 
\cdot& \cdot& \cdot& \cdot& \cdot& \cdot& \cdot& \cdot& \cdot& 2& 1& \cdot& 1& 0& \cdot& \cdot& \cdot& \cdot& \cdot& \cdot& \cdot& \cdot& \cdot& \cdot& \cdot& \cdot& \cdot\\& 
\cdot& \cdot& \cdot& \cdot& \cdot& \cdot& \cdot& \cdot& \cdot& \cdot& 2& 1& \cdot& 1& 0& \cdot& \cdot& \cdot& \cdot& \cdot& \cdot& \cdot& \cdot& \cdot& \cdot& \cdot& \cdot\\& 
\cdot& \cdot& \cdot& \cdot& \cdot& \cdot& \cdot& \cdot& 2& \cdot& 1& \cdot& \cdot& \cdot& 1& 0& \cdot& \cdot& \cdot& \cdot& \cdot& \cdot& \cdot& \cdot& \cdot& \cdot& \cdot\\& 
\cdot& \cdot& \cdot& \cdot& \cdot& \cdot& \cdot& 2& 1& \cdot& \cdot& \cdot& \cdot& \cdot& \cdot& 1& 0& \cdot& \cdot& \cdot& \cdot& \cdot& \cdot& \cdot& \cdot& \cdot& \cdot\\& 
2& 1& \cdot& \cdot& \cdot& \cdot& \cdot& \cdot& \cdot& \cdot& \cdot& \cdot& 1& \cdot& \cdot& \cdot& \cdot& 0& \cdot& \cdot& \cdot& \cdot& \cdot& \cdot& \cdot& \cdot& \cdot\\& 
\cdot& \cdot& \cdot& \cdot& \cdot& \cdot& \cdot& \cdot& \cdot& \cdot& \cdot& \cdot& 2& 1& \cdot& \cdot& \cdot& 1& 0& \cdot& \cdot& \cdot& \cdot& \cdot& \cdot& \cdot& \cdot\\& 
\cdot& \cdot& \cdot& \cdot& \cdot& \cdot& \cdot& \cdot& \cdot& \cdot& \cdot& \cdot& \cdot& 2& 1& \cdot& \cdot& \cdot& 1& 0& \cdot& \cdot& \cdot& \cdot& \cdot& \cdot& \cdot\\& 
\cdot& \cdot& \cdot& \cdot& \cdot& \cdot& \cdot& \cdot& \cdot& \cdot& \cdot& \cdot& \cdot& \cdot& 2& 1& \cdot& \cdot& \cdot& 1& 0& \cdot& \cdot& \cdot& \cdot& \cdot& \cdot\\& 
\cdot& \cdot& \cdot& \cdot& \cdot& \cdot& \cdot& \cdot& \cdot& \cdot& \cdot& \cdot& \cdot& \cdot& \cdot& 2& 1& \cdot& \cdot& \cdot& 1& 0& \cdot& \cdot& \cdot& \cdot& \cdot\\& 
\cdot& \cdot& \cdot& \cdot& \cdot& \cdot& \cdot& \cdot& \cdot& \cdot& \cdot& 2& \cdot& \cdot& 1& \cdot& \cdot& \cdot& \cdot& \cdot& 1& \cdot& 0& \cdot& \cdot& \cdot& \cdot\\& 
\cdot& \cdot& \cdot& \cdot& \cdot& \cdot& \cdot& \cdot& \cdot& \cdot& \cdot& \cdot& \cdot& \cdot& \cdot& \cdot& \cdot& \cdot& \cdot& \cdot& 2& 1& 1& 0& \cdot& \cdot& \cdot\\& 
\cdot& \cdot& \cdot& \cdot& \cdot& \cdot& \cdot& \cdot& \cdot& \cdot& \cdot& \cdot& \cdot& \cdot& \cdot& \cdot& \cdot& \cdot& \cdot& 2& 1& \cdot& \cdot& 1& 0& \cdot& \cdot\\& 
\cdot& \cdot& \cdot& \cdot& \cdot& \cdot& \cdot& \cdot& \cdot& \cdot& \cdot& \cdot& \cdot& \cdot& \cdot& \cdot& \cdot& \cdot& 2& 1& \cdot& \cdot& \cdot& \cdot& 1& 0& \cdot\\& 
\cdot& \cdot& \cdot& \cdot& \cdot& \cdot& \cdot& \cdot& \cdot& \cdot& \cdot& \cdot& \cdot& \cdot& \cdot& \cdot& \cdot& 2& 1& \cdot& \cdot& \cdot& \cdot& \cdot& \cdot& 1& 0\\\\
\end{array}			
			$$
\caption{The light leaves matrix $\Delta_{\la , \mu}$ of type $(E_6, D_5)$.  
For purposes of space, we record $q^i$ simply as $i$, and we record each zero polynomials as a dot.  For example, the matrix is uni-triangular with diagonal entries $q^0=1$.  
The rows  and columns  are ordered by a total refinement of the Bruhat order in which we prefer to add the reflection  with largest possible subscript.   
More specifically, the order is as follows 
  $\varnothing\color{black}   , \;      
\color{magenta}s_1    \color{black}   , \;           		 
\color{magenta}s_1  \color{cyan}s_2    \color{black}   , \;           
\color{magenta}s_1  \color{cyan}s_2  \color{green!80!black}s_3   \color{black}   , \;            
\color{magenta}s_1  \color{cyan}s_2  \color{green!80!black}s_3 \color{violet}s_6    \color{black}   , \;           
\color{magenta}s_1  \color{cyan}s_2  \color{green!80!black}s_3 \color{orange}s_4    \color{black}   , \;           
\color{magenta}s_1  \color{cyan}s_2  \color{green!80!black}s_3 \color{violet}s_6  \color{orange}s_4    \color{black}   , \;           
\color{magenta}s_1  \color{cyan}s_2  \color{green!80!black}s_3 \color{orange}s_4  \color{gray}s_5   \color{black}   , \;  $
$         
\color{magenta}s_1  \color{cyan}s_2  \color{green!80!black}s_3 \color{violet}s_6  \color{orange}s_4  \color{gray}s_5   \color{black}   , \;           
\color{magenta}s_1  \color{cyan}s_2  \color{green!80!black}s_3 \color{violet}s_6  \color{orange}s_4  \color{green!80!black}s_3   \color{black}   , \;           
\color{magenta}s_1  \color{cyan}s_2  \color{green!80!black}s_3 \color{violet}s_6  \color{orange}s_4  \color{gray}s_5 \color{green!80!black}s_3   \color{black}   , \;           
\color{magenta}s_1  \color{cyan}s_2  \color{green!80!black}s_3 \color{violet}s_6  \color{orange}s_4  \color{gray}s_5 \color{green!80!black}s_3 \color{orange}s_4    \color{black}   , \;           
\color{magenta}s_1  \color{cyan}s_2  \color{green!80!black}s_3 \color{violet}s_6  \color{orange}s_4  \color{green!80!black}s_3 \color{cyan}s_2    \color{black}   , \;           
\color{magenta}s_1  \color{cyan}s_2  \color{green!80!black}s_3 \color{violet}s_6  \color{orange}s_4  \color{gray}s_5 \color{green!80!black}s_3 \color{cyan}s_2    \color{black}   , \;  $
$        
\color{magenta}s_1  \color{cyan}s_2  \color{green!80!black}s_3 \color{violet}s_6  \color{orange}s_4  \color{gray}s_5 \color{green!80!black}s_3 \color{orange}s_4  \color{cyan}s_2    \color{black}   , \;           
\color{magenta}s_1  \color{cyan}s_2  \color{green!80!black}s_3 \color{violet}s_6  \color{orange}s_4  \color{gray}s_5 \color{green!80!black}s_3 \color{orange}s_4  \color{cyan}s_2  \color{green!80!black}s_3   \color{black}   , \;           
\color{magenta}s_1  \color{cyan}s_2  \color{green!80!black}s_3 \color{violet}s_6  \color{orange}s_4  \color{gray}s_5 \color{green!80!black}s_3 \color{orange}s_4  \color{cyan}s_2  \color{green!80!black}s_3 \color{violet}s_6    \color{black}   , \;           
\color{magenta}s_1  \color{cyan}s_2  \color{green!80!black}s_3 \color{violet}s_6  \color{orange}s_4  \color{green!80!black}s_3 \color{cyan}s_2  \color{magenta}s_1    \color{black}   , \;           $
$
\color{magenta}s_1  \color{cyan}s_2  \color{green!80!black}s_3 \color{violet}s_6  \color{orange}s_4  \color{gray}s_5 \color{green!80!black}s_3 \color{cyan}s_2  \color{magenta}s_1    \color{black}   , \;           
\color{magenta}s_1  \color{cyan}s_2  \color{green!80!black}s_3 \color{violet}s_6  \color{orange}s_4  \color{gray}s_5 \color{green!80!black}s_3 \color{orange}s_4  \color{cyan}s_2  \color{magenta}s_1    \color{black}   , \;           
\color{magenta}s_1  \color{cyan}s_2  \color{green!80!black}s_3 \color{violet}s_6  \color{orange}s_4  \color{gray}s_5 \color{green!80!black}s_3 \color{orange}s_4  \color{cyan}s_2  \color{green!80!black}s_3 \color{magenta}s_1    \color{black}   , \;           
\color{magenta}s_1  \color{cyan}s_2  \color{green!80!black}s_3 \color{violet}s_6  \color{orange}s_4  \color{gray}s_5 \color{green!80!black}s_3 \color{orange}s_4  \color{cyan}s_2  \color{green!80!black}s_3 \color{violet}s_6  \color{magenta}s_1    \color{black}   , \;     
      $
$
\color{magenta}s_1  \color{cyan}s_2  \color{green!80!black}s_3 \color{violet}s_6  \color{orange}s_4  \color{gray}s_5 \color{green!80!black}s_3 \color{orange}s_4  \color{cyan}s_2  \color{green!80!black}s_3 \color{magenta}s_1  \color{cyan}s_2    \color{black}   , \;           
\color{magenta}s_1  \color{cyan}s_2  \color{green!80!black}s_3 \color{violet}s_6  \color{orange}s_4  \color{gray}s_5 \color{green!80!black}s_3 \color{orange}s_4  \color{cyan}s_2  \color{green!80!black}s_3 \color{violet}s_6  \color{magenta}s_1  \color{cyan}s_2    \color{black}   , \;           
\color{magenta}s_1  \color{cyan}s_2  \color{green!80!black}s_3 \color{violet}s_6  \color{orange}s_4  \color{gray}s_5 \color{green!80!black}s_3 \color{orange}s_4  \color{cyan}s_2  \color{green!80!black}s_3 \color{violet}s_6  \color{magenta}s_1  \color{cyan}s_2  \color{green!80!black}s_3   \color{black}   , \;           $
$
\color{magenta}s_1  \color{cyan}s_2  \color{green!80!black}s_3 \color{violet}s_6  \color{orange}s_4  \color{gray}s_5 \color{green!80!black}s_3 \color{orange}s_4  \color{cyan}s_2  \color{green!80!black}s_3 \color{violet}s_6  \color{magenta}s_1  \color{cyan}s_2  \color{green!80!black}s_3 \color{orange}s_4    \color{black}   , \;           
\color{magenta}s_1  \color{cyan}s_2  \color{green!80!black}s_3 \color{violet}s_6  \color{orange}s_4  \color{gray}s_5 \color{green!80!black}s_3 \color{orange}s_4  \color{cyan}s_2  \color{green!80!black}s_3 \color{violet}s_6  \color{magenta}s_1  \color{cyan}s_2  \color{green!80!black}s_3 \color{orange}s_4  \color{gray}s_5   \color{black}  .         
$ 
    }
\label{tableExc}
 \end{figure}

 \begin{figure}[ht!]
 $$\scalefont{0.6}

$$
\caption{The light leaves matrix $\Delta$ of type $(E_7, E_6)$.  
For purposes of space, we record $q^i$ simply as $i$, and we record each zero polynomials as a dot.  For example. the matrix is uni-triangular with diagonal entries $q^0=1$.  
The rows and columns are ordered by a total refinement of the Bruhat order in which add the reflection with largest possible subscript.  
 }
 \label{tableExc2}
\end{figure}

\subsection{Type   $(D_n,D_{n-1})$.}
  In this  case, there is precisely one element  in ${^PW}$ of each length $0\leq \ell \leq 2(n-1)$, $\ell \neq n-1$ and precisely two elements  of length $n-1$. Ordering the rows and columns of the matrix $\Delta$ by decreasing length we have
\[ 
 \Delta=\scalefont{0.9}
\left[ \begin{array}{ccccc|cc|cccccccc}
 1& \cdot    & \cdots & \cdot  & \cdot  			& \cdot  & \cdot  		& \cdot  & \cdot  & \cdots  & \cdot  & \cdot 		\\
 q & 1    & \cdots  & \cdot & \cdot  	& \cdot 	 & \cdot  				& \cdot  & \cdot &    &    & \cdot 		\\
\vdots  &   &\ddots &    & \vdots  			& \vdots  & \vdots  		& \vdots  &   & \ddots  & 	 &   \vdots 		\\
   \cdot  & \cdot   & \cdots  & 1 & \cdot  		& \cdot  & \cdot  		& \cdot  &  &    & \cdot   & \cdot 		\\
    \cdot  & \cdot   & \cdots  & q &  1 			& \cdot  & \cdot  		& \cdot  & \cdot  & \cdots  & \cdot  & \cdot 		\\ 
    \hline 
    \cdot  & \cdot   & \cdots  & \cdot  &  q 			&   1 		& \cdot & \cdot  & \cdot  & \cdots  & \cdot  & \cdot 		\\ 
        \cdot  & \cdot   & \cdots  & \cdot  &  q 			& \cdot  & 1 		& \cdot  & \cdot  & \cdots  & \cdot  & \cdot 		\\ 
        \hline 
        \cdot  & \cdot   & \cdots  & q &  q^2 			& q&q & 1 		  & \cdot  & \cdots  & \cdot  & \cdot 		\\ 
 \cdot	 &\cdot	&\cdots	&q^2		&\cdot	& \cdot			&\cdot& 	 q & 1    & \cdots  & \cdot & \cdot  	  	  \\
 \vdots 		&		&\iddots			&	&\vdots			&\vdots	&\vdots&\vdots		   &   &\ddots &    & \vdots  			 \\
 q		&q^2		&\cdots				&\cdot	&\cdot		&\cdot			&\cdot		&   \cdot  & \cdot   & \cdots  & 1 & \cdot  		 \\
 q^2		&\cdot		&	\cdots		&\cdot	&\cdot		&\cdot			&\cdot		  &  \cdot  & \cdot   & \cdots  & q &  1 			  
                 \end{array}\right]
\]
that is, the top left and bottom right $(n-1)\times (n-1)$-matrices have non-zero entries on the diagonal and sub-diagonal only;
the bottom left $(n-1)\times (n-1)$-matrix has non-zero entries on the anti-diagonal and sup-anti-diagonal only.  
In this case the matrix factorisation is trivial, with $B={\rm Id}$ and $N=\Delta$.  This gives a proof of Theorem A in type $(D_n, D_{n-1})$ by setting every basis elements in the anti-spherical module for ${\rm TL}_{(D_n, D_{n-1})}(q)$ to be standard.

\subsection{Type $(B_n,B_{n-1})$} 
In this type there is precisely one element $\la\in {^PW}$ of each length. Ordering the rows and columns of the matrix $\Delta$ by  decreasing length it is easy to see that 
\[
 \Delta=\scalefont{0.9}
\left[ \begin{array}{ccccc|ccccccccc}
 1& \cdot    & \cdots & \cdot  & \cdot  			& \cdot  & \cdot  		& \cdot  & \cdot    & \cdot 		\\
 q & 1    & \cdots  & \cdot & \cdot  	& \cdot 	 & \cdot  				& \cdot  & \cdot    & \cdot 		\\
\vdots  &   &\ddots &    & \vdots  			 	& \vdots  &   & \ddots  & 	 &   \vdots 		\\
   \cdot  & \cdot   & \cdots  & 1 & \cdot  		 		& \cdot  &  &    & \cdot   & \cdot 		\\
    \cdot  & \cdot   & \cdots  & q &  1 			  		& \cdot  & \cdot  & \cdots  & \cdot  & \cdot 		\\ 
        \hline 
        \cdot  & \cdot   & \cdots  & 1 &  q 			 & 1 		  & \cdot  & \cdots  & \cdot  & \cdot 		\\ 
 \cdot	 &\cdot	&\cdots	&q 		&\cdot	   	&	 q & 1    & \cdots  & \cdot & \cdot  	  	  \\
 \vdots 		&		&\iddots			&	&\vdots			 &\vdots		   &   &\ddots &    & \vdots  			 \\
1		&q		&\cdots				&\cdot	&\cdot		 		&   \cdot  & \cdot   & \cdots  & 1 & \cdot  		 \\
 q 		&\cdot		&	\cdots		&\cdot	&\cdot		 	  &  \cdot  & \cdot   & \cdots  & q &  1 			  
                 \end{array}\right]
\]
that is, the top left and bottom right $n\times n$-matrices have non-zero entries on the diagonal and sub-diagonal only;
the bottom left $n\times n$-matrix has non-zero entries on the anti-diagonal and sub-anti-diagonal only.  
We then immediately  deduce that the matrix factorisation is as follows 
\[\scalefont{0.9}
\left[ \begin{array}{ccccc|ccccccccc}
 1& \cdot    & \cdots & \cdot  & \cdot  			& \cdot  & \cdot  		& \cdot  & \cdot    & \cdot 		\\
 q & 1    & \cdots  & \cdot & \cdot  	& \cdot 	 & \cdot  				& \cdot  & \cdot    & \cdot 		\\
\vdots  &   &\ddots &    & \vdots  			 	& \vdots  &   & \ddots  & 	 &   \vdots 		\\
   \cdot  & \cdot   & \cdots  & 1 & \cdot  		 		& \cdot  &  &    & \cdot   & \cdot 		\\
    \cdot  & \cdot   & \cdots  & q &  1 			  		& \cdot  & \cdot  & \cdots  & \cdot  & \cdot 		\\ 
        \hline 
        \cdot  & \cdot   & \cdots  & \cdot &  q 			 & 1 		  & \cdot  & \cdots  & \cdot  & \cdot 		\\ 
 \cdot	 &\cdot	&\cdots	&\cdot 		&\cdot	   	&	 q & 1    & \cdots  & \cdot & \cdot  	  	  \\
 \vdots 		&		&\iddots			&	&\vdots			 &\vdots		   &   &\ddots &    & \vdots  			 \\
\cdot		&\cdot		&\cdots				&\cdot	&\cdot		 		&   \cdot  & \cdot   & \cdots  & 1 & \cdot  		 \\
 \cdot 		&\cdot	&	\cdots		&\cdot	&\cdot		 	  &  \cdot  & \cdot   & \cdots  & q &  1 			  
                 \end{array}\right]
\left[ \begin{array}{ccccc|ccccccccc}
 1& \cdot    & \cdots & \cdot  & \cdot  			& \cdot  & \cdot  		& \cdot  & \cdot    & \cdot 		\\
 \cdot & 1    & \cdots  & \cdot & \cdot  	& \cdot 	 & \cdot  				& \cdot  & \cdot    & \cdot 		\\
\vdots  &   &\ddots &    & \vdots  			 	& \vdots  &   & \ddots  & 	 &   \vdots 		\\
   \cdot  & \cdot   & \cdots  & 1 & \cdot  		 		& \cdot  &  &    & \cdot   & \cdot 		\\
    \cdot  & \cdot   & \cdots  & \cdot &  1 			  		& \cdot  & \cdot  & \cdots  & \cdot  & \cdot 		\\ 
        \hline 
        \cdot  & \cdot   & \cdots  &1 &  \cdot 			 & 1 		  & \cdot  & \cdots  & \cdot  & \cdot 		\\ 
 \cdot	 &\cdot	&\cdots	&\cdot		&\cdot 	   	&	 \cdot & 1    & \cdots  & \cdot & \cdot  	  	  \\
 \vdots 		&		&\iddots			&	&\vdots			 &\vdots		   &   &\ddots &    & \vdots  			 \\
1		&\cdot		&\cdots				&\cdot	&\cdot		 		&   \cdot  & \cdot   & \cdots  & 1 & \cdot  		 \\
 \cdot 		&\cdot		&	\cdots		&\cdot	&\cdot		 	  &  \cdot  & \cdot   & \cdots  &\cdot &  1 			  
                 \end{array}\right].
\]
where we note that the matrix on the left is the same as the matrix of Kazhdan--Lusztig polynomials of type $(A_{2n-1}, A_{2n-2})$, this will be explained in our companion paper \cite{compan}.
This gives a proof of Theorem A in type $(B_n, B_{n-1})$ by setting every basis element $E_{\SSTT}$ corresponding to non-zero entries in the matrix $N$ (on the left) to be standard.

From now, until the end of Section 8, we focus solely on the remaining cases, namely $(W,P) = (A_{n}, A_{k-1}\times A_{n-k}), (C_n, A_{n-1})$ and $(D_n, A_{n-1})$.

\section{Bruhat graphs in classical type}\label{section4}

We  now  introduce an  elementary way of visualising  the graphs $\mathcal{G}_{(W,P)}$ for   classical    Hermitian symmetric pairs  $(W,P)=(A_n, A_{k-1}\times A_{n-k}), (C_n, A_{n-1})$ and $(D_n, A_{n-1})$.  
We will use these in the next section to define a diagrammatic visualisation of the oriented Temperley--Lieb algebra in these types.

We start by recalling the description of the Coxeter groups of type $A_n, C_n$ and $D_n$ as groups of (signed) permutations. The Coxeter group of type $A_n$ is the symmetric group consisting of all permutations of $\{1, 2, \ldots , n+1\}$. The simple reflections $s_i$, for $1\leq i\leq n$ are given by 
\begin{equation*}
s_i = (i, i+1).
\end{equation*}
The Coxeter group of type $C_n$  is the signed permutation group, namely the group of all permutations $w$ of $\{\pm 2, \pm 3, \ldots , \pm (n+1)\}$ such that $w(-i)=-w(i)$ for all $2\leq i\leq n+1$. The simple reflections $s_i$ for $i=1', 2, \ldots , n$ are given by 
\begin{equation*}
s_{1'} = (2, -2) \quad \mbox{ and} \quad s_i = (i,i+1)(-i, -(i+1)) \,\,\mbox{for $2 \leq i \leq n$}.
\end{equation*}
The Coxeter group of type $D_n$ is the even signed permutation group, that is the subgroup of the group of all signed permutations on $\{\pm 1 , \pm 2, \ldots , \pm n\}$ consisting of all elements flipping an even number of signs. The simple reflections $s_i$ for $i=1'', 1 , 2, \ldots, n-1$ are given by
\[s_{1''} = (1, -2)(-1,2) \quad \mbox{and} \quad s_i = (i,i+1)(-i, -(i+1)) \,\, \mbox{for $1\leq i\leq n-1$}.\]

\begin{rmk}
The choice of labelling in  type $C_n$ might seem slightly unnatural at this point but  (apart from giving a uniform definition of the generators $s_i$ for $2\leq i \leq n-1$ in all types) it is required for compatibility with the
diagrammatic  Temperley--Lieb algebra of type $C$ given in \cref{results1}. 
\end{rmk}

Given this description of the Coxeter groups as (signed) permutation groups, we have the following natural visualisation of the cosets ${^PW}$. 
We will represent elements of ${^PW}$ as horizontal lines with $n(+1)$ points in positions $1, 2 , \ldots , n (,n+1)$ labelled with the symbols $\{\up, \down, \circ\}$. The generators of $W$ will act on these as follows: 
For $i\geq 1$, $s_i$ swaps the labels in positions $i$ and $i+1$, 
the generator $s_{1'}$ flips (through the horizontal axis) the label in second position, and 
$s_{1''}$ swaps and flips (through the horizontal line) the labels in first and second positions.
Now, we start by representing the identity coset, which we denote by $\varnothing$ (for the empty word in the generators), as follows:
\begin{itemize}
\item If $(W,P)=(A_n, A_{k-1}\times A_{n-k})$, draw $\varnothing$ as the horizontal line containing $n+1$ points with the first $k$ points labelled by $\up$ and the last $n-k+1$ points labelled by $\down$.
\item If $(W,P)=(C_n, A_{n-1})$ draw $\varnothing$ as the horizontal line with $n+1$ points with the first point labelled by $\circ$ and the last $n$ points labelled by $\down$. 
\item If $(W,P)=(D_n , A_{n-1})$ draw $\varnothing$ as the horizontal line with $n$ points all labelled by $\down$.
\end{itemize}
The elements of ${^PW}$ are then obtained as the elements of the orbit of $\varnothing$ under the action of $W$.

 \begin{figure}[ht!]
 $$
  \begin{tikzpicture} [scale=0.7]

\path (0,-0.7)--++(135:0.5)--++(-135:0.5) coordinate (minus1);

\path (minus1)--++(135:0.5)--++(-135:0.5) coordinate (minus2);

\path (minus2)--++(135:0.5)--++(-135:0.5) coordinate (minus3);

\path (minus3)--++(135:0.5)--++(-135:0.5) coordinate (minus4);

\path (0,-0.7)--++(45:0.5)--++(-45:0.5) coordinate (plus1);

\path (plus1)--++(45:0.5)--++(-45:0.5) coordinate (plus2);

\path (plus2)--++(45:0.5)--++(-45:0.5) coordinate (plus3);

\draw[very thick](plus3)--(minus4);

\path(minus1)--++(45:0.4) coordinate (minus1NE);
\path(minus1)--++(-45:0.4) coordinate (minus1SE);

\path(minus4)--++(135:0.4) coordinate (minus1NW);
\path(minus4)--++(-135:0.4) coordinate (minus1SW);

\path(minus2)--++(135:0.4) coordinate (start);

\draw[rounded corners, thick] (start)--(minus1NE)--(minus1SE)--(minus1SW)--(minus1NW)--(start);

\path(plus3)--++(45:0.4) coordinate (minus1NE);
\path(plus3)--++(-45:0.4) coordinate (minus1SE);

\path(plus1)--++(135:0.4) coordinate (minus1NW);
\path(plus1)--++(-135:0.4) coordinate (minus1SW);

\path(plus2)--++(135:0.4) coordinate (start);

\draw[rounded corners, thick] (start)--(minus1NE)--(minus1SE)--(minus1SW)--(minus1NW)--(start);

\draw[very thick,fill=magenta](0,-0.7) circle (4pt);

\draw[very thick,  fill=darkgreen](minus1) circle (4pt);

\draw[very thick,  fill=orange](minus2) circle (4pt);

\draw[very thick,  fill=lime!80!black](minus3) circle (4pt);

\draw[very thick,  fill=violet](minus4) circle (4pt);

\draw[very thick,  fill=gray!80](plus1) circle (4pt);

\draw[very thick,  fill=cyan](plus2) circle (4pt);

\draw[very thick,  fill=pink](plus3) circle (4pt);

\path (0,0) coordinate (origin2);

\begin{scope}

     \foreach \i in {0,1,2,3,4,5,6,7,8,9,10,11,12}
{
\path (origin2)--++(45:0.5*\i) coordinate (c\i); 
\path (origin2)--++(135:0.5*\i)  coordinate (d\i); 
  }

\path(origin2)  ++(135:2.5)   ++(-135:2.5) coordinate(corner1);
\path(origin2)  ++(45:2)   ++(135:7) coordinate(corner2);
\path(origin2)  ++(45:2)   ++(-45:2) coordinate(corner4);
\path(origin2)  ++(135:2.5)   ++(45:6.5) coordinate(corner3);

\draw[thick] (origin2)--(corner1)--(corner2)--(corner3)--(corner4)--(origin2);

\clip(corner1)--(corner2)--++(90:0.3)--++(0:6.5)--(corner3)--(corner4)
--++(90:-0.3)--++(180:6.5) --(corner1);

\path[name path=pathd1] (d1)--++(90:7);   
 \path[name path=top] (corner2)--(corner3);   
 \path [name intersections={of = pathd1 and top}];
   \coordinate (A)  at (intersection-1);
     \path(A)--++(-90:0.1) node { $\up$ };

\path[name path=pathd3] (d3)--++(90:7);   
 \path[name path=top] (corner2)--(corner3);   
 \path [name intersections={of = pathd3 and top}];
   \coordinate (A)  at (intersection-1);
  \path(A)--++(90:-0.1) node { $\up$ };

\path[name path=pathd5] (d5)--++(90:7);   
 \path[name path=top] (corner2)--(corner3);   
 \path [name intersections={of = pathd5 and top}];
   \coordinate (A)  at (intersection-1);
  \path(A)--++(90:0.1) node { $\down$ };

\path[name path=pathd7] (d7)--++(90:7);   
 \path[name path=top] (corner2)--(corner3);   
 \path [name intersections={of = pathd7 and top}];
   \coordinate (A)  at (intersection-1); 
     \path(A)--++(90:-0.1) node { $\up$ };

\path[name path=pathd9] (d9)--++(90:7);   
 \path[name path=top] (corner2)--(corner3);   
 \path [name intersections={of = pathd9 and top}];
   \coordinate (A)  at (intersection-1);
    \path(A)--++(-90:-0.1) node { $\down$ };

\path[name path=pathc1] (c1)--++(90:7);   
 \path[name path=top] (corner2)--(corner3);   
 \path [name intersections={of = pathc1 and top}];
   \coordinate (A)  at (intersection-1);
  \path(A)--++(-90:-0.1) node { $\down$ };

\path[name path=pathc3] (c3)--++(90:7);   
 \path[name path=top] (corner2)--(corner3);   
 \path [name intersections={of = pathc3 and top}];
   \coordinate (A)  at (intersection-1);
     \path(A)--++(-90:-0.1) node { $\down$ };

\path[name path=pathc5] (c5)--++(90:7);   
 \path[name path=top] (corner2)--(corner3);   
 \path [name intersections={of = pathc5 and top}];
   \coordinate (A)  at (intersection-1);
    \path(A)--++(90:-0.1) node { $\up$ };

\path[name path=pathc7] (c7)--++(90:7);   
 \path[name path=top] (corner2)--(corner3);   
 \path [name intersections={of = pathc7 and top}];
   \coordinate (A)  at (intersection-1);
   \path(A)--++(-90:0.1) node { $\up$ };

 \path[name path=pathd1] (d1)--++(-90:7);   
 \path[name path=bottom] (corner1)--(corner4);   
 \path [name intersections={of = pathd1 and bottom}];
   \coordinate (A)  at (intersection-1);
   \path (A)--++(90:-0.1) node { $\up$  };
   
 \path[name path=pathd3] (d3)--++(-90:7);   
 \path[name path=bottom] (corner1)--(corner4);   
 \path [name intersections={of = pathd3 and bottom}];
   \coordinate (A)  at (intersection-1);
   \path (A)--++(90:-0.1) node { $\up$  };

  \path[name path=pathd5] (d5)--++(-90:7);   
 \path[name path=bottom] (corner1)--(corner4);   
 \path [name intersections={of = pathd5 and bottom}];
   \coordinate (A)  at (intersection-1);
   \path (A)--++(90:-0.1) node { $\up$  };

 \path[name path=pathd7] (d7)--++(-90:7);   
 \path[name path=bottom] (corner1)--(corner4);   
 \path [name intersections={of = pathd7 and bottom}];
   \coordinate (A)  at (intersection-1);
   \path (A)--++(90:-0.1) node { $\up$  };
   
 \path[name path=pathd9] (d9)--++(-90:7);   
 \path[name path=bottom] (corner1)--(corner4);   
 \path [name intersections={of = pathd9 and bottom}];
   \coordinate (A)  at (intersection-1);
   \path (A)--++(90:-0.1) node { $\up$  };

 \path[name path=pathc1] (c1)--++(-90:7);   
 \path[name path=bottom] (corner1)--(corner4);   
 \path [name intersections={of = pathc1 and bottom}];
   \coordinate (A)  at (intersection-1);
   \path (A)--++(90:0.1) node { $\down$  };

    \path[name path=pathc3] (c3)--++(-90:7);   
 \path[name path=bottom] (corner1)--(corner4);   
 \path [name intersections={of = pathc3 and bottom}];
   \coordinate (A)  at (intersection-1);
   \path (A)--++(90:0.1) node { $\down$  };

 \path[name path=pathc5] (c5)--++(-90:7);   
 \path[name path=bottom] (corner1)--(corner4);   
 \path [name intersections={of = pathc5 and bottom}];
   \coordinate (A)  at (intersection-1);
   \path (A)--++(90:0.1) node { $\down$  };

 \path[name path=pathc7] (c7)--++(-90:7);   
 \path[name path=bottom] (corner1)--(corner4);   
 \path [name intersections={of = pathc7 and bottom}];
   \coordinate (A)  at (intersection-1);
   \path (A)--++(90:0.1) node { $\down$  };

\clip(corner1)--(corner2)--(corner3)--(corner4)--(corner1);

  \foreach \i in {1,3,5,7,9,11}
{
 \draw[thick](c\i)--++(90:7);   
 \draw[thick](c\i)--++(-90:7);
\draw[thick](d\i)--++(90:7);
\draw[thick](d\i)--++(-90:7);
   }

\end{scope}

\begin{scope}
\clip(0,0) --++(45:4)--++(135:1)
--++(135:1)--++(-135:1)--++(-135:1)--++(135:2)--++(-135:1)--++(135:1)
--++(-135:1)--++(-45:5);
  
\path (0,0) coordinate (origin2);

 \foreach \i\j in {0,1,2,3,4,5,6,7,8,9,10,11,12}
{
\path (origin2)--++(45:0.5*\i) coordinate (a\i); 
\path (origin2)--++(135:0.5*\i)  coordinate (b\j); 


   }
     
  \fill[white]
(0,0) --++(45:4)--++(135:5)--++(-135:4)--++(-45:5);

\draw[very thick,magenta](c1) --++(135:1) coordinate (cC1);
\draw[very thick,darkgreen](cC1) --++(135:1) coordinate (cC1);
\draw[very thick,orange](cC1) --++(135:1) coordinate (cC1);   
\draw[very thick,lime!80!black](cC1) --++(135:1) coordinate (cC1);
\draw[very thick,violet](cC1) --++(135:1) coordinate (cC1);

\draw[very thick,gray](c3) --++(135:1) coordinate (cC1);
\draw[very thick,magenta](cC1) --++(135:1) coordinate (cC1);
\draw[very thick,darkgreen](cC1) --++(135:1) coordinate (cC1);
\draw[very thick,orange](cC1) --++(135:1) coordinate (cC1);   
\draw[very thick,lime!80!black](cC1) --++(135:1) coordinate (cC1);

  \draw[very thick,cyan](c5) --++(135:1) coordinate (cC1);
 \draw[very thick,gray](cC1) --++(135:1) coordinate (cC1);
\draw[very thick,magenta](cC1) --++(135:1) coordinate (cC1);
\draw[very thick,darkgreen](cC1) --++(135:1) coordinate (cC1);
\draw[very thick,orange](cC1) --++(135:1) coordinate (cC1);   
\draw[very thick,lime!80!black](cC1) --++(135:1) coordinate (cC1);

  \draw[very thick,pink](c7) --++(135:1) coordinate (cC1);
  \draw[very thick,cyan](cC1) --++(135:1) coordinate (cC1);
 \draw[very thick,gray](cC1) --++(135:1) coordinate (cC1);
\draw[very thick,magenta](cC1) --++(135:1) coordinate (cC1);
\draw[very thick,darkgreen](cC1) --++(135:1) coordinate (cC1);
\draw[very thick,orange](cC1) --++(135:1) coordinate (cC1);   
\draw[very thick,lime!80!black](cC1) --++(135:1) coordinate (cC1);

\draw[very thick,magenta](d1) --++(45:1) coordinate (x1); 
\draw[very thick,gray](x1) --++(45:1) coordinate (x1); 
 \draw[very thick,cyan](x1) --++(45:1) coordinate (x1); 
 \draw[very thick,pink](x1) --++(45:1) coordinate (x1);

\draw[very thick,darkgreen](d3) --++(45:1) coordinate (x1);
\draw[very thick,magenta](x1) --++(45:1) coordinate (x1); 
\draw[very thick,gray](x1) --++(45:1) coordinate (x1); 
 \draw[very thick,cyan](x1) --++(45:1) coordinate (x1); 
 \draw[very thick,pink](x1) --++(45:1) coordinate (x1);

\draw[very thick,orange](d5) --++(45:1) coordinate (x1);
\draw[very thick,darkgreen](x1) --++(45:1) coordinate (x1);
\draw[very thick,magenta](x1) --++(45:1) coordinate (x1); 
\draw[very thick,gray](x1) --++(45:1) coordinate (x1); 
 \draw[very thick,cyan](x1) --++(45:1) coordinate (x1); 
 \draw[very thick,pink](x1) --++(45:1) coordinate (x1);

\draw[very thick,lime!80!black](d7) --++(45:1) coordinate (x1);
\draw[very thick,orange](x1) --++(45:1) coordinate (x1);

\path(x1) --++(45:1) coordinate (x1);
\path(x1) --++(45:1) coordinate (x1); 
\path(x1) --++(45:1) coordinate (x1); 
\path(x1) --++(45:1) coordinate (x1); 
\path(x1) --++(45:1) coordinate (x1); 

\draw[very thick,violet](d9) --++(45:1) coordinate (x1);

   \foreach \i in {0,1,2,3,4,5,6,7,8,9,10,11,12}
{
\path (origin2)--++(45:1*\i) coordinate (c\i); 
\path (origin2)--++(135:1*\i)  coordinate (d\i);  
\fill[gray!5,opacity=0.01,rounded corners] (c0)--++(135:5)--++(45:4)
 --++(-45:5)--++(-135:4)--++(135:1);
 }

\end{scope}

\path(origin2)  ++(135:2.5)   ++(-135:2.5) coordinate(corner1);
\path(origin2)  ++(45:2)   ++(135:7) coordinate(corner2);
\path(origin2)  ++(45:2)   ++(-45:2) coordinate(corner4);
\path(origin2)  ++(135:2.5)   ++(45:6.5) coordinate(corner3);

\draw[thick] (origin2)--(corner1)--(corner2)--(corner3)--(corner4)--(origin2);
\clip(origin2)--(corner1)--(corner2)--(corner3)--(corner4)--(origin2);

\end{tikzpicture}
$$
\caption{
We depict the 
 identity coset $\varnothing$ along the bottom of the diagram, 
  the coset $\la$ along the top of the diagram,    
  and its corresponding reduced expression
  $\color{magenta}s_5
  \color{darkgreen}s_4
    \color{orange}s_3  
      \color{lime!80!black}s_2
          \color{violet}s_1
                    \color{gray}s_6
          \color{magenta}s_5
  \color{darkgreen}s_4
    \color{orange}s_3  
     \color{cyan}s_7
                    \color{gray}s_6
                    \color{pink}s_8
                    \color{cyan}s_7
$. }
\label{typeAtiling-long}
\end{figure}

Moreover, we can construct the graph $\mathcal{G}_{(W,P)}$ starting from $\varnothing$ as follows.

\begin{itemize}[leftmargin=*]
\item Start  by drawing   $\varnothing$ at the bottom.
\item If applying a  simple reflection does   result in a new coset, then record this in the next level up in the diagram.  We record the colour of the reflection as an edge relating the two points in the graph.  

\end{itemize}
We repeat the above until the process terminates.  
This is best illustrated via the  examples in \cref{Bruhatexample,BruhatexampleD}.  

\begin{figure}[ht!]
 $$\begin{tikzpicture} [yscale=0.51,xscale=-0.51]

\draw[magenta, line width=3] (2,0.5)--(2,3) coordinate (hi);

\draw[cyan, line width=3] (hi)--(6,5.5) coordinate (hi2);
\draw[darkgreen, line width=3] (hi)--(-2,5.5) coordinate (hi3);
\draw[cyan, line width=3] (hi3)--(2,8) coordinate (hi4);
\draw[darkgreen, line width=3] (hi2)--(2,8) coordinate (hi4);
\draw[orange, line width=3] (hi3)--(-6,8) coordinate (hi5);
\draw[cyan, line width=3] (hi5)--(-2,10.5) coordinate (hi6);
\draw[orange, line width=3] (hi4)--(-2,10.5) coordinate (hi6);
\draw[magenta, line width=3] (hi4)--(6,10.5) coordinate (hi7);

\draw[magenta, line width=3] (hi6)--(2,13) coordinate (hi8);
\draw[orange, line width=3] (hi7)--(2,13) coordinate (hi8);
\draw[darkgreen, line width=3] (hi8)--(2,15) ;

\path (0,0) coordinate (origin); 
\draw[fill=white,rounded corners](origin) rectangle ++(4,1);
\path (origin)--++(0.5,0.5) coordinate (origin2);  
\draw(origin2)--++(0:3); 
\foreach \i in {1,2,3,4,5}
{
\path (origin2)--++(0:0.5*\i) coordinate (a\i); 
 
  }
\path(a1) --++(90:0.175) node  {  $  \down   $} ;
\path(a2) --++(90:0.175) node  {  $  \down   $} ;
\path(a3) --++(90:0.175) node  {  $  \down   $} ;

\path(a4) --++(-90:0.175) node  {  $  \up   $} ;
\path(a5) --++(-90:0.175) node  {  $  \up   $} ;

\path (0,2.5) coordinate (origin); 
\draw[fill=white,rounded corners](origin) rectangle ++(4,1);
\path (origin)--++(0.5,0.5) coordinate (origin2);  
\draw(origin2)--++(0:3); 
\foreach \i in {1,2,3,4,5}
{
\path (origin2)--++(0:0.5*\i) coordinate (a\i); 
 
  }
\path(a1) --++(90:0.175) node  {  $  \down   $} ;
\path(a2) --++(90:0.175) node  {  $  \down   $} ;
\path(a3) --++(-90:0.175) node  {  $  \up   $} ;

\path(a4) --++(90:0.175) node  {  $  \down   $} ;
\path(a5) --++(-90:0.175) node  {  $  \up   $} ;

\path (-4,5) coordinate (origin); 
\draw[fill=white,rounded corners](origin) rectangle ++(4,1);
\path (origin)--++(0.5,0.5) coordinate (origin2);  
\draw(origin2)--++(0:3); 
\foreach \i in {1,2,3,4,5}
{
\path (origin2)--++(0:0.5*\i) coordinate (a\i); 
 
  }
\path(a1) --++(90:0.175) node  {  $  \down   $} ;
\path(a2) --++(-90:0.175) node  {  $  \up   $} ;
\path(a3) --++(90:0.175) node  {  $  \down   $} ;

\path(a4) --++(90:0.175) node  {  $  \down   $} ;
\path(a5) --++(-90:0.175) node  {  $  \up   $} ;

\path (4,5) coordinate (origin); 
\draw[fill=white,rounded corners](origin) rectangle ++(4,1);
\path (origin)--++(0.5,0.5) coordinate (origin2);  
\draw(origin2)--++(0:3); 
\foreach \i in {1,2,3,4,5}
{
\path (origin2)--++(0:0.5*\i) coordinate (a\i); 
 
  }
\path(a1) --++(90:0.175) node  {  $  \down   $} ;
\path(a2) --++(90:0.175) node  {  $  \down   $} ;
\path(a3) --++(-90:0.175) node  {  $  \up   $} ;

\path(a4) --++(-90:0.175) node  {  $  \up   $} ;
\path(a5) --++(90:0.175) node  {  $  \down  $} ;

\path (0,7.5) coordinate (origin); 
\draw[fill=white,rounded corners](origin) rectangle ++(4,1);
\path (origin)--++(0.5,0.5) coordinate (origin2);  
\draw(origin2)--++(0:3); 
\foreach \i in {1,2,3,4,5}
{
\path (origin2)--++(0:0.5*\i) coordinate (a\i); 
 
  }
\path(a1) --++(90:0.175) node  {  $  \down   $} ;
\path(a2) --++(-90:0.175) node  {  $  \up  $} ;
\path(a3) --++(90:0.175) node  {  $  \down    $} ;

\path(a4) --++(-90:0.175) node  {  $  \up   $} ;
\path(a5) --++(90:0.175) node  {  $  \down  $} ;

\path (-8,7.5) coordinate (origin); 
\draw[fill=white,rounded corners](origin) rectangle ++(4,1);
\path (origin)--++(0.5,0.5) coordinate (origin2);  
\draw(origin2)--++(0:3); 
\foreach \i in {1,2,3,4,5}
{
\path (origin2)--++(0:0.5*\i) coordinate (a\i); 
 
  }
\path(a1) --++(-90:0.175) node  {  $  \up   $} ;
\path(a2) --++(90:0.175) node  {  $  \down  $} ;
\path(a3) --++(90:0.175) node  {  $  \down    $} ;

\path(a4) --++(90:0.175) node  {  $   \down   $} ;
\path(a5) --++(-90:0.175) node  {  $  \up   $} ;

\path (-4,10) coordinate (origin); 
\draw[fill=white,rounded corners](origin) rectangle ++(4,1);
\path (origin)--++(0.5,0.5) coordinate (origin2);  
\draw(origin2)--++(0:3); 
\foreach \i in {1,2,3,4,5}
{
\path (origin2)--++(0:0.5*\i) coordinate (a\i); 
 
  }
\path(a1) --++(-90:0.175) node  {  $  \up   $} ;
\path(a2) --++(90:0.175) node  {  $  \down  $} ;
\path(a3) --++(90:0.175) node  {  $  \down   $} ;

\path(a4) --++(-90:0.175) node  {  $  \up  $} ;
\path(a5) --++(90:0.175) node  {  $  \down  $} ;

\path (4,10) coordinate (origin); 
\draw[fill=white,rounded corners](origin) rectangle ++(4,1);
\path (origin)--++(0.5,0.5) coordinate (origin2);  
\draw(origin2)--++(0:3); 
\foreach \i in {1,2,3,4,5}
{
\path (origin2)--++(0:0.5*\i) coordinate (a\i); 
 
  }
\path(a1) --++(90:0.175) node  {  $  \down   $} ;
\path(a2) --++(-90:0.175) node  {  $  \up  $} ;
\path(a3) --++(-90:0.175) node  {  $  \up   $} ;

\path(a4) --++(90:0.175) node  {  $  \down   $} ;
\path(a5) --++(90:0.175) node  {  $  \down  $} ;

\path (0,12.5) coordinate (origin); 
\draw[fill=white,rounded corners](origin) rectangle ++(4,1);
\path (origin)--++(0.5,0.5) coordinate (origin2);  
\draw(origin2)--++(0:3); 
\foreach \i in {1,2,3,4,5}
{
\path (origin2)--++(0:0.5*\i) coordinate (a\i); 
 
  }
\path(a1) --++(-90:0.175) node  {  $  \up   $} ;
\path(a2) --++(90:0.175) node  {  $  \down $} ;
\path(a3) --++(-90:0.175) node  {  $  \up   $} ;

\path(a4) --++(90:0.175) node  {  $  \down   $} ;
\path(a5) --++(90:0.175) node  {  $  \down  $} ;

\path (0,15) coordinate (origin); 
\draw[fill=white,rounded corners](origin) rectangle ++(4,1);
\path (origin)--++(0.5,0.5) coordinate (origin2);  
\draw(origin2)--++(0:3); 
\foreach \i in {1,2,3,4,5}
{
\path (origin2)--++(0:0.5*\i) coordinate (a\i); 
 
  }
\path(a1) --++(-90:0.175) node  {  $  \up   $} ;
\path(a2) --++(-90:0.175) node  {  $  \up   $} ;
\path(a3) --++(90:0.175) node  {  $  \down   $} ;

\path(a4) --++(90:0.175) node  {  $  \down   $} ;
\path(a5) --++(90:0.175) node  {  $  \down  $} ;

\draw[very thick]( -10,3)--(-4,3); 
\draw[very thick, fill=orange] ( -10,3) coordinate circle (7pt); 

\draw[very thick, fill=darkgreen] ( -8,3) coordinate circle (7pt);  

\draw[very thick, fill=magenta] ( -6,3) coordinate circle (7pt); 
 
\draw[very thick, fill=cyan] ( -4,3) coordinate circle (7pt); 

\draw[very thick, rounded corners] (-10.8,2.5) rectangle (-7.2,3.5); 

\draw[very thick, rounded corners] (-4.8,2.5) rectangle (-3.2,3.5); 
\end{tikzpicture}
$$

\caption{The graph  $\mathcal{G}_{(W,P)}$ for  $(W,P)=(A_4,A_1 \times A_2  )$. } 
\label{Bruhatexample}
\end{figure}

\begin{figure}[ht!]
 $$ \begin{tikzpicture} [scale=0.51]

\draw[magenta, line width=3] (2,0.5)--(2,-1.5)  ;

\draw[cyan, line width=3] (2,0.5)--(2,3) coordinate (hi);

\draw[orange, line width=3] (hi)--(5,5.5) coordinate (hi2);
\draw[darkgreen, line width=3] (hi)--(-1,5.5) coordinate (hi3);
\draw[orange, line width=3] (hi3)--(2,8) coordinate (hi4);
\draw[darkgreen, line width=3] (hi2)--(2,8) coordinate (hi4);
  
\draw[cyan, line width=3] (hi4)--++(90:2) coordinate (hi5);
\draw[magenta, line width=3] (hi5)--++(90:2.5) coordinate (hi6);

\path (0,-2.5) coordinate (origin); 
\draw[fill=white,rounded corners](origin) rectangle ++(4,1);
\path (origin)--++(0.75,0.5) coordinate (origin2);  
\draw(origin2)--++(0:2.5); 
\foreach \i in {1,2,3,4}
{
\path (origin2)--++(0:0.5*\i) coordinate (a\i); 
 
  }
\path(a1) --++(90:0.175) node  {  $  \down   $} ;
\path(a2) --++(90:0.175) node  {  $  \down   $} ;
\path(a3) --++(90:0.175) node  {  $  \down   $} ;

\path(a4) --++(90:0.175) node  {  $  \down    $} ;

\path (0,0) coordinate (origin); 
\draw[fill=white,rounded corners](origin) rectangle ++(4,1);
\path (origin)--++(0.75,0.5) coordinate (origin2);  
\draw(origin2)--++(0:2.5); 
\foreach \i in {1,2,3,4}
{
\path (origin2)--++(0:0.5*\i) coordinate (a\i); 
 
  }
\path(a1) --++(-90:0.175) node  {  $  \up  $} ;
\path(a2) --++(-90:0.175) node  {  $  \up $} ;
\path(a3) --++(90:0.175) node  {  $  \down   $} ;

\path(a4) --++(90:0.175) node  {  $  \down    $} ;

\path (0,2.5) coordinate (origin); 
\draw[fill=white,rounded corners](origin) rectangle ++(4,1);
\path (origin)--++(0.75,0.5) coordinate (origin2);  
\draw(origin2)--++(0:2.5); 
\foreach \i in {1,2,3,4}
{
\path (origin2)--++(0:0.5*\i) coordinate (a\i); 
 
  }
\path(a1) --++(-90:0.175) node  {  $  \up  $} ;
\path(a2) --++(90:0.175) node  {  $  \down $} ;
\path(a3) --++(-90:0.175) node  {  $  \up   $} ;

\path(a4) --++(90:0.175) node  {  $  \down    $} ;

\path (-3,5) coordinate (origin); 
\draw[fill=white,rounded corners](origin) rectangle ++(4,1);
\path (origin)--++(0.75,0.5) coordinate (origin2);  
\draw(origin2)--++(0:2.5); 
\foreach \i in {1,2,3,4}
{
\path (origin2)--++(0:0.5*\i) coordinate (a\i); 
 
  }
\path(a1) --++(-90:0.175) node  {  $  \up  $} ;
\path(a4) --++(-90:0.175) node  {  $  \up $} ;
\path(a3) --++(90:0.175) node  {  $  \down   $} ;

\path(a2) --++(90:0.175) node  {  $  \down    $} ;

\path (3,5) coordinate (origin); 
\draw[fill=white,rounded corners](origin) rectangle ++(4,1);
\path (origin)--++(0.75,0.5) coordinate (origin2);  
\draw(origin2)--++(0:2.5); 
\foreach \i in {1,2,3,4}
{
\path (origin2)--++(0:0.5*\i) coordinate (a\i); 
 
  }
\path(a1) --++(90:0.175) node  {  $  \down  $} ;
\path(a4) --++(90:0.175) node  {  $   \down $} ;
\path(a3) --++(-90:0.175) node  {  $  \up   $} ;

\path(a2) --++(-90:0.175) node  {  $  \up    $} ;

\path (0,7.5) coordinate (origin); 
\draw[fill=white,rounded corners](origin) rectangle ++(4,1);
\path (origin)--++(0.75,0.5) coordinate (origin2);  
\draw(origin2)--++(0:2.5); 
\foreach \i in {1,2,3,4}
{
\path (origin2)--++(0:0.5*\i) coordinate (a\i); 
 
  }
\path(a1) --++(90:0.175) node  {  $  \down  $} ;
\path(a2) --++(-90:0.175) node  {  $  \up $} ;
\path(a3) --++(90:0.175) node  {  $  \down   $} ;

\path(a4) --++(-90:0.175) node  {  $  \up  $} ;

\path (0,10) coordinate (origin); 
\draw[fill=white,rounded corners](origin) rectangle ++(4,1);
\path (origin)--++(0.75,0.5) coordinate (origin2);  
\draw(origin2)--++(0:2.5); 
\foreach \i in {1,2,3,4}
{
\path (origin2)--++(0:0.5*\i) coordinate (a\i); 
 
  }
\path(a1) --++(90:0.175) node  {  $  \down  $} ;
\path(a2) --++(90:0.175) node  {  $  \down $} ;
\path(a3) --++(-90:0.175) node  {  $  \up   $} ;

\path(a4) --++(-90:0.175) node  {  $  \up  $} ;

\path (0,12.5) coordinate (origin); 
\draw[fill=white,rounded corners](origin) rectangle ++(4,1);
\path (origin)--++(0.75,0.5) coordinate (origin2);  
\draw(origin2)--++(0:2.5); 
\foreach \i in {1,2,3,4}
{
\path (origin2)--++(0:0.5*\i) coordinate (a\i); 
 
  }
\path(a1) --++(-90:0.175) node  {  $  \up  $} ;
\path(a2) --++(-90:0.175) node  {  $  \up $} ;
\path(a3) --++(-90:0.175) node  {  $  \up   $} ;

\path(a4) --++(-90:0.175) node  {  $   \up $} ;

\path ( -2,0) coordinate (himid);

\draw[very thick] (himid)--++(180:2) coordinate (hi3); 
 
\draw[very thick ] (himid) --++(60:2) coordinate (hi2); 

\draw[very thick ] ( himid) --++(-60:2) coordinate (hi); 

\draw[very thick, fill=orange] (hi2)  coordinate circle (7pt); 

\draw[very thick, fill=magenta] (hi)  coordinate circle (7pt); 
 \draw[very thick, fill=darkgreen] ( hi3) coordinate circle (7pt); 

\draw[very thick, fill=cyan] (himid) coordinate circle (7pt);


\path (hi3)--++(180:0.5) coordinate (hi3TLX); 
\path (hi3)--++(120:0.5) coordinate (hi3TL);
\path (himid)--++(-60:0.5) coordinate (himidBR);
\path (hi2)--++(0:0.5) coordinate (hi2L);
\path (hi2)--++(60:0.5) coordinate (hi2LX);
\path (hi2)--++(120:0.5) coordinate (hi2T);
\path (himid)--++(120:0.5) coordinate (himidT);
\path (hi3)--++(-120:0.5) coordinate (hi3BL);
\path (hi3)--++(-120:0.5)--++(0:0.3) coordinate (hi3BL2);

\draw[thick, rounded corners,smooth] (hi3BL2)--(himidBR)--(hi2L)--(hi2LX)--(hi2T)--(himidT)--(hi3TL)--(hi3TLX)-- (hi3BL)--++(0:1); 
 \end{tikzpicture} \qquad 
 \begin{tikzpicture} [scale=0.51]

\draw[magenta, line width=3] (2,0.5)--(2,-1.5)  ;

\draw[cyan, line width=3] (2,0.5)--(2,3) coordinate (hi);

\draw[magenta, line width=3] (hi)--(5,5.5) coordinate (hi2);
\draw[darkgreen, line width=3] (hi)--(-1,5.5) coordinate (hi3);
\draw[magenta, line width=3] (hi3)--(2,8) coordinate (hi4);
\draw[darkgreen, line width=3] (hi2)--(2,8) coordinate (hi4);
  
\draw[cyan, line width=3] (hi4)--++(90:2) coordinate (hi5);
\draw[magenta, line width=3] (hi5)--++(90:2.5) coordinate (hi6);

\path (0,-2.5) coordinate (origin); 
\draw[fill=white,rounded corners](origin) rectangle ++(4,1);
\path (origin)--++(0.5,0.5) coordinate (origin2);  
\draw(origin2)--++(0:3); 
\foreach \i in {0,1,2,3}
{
\path (origin2)--++(0:\i*2/3) --++(0:0.5)coordinate (a\i); 
 
  }
  \draw(a0)  circle (2pt); 

\path(a1) --++(90:0.175) node  {  $  \down   $} ;
\path(a2) --++(90:0.175) node  {  $  \down   $} ;
\path(a3) --++(90:0.175) node  {  $  \down   $} ;

\path (0,0) coordinate (origin); 
\draw[fill=white,rounded corners](origin) rectangle ++(4,1);
\path (origin)--++(0.5,0.5) coordinate (origin2);  
\draw(origin2)--++(0:3); 
\foreach \i in {0,1,2,3}
{
\path (origin2)--++(0:\i*2/3) --++(0:0.5)coordinate (a\i); 
 
  }
  \draw(a0)  circle (2pt);
\path(a1) --++(-90:0.175) node  {  $  \up   $} ;
\path(a2) --++(90:0.175) node  {  $  \down   $} ;
\path(a3) --++(90:0.175) node  {  $  \down   $} ;

\path (0,2.5) coordinate (origin); 
\draw[fill=white,rounded corners](origin) rectangle ++(4,1);
\path (origin)--++(0.5,0.5) coordinate (origin2);  
\draw(origin2)--++(0:3); 
\foreach \i in {0,1,2,3}
{
\path (origin2)--++(0:\i*2/3) --++(0:0.5)coordinate (a\i); 
 
  }
\path(a1) --++(90:0.175) node  {  $  \down   $} ;
\path(a2) --++(-90:0.175) node  {  $  \up   $} ;
\path(a3) --++(90:0.175) node  {  $  \down   $} ;

  \draw(a0)  circle (2pt);

\path (-3,5) coordinate (origin); 
\draw[fill=white,rounded corners](origin) rectangle ++(4,1);
\path (origin)--++(0.5,0.5) coordinate (origin2);  
\draw(origin2)--++(0:3); 
\foreach \i in {0,1,2,3}
{
\path (origin2)--++(0:\i*2/3) --++(0:0.5)coordinate (a\i); 
 
  }
\path(a1) --++(90:0.175) node  {  $  \down   $} ;
\path(a2) --++(90:0.175) node  {  $  \down   $} ;
\path(a3) --++(-90:0.175) node  {  $  \up   $} ;

%
  \draw(a0)  circle (2pt);

\path (3,5) coordinate (origin); 
\draw[fill=white,rounded corners](origin) rectangle ++(4,1);
\path (origin)--++(0.5,0.5) coordinate (origin2);  
\draw(origin2)--++(0:3); 
\foreach \i in {0,1,2,3}
{
\path (origin2)--++(0:\i*2/3) --++(0:0.5)coordinate (a\i); 
 
  }
\path(a1) --++(-90:0.175) node  {  $  \up   $} ;
\path(a2) --++(-90:0.175) node  {  $  \up  $} ;
\path(a3) --++(90:0.175) node  {  $  \down   $} ;

  \draw(a0)  circle (2pt);

\path (0,7.5) coordinate (origin); 
\draw[fill=white,rounded corners](origin) rectangle ++(4,1);
\path (origin)--++(0.5,0.5) coordinate (origin2);  
\draw(origin2)--++(0:3); 
\foreach \i in {0,1,2,3}
{
\path (origin2)--++(0:\i*2/3) --++(0:0.5)coordinate (a\i); 
 
  }
\path(a1) --++(-90:0.175) node  {  $  \up   $} ;
\path(a2) --++(90:0.175) node  {  $  \down   $} ;
\path(a3) --++(-90:0.175) node  {  $  \up   $} ;

  \draw(a0)  circle (2pt);

\path (0,10) coordinate (origin); 
\draw[fill=white,rounded corners](origin) rectangle ++(4,1);
\path (origin)--++(0.5,0.5) coordinate (origin2);  
\draw(origin2)--++(0:3); 
\foreach \i in {0,1,2,3}
{
\path (origin2)--++(0:\i*2/3) --++(0:0.5)coordinate (a\i); 
 
  }
\path(a1) --++(90:0.175) node  {  $  \down   $} ;
\path(a2) --++(-90:0.175) node  {  $  \up  $} ;
\path(a3) --++(-90:0.175) node  {  $  \up   $} ;

  \draw(a0)  circle (2pt);

\path (0,12.5) coordinate (origin); 
\draw[fill=white,rounded corners](origin) rectangle ++(4,1);
\path (origin)--++(0.5,0.5) coordinate (origin2);  
\draw(origin2)--++(0:3); 
\foreach \i in {0,1,2,3}
{
\path (origin2)--++(0:\i*2/3) --++(0:0.5)coordinate (a\i); 
 
  }
\path(a1) --++(-90:0.175) node  {  $  \up   $} ;
\path(a2) --++(-90:0.175) node  {  $  \up  $} ;
\path(a3) --++(-90:0.175) node  {  $  \up   $} ;

  \draw(a0)  circle (2pt);

\path ( -2.75,0) coordinate (himid);

\draw[very thick] (himid)--++(180:2) coordinate (hi3); 
 
\path (himid) --++(0:2) coordinate (hi2); 

\draw[very thick ] (hi2)--++(90:0.1)   --++(180:2) 
--++(-90:0.2)   --++(0: 2)  ;

\draw[very thick, fill=magenta] (hi2)  coordinate circle (7pt); 

  \draw[very thick, fill=darkgreen] ( hi3) coordinate circle (7pt); 

\draw[very thick, fill=cyan] (himid) coordinate circle (7pt);

\path (hi3)--++(180:0.5) coordinate (hi3TLX); 
\path (hi3)--++(120:0.5) coordinate (hi3TL);
\path (himid)--++(-60:0.5) coordinate (himidBR);
\path (himid)--++(-60:0.5) coordinate (hi2L);
\path (himid)--++(0:0.5) coordinate (hi2LX);
\path (himid)--++(60:0.5) coordinate (hi2T);
\path (himid)--++(120:0.5) coordinate (himidT);
\path (hi3)--++(-120:0.5) coordinate (hi3BL);
\path (hi3)--++(-120:0.5)--++(0:0.3) coordinate (hi3BL2);

\draw[thick, rounded corners,smooth] (hi3BL2) --(hi2L)--(hi2LX)--(hi2T)--(himidT)--(hi3TL)--(hi3TLX)-- (hi3BL)--++(0:1); 
 \end{tikzpicture}
$$

\caption{ The  graphs $\mathcal{G}_{(W,P)}$  for   $(W,P)=(D_4,A_3)$
and   $(C_3,A_2)$.  
}
\label{BruhatexampleD}
\end{figure}

 The {\sf Bruhat order} on ${{^P}W}$  can also easily be visualised in this setting, namely  $\la < \mu$ if either $\la$ contains strictly fewer $\up$ arrows than $\mu$ or
if $\la$ has the same number of $\up$ arrows
  as $\mu$ and $\mu$ is obtained from $\lambda$ by moving $\up$ arrows to the right.

\begin{rmk}\label{weightcosets} It is worth taking a moment to consider which  diagrams can appear elements of ${^PW}$. It is clear that there are $ {n+1}\choose {k}$ cosets for $(A_n,A_{k-1}\times A_{n-k})$ and that these  are given by all possible diagrams with $k$ $\up$-arrows and $n-k+1$ $\down$-arrows.  
There are $ 2^n$ cosets for $(C_n,A_{n-1})$ and these 
are given by freely choosing the $\up$ versus $\down$ decorations on the vertices
$\{ 2,\dots,n+1\}$ (the   decoration on vertex $1$ is always a $\circ$).  
There are $ 2^{n-1}$ cosets for $(D_n,A_{n-1})$ and these 
are given by freely choosing the $\up$ versus $\down$ decorations on the vertices
$\{ 1,2,\dots,n\}$ subject to the condition that  the total number of $\up$-arrows is even. 
\end{rmk}

For the remainder of the paper we will freely identify cosets with their diagrams  without further mention.  

\begin{rmk}\label{isomorphic copies}
In type $(D_n, A_{n-1})$, the description we gave of the cosets (and graph $\mathcal{G}_{(W,P)}$) uses the parabolic subgroup of type $A_{n-1}$ as the subgroup generated by $\{s_1, s_2, \ldots , s_{n-1}\}$. We could have used instead the parabolic subgroup of type $A_{n-1}$ generated by $\{s_{1''}, s_2, \ldots , s_{n-1}\}$. 
This gives an alternative construction of $\mathcal{G}_{(W,P)}$ in type $(D_n, A_{n-1})$, starting by defining the coset $\varnothing$ as the horizontal line with $n$ points where the first one is labelled by $\up$ and all others are labelled by $\down$.  So the cosets now must have an odd number of $\up$-arrows.
\end{rmk}

 \section{Diagrammatic oriented Temperley--Lieb algebras in classical type}\label{results1}

We now introduce a visualisation of oriented Temperley--Lieb algebras for $(W,P)$ of classical type, namely $(A_n, A_k\times A_{n-k-1})$, $(D_n, A_{n-1})$ and $(C_n, A_{n-1})$.

\subsection{Green's diagrammatic Temperley--Lieb algebras}
 We first recall Green's diagrammatic realisation of the generalised Temperley--Lieb algebras of type $W$ (see \cite{MR1618912}).

\begin{defn}
	\begin{enumerate}
	\item 
	An {\sf $n$-tangle} is a rectangular
	frame with $n$ vertices on the northern and southern boundaries which are paired-off by $n$ non-crossing strands and a finite number of non-crossing closed loops. Strands and loops can be decorated by a finite number of beads if they are left exposed (i.e. can be deformed to touch the left boundary of the frame). 
	We refer to a strand connecting a northern and southern vertex as a {\sf propagating strand}.  We refer to any strand connecting two northern vertices (or two southern vertices) to each other as an {\sf arc}.  Two $n$-tangles are equal if there exists an isotopy of the plane fixing the boundaries of the frame carrying one $n$-tangle to the other. We denote the set of all such $n$-tangles by $\mathbb{DT}_n$. 

\item We call an $n$-tangle $d\in \mathbb{DT}_n$ {\sf undecorated} if it has no beads on its strands or loops.

	\item For $R$ any commutative ring, we have that $R\mathbb{DT}_n$ has the structure of an $R$-algebra where the multiplication is given by the vertical concatenation of $n$-tangles. Specifically, for $d, d'\in  \mathbb{DT}_n$, we define  the product $d d'$ simply by placing $d'$ above $d$. 
	\end{enumerate}
	
\end{defn}

\begin{figure}[ht!]
	$$  
	\begin{tikzpicture} [scale=0.75]
		
		\clip(4,1.82) rectangle ++ (6,-2.7);
		
		\path (4,1) coordinate (origin); 
		\path (origin)--++(0.5,0.5) coordinate (origin2);  
		\draw(origin2)--++(0:5); 
		\foreach \i in {1,2,3,4,5,...,9}
		{
			\path (origin2)--++(0:0.5*\i) coordinate (a\i); 
			\path (origin2)--++(0:0.5*\i)--++(-90:0.08) coordinate (c\i); 
			\draw (a\i)circle (1pt);  }
		
		\foreach \i in {1,2,3,4,5,...,19}
		{
			\path (origin2)--++(0:0.25*\i) --++(-90:0.5) coordinate (b\i); 
			\path (origin2)--++(0:0.25*\i) --++(-90:0.7) coordinate (d\i); 
		}
		%

		\draw[  very thick](c3) to [out=-90,in=0] (b5) to [out=180,in=-90] (c2); 
		\draw[  very thick](c4) to [out=-90,in=0] (d5) to [out=180,in=-90] (c1);

		\draw[  very thick](c9) to [out=-90,in=0] (b17) to [out=180,in=-90] (c8);

		\draw[  very thick](c5) to [out=-90,in=15] (6,0.55);  
		
		\draw[  very thick](c6) to [out=-90,in=15] (7,0.55);

		\draw[  very thick](c7) to [out=-90,in=165] (8.5,0.55);

		\path (4,-1) coordinate (origin); 
		\path (origin)--++(0.5,0.5) coordinate (origin2);  
		\draw(origin2)--++(0:5); 
		\foreach \i in {1,2,3,4,5,...,9}
		{
			\path (origin2)--++(0:0.5*\i) coordinate (a\i); 
			\path (origin2)--++(0:0.5*\i)--++(90:0.08) coordinate (c\i); 
			\draw (a\i)circle (1pt);  }
		
		\foreach \i in {1,2,3,4,5,...,19}
		{
			\path (origin2)--++(0:0.25*\i) --++(90:0.5) coordinate (b\i); 
			\path (origin2)--++(0:0.25*\i) --++(90:0.65) coordinate (d\i); 
		} 
		
		\draw[  very thick](c6) to [out=90,in=0] (b11) to [out=180,in=90] (c5); 
		\draw[  very thick](c7) to [out=90,in=0] (d11) to [out=180,in=90] (c4); 
		
		\path(d11)--++(90:0.2) coordinate (e11); 
		\draw[  very thick](c8) to [out=90,in=0] (e11) to [out=180,in=90] (c3);

		\draw[very   thick](c1) to [out=90,in=-170] (6,0.55); 
		\draw[very   thick](c2) to [out=90,in=-170] (7,0.55);   
		
		\draw[  very thick](c9) to [out=90,in=-15] (8.5,0.55);

	\end{tikzpicture} 
	\begin{tikzpicture} [scale=0.75]
		
		\clip(4,1.82) rectangle ++ (6,-2.7);
		
		\path (4,1) coordinate (origin); 
		\path (origin)--++(0.5,0.5) coordinate (origin2);  
		\draw(origin2)--++(0:5); 
		\foreach \i in {1,2,3,4,5,...,9}
		{
			\path (origin2)--++(0:0.5*\i) coordinate (a\i); 
			\path (origin2)--++(0:0.5*\i)--++(-90:0.08) coordinate (c\i); 
			\draw (a\i)circle (1pt);  }
		
		\foreach \i in {1,2,3,4,5,...,19}
		{
			\path (origin2)--++(0:0.25*\i) --++(-90:0.5) coordinate (b\i); 
			\path (origin2)--++(0:0.25*\i) --++(-90:0.7) coordinate (d\i); 
		}
	 		
		\draw[  very thick](c5) to [out=-90,in=0] (b9) to [out=180,in=-90] (c4); 		
		\draw[  very thick](c3) to [out=-90,in=0] (b5) to [out=180,in=-90] (c2); 
		\draw[  very thick](c6) to [out=-90,in=0] (d7) to [out=180,in=-90] (c1);

 	\draw[  very thick](c8) to [out=-90,in=0] (b15) to [out=180,in=-90] (c7);

		\draw[  very thick](c9) to [out=-90,in=0] (7,0.6);  
 		
		\path (4,-1) coordinate (origin); 
		\path (origin)--++(0.5,0.5) coordinate (origin2);  
		\draw(origin2)--++(0:5); 
		\foreach \i in {1,2,3,4,5,...,9}
		{
			\path (origin2)--++(0:0.5*\i) coordinate (a\i); 
			\path (origin2)--++(0:0.5*\i)--++(90:0.08) coordinate (c\i); 
			\draw (a\i)circle (1pt);  }
		
		\foreach \i in {1,2,3,4,5,...,19}
		{
			\path (origin2)--++(0:0.25*\i) --++(90:0.5) coordinate (b\i); 
			\path (origin2)--++(0:0.25*\i) --++(90:0.65) coordinate (d\i); 
		} 
	
			\draw[very   thick](c1) 
 to [out=90,in=-180] (7,0.6);

%
%
%
%
 
 		\path(b11)--++(-90:0.1) coordinate (b11);

 		\path(c11)--++(-90:0.1) coordinate (c11); 
 		\path(d11)--++(-90:0.1) coordinate (d11);  
 
 		\draw[  very thick](c6) to [out=90,in=0] (b11) to [out=180,in=90] (c5); 
		\draw[  very thick](c7) to [out=90,in=0] (d11) to [out=180,in=90] (c4); 
		
		\path(d11)--++(90:0.2) coordinate (e11); 
		\draw[  very thick](c8) to [out=90,in=0] (e11) to [out=180,in=90] (c3); 
		
	 		\path(e11)--++(90:0.2) coordinate (e11);	
 
		\draw[  very thick](c9) to [out=90,in=0] (e11) to [out=180,in=90] (c2);

		

	\end{tikzpicture}  
	\begin{tikzpicture} [scale=0.75]
		
		\clip(4,1.82) rectangle ++ (6,-2.7);
		
		\path (4,1) coordinate (origin); 
		\path (origin)--++(0.5,0.5) coordinate (origin2);  
		\draw(origin2)--++(0:5); 
		\foreach \i in {1,2,3,4,5,...,9}
		{
			\path (origin2)--++(0:0.5*\i) coordinate (a\i); 
			\path (origin2)--++(0:0.5*\i)--++(-90:0.08) coordinate (c\i); 
			\draw (a\i)circle (1pt);  }

		
		\foreach \i in {1,2,3,4,5,...,19}
		{
			\path (origin2)--++(0:0.25*\i) --++(-90:0.5) coordinate (b\i); 
			\path (origin2)--++(0:0.25*\i) --++(-90:0.7) coordinate (d\i); 
		}
	 		
		\draw[  very thick](c5) to [out=-90,in=0] (b9) to [out=180,in=-90] (c4); 		
		\draw[  very thick](c3) to [out=-90,in=0] (b5) to [out=180,in=-90] (c2); 
		\draw[  very thick](c6) to [out=-90,in=0] (d7) to [out=180,in=-90] (c1);

 	\draw[  very thick](c8) to [out=-90,in=0] (b15) to [out=180,in=-90] (c7);

%
%
%
%
%
		\draw[  very thick](c9) to [out=-90,in=15] (7,0.55);  

		\path (4,-1) coordinate (origin); 
		\path (origin)--++(0.5,0.5) coordinate (origin2);  
		\draw(origin2)--++(0:5); 
		\foreach \i in {1,2,3,4,5,...,9}
		{
			\path (origin2)--++(0:0.5*\i) coordinate (a\i); 
			\path (origin2)--++(0:0.5*\i)--++(90:0.08) coordinate (c\i); 
			\draw (a\i)circle (1pt);  }
		
		\foreach \i in {1,2,3,4,5,...,19}
		{
			\path (origin2)--++(0:0.25*\i) --++(90:0.5) coordinate (b\i); 
			\path (origin2)--++(0:0.25*\i) --++(90:0.65) coordinate (d\i); 
		} 
		
		\draw[  very thick](c8) to [out=90,in=0] (b15) to [out=180,in=90] (c7); 
		\draw[  very thick](c9) to [out=90,in=0] (d15) to [out=180,in=90] (c6); 
		
		
				\draw[  very thick](c2) to [out=90,in=0] (b3) to [out=180,in=90] (c1); 
		
				\draw[  very thick](c5) to [out=90,in=0] (b9) to [out=180,in=90] (c4); 

		\draw[very   thick](c3) to [out=90,in=-170] (7,0.55); 
%
%
		
%
	\end{tikzpicture}  
	$$
	 	\caption{  Examples of undecorated tangles.  
 	}
	\label{typeAtiling2lkjs}
\end{figure}

\begin{figure}[ht!]
$$	\begin{tikzpicture} [scale=0.75]
		
 	 	\clip(3,1.82) rectangle ++ (6.5,-2.7);

		\path (4,1) coordinate (origin); 
		\path (origin)--++(0.5,0.5) coordinate (origin2);  
		\draw(origin2)--++(0:4.5) (origin2)--++(180:1); 
		\foreach \i in {-1,0,1,2,3,4,5,...,8}
		{
			\path (origin2)--++(0:0.5*\i) coordinate (a\i); 
			\path (origin2)--++(0:0.5*\i)--++(-90:0.08) coordinate (c\i); 
			\draw (a\i)circle (1pt);  }
		
		\foreach \i in {-1,0,1,2,3,4,5,...,19}
		{
			\path (origin2)--++(0:0.25*\i) --++(-90:0.5) coordinate (b\i); 
			\path (origin2)--++(0:0.25*\i) --++(-90:0.7) coordinate (d\i); 
		} 
		
		\draw[  very thick](c3) to [out=-90,in=0] (b5) to [out=180,in=-90] (c2); 
		\draw[  very thick](c4) to [out=-90,in=0] (d5) to [out=180,in=-90] (c1); 
		
		\fill (d5)  circle (3pt);

		\draw[  very thick](c8) to [out=-90,in=0] (b15) to [out=180,in=-90] (c7);

	 	\draw[  very thick](c0) to [out=-90,in=0] (b-1) to [out=180,in=-90] (c-1); 
	
	\fill (b-1)  circle (3pt);    	
		
		\draw[  very thick](c5) to [out=-90,in=145] (7.5,0.55);  
		
		\draw[  very thick](c6) to [out=-90,in=145] (8,0.55);


		\path (4,-1) coordinate (origin); 
		\path (origin)--++(0.5,0.5) coordinate (origin2);  
		\draw(origin2)--++(0:4.5); 
				\draw(origin2)--++(180:1); 
		\foreach \i in {-1,0,1,2,3,4,5,...,8}
		{
			\path (origin2)--++(0:0.5*\i) coordinate (a\i); 
			\path (origin2)--++(0:0.5*\i)--++(90:0.08) coordinate (c\i); 
			\draw (a\i)circle (1pt);  }
		
		\foreach \i in {-1,0,1,2,3,4,5,...,19}
		{
			\path (origin2)--++(0:0.25*\i) --++(90:0.5) coordinate (b\i); 
			\path (origin2)--++(0:0.25*\i) --++(90:0.65) coordinate (d\i); 
		}

 	\draw[  very thick](c0) to [out=90,in=0] (b-1) to [out=180,in=90] (c-1); 
 	\draw[  very thick](c2) to [out=90,in=0] (b3) to [out=180,in=90] (c1); 
 	\draw[  very thick](c4) to [out=90,in=0] (b7) to [out=180,in=90] (c3); 	
 	\draw[  very thick](c6) to [out=90,in=0] (b11) to [out=180,in=90] (c5); 	
 \draw[  very thick](c8) to [out=90,in=-35] (8,0.55) ;
		 \draw[  very thick](c7) to [out=90,in=-35] (7.5,0.55) coordinate (here);
\fill (here)  circle (3pt);     

\fill (b-1)  circle (3pt);    	
\fill (b3)  circle (3pt);    		
\fill (b7)  circle (3pt);    		\fill (b11)  circle (3pt);    				
	\end{tikzpicture}
		\begin{tikzpicture} [scale=0.75]
		
 	 	\clip(3,1.82) rectangle ++ (6.5,-2.7);

		\path (4,1) coordinate (origin); 
		\path (origin)--++(0.5,0.5) coordinate (origin2);  
		\draw(origin2)--++(0:4.5) (origin2)--++(180:1); 
		\foreach \i in {-1,0,1,2,3,4,5,...,8}
		{
			\path (origin2)--++(0:0.5*\i) coordinate (a\i); 
			\path (origin2)--++(0:0.5*\i)--++(-90:0.08) coordinate (c\i); 
			\draw (a\i)circle (1pt);  }
		
		\foreach \i in {-1,0,1,2,3,4,5,...,19}
		{
			\path (origin2)--++(0:0.25*\i) --++(-90:0.5) coordinate (b\i); 
			\path (origin2)--++(0:0.25*\i) --++(-90:0.7) coordinate (d\i); 
		} 
		
		\draw[  very thick](c3) to [out=-90,in=0] (b5) to [out=180,in=-90] (c2); 
		\draw[  very thick](c4) to [out=-90,in=0] (d5) to [out=180,in=-90] (c1); 
		
		\fill (d5)  circle (3pt);

		\draw[  very thick](c8) to [out=-90,in=0] (b15) to [out=180,in=-90] (c7);

	 	\draw[  very thick](c0) to [out=-90,in=0] (b-1) to [out=180,in=-90] (c-1); 
	
		
		\draw[  very thick](c5) to [out=-90,in=145] (7.5,0.55);  
		
		\draw[  very thick](c6) to [out=-90,in=145] (8,0.55);


		\path (4,-1) coordinate (origin); 
		\path (origin)--++(0.5,0.5) coordinate (origin2);  
		\draw(origin2)--++(0:4.5); 
				\draw(origin2)--++(180:1); 
		\foreach \i in {-1,0,1,2,3,4,5,...,8}
		{
			\path (origin2)--++(0:0.5*\i) coordinate (a\i); 
			\path (origin2)--++(0:0.5*\i)--++(90:0.08) coordinate (c\i); 
			\draw (a\i)circle (1pt);  }
		
		\foreach \i in {-1,0,1,2,3,4,5,...,19}
		{
			\path (origin2)--++(0:0.25*\i) --++(90:0.5) coordinate (b\i); 
			\path (origin2)--++(0:0.25*\i) --++(90:0.65) coordinate (d\i); 
		}

 	\draw[  very thick](c0) to [out=90,in=0] (b-1) to [out=180,in=90] (c-1); 
 	\draw[  very thick](c2) to [out=90,in=0] (b3) to [out=180,in=90] (c1); 
 	\draw[  very thick](c4) to [out=90,in=0] (b7) to [out=180,in=90] (c3); 	
 	\draw[  very thick](c6) to [out=90,in=0] (b11) to [out=180,in=90] (c5); 	
 \draw[  very thick](c8) to [out=90,in=-35] (8,0.55) ;
		 \draw[  very thick](c7) to [out=90,in=-35] (7.5,0.55) coordinate (here);
\fill (here)  circle (3pt);     

\fill (b-1)  circle (3pt);    	
\fill (b3)  circle (3pt);    		
\fill (b7)  circle (3pt);    		\fill (b11)  circle (3pt);    				
	\end{tikzpicture}
	 \begin{tikzpicture}[scale=0.75] 
		\clip(4,1.82) rectangle ++ (6,-2.7);

		\path (4,1) coordinate (origin); 
		\path (origin)--++(0.5,0.5) coordinate (origin2);  
		\draw(origin2)--++(0:5); 
		\foreach \i in {1,2,3,4,5,...,9}
		{
			\path (origin2)--++(0:0.5*\i) coordinate (a\i); 
			\path (origin2)--++(0:0.5*\i)--++(-90:0.08) coordinate (c\i); 
			\draw (a\i)circle (1pt);  }
		
		\foreach \i in {1,2,3,4,5,...,19}
		{
			\path (origin2)--++(0:0.25*\i) --++(-90:0.5) coordinate (b\i); 
			\path (origin2)--++(0:0.25*\i) --++(-90:0.7) coordinate (d\i); 
		}

		\draw[  very thick](c3) to [out=-90,in=0] (b5) to [out=180,in=-90] (c2); 
		\draw[  very thick](c4) to [out=-90,in=0] (d5) to [out=180,in=-90] (c1);

		\draw[  very thick](c9) to [out=-90,in=0] (b17) to [out=180,in=-90] (c8);

		\draw[  very thick](c5) to [out=-90,in=15] (6,0.55);  
		
		\draw[  very thick](c6) to [out=-90,in=15] (7,0.55);

		\draw[  very thick](c7) to [out=-90,in=165] (8.5,0.55);

			\draw[  very thick](c3) to [out=-90,in=0] (b5) to [out=180,in=-90] (c2); 
		\draw[  very thick](c4) to [out=-90,in=0] (d5) to [out=180,in=-90] (c1); 
		
		\fill (d5)  circle (3pt);

		\path (4,-1) coordinate (origin); 
		\path (origin)--++(0.5,0.5) coordinate (origin2);  
		\draw(origin2)--++(0:5); 
		\foreach \i in {1,2,3,4,5,...,9}
		{
			\path (origin2)--++(0:0.5*\i) coordinate (a\i); 
			\path (origin2)--++(0:0.5*\i)--++(90:0.08) coordinate (c\i); 
			\draw (a\i)circle (1pt);  }
		
		\foreach \i in {1,2,3,4,5,...,19}
		{
			\path (origin2)--++(0:0.25*\i) --++(90:0.5) coordinate (b\i); 
			\path (origin2)--++(0:0.25*\i) --++(90:0.65) coordinate (d\i); 
		}
%
		
		\draw[  very thick](c6) to [out=90,in=0] (b11) to [out=180,in=90] (c5); 
		\draw[  very thick](c7) to [out=90,in=0] (d11) to [out=180,in=90] (c4); 
		
		\path(d11)--++(90:0.2) coordinate (e11); 
		\draw[  very thick](c8) to [out=90,in=0] (e11) to [out=180,in=90] (c3); 
		

		\draw[very   thick](c1) to [out=90,in=-170] (6,0.55); 
		\draw[very   thick](c2) to [out=90,in=-170] (7,0.55);   
		
		\draw[  very thick](c9) to [out=90,in=-15] (8.5,0.55);

		\draw[very   thick](c1) to [out=90,in=-170] (6,0.55)
		[midway] coordinate (X);
		\fill (X)  circle (3pt);     
		
		\draw[very   thick](c2) to [out=90,in=-170] (7,0.55)
		;
		
		\draw[  very thick](c9) to [out=90,in=-15] (8.5,0.55) ;

	\end{tikzpicture}$$
	 	\caption{  Examples of tangles.  In the first and third diagrams we decorate {\em every} left-exposed strand. 
 	}
	\label{typeAtiling2lkjs2}
\end{figure}


%
%
%

We define ${\sf e}_i$ for $i\geq 1$ to be the undecorated $n$-tangle with  
a single pair of arcs connecting the $i$th and $(i+1)$th northern (respectively  southern) vertices,  
and with 
$(n-2)$ vertical strands.  
We set ${\sf e}_{1'}={\sf e}_{1''}$ to be the $n$-tangle
which has a single pair of arcs connecting the $1$st and $2$nd  northern (respectively  southern) vertices, both of which carry a single bead, and 
with 
$(n-2)$ vertical undecorated strands.  
We use the distinct subscripts  $1'$ and $1''$ to remind the reader  of  the corresponding Coxeter labels in \cref{coxeterlabelD2} for what follows.
Examples of these elements are depicted in \cref{gens}.

\begin{figure}[H]
	\begin{tikzpicture}  [scale=0.85]
		
		\clip(4,1.82) rectangle ++ (3.5,-2.7);
		
		\path (4,1) coordinate (origin); 
		\path (origin)--++(0.5,0.5) coordinate (origin2);  
		\draw(origin2)--++(0:3); 
		\foreach \i in {1,2,3,4,5}
		{
			\path (origin2)--++(0:0.5*\i) coordinate (a\i); 
			\path (origin2)--++(0:0.5*\i)--++(-90:0.08) coordinate (c\i); 
			\draw (a\i)circle (1pt); 
		}
		
		\foreach \i in {1,2,3,4,5}
		{
			\path (origin2)--++(0:0.25*\i) --++(-90:0.5) coordinate (b\i); 
			\path (origin2)--++(0:0.25*\i) --++(-90:0.7) coordinate (d\i); 
		}

		\draw[  very thick](c2) to [out=-90,in=0] (b3) to [out=180,in=-90] (c1);

		\draw[fill=black](b3) circle (3pt);

		\draw[very   thick](c3)  --  (6,0.55);

		\draw[very   thick](c4)  --  (6.5,0.55);
		
		\draw[very   thick](c5)  --  (7,0.55);

		\path (4,-1) coordinate (origin); 
		\path (origin)--++(0.5,0.5) coordinate (origin2);  
		\draw(origin2)--++(0:3); 
		\foreach \i in {1,2,3,4,5}
		{
			\path (origin2)--++(0:0.5*\i) coordinate (a\i); 
			\path (origin2)--++(0:0.5*\i)--++(90:0.08) coordinate (c\i); 
			\draw (a\i)circle (1pt);  }
		
		\foreach \i in {1,2,3,4,5,...,19}
		{
			\path (origin2)--++(0:0.25*\i) --++(90:0.5) coordinate (b\i); 
			\path (origin2)--++(0:0.25*\i) --++(90:0.65) coordinate (d\i); 
		}

		\draw[  very thick](c2) to [out=90,in=0] (b3) to [out=180,in=90] (c1);

		\draw[very   thick](c3)  --  (6,0.55);

		\draw[fill=black](b3) circle (3pt);
		
		\draw[very   thick](c4)  --  (6.5,0.55);
		
		\draw[very   thick](c5)  --  (7,0.55);

	\end{tikzpicture}  
\begin{tikzpicture}  [scale=0.85]
		
		\clip(4,1.82) rectangle ++ (3.5,-2.7);
		
		\path (4,1) coordinate (origin); 
		\path (origin)--++(0.5,0.5) coordinate (origin2);  
		\draw(origin2)--++(0:3); 
		\foreach \i in {1,2,3,4,5}
		{
			\path (origin2)--++(0:0.5*\i) coordinate (a\i); 
			\path (origin2)--++(0:0.5*\i)--++(-90:0.08) coordinate (c\i); 
			\draw (a\i)circle (1pt); 
		}
		
		\foreach \i in {1,2,3,4,5}
		{
			\path (origin2)--++(0:0.25*\i) --++(-90:0.5) coordinate (b\i); 
			\path (origin2)--++(0:0.25*\i) --++(-90:0.7) coordinate (d\i); 
		}

		\draw[  very thick](c2) to [out=-90,in=0] (b3) to [out=180,in=-90] (c1);

		\draw[very   thick](c3)  --  (6,0.55);

		\draw[very   thick](c4)  --  (6.5,0.55);
		
		\draw[very   thick](c5)  --  (7,0.55);

		\path (4,-1) coordinate (origin); 
		\path (origin)--++(0.5,0.5) coordinate (origin2);  
		\draw(origin2)--++(0:3); 
		\foreach \i in {1,2,3,4,5}
		{
			\path (origin2)--++(0:0.5*\i) coordinate (a\i); 
			\path (origin2)--++(0:0.5*\i)--++(90:0.08) coordinate (c\i); 
			\draw (a\i)circle (1pt);  }
		
		\foreach \i in {1,2,3,4,5,...,19}
		{
			\path (origin2)--++(0:0.25*\i) --++(90:0.5) coordinate (b\i); 
			\path (origin2)--++(0:0.25*\i) --++(90:0.65) coordinate (d\i); 
		}

		\draw[  very thick](c2) to [out=90,in=0] (b3) to [out=180,in=90] (c1);

		\draw[very   thick](c3)  --  (6,0.55);


		\draw[very   thick](c4)  --  (6.5,0.55);
		
		\draw[very   thick](c5)  --  (7,0.55);

	\end{tikzpicture}  
	\begin{tikzpicture}  [scale=0.85]
		
		\clip(4,1.82) rectangle ++ (3.5,-2.7);
		
		\path (4,1) coordinate (origin); 
		\path (origin)--++(0.5,0.5) coordinate (origin2);  
		\draw(origin2)--++(0:3); 
		\foreach \i in {1,2,3,4,5}
		{
			\path (origin2)--++(0:0.5*\i) coordinate (a\i); 
			\path (origin2)--++(0:0.5*\i)--++(-90:0.08) coordinate (c\i); 
			\draw (a\i)circle (1pt); 
		}
		
		\foreach \i in {1,2,3,4,5}
		{
			\path (origin2)--++(0:0.25*\i) --++(-90:0.5) coordinate (b\i); 
			\path (origin2)--++(0:0.25*\i) --++(-90:0.7) coordinate (d\i); 
		}

		\draw[  very thick](c3) to [out=-90,in=0] (b5) to [out=180,in=-90] (c2);

		\draw[very   thick](c1)  --  (5,0.55);

		\draw[very   thick](c4)  --  (6.5,0.55);
		
		\draw[very   thick](c5)  --  (7,0.55);

		\path (4,-1) coordinate (origin); 
		\path (origin)--++(0.5,0.5) coordinate (origin2);  
		\draw(origin2)--++(0:3); 
		\foreach \i in {1,2,3,4,5}
		{
			\path (origin2)--++(0:0.5*\i) coordinate (a\i); 
			\path (origin2)--++(0:0.5*\i)--++(90:0.08) coordinate (c\i); 
			\draw (a\i)circle (1pt);  }
		
		\foreach \i in {1,2,3,4,5,...,19}
		{
			\path (origin2)--++(0:0.25*\i) --++(90:0.5) coordinate (b\i); 
			\path (origin2)--++(0:0.25*\i) --++(90:0.65) coordinate (d\i); 
		}

		\draw[  very thick](c3) to [out=90,in=0] (b5) to [out=180,in=90] (c2);

		\draw[very   thick](c1)  --  (5,0.55);


		\draw[very   thick](c4)  --  (6.5,0.55);
		
		\draw[very   thick](c5)  --  (7,0.55);

	\end{tikzpicture}  
	\quad	\begin{tikzpicture}  [xscale=-0.85,yscale=0.85]
		
		\clip(4,1.82) rectangle ++ (3.5,-2.7);
		
		\path (4,1) coordinate (origin); 
		\path (origin)--++(0.5,0.5) coordinate (origin2);  
		\draw(origin2)--++(0:3); 
		\foreach \i in {1,2,3,4,5}
		{
			\path (origin2)--++(0:0.5*\i) coordinate (a\i); 
			\path (origin2)--++(0:0.5*\i)--++(-90:0.08) coordinate (c\i); 
			\draw (a\i)circle (1pt); 
		}
		
		\foreach \i in {1,2,3,4,5}
		{
			\path (origin2)--++(0:0.25*\i) --++(-90:0.5) coordinate (b\i); 
			\path (origin2)--++(0:0.25*\i) --++(-90:0.7) coordinate (d\i); 
		}

		\draw[  very thick](c3) to [out=-90,in=0] (b5) to [out=180,in=-90] (c2);

		\draw[very   thick](c1)  --  (5,0.55);

		\draw[very   thick](c4)  --  (6.5,0.55);
		
		\draw[very   thick](c5)  --  (7,0.55);

		\path (4,-1) coordinate (origin); 
		\path (origin)--++(0.5,0.5) coordinate (origin2);  
		\draw(origin2)--++(0:3); 
		\foreach \i in {1,2,3,4,5}
		{
			\path (origin2)--++(0:0.5*\i) coordinate (a\i); 
			\path (origin2)--++(0:0.5*\i)--++(90:0.08) coordinate (c\i); 
			\draw (a\i)circle (1pt);  }
		
		\foreach \i in {1,2,3,4,5,...,19}
		{
			\path (origin2)--++(0:0.25*\i) --++(90:0.5) coordinate (b\i); 
			\path (origin2)--++(0:0.25*\i) --++(90:0.65) coordinate (d\i); 
		}

		\draw[  very thick](c3) to [out=90,in=0] (b5) to [out=180,in=90] (c2);

		\draw[very   thick](c1)  --  (5,0.55);


		\draw[very   thick](c4)  --  (6.5,0.55);
		
		\draw[very   thick](c5)  --  (7,0.55);

	\end{tikzpicture}  
	\caption{
 The $n$-tangle   $\diag_{1'} =\diag_{1''}  $, $\diag_1$,  $\diag_2$, and $\diag_3$    
  for $n = 5$.
	}
	\label{gens}
\end{figure} 

We now recall the diagrammatic visualisation of the generalised Temperley--Lieb algebras ${\rm TL}_W(q)$ given in \cref{genTL}. From now on, we take $R=\mathbb{Z}[q,q^{-1}]$.

\begin{thm}[Kauffman \cite{Kau87} ]\label{thmGreenA} The Temperley--Lieb algebra ${\rm TL}_{A_n}(q)$ of type $A_n$,  is isomorphic to the subquotient of the algebra $R\mathbb{DT}_{n+1}$ generated by ${\sf e}_1, \ldots , {\sf e}_n$  subject to the relation
\begin{align*} 
\begin{minipage}{1.55cm}
\begin{tikzpicture}[yscale=1,xscale=-1]
\draw[densely dotted,rounded corners] (-22pt,-22pt) rectangle (22pt,22pt);
\clip (0,0) circle (22pt);
\draw[very thick] 
 circle (8pt);
 \end{tikzpicture}
\end{minipage}=q +q ^{-1}
 \end{align*}It has a basis given by the set of all undecorated $(n+1)$-tangles with no loops, which we denote by $\mathbb{DT}[A_n]$.
\end{thm}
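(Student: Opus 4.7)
The plan is to follow the classical Kauffman--Jones strategy: construct a surjective $R$-algebra homomorphism
\[
\phi\colon {\rm TL}_{A_n}(q)\longrightarrow \langle {\sf e}_1,\dots,{\sf e}_n\rangle \subseteq R\mathbb{DT}_{n+1}/I,
\]
where $I$ is the two-sided ideal enforcing the loop relation, sending $U_i\mapsto {\sf e}_i$, and then prove it is an isomorphism by identifying the image with the $R$-span of $\mathbb{DT}[A_n]$.

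Well-definedness of $\phi$ is an exercise in planar isotopy, one relation at a time. The relation $U_i^2=(q+q^{-1})U_i$ is immediate from stacking two copies of ${\sf e}_i$: the arcs close up to produce a single closed loop above a copy of ${\sf e}_i$, and the loop relation converts this into $(q+q^{-1}){\sf e}_i$. The commutation $U_iU_j=U_jU_i$ for $|i-j|\geq 2$ is obvious since the two pairs of arcs occupy disjoint horizontal intervals. Finally, $U_iU_{j}U_i=U_i$ for $|i-j|=1$ is the standard zig-zag move: the middle ${\sf e}_j$ can be straightened against the two outer ${\sf e}_i$'s, returning ${\sf e}_i$ with no extra loops.

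For the basis claim I would handle spanning and independence separately. \textbf{Spanning:} any monomial ${\sf e}_{i_1}\cdots {\sf e}_{i_r}$, interpreted as a vertical concatenation of decorated tangles (here undecorated), is itself an $(n+1)$-tangle, possibly carrying finitely many closed loops produced by the concatenation; repeatedly applying the loop relation yields $(q+q^{-1})^{k}$ times an element of $\mathbb{DT}[A_n]$ for some $k\geq 0$. Hence $\mathbb{DT}[A_n]$ $R$-spans the image of $\phi$. \textbf{Independence:} one shows $|\mathbb{DT}[A_n]|=C_{n+1}$, the $(n+1)$-st Catalan number, since loopless undecorated $(n+1)$-tangles are in bijection with non-crossing perfect matchings of the $2(n+1)$ boundary points. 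Comparing with the well-known fact that ${\rm TL}_{A_n}(q)$ is free of $R$-rank $C_{n+1}$ forces $\phi$ to be an isomorphism and $\mathbb{DT}[A_n]$ to be $R$-linearly independent.

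The main obstacle is this last rank statement: the surjective-map-between-equal-rank-free-modules argument silently imports the classical computation of $\dim_{R}{\rm TL}_{A_n}(q)=C_{n+1}$, which is not entirely formal. A conceptual alternative, which I would adopt if the reader is not assumed to know this, is to exhibit a faithful representation of the diagrammatic algebra --- for instance the action on $V^{\otimes(n+1)}$ where $V$ is the standard $U_q(\mathfrak{sl}_2)$-module via the Jones--Temperley--Lieb functor --- and to check that the matrices assigned to distinct elements of $\mathbb{DT}[A_n]$ are linearly independent over $R$. This sidesteps a pure dimension count at the cost of constructing the representation, but the underlying combinatorial content (the Catalan enumeration) is the same.
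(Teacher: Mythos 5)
The paper does not actually supply a proof of this theorem; it is cited as a classical result of Kauffman, and only later (\cref{productofgenerators}) does the paper develop the tiling/heap machinery that encodes the relevant combinatorics. Your sketch follows the standard strategy, but it has one genuine gap.

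You show that every monomial ${\sf e}_{i_1}\cdots {\sf e}_{i_r}$ reduces, after removing loops, to $(q+q^{-1})^k$ times a loopless tangle, so the image of $\phi$ is \emph{contained in} the $R$-span of $\mathbb{DT}[A_n]$. What you never establish is the converse containment: that every loopless undecorated $(n+1)$-tangle actually arises as a product of the ${\sf e}_i$, and hence lies in the subquotient at all. Without this, $\mathbb{DT}[A_n]$ is not known to be a spanning set of the subquotient, only a superset of one. The dimension count cannot repair this: even granting that $\mathrm{TL}_{A_n}(q)$ is free of rank $C_{n+1}$ and that $\mathbb{DT}[A_n]$ is linearly independent of cardinality $C_{n+1}$, all you obtain is that the image of $\phi$ is a free submodule of $\operatorname{span}_R(\mathbb{DT}[A_n])$ of rank at most $C_{n+1}$. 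Over a field this would force equality, but over $R=\mathbb{Z}[q,q^{-1}]$ a rank-$C_{n+1}$ submodule of a free rank-$C_{n+1}$ module need not be the whole module, so the argument does not close. And independently of the base ring, the ``well-known fact'' you import --- freeness of rank $C_{n+1}$ for the abstract algebra --- is itself usually proved \emph{by} constructing the diagram basis, which makes the argument uncomfortably close to circular.

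The missing ingredient is the elementary but non-vacuous combinatorial fact that every loopless planar matching on $2(n+1)$ points factors as a product ${\sf e}_{i_1}\cdots {\sf e}_{i_r}$. This is exactly what the paper's tiling construction delivers: for each $d\in\mathbb{DT}[W,P]$ one reads off a region $R(d)$, a reduced word $\underline{w}(d)$, and a verification that $e_{\underline{w}(d)}=d$. You should either prove this factorization directly (an induction on the number of arcs, or the heap argument of Stembridge, both work) or cite it explicitly, before any dimension count is invoked. Once it is in hand, spanning becomes genuine, linear independence of $\mathbb{DT}[A_n]$ in $R\mathbb{DT}_{n+1}/I$ is immediate from the fact that the loop ideal merely rescales loopless tangles, and injectivity of $\phi$ then follows from comparing with the abstract spanning set of size $C_{n+1}$; no faithful-representation detour is needed.
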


\begin{thm}[Green \cite{MR1618912}]\label{thmGreenB} The generalised Temperley--Lieb algebra  ${\rm TL}_{C_n}(q)$ of type $C_n$  is isomorphic to the subquotient of the algebra $R\mathbb{DT}_{n+1}$ generated by ${\sf e}_{1'}, {\sf e}_2, \ldots , {\sf e}_n$ subject to the relations
 \[
\begin{minipage}{1.55cm}
\begin{tikzpicture}[yscale=1,xscale=-1]
\draw[densely dotted,rounded corners] (-22pt,-22pt) rectangle (22pt,22pt);
\draw[very thick] (0,-22pt)--++(90:44pt);
\fill[very thick] (0,-8pt)
 circle (4pt);
\fill[very thick] (0,8pt)
 circle (4pt);

 \end{tikzpicture}
\end{minipage}=
\begin{minipage}{1.55cm}
\begin{tikzpicture}[yscale=1,xscale=-1]\draw[densely dotted,rounded corners] (-22pt,-22pt) rectangle (22pt,22pt);
\fill[very thick] (0,0)
 circle (4pt);
\draw[very thick] (0,-22pt)--++(90:44pt);
 \end{tikzpicture}
\end{minipage}
\qquad\quad\quad  
\begin{minipage}{1.55cm}
\begin{tikzpicture}[yscale=1,xscale=-1]\draw[densely dotted,rounded corners] (-22pt,-22pt) rectangle (22pt,22pt);
\clip (0,0) circle (22pt);
\draw[very thick] 
 circle (8pt);
  \fill[very thick] (0pt,8pt)  circle (4pt) ;
 \end{tikzpicture}
\end{minipage}=
\begin{minipage}{1.85cm}
\begin{tikzpicture}[yscale=1.2,xscale=-1.2]
\draw[densely dotted,rounded corners] (-22pt,-22pt) rectangle (22pt,22pt);
\clip (0,0) circle (22pt);
\draw[very thick] 
 circle (8pt);
 \end{tikzpicture}
\end{minipage}=q +q ^{-1}
\]
 It has a basis given by the set of all $(n+1)$-tangles with no loops and at most one decoration on each strand and satisfying one of the following (mutually exclusive) conditions.
\begin{enumerate}[leftmargin=*]
\item The leftmost northern vertex is connected to the leftmost southern vertex by an undecorated strand.
\item The leftmost northern vertex is connected to the leftmost southern vertex  by a decorated strand and there is at least one northern and one southern arc.
\item The strands emerging from the leftmost northern and southern vertices are distinct and both decorated.
\end{enumerate}  
We denote the set of all such $(n+1)$-tangles by $\mathbb{DT}[C_n]$.
\end{thm}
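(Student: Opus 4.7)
The plan is to establish the theorem in three stages: first check that the specified tangles $\diag_{1'}, \diag_2,\dots,\diag_n$ satisfy the defining relations of ${\rm TL}_{C_n}(q)$, then prove that $\mathbb{DT}[C_n]$ spans the subquotient, and finally prove linear independence by a dimension count. Throughout, I would exploit the two explicit local relations displayed in the statement (the decorated-bigon relation $\bullet\bullet = \bullet$ on a strand, and the decorated loop evaluating to $q+q^{-1}$).

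First I would verify the Temperley--Lieb-of-type-$C$ relations inside the subquotient of $R\mathbb{DT}_{n+1}$. For $i,j\ge 2$ this is just Kauffman's classical diagrammatic Temperley--Lieb computation (\cref{thmGreenA}). For the new generator, stacking two copies of $\diag_{1'}$ creates a closed loop carrying exactly two beads; the bead-doubling rule collapses these to a single bead and the decorated-loop relation evaluates to $q+q^{-1}$, giving $\diag_{1'}^2 = (q+q^{-1})\diag_{1'}$. For the mixed braid relation $\diag_{1'}\diag_2\diag_{1'}\diag_2 = 2\,\diag_{1'}\diag_2$ required by (\ref{TL234}), I would draw the four-factor diagram, observe that it produces the tangle $\diag_{1'}\diag_2$ together with one closed loop bearing two beads, and apply the bead relation then the loop evaluation, but keeping in mind the specialisation at $q=1$ that produces the coefficient $2$ (this is exactly the $m_{i,j}=4$ Fan--Graham computation for short root $\alpha_{1'}$).

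Second, for spanning, I would argue by induction on word length. A product of generators yields a decorated $(n+1)$-tangle in which beads appear only where a new $\diag_{1'}$ has been multiplied in; the only way a bead can arise on a strand that is not left exposed in the final tangle is if it has been trapped by an arc placed to its left, which can always be unwound using the Temperley--Lieb isotopies, pulling beads through until they rest on a left-exposed strand. Any closed loop, decorated or undecorated, can be evaluated via the loop relations. Inspection of the three mutually exclusive cases in the statement is then a finite case check: beads may appear either on both (necessarily distinct) left-boundary propagating strands, or on the unique propagating strand from vertex $1$ provided some arc exists on each side, or not at all --- these are exactly the configurations that cannot be simplified further.

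The hard part is linear independence. My plan would be to compare ranks: the $R$-rank of ${\rm TL}_{C_n}(q)$ is the number of fully commutative elements of $W(C_n)$, and I would exhibit a bijection between $\mathbb{DT}[C_n]$ and this set. A natural recipe is to encode a decorated tangle by the sign sequence read along the top boundary where arcs are matched off in nested pairs and the decoration/no-decoration on the residual propagating strands records the additional $\mathbb{Z}/2$ data coming from ${\sf e}_{1'}$; this matches the visualisation of $^PW$ by $\up/\down/\circ$ sequences described in \cref{section4} for $(C_n,A_{n-1})$ and extends to the two-sided picture via a Robinson--Schensted-style shape-matching argument. Once surjectivity onto the $R$-span of $\mathbb{DT}[C_n]$ and equality of ranks are both in hand, the map is an isomorphism of finitely generated free $R$-modules, completing the proof. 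The main technical obstacle is pinning down the bijection cleanly enough to count on both sides without ambiguity around the interaction between the decoration on a left-exposed propagating strand and the parity of nested decorated arcs.
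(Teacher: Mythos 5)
The paper states this result with the attribution ``[Green]'' and offers no proof; it is recalled from \cite{MR1618912}. So there is no argument in the paper to compare against, and your sketch must stand on its own. As written, it has a genuine computational error in the relation--checking step, and the independence argument rests on a confusion of parametrising sets.

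The assertion that $\diag_{1'}\diag_2\diag_{1'}\diag_2$ ``produces the tangle $\diag_{1'}\diag_2$ together with one closed loop bearing two beads'' is false. Trace the composite (bottom to top: $\diag_{1'},\diag_2,\diag_{1'},\diag_2$). The strand entering the bottom boundary at position $3$ travels up the vertical of box $1$, across the cup $(2,3)$ of box $2$, across the decorated cap $(1,2)$ of box $1$, up the vertical of box $2$, across the decorated cup $(1,2)$ of box $3$, across the cap $(2,3)$ of box $2$, up the vertical of box $3$, across the cup $(2,3)$ of box $4$, across the decorated cap $(1,2)$ of box $3$, and up the vertical of box $4$, exiting at the top at position $1$ with three beads. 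The only other components are the decorated southern cup $(1,2)$ and the undecorated northern cap $(2,3)$. Every boundary point is accounted for, so no closed loop occurs. After the bead--merging relation this is literally $\diag_{1'}\diag_2$, with coefficient $1$, and the decorated--tangle calculus by itself does not produce the coefficient $2$. Your remark about ``the specialisation at $q=1$'' is also not a legitimate algebraic step --- the algebra is defined over $R = \mathbb{Z}[q,q^{-1}]$, and even had a decorated loop appeared it would contribute $q+q^{-1}$, not $2$. Establishing the Fan--Graham relation $U_{1'}U_2U_{1'}U_2 = 2U_{1'}U_2$ genuinely requires an argument outside the tangle picture; contrast this with \cref{rmkmequals4}, where the $2$ arises in the \emph{oriented} algebra as a sum over two idempotent sectors ${\sf 1}_\lambda$ and ${\sf 1}_{\lambda s_i}$, not from a loop evaluation. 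Green's actual proof works instead by constructing an explicit $\mathcal H$-module map and comparing monomial bases, and you would need to do something of that kind here.

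On linear independence: matching ranks is the right overall shape, and it is true that ${\rm TL}_{C_n}(q)$ has $R$-rank equal to the number of fully commutative elements of $W(C_n)$. But the bijection you propose conflates two different sets. The $\up/\down/\circ$ coset diagrams of Section~\ref{section4} parametrise $^PW$ for $(C_n,A_{n-1})$, of size $2^n$, whereas the basis of ${\rm TL}_{C_n}(q)$ is indexed by all fully commutative elements of $W(C_n)$, of size $(n+2)\binom{2n}{n}/(n+1)-1$, which is strictly larger for $n\geq 2$. The correct device is the heap/tiling encoding of fully commutative elements in the style of Stembridge (the same tiling picture that Section~\ref{productofgenerators} later uses for the oriented diagrams), and one must track carefully how the three mutually exclusive decoration conditions in the theorem correspond to parity data in the heap. ``Robinson--Schensted--style shape--matching'' is not the standard framework here, and as written this part of the plan is too vague to verify.
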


\begin{thm}[Green \cite{MR1618912}]\label{thmGreenD} The generalised Temperley--Lieb algebra ${\rm TL}_{D_n}(q)$ of type $D_n$  is isomorphic to  the subquotient of the algebra $R\mathbb{DT}_{n}$ generated by ${\sf e}_{1''}, {\sf e}_1, {\sf e}_2, \ldots {\sf e}_{n-1}$ subject to relations
$$
\begin{minipage}{1.55cm}
\begin{tikzpicture}[yscale=1,xscale=-1]
\draw[densely dotted,rounded corners] (-22pt,-22pt) rectangle (22pt,22pt);
 \draw[very thick] (0,-22pt)--++(90:44pt);
\fill[very thick] (0,-8pt)
 circle (4pt);
\fill[very thick] (0,8pt)
 circle (4pt);
  \end{tikzpicture}
\end{minipage}=
\begin{minipage}{1.55cm}
\begin{tikzpicture}[yscale=1,xscale=-1]
\draw[densely dotted,rounded corners] (-22pt,-22pt) rectangle (22pt,22pt);
\draw[very thick] (0,-22pt)--++(90:44pt);
  \end{tikzpicture}
\end{minipage}
\qquad\qquad 
 \begin{minipage}{1.55cm}
\begin{tikzpicture}[yscale=1,xscale=-1]
\draw[densely dotted,rounded corners] (-22pt,-22pt) rectangle (22pt,22pt);
\clip (0,0) circle (22pt);
\draw[very thick] 
 circle (8pt);
   \end{tikzpicture}
\end{minipage}=q +q ^{-1}
\qquad\qquad
 \begin{minipage}{1.55cm}
\begin{tikzpicture}[yscale=1,xscale=1]
\draw[densely dotted,rounded corners] (-22pt,-22pt) rectangle (22pt,22pt);
\clip (0,0) circle (22pt);
\draw[very thick] (-5pt,0)
 circle (8pt);
  \fill[very thick] (-5pt,8pt)  circle (4pt) ;
  \draw[very thick] (0,-22pt) to [out=90, in=-90] (16pt,0) to [out=90, in=-90] (0,22pt);
   \fill[very thick] (15pt,0pt)  circle (4pt) ;
  \end{tikzpicture}
\end{minipage}=
 \begin{minipage}{1.55cm}
\begin{tikzpicture}[yscale=1,xscale=1]
\draw[densely dotted,rounded corners] (-22pt,-22pt) rectangle (22pt,22pt);
\clip (0,0) circle (22pt);
\draw[very thick] (-5pt,0)
 circle (8pt);
  \fill[very thick] (-5pt,8pt)  circle (4pt) ;
  \draw[very thick] (0,-22pt) to [out=90, in=-90] (16pt,0) to [out=90, in=-90] (0,22pt);

  \end{tikzpicture}\end{minipage}
$$It has a basis given by the set  of all $n$-tangles with at most one decoration on each strand or loop, and which satisfy one of the following (mutually exclusive) conditions.
\begin{enumerate}[leftmargin=*]
\item It contains one decorated loop and no other loops or decorations, and there is at least one northern and one southern arc.
\item It contains no loops and the number of decorations is even.
\end{enumerate}
We denote the set of all such $n$-tangles by $\mathbb{DT}[D_n]$.
\end{thm}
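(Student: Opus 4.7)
The plan is to follow the standard three-step strategy used by Kauffman in type $A$ and by Graham and Green for the other classical types. First, I would verify that the decorated tangles ${\sf e}_{1''}, {\sf e}_1, \ldots, {\sf e}_{n-1}$ satisfy the defining Temperley--Lieb relations (\ref{TL1}) and (\ref{TL234}), thereby yielding a surjective $R$-algebra homomorphism from ${\rm TL}_{D_n}(q)$ onto the subquotient of $R\mathbb{DT}_n$ they generate. Then I would show that the set $\mathbb{DT}[D_n]$ spans this subquotient. Finally, I would prove that $\mathbb{DT}[D_n]$ is linearly independent, giving the asserted basis.

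For the first step, the relations $U_i U_j = U_j U_i$ when $m_{ij} = 2$ follow from planar isotopy when the generators have disjoint support, and from a direct calculation using the first defining relation (to cancel pairs of decorations) when $\{i, j\} = \{1'', 1\}$, since ${\sf e}_{1''}$ and ${\sf e}_1$ both occupy positions $\{1, 2\}$ but carry different decoration patterns. The quadratic relation $U_i^2 = (q+q^{-1}) U_i$ reduces to the loop-evaluation relation, again after cancelling decoration pairs when $i = 1''$. The braid-type identities $U_i U_j U_i = U_i$ for $m_{ij} = 3$ are routine for $\{i, j\} \subseteq \{1, 2, \ldots, n-1\}$; the subtle cases are $U_{1''} U_2 U_{1''} = U_{1''}$ and $U_2 U_{1''} U_2 = U_2$, where the expansion produces a loop carrying two decorations that must be straightened using the first relation. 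Since there are no generators with $m_{ij} = 4$ in type $D_n$, no further identities need to be checked.

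For the spanning step, any word in the generators gives a decorated $n$-tangle (possibly with several decorations per component and several closed loops). The first defining relation allows us to reduce to at most one decoration per strand or loop; the second relation removes each undecorated loop at the cost of a scalar $q+q^{-1}$; and the third relation lets us absorb decorations on propagating strands into a decorated loop whenever one is present. Combining these moves, every word reduces, up to a scalar in $R$, to a diagram in $\mathbb{DT}[D_n]$: either a diagram with no loops and an even number of decorations, or a diagram with exactly one decorated loop, no other decorations, and at least one northern and one southern arc, this final condition reflecting the fact that a decorated loop cannot arise unless the corresponding subexpression produces such arcs.

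The hardest step is linear independence, which I expect to be the principal obstacle. The most direct route is to match cardinalities with Graham's formula for $\dim {\rm TL}_{D_n}(q)$, which equals the number of fully commutative elements in the Coxeter group $W(D_n)$. One must show in particular that the two classes of diagrams (with and without a decorated loop) contribute the expected counts and that no spurious identifications are introduced by the relations. Alternatively, one can exhibit a faithful cellular action on a suitable link-state module in which the decorated loops correspond to an independent idempotent, making the separation of the two classes manifest; this is the approach closest in spirit to the rest of the paper, and lifts cleanly to the anti-spherical module studied in \cref{results2}. Either route completes the proof that ${\rm TL}_{D_n}(q)$ is realised as the stated subquotient with basis naturally indexed by $\mathbb{DT}[D_n]$.
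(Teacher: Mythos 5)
The paper does not supply a proof of this theorem: it is recalled as Green's result and cited to \cite{MR1618912} (the surrounding text says ``We now recall the diagrammatic description of the generalised Temperley--Lieb algebras \ldots due to Kauffman \ldots and Green''), so there is no internal argument to compare against. Your three-step strategy --- verify that the generators satisfy the defining relations, show the set $\mathbb{DT}[D_n]$ spans the subquotient, and establish linear independence by a dimension count against Stembridge's enumeration of fully commutative elements of $W(D_n)$ or via a cellular action --- is exactly the standard route, and essentially Green's own.

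That said, the bookkeeping in your relation checks is slightly off, and worth correcting since identifying which relation does the work is the whole content of the first step. For the commutation $U_{1''}U_1 = U_1U_{1''}$, composing ${\sf e}_{1''}$ with ${\sf e}_1$ in either order produces a once-decorated closed loop together with one decorated and one undecorated boundary arc; the two products differ only in whether the surviving bead sits on the northern or the southern arc, and they are identified by the \emph{third} relation (a decorated loop absorbs a bead from a nearby left-exposed strand), not by the double-bead cancellation you cite. Conversely, for the braid identities $U_{1''}U_2U_{1''}=U_{1''}$ and $U_2U_{1''}U_2=U_2$, tracing the composition carefully shows that \emph{no} closed loop is created: the two interior beads both land on the single wiggly propagating strand (exactly as in the type-$A$ braid move $e_1e_2e_1=e_1$), and the double-bead cancellation \emph{on a strand} is what reduces the word. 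A genuine two-bead loop appears only in $U_{1''}^2=(q+q^{-1})U_{1''}$, which you treat correctly (cancel the beads, then evaluate the undecorated loop).
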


\subsection{Diagrammatic oriented Temperley--Lieb algebras}

Recall from Section 5 that  we can represent each coset representative $\lambda \in {^PW}$ by its coset diagram, which we also denote by $\lambda$. For $W\in\{A_n, C_n, D_n\}$, we let $\mathbb{DT}[W]$ be the set of tangles defined in \cref{thmGreenA,thmGreenB,thmGreenD}. Now starting with any tangle in $\mathbb{DT}[W]$ we will form so-called {\sf oriented tangles} by considering elements of the form $\lambda d \mu$ for $\lambda, \mu \in {^PW}$ by placing $\lambda$, respectively $\mu$ on the southern, respectively  northern, boundary of $d$. Each strand of $d$ in $\lambda d \mu$ now connects two symbols from the set $\{ \up, \down, \circ \}$ from the boundaries. 
A strand connecting two symbols from the set $\{\up, \down\}$	is said to be  {\sf oriented} if one
		 arrow points into the strand and the other arrow point out of the strand.	
We say that a strand connecting two symbols from the set $\{\up, \down\}$ is {\sf flip-oriented} if either both arrows point into the strand or both arrows point out of the strand. 
A strand connecting $\circ$ with one symbol from the set $\{\up , \down\}$ is said to be both oriented and flip-oriented.

\begin{defn} 
For $d\in \mathbb{DT}[W]$ and $\lambda , \mu\in {^PW}$ we say that $\lambda d \mu$ is an {\sf oriented tangle of type $(W,P)$} if the following conditions holds.
\begin{itemize}
\item Every undecorated strand is oriented.
\end{itemize}
Moreover, if $(W,P) = (D_n, A_{n-1})$ then
\begin{itemize}
\item every decorated strand is flip-oriented, and 
\item there are no loops.
\end{itemize}
If $(W,P)=(C_n, A_{n-1})$ then 
\begin{itemize}
\item there are no decorations on the strand connecting $\circ$ to $\circ$.
\end{itemize}
We denote by $\mathbb{ODT}[W,P]$ the set of all oriented tangles of type $(W,P)$ and refer to these as oriented Temperley--Lieb diagrams of type $(W,P)$.
\end{defn}

\noindent Examples of oriented tangles are given in \cref{typeAtiling2lkjsdfghslkdfjghsldk,typeAtiling2lkjsdfghslkdfjghsldkXXX}.

\begin{figure} [H]
	$$   \begin{tikzpicture} [scale=0.85]
		
		\clip(4,1.82) rectangle ++ (6,-2.7);
		
		\path (4,1) coordinate (origin); 
		\path (origin)--++(0.5,0.5) coordinate (origin2);  
		\draw(origin2)--++(0:5); 
		\foreach \i in {1,2,3,4,5,...,9}
		{
			\path (origin2)--++(0:0.5*\i) coordinate (a\i); 
			\path (origin2)--++(0:0.5*\i)--++(-90:0.08) coordinate (c\i); 
			\draw (a\i)circle (1pt);  }
		
		\foreach \i in {1,2,3,4,5,...,19}
		{
			\path (origin2)--++(0:0.25*\i) --++(-90:0.5) coordinate (b\i); 
			\path (origin2)--++(0:0.25*\i) --++(-90:0.7) coordinate (d\i); 
		}
		\path(a1) --++(90:0.175) node  {  $  \down   $} ;
		\path(a2) --++(90:0.175) node  {  $  \down   $} ;
		\path(a3) --++(-90:0.175) node  {  $  \up   $} ;
		\path(a4) --++(-90:0.175) node  {  $  \up   $} ;
		\path(a5) --++(-90:0.175) node  {  $  \up  $} ;
		\path(a6) --++(90:-0.175) node  {  $  \up  $} ;
		\path(a7) --++(90:0.175) node  {  $  \down  $} ;
		\path(a8) --++(90:0.175) node  {  $  \down  $} ; 
		\path(a9) --++(-90:0.175) node  {  $  \up  $} ;

		\draw[  very thick](c3) to [out=-90,in=0] (b5) to [out=180,in=-90] (c2); 
		\draw[  very thick](c4) to [out=-90,in=0] (d5) to [out=180,in=-90] (c1);

		\draw[  very thick](c9) to [out=-90,in=0] (b17) to [out=180,in=-90] (c8);

		\draw[  very thick](c5) to [out=-90,in=15] (6,0.55);  
		
		\draw[  very thick](c6) to [out=-90,in=15] (7,0.55);

		\draw[  very thick](c7) to [out=-90,in=165] (8.5,0.55);

		\path (4,-1) coordinate (origin); 
		\path (origin)--++(0.5,0.5) coordinate (origin2);  
		\draw(origin2)--++(0:5); 
		\foreach \i in {1,2,3,4,5,...,9}
		{
			\path (origin2)--++(0:0.5*\i) coordinate (a\i); 
			\path (origin2)--++(0:0.5*\i)--++(90:0.08) coordinate (c\i); 
			\draw (a\i)circle (1pt);  }
		
		\foreach \i in {1,2,3,4,5,...,19}
		{
			\path (origin2)--++(0:0.25*\i) --++(90:0.5) coordinate (b\i); 
			\path (origin2)--++(0:0.25*\i) --++(90:0.65) coordinate (d\i); 
		}
		\path(a1) --++(90:-0.175) node  {  $  \up  $} ;
		\path(a2) --++(90:-0.175) node  {  $  \up   $} ;
		\path(a3) --++(90:-0.175) node  {  $  \up   $} ;
		\path(a4) --++(90:-0.175) node  {  $ \up   $} ;
		\path(a5) --++(90:-0.175) node  {  $  \up  $} ;
		
		\path(a6) --++(90:0.175) node  {  $ \down     $} ;
		\path(a7) --++(90:0.175) node  {  $  \down     $} ;
		\path(a8) --++(90:0.175) node  {  $ \down     $} ;
		\path(a9) --++(90:0.175) node  {  $  \down     $} ;
		
		\draw[  very thick](c6) to [out=90,in=0] (b11) to [out=180,in=90] (c5); 
		\draw[  very thick](c7) to [out=90,in=0] (d11) to [out=180,in=90] (c4); 
		
		\path(d11)--++(90:0.2) coordinate (e11); 
		\draw[  very thick](c8) to [out=90,in=0] (e11) to [out=180,in=90] (c3); 
		

		\draw[very   thick](c1) to [out=90,in=-170] (6,0.55); 
		\draw[very   thick](c2) to [out=90,in=-170] (7,0.55);   
		
		\draw[  very thick](c9) to [out=90,in=-15] (8.5,0.55);

	\end{tikzpicture} 
     \begin{tikzpicture} [scale=0.85]
		\clip(4,1.82) rectangle ++ (6,-2.7);

		\path (4,1) coordinate (origin); 
		\path (origin)--++(0.5,0.5) coordinate (origin2);  
		\draw(origin2)--++(0:5); 
		\foreach \i in {1,2,3,4,5,...,9}
		{
			\path (origin2)--++(0:0.5*\i) coordinate (a\i); 
			\path (origin2)--++(0:0.5*\i)--++(-90:0.08) coordinate (c\i); 
			\draw (a\i)circle (1pt);  }
		
		\foreach \i in {1,2,3,4,5,...,19}
		{
			\path (origin2)--++(0:0.25*\i) --++(-90:0.5) coordinate (b\i); 
			\path (origin2)--++(0:0.25*\i) --++(-90:0.7) coordinate (d\i); 
		}
		\path(a1) --++(90:0.175) node  {  $  \down   $} ;
		\path(a3) --++(90:0.175) node  {  $  \down   $} ;
		\path(a2) --++(-90:0.175) node  {  $  \up   $} ;
		\path(a4) --++(-90:0.175) node  {  $  \up   $} ;
		\path(a5) --++(-90:0.175) node  {  $  \up  $} ;
		\path(a6) --++(90:-0.175) node  {  $  \up  $} ;
		\path(a7) --++(90:0.175) node  {  $  \down  $} ;
		\path(a8) --++(90:0.175) node  {  $  \down  $} ; 
		\path(a9) --++(-90:0.175) node  {  $  \up  $} ;

		\draw[  very thick](c3) to [out=-90,in=0] (b5) to [out=180,in=-90] (c2); 
		\draw[  very thick](c4) to [out=-90,in=0] (d5) to [out=180,in=-90] (c1);

		\draw[  very thick](c9) to [out=-90,in=0] (b17) to [out=180,in=-90] (c8);

		\draw[  very thick](c5) to [out=-90,in=15] (6,0.55);  
		
		\draw[  very thick](c6) to [out=-90,in=15] (7,0.55);

		\draw[  very thick](c7) to [out=-90,in=165] (8.5,0.55);

		\path (4,-1) coordinate (origin); 
		\path (origin)--++(0.5,0.5) coordinate (origin2);  
		\draw(origin2)--++(0:5); 
		\foreach \i in {1,2,3,4,5,...,9}
		{
			\path (origin2)--++(0:0.5*\i) coordinate (a\i); 
			\path (origin2)--++(0:0.5*\i)--++(90:0.08) coordinate (c\i); 
			\draw (a\i)circle (1pt);  }
		
		\foreach \i in {1,2,3,4,5,...,19}
		{
			\path (origin2)--++(0:0.25*\i) --++(90:0.5) coordinate (b\i); 
			\path (origin2)--++(0:0.25*\i) --++(90:0.65) coordinate (d\i); 
		}
		\path(a1) --++(90:-0.175) node  {  $  \up  $} ;
		\path(a2) --++(90:-0.175) node  {  $  \up   $} ;
		\path(a3) --++(90:-0.175) node  {  $  \up   $} ;
		\path(a4) --++(90:-0.175) node  {  $ \up   $} ;
		\path(a5) --++(90:-0.175) node  {  $  \up  $} ;
		
		\path(a6) --++(90:0.175) node  {  $ \down     $} ;
		\path(a7) --++(90:0.175) node  {  $  \down     $} ;
		\path(a8) --++(90:0.175) node  {  $ \down     $} ;
		\path(a9) --++(90:0.175) node  {  $  \down     $} ;
		
		\draw[  very thick](c6) to [out=90,in=0] (b11) to [out=180,in=90] (c5); 
		\draw[  very thick](c7) to [out=90,in=0] (d11) to [out=180,in=90] (c4); 
		
		\path(d11)--++(90:0.2) coordinate (e11); 
		\draw[  very thick](c8) to [out=90,in=0] (e11) to [out=180,in=90] (c3); 
		

		\draw[very   thick](c1) to [out=90,in=-170] (6,0.55); 
		\draw[very   thick](c2) to [out=90,in=-170] (7,0.55);   
		
		\draw[  very thick](c9) to [out=90,in=-15] (8.5,0.55);

	\end{tikzpicture} 
 \begin{tikzpicture} [scale=0.85]
		
		\clip(4,1.82) rectangle ++ (6,-2.7);
		
		\path (4,1) coordinate (origin); 
		\path (origin)--++(0.5,0.5) coordinate (origin2);  
		\draw(origin2)--++(0:5); 
		\foreach \i in {1,2,3,4,5,...,9}
		{
			\path (origin2)--++(0:0.5*\i) coordinate (a\i); 
			\path (origin2)--++(0:0.5*\i)--++(-90:0.08) coordinate (c\i); 
			\draw (a\i)circle (1pt);  }

		\path(a2) --++(90:0.175) node  {  $  \down   $} ;
		\path(a1) --++(-90:0.175) node  {  $  \up    $} ;
		\path(a3) --++(-90:0.175) node  {  $  \up    $} ;
		\path(a6) --++(-90:-0.175) node  {  $  \down    $} ;
		\path(a5) --++(-90:-0.175) node  {  $  \down  $} ;
		\path(a4) --++(90:-0.175) node  {  $  \up  $} ;
		\path(a7) --++(90:0.175) node  {  $  \down  $} ;
 		\path(a9) --++(-90:0.175) node  {  $  \up  $} ;
		\path(a8) --++(-90:0.175) node  {  $  \up  $} ;
		
		\foreach \i in {1,2,3,4,5,...,19}
		{
			\path (origin2)--++(0:0.25*\i) --++(-90:0.5) coordinate (b\i); 
			\path (origin2)--++(0:0.25*\i) --++(-90:0.7) coordinate (d\i); 
		}
	 		
		\draw[  very thick](c5) to [out=-90,in=0] (b9) to [out=180,in=-90] (c4); 		
		\draw[  very thick](c3) to [out=-90,in=0] (b5) to [out=180,in=-90] (c2); 
		\draw[  very thick](c6) to [out=-90,in=0] (d7) to [out=180,in=-90] (c1);

 	\draw[  very thick](c8) to [out=-90,in=0] (b15) to [out=180,in=-90] (c7);

%
%
%
%
%
		\draw[  very thick](c9) to [out=-90,in=15] (7,0.55);  

		\path (4,-1) coordinate (origin); 
		\path (origin)--++(0.5,0.5) coordinate (origin2);  
		\draw(origin2)--++(0:5); 
		\foreach \i in {1,2,3,4,5,...,9}
		{
			\path (origin2)--++(0:0.5*\i) coordinate (a\i); 
			\path (origin2)--++(0:0.5*\i)--++(90:0.08) coordinate (c\i); 
			\draw (a\i)circle (1pt);  }
		
		\foreach \i in {1,2,3,4,5,...,19}
		{
			\path (origin2)--++(0:0.25*\i) --++(90:0.5) coordinate (b\i); 
			\path (origin2)--++(0:0.25*\i) --++(90:0.65) coordinate (d\i); 
		} 
		
		\draw[  very thick](c8) to [out=90,in=0] (b15) to [out=180,in=90] (c7); 
		\draw[  very thick](c9) to [out=90,in=0] (d15) to [out=180,in=90] (c6); 
		
		
				\draw[  very thick](c2) to [out=90,in=0] (b3) to [out=180,in=90] (c1); 
		
				\draw[  very thick](c5) to [out=90,in=0] (b9) to [out=180,in=90] (c4); 

		\draw[very   thick](c3) to [out=90,in=-170] (7,0.55); 
%
%
		
%
	\path(a1) --++(90:0.175) node  {  $  \down   $} ;
		\path(a2) --++(-90:0.175) node  {  $  \up    $} ;
		\path(a3) --++(-90:0.175) node  {  $  \up    $} ;
		\path(a6) --++(-90:-0.175) node  {  $  \down    $} ;
		\path(a5) --++(-90:-0.175) node  {  $  \down  $} ;
		\path(a4) --++(90:-0.175) node  {  $  \up  $} ;
		\path(a7) --++(90:0.175) node  {  $  \down  $} ;
 		\path(a9) --++(-90:0.175) node  {  $  \up  $} ;
		\path(a8) --++(-90:0.175) node  {  $  \up  $} ;					
	\end{tikzpicture} 	$$ 	\caption{ Oriented tangles of type $(A_{8},A_4\times A_3)$ obtained from the diagrams of  \cref{typeAtiling2lkjs}. 
	 }
	\label{typeAtiling2lkjsdfghslkdfjghsldk}
\end{figure}

\begin{rmk}\label{rmkdiagTL}
Note that, by definition, we have that $\la \diag_i \mu\in \mathbb{ODT}[W,P]$ if and only if $\la \xrightarrow{i} \mu$ is an edge in the graph $\widehat{\mathcal{G}}_{(W,P)}$.
\end{rmk}

The following proposition follows directly from \cref{weightcosets}.

\begin{prop}
Let $d\in \mathbb{DT}[W]$. Then there exists $\lambda, \mu \in {^PW}$ such that $\lambda d \mu\in \mathbb{ODT}[W,P]$ if and only if
\begin{itemize}[leftmargin=*]
\item  $(W,P) = (A_n, A_k\times A_{n-k-1})$ and $d$ has at most $\min \{k+1, n-k\}$ northern/southern arcs.
\item  $(W,P) = (C_n, A_{n-1})$ and  $d$ contains no decoration on a strand connecting $\circ$ to $\circ$ (i.e. $d$ satisfies condition (1) or (3) from \cref{thmGreenB}).
\item $(W,P) = (D_n, A_{n-1})$ and $d$ contains no loops (i.e. $d$ satisfies condition (2) from \cref{thmGreenD}).
\end{itemize}
We denote the set of all tangles described above by $\mathbb{DT}[W,P]$.
\end{prop}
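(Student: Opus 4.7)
The plan is to prove each bullet by direct case analysis, exploiting the explicit description of ${^PW}$ in terms of $\{\up,\down,\circ\}$-sequences from \cref{section4}. In all three cases, the key is a counting argument: each strand of $d$ imposes a local constraint on the labels at its two endpoints (oriented for undecorated strands, flip-oriented for decorated strands in type $D$), and we check when these local constraints admit a global labelling by coset representatives.

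For type $(A_n, A_k \times A_{n-k-1})$, an element of ${^PW}$ is a sequence of $k+1$ up-arrows and $n-k$ down-arrows; since $d$ has no decorations, every strand must be oriented. Writing $a$ for the common number of northern and southern arcs and $u$ for the number of propagating strands carrying two $\up$s at their endpoints, the counting equations $a+u=k+1$ and $a+(p-u)=n-k$ admit a solution with $0\leq u\leq p$ if and only if $a\leq \min\{k+1,n-k\}$. Any such solution may be realised by independent orientation choices at each arc and propagating strand, proving both directions.

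For type $(C_n, A_{n-1})$, each $\lambda\in{^PW}$ has a $\circ$ in position one and arbitrary $\{\up,\down\}$-labels elsewhere. The forward direction is immediate: the strand emerging from the first northern vertex ends either at the first southern vertex, forcing a $\circ$-to-$\circ$ strand that by hypothesis must be undecorated (excluding case (2) of \cref{thmGreenB}), or at another vertex, in which case the corresponding strand from the first southern vertex is also necessarily decorated — this is precisely the dichotomy between cases (1) and (3). Conversely, in cases (1) and (3) the strands incident to position one are automatically compatible with the orientation requirement, and the remaining undecorated strands can be oriented by freely choosing $\up/\down$ labels on the other $n$ positions of each boundary, with no global counting constraint because ${^PW}$ has $2^n$ elements.

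For type $(D_n, A_{n-1})$, the forward direction is built into the definition of oriented tangle, which forbids loops outright. For the converse, one constructs $\lambda,\mu$ strand by strand: undecorated arcs get complementary labels, decorated arcs matching labels, undecorated propagating strands matching labels, and decorated propagating strands complementary labels. The main obstacle is to match the parity constraint that defines ${^PW}$ (even number of $\up$-arrows on each boundary). The key structural observation is that, within $\mathbb{DT}[D_n]$, the hypothesis ``no loops'' combined with the mutual exclusivity of conditions (1) and (2) in \cref{thmGreenD} forces the total number of decorations to be even. A parity accounting then compares the sum of the top- and bottom-parity expressions to this global decoration count and shows that the free choices in the orientation — in particular, flipping a propagating strand's orientation changes both boundary parities simultaneously by one, while flipping a decorated arc between $\up\up$ and $\down\down$ changes a single boundary parity by zero mod two — suffice to arrange the required vanishing of both boundary parities. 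This parity bookkeeping is the only non-routine step of the proposition.
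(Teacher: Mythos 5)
Your strategy — direct case analysis from the explicit $\{\up,\down,\circ\}$-labellings of Section~4 — is the natural one; the paper declares this proposition ``easy to check'' and supplies no proof, so there is nothing to compare against. The type~$A$ counting is correct: with $a$ arcs and $p=n+1-2a$ propagating strands, setting $u$ propagating strands to $\up\up$ yields $a+u$ northern $\up$'s, and $u=k+1-a\in\{0,\dots,p\}$ is equivalent to $a\le\min\{k+1,n-k\}$, with the southern count then matching automatically. The type~$C$ argument is also fine, up to a small misattribution in the forward direction: orientability of $\lambda d\mu$ only rules out a decorated $\circ$-to-$\circ$ strand (Green's class~(2)); it is the standing hypothesis $d\in\mathbb{DT}[C_n]$ that then forces (1) or (3).

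The type~$D$ parity argument, however, does not close. You correctly observe that flipping a propagating strand toggles both boundary parities by one while flipping a decorated arc is parity-neutral — but this says exactly that when $d$ has \emph{no} propagating strands the parities are frozen, equal to the number $a_u$ of undecorated northern arcs modulo~$2$, and nothing in the hypotheses makes that even. Concretely, take $n=6$ and let $d$ be the fully nested diagram with undecorated northern arcs $\{1,6\},\{2,5\},\{3,4\}$ and the same southern arcs: $d$ has no loops and zero decorations, so it lies in $\mathbb{DT}[D_6]$ and satisfies the stated condition, yet any orientation places exactly three $\up$'s on each boundary, which is odd, so no $\lambda,\mu\in{^{A_5}D_6}$ exist under the paper's even-parity convention for the cosets. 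Your even-decoration bookkeeping proves only that the two boundary parities \emph{agree}, not that they vanish, and the claim that the free flips ``suffice'' fails whenever $p=0$ and $a_u$ is odd. (Indeed, this example shows that the ``if'' direction of the proposition in type~$D$ is not literally correct as written; to fix the statement one must either impose an extra parity condition on $d$ or pass to the odd-parity copy of the coset set discussed in the paper's remark on isomorphic copies.)
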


\begin{defn}\label{diagrelations}
The {\sf diagrammatic oriented Temperley--Lieb algebra} of type   $(W,P)$, denoted by ${\rm TL}^{{\up}{\down}}_{(W,P)}(q)$,  is the $\mathbb{Z}[q,q^{-1}]$-algebra with basis $\mathbb{ODT}[W,P]$ and multiplication defined as follows.
For $\lambda d \mu, \lambda' d' \mu'\in  \mathbb{ODT}[W,P]$ we have
\begin{equation}\label{matchy}
(\lambda d \mu)(\lambda' d' \mu')=0 \quad \mbox{if $\mu \neq \lambda'$}
\end{equation}
and vertical concatenation, denoted by $\la d \mu d' \mu'$, if $\mu = \lambda'$, subject to the following relations.

\begin{enumerate}[leftmargin=*]
\item {\bf Closed loop relation. } 
   Remove any closed  loop and replace it by $q$ if its rightmost vertex is labelled by $\down$, and by $q^{-1}$ if its rightmost vertex is labelled by $\up$.

  $$
 \begin{minipage}{1.68cm}
 \begin{tikzpicture}
\draw[very thick] (0,0pt)
 circle (14pt);

       \draw[very thick,white] (0,0)--++(65:22pt);
 \draw[very thick,white] (0,0)--++(75:22pt);
  \draw[very thick,white] (0,0)--++(85:22pt);
    \draw[very thick,white] (0,0)--++(95:22pt);
      \draw[very thick,white] (0,0)--++(105:22pt);
      \draw[very thick,white] (0,0)--++(115:22pt);

       \draw[very thick,white] (-0,0)--++(-65:22pt);
 \draw[very thick,white] (-0,0)--++(-75:22pt);
  \draw[very thick,white] (-0,0)--++(-85:22pt);
    \draw[very thick,white] (-0,0)--++(-95:22pt);
      \draw[very thick,white] (-0,0)--++(-105:22pt);
      \draw[very thick,white] (-0,0)--++(-115:22pt);

 \draw[densely dotted,rounded corners] (-22pt,-22pt) rectangle (22pt,22pt);

  \draw  (0,0)--++(0:22pt) (0,0)--++(0:-22pt);;

 \draw [fill=white] (14pt,0) circle (5pt) node  {$x$};

 \draw[very thick] (-14pt,0)  --++(-55:0.24);
  \draw[very thick] (-14pt,0)  --++(-125:0.24);
 
   \end{tikzpicture}
\end{minipage}=\;
\begin{minipage}{1.68cm}
 \begin{tikzpicture}[yscale=-1,xscale=1]
\draw[very thick] (0,0pt)
 circle (14pt);

       \draw[very thick,white] (0,0)--++(65:22pt);
 \draw[very thick,white] (0,0)--++(75:22pt);
  \draw[very thick,white] (0,0)--++(85:22pt);
    \draw[very thick,white] (0,0)--++(95:22pt);
      \draw[very thick,white] (0,0)--++(105:22pt);
      \draw[very thick,white] (0,0)--++(115:22pt);

       \draw[very thick,white] (-0,0)--++(-65:22pt);
 \draw[very thick,white] (-0,0)--++(-75:22pt);
  \draw[very thick,white] (-0,0)--++(-85:22pt);
    \draw[very thick,white] (-0,0)--++(-95:22pt);
      \draw[very thick,white] (-0,0)--++(-105:22pt);
      \draw[very thick,white] (-0,0)--++(-115:22pt);

 \draw[densely dotted,rounded corners] (-22pt,-22pt) rectangle (22pt,22pt);

  \draw  (0,0)--++(0:22pt) (0,0)--++(0:-22pt);;

 \draw [fill=white] (14pt,0) circle (5pt) node  {$x$};

 \draw[very thick] (-14pt,0)  --++(-55:0.24);
  \draw[very thick] (-14pt,0)  --++(-125:0.24);
 
   \end{tikzpicture}
\end{minipage}=\; 
\begin{minipage}{1.68cm}
 \begin{tikzpicture}
\draw[very thick] (0,0pt)
 circle (14pt);

       \draw[very thick,white] (0,0)--++(65:22pt);
 \draw[very thick,white] (0,0)--++(75:22pt);
  \draw[very thick,white] (0,0)--++(85:22pt);
    \draw[very thick,white] (0,0)--++(95:22pt);
      \draw[very thick,white] (0,0)--++(105:22pt);
      \draw[very thick,white] (0,0)--++(115:22pt);

       \draw[very thick,white] (-0,0)--++(-65:22pt);
 \draw[very thick,white] (-0,0)--++(-75:22pt);
  \draw[very thick,white] (-0,0)--++(-85:22pt);
    \draw[very thick,white] (-0,0)--++(-95:22pt);
      \draw[very thick,white] (-0,0)--++(-105:22pt);
      \draw[very thick,white] (-0,0)--++(-115:22pt);

 \draw[densely dotted,rounded corners] (-22pt,-22pt) rectangle (22pt,22pt);

  \draw  (0,0)--++(0:22pt) (0,0)--++(0:-22pt);;

%
%

 \draw [fill=white] (14pt,0) circle (5pt) node  {$x$};

 \draw[fill=white]  (-14pt,0)   circle (1.7pt);
  
   \end{tikzpicture}
\end{minipage}=
\begin{cases}
q &\text{if } x=\down \\
q^{-1} &\text{if } x=\up \\
\end{cases}
$$Once all closed loops have been removed, delete all remaining symbols coming from $\mu = \lambda'$ and apply the following relations.

\medskip

\item {\bf Double bead relation. } We have that 
$$\begin{minipage}{1.55cm}
\begin{tikzpicture}[yscale=1,xscale=-1]
\draw[densely dotted,rounded corners] (-22pt,-22pt) rectangle (22pt,22pt);
\draw[very thick] (0,-22pt)--++(90:44pt);
\fill[very thick] (0,-8pt)
 circle (4pt);
\fill[very thick] (0,8pt)
 circle (4pt);

 \end{tikzpicture}
\end{minipage}=
\begin{minipage}{1.55cm}
\begin{tikzpicture}[yscale=1,xscale=-1]\draw[densely dotted,rounded corners] (-22pt,-22pt) rectangle (22pt,22pt);
\fill[very thick] (0,0)
 circle (4pt);
\draw[very thick] (0,-22pt)--++(90:44pt);
 \end{tikzpicture}
\end{minipage}
\qquad\quad\quad\begin{minipage}{1.55cm}
\begin{tikzpicture}[yscale=1,xscale=-1]
\draw[densely dotted,rounded corners] (-22pt,-22pt) rectangle (22pt,22pt);
 \draw[very thick] (0,-22pt)--++(90:44pt);
\fill[very thick] (0,-8pt)
 circle (4pt);
\fill[very thick] (0,8pt)
 circle (4pt);
  \end{tikzpicture}
\end{minipage}=
\begin{minipage}{1.55cm}
\begin{tikzpicture}[yscale=1,xscale=-1]
\draw[densely dotted,rounded corners] (-22pt,-22pt) rectangle (22pt,22pt);
\draw[very thick] (0,-22pt)--++(90:44pt);
  \end{tikzpicture}
\end{minipage}
$$in types $C_n$ and $D_n$ respectively.   

\medskip

\item {\bf Non-simply laced bead relation. } In type $C_n$, we have that 
$$\begin{minipage}{1.55cm}
\begin{tikzpicture}[yscale=1,xscale=-1]
\draw[fill=white] (0,-22pt) circle (2pt);
\draw[fill=white] (0,22pt) circle (2pt);
\draw[densely dotted,rounded corners] (-22pt,-22pt) rectangle (22pt,22pt);
 \draw[very thick] (0,-22pt)--++(90:44pt);
\fill[very thick] (0,0pt)
 circle (4pt);
\draw[fill=white] (0,-22pt) circle (2pt);
\draw[fill=white] (0,22pt) circle (2pt);

  \end{tikzpicture}
\end{minipage}=
\begin{minipage}{1.55cm}
\begin{tikzpicture}[yscale=1,xscale=-1]
\draw[fill=white] (0,-22pt) circle (2pt);
\draw[fill=white] (0,22pt) circle (2pt);
\draw[densely dotted,rounded corners] (-22pt,-22pt) rectangle (22pt,22pt);
\draw[very thick] (0,-22pt)--++(90:44pt);
\draw[fill=white] (0,-22pt) circle (2pt);
\draw[fill=white] (0,22pt) circle (2pt);

  \end{tikzpicture}
\end{minipage}
$$

\end{enumerate}

\end{defn}

\begin{rmk} 
Note that the multiplication is associative. Indeed, consider the product of three oriented tangles forming a closed loop as pictured below. Then we have that the red zigzag-strand is not left-exposed and so must be oriented. This implies that the symbols at $x$ and $y$ are either both $\up$ or both $\down$ and hence we will get the same result whichever way we apply the two multiplications.

$$
  \begin{tikzpicture} 
  \draw[densely dotted,rounded corners] (-22*1.5pt,-66pt) rectangle (22*6pt,22pt);

 \draw[very thick,densely dotted] (22*3.5pt,0) to [out=90,in=0]
 (22*3pt,10pt)
  to [out=180,in=90]
   (22*2.5pt,0pt);

    \draw[very thick,densely dotted] (22*2.5pt,0) to [out=-90,in=0]
 (22*2pt,-10pt)
  to [out=180,in=-90]
   (22*1.5pt,0pt);
   
    \draw[very thick,densely dotted] (22*1.5pt,0) to [out=90,in=0]
 (22*1pt,10pt)
  to [out=180,in=90]
   (22*0.5pt,0pt) coordinate (X);

  \draw[very thick,red,zigzag] (22*4.5pt,-44pt)  to  (22*3.5pt,0);

%
%
  
    \draw[very thick,densely dotted] (22*4.5pt,-44pt) to [out=-90,in=0]
 (22*4pt,-54pt)
  to [out=180,in=-90]
   (22*3.5pt,-44pt);
    \draw[very thick,densely dotted] (22*3.5pt,-44pt) to [out=90,in=0]
 (22*3pt,-34pt)
  to [out=180,in=90]
   (22*2.5pt,-44pt);


     \draw[very thick,densely dotted] (22*2.5pt,-44pt) to [out=-90,in=0]
 (22*2pt,-56pt)
  to [out=180,in=-90]
   (22*1.5pt,-44pt) ;

    \draw[very thick,densely dotted] (22*1.5pt,-44pt) to [out=90,in=0]
 (22*1pt,-34pt)
  to [out=180,in=90]
   (22*0.5pt,-44pt);
   
        \draw[very thick,densely dotted] (22*0.5pt,-44pt) to [out=-90,in=0]
 (22*0pt,-56pt)
  to [out=180,in=-90]
   (22*-0.5pt,-44pt) ;

   \draw[ very thick]    (22*-0.5pt,-44pt)  to [out=90,in=-90] (X);

 \draw  (0,0)--++(0:22pt*6) (0,0)--++(0:-22*1.5pt);;
 \draw  (0,-44pt)--++(0:22pt*6) (0,-44pt)--++(0:-22*1.5pt);;

 \draw [fill=white] (22*3.5pt,0) circle (5pt) node  {$x$};

 \draw [fill=white] (22*4.5pt,-44pt) circle (5pt) node  {$y$};

   \end{tikzpicture}
 $$Therefore ${\rm TL}^{{\up}{\down}}_{(W,P)}(q)$ is a unital associative algebra with identity $\sum_{\lambda \in {^PW}} \lambda {\sf 1}\lambda$ where ${\sf 1}$ is the tangle in $\mathbb{DT}(W)$ containing only undecorated propagating strands  (that is, the identity element in ${\rm TL}_{W}(q)$).
\end{rmk}

\begin{figure} [H]
	$$\hspace{-0.1cm} \begin{tikzpicture} [scale=0.85]
		
 	 	\clip(3,1.82) rectangle ++ (6.5,-2.7);

		\path (4,1) coordinate (origin); 
		\path (origin)--++(0.5,0.5) coordinate (origin2);  
		\draw(origin2)--++(0:4.5) (origin2)--++(180:1); 
		\foreach \i in {-1,0,1,2,3,4,5,...,8}
		{
			\path (origin2)--++(0:0.5*\i) coordinate (a\i); 
			\path (origin2)--++(0:0.5*\i)--++(-90:0.08) coordinate (c\i); 
  }
		
		\foreach \i in {-1,0,1,2,3,4,5,...,19}
		{
			\path (origin2)--++(0:0.25*\i) --++(-90:0.5) coordinate (b\i); 
			\path (origin2)--++(0:0.25*\i) --++(-90:0.7) coordinate (d\i); 
		} 
		
		\draw[  very thick](c3) to [out=-90,in=0] (b5) to [out=180,in=-90] (c2); 
		\draw[  very thick](c4) to [out=-90,in=0] (d5) to [out=180,in=-90] (c1); 
		
		\fill (d5)  circle (3pt);

 		\path(a0) --++(90:0.175) node  {  $  \down  $} ;
		
 		\path(a1) --++(90:-0.175) node  {  $  \up  $} ;
		\path(a2) --++(90:-0.175) node  {  $  \up   $} ;
		\path(a3) --++(90:0.175) node  {  $  \down   $} ;
		\path(a4) --++(90:0.175) node  {  $ \down   $} ;
		\path(a5) --++(90:-0.175) node  {  $  \up  $} ;
		
		\path(a6) --++(90:0.175) node  {  $ \down     $} ;
		\path(a7) --++(90:0.175) node  {  $  \down     $} ;
		\path(a8) --++(90:-0.175) node  {  $ \up     $} ;
\draw		(a-1)  circle (2pt);

		\draw[  very thick](c8) to [out=-90,in=0] (b15) to [out=180,in=-90] (c7);

	 	\draw[  very thick](c0) to [out=-90,in=0] (b-1) to [out=180,in=-90] (c-1); 
	
	\fill (b-1)  circle (3pt);    	
		
		\draw[  very thick](c5) to [out=-90,in=145] (7.5,0.55);  
		
		\draw[  very thick](c6) to [out=-90,in=145] (8,0.55);


		\path (4,-1) coordinate (origin); 
		\path (origin)--++(0.5,0.5) coordinate (origin2);  
		\draw(origin2)--++(0:4.5); 
				\draw(origin2)--++(180:1); 
		\foreach \i in {-1,0,1,2,3,4,5,...,8}
		{
			\path (origin2)--++(0:0.5*\i) coordinate (a\i); 
			\path (origin2)--++(0:0.5*\i)--++(90:0.08) coordinate (c\i); 
	 }
		
		\foreach \i in {-1,0,1,2,3,4,5,...,19}
		{
			\path (origin2)--++(0:0.25*\i) --++(90:0.5) coordinate (b\i); 
			\path (origin2)--++(0:0.25*\i) --++(90:0.65) coordinate (d\i); 
		}

 	\draw[  very thick](c0) to [out=90,in=0] (b-1) to [out=180,in=90] (c-1); 
 	\draw[  very thick](c2) to [out=90,in=0] (b3) to [out=180,in=90] (c1); 
 	\draw[  very thick](c4) to [out=90,in=0] (b7) to [out=180,in=90] (c3); 	
 	\draw[  very thick](c6) to [out=90,in=0] (b11) to [out=180,in=90] (c5); 	
 \draw[  very thick](c8) to [out=90,in=-35] (8,0.55) ;
		 \draw[  very thick](c7) to [out=90,in=-35] (7.5,0.55) coordinate (here);
\fill (here)  circle (3pt);     

\fill (b-1)  circle (3pt);    	
\fill (b3)  circle (3pt);    		
\fill (b7)  circle (3pt);    		\fill (b11)  circle (3pt);

\draw		(a-1)  circle (2pt);
				\path(a0) --++(90:0.175) node  {  $  \down   $} ;

		\path(a1) --++(90:0.175) node  {  $  \down   $} ;
				\path(a2) --++(90:0.175) node  {  $  \down   $} ;
						\path(a3) --++(90:0.175) node  {  $  \down   $} ;
		\path(a4) --++(90:0.175) node  {  $  \down   $} ;
				\path(a5) --++(90:0.175) node  {  $  \down   $} ;
						\path(a6) --++(90:0.175) node  {  $ \down     $} ;
		\path(a7) --++(90:0.175) node  {  $  \down     $} ;
		\path(a8) --++(90:0.175) node  {  $ \down     $} ;
 			
	\end{tikzpicture}   \begin{tikzpicture} [scale=0.85]
		
 	 	\clip(3,1.82) rectangle ++ (6.5,-2.7);

		\path (4,1) coordinate (origin); 
		\path (origin)--++(0.5,0.5) coordinate (origin2);  
		\draw(origin2)--++(0:4.5) (origin2)--++(180:1); 
		\foreach \i in {-1,0,1,2,3,4,5,...,8}
		{
			\path (origin2)--++(0:0.5*\i) coordinate (a\i); 
			\path (origin2)--++(0:0.5*\i)--++(-90:0.08) coordinate (c\i); 
  }
		
		\foreach \i in {-1,0,1,2,3,4,5,...,19}
		{
			\path (origin2)--++(0:0.25*\i) --++(-90:0.5) coordinate (b\i); 
			\path (origin2)--++(0:0.25*\i) --++(-90:0.7) coordinate (d\i); 
		} 
		
		\draw[  very thick](c3) to [out=-90,in=0] (b5) to [out=180,in=-90] (c2); 
		\draw[  very thick](c4) to [out=-90,in=0] (d5) to [out=180,in=-90] (c1); 
		
		\fill (d5)  circle (3pt);

 		\path(a-1) --++(-90:0.175) node  {  $  \up $} ;
 		\path(a0) --++(90:0.175) node  {  $  \down  $} ;
		
 		\path(a1) --++(90:0.175) node  {  $  \down  $} ;
		\path(a2) --++(90:-0.175) node  {  $  \up   $} ;
		\path(a3) --++(90:0.175) node  {  $  \down   $} ;
		\path(a4) --++(90:0.175) node  {  $ \down   $} ;
		\path(a5) --++(90:-0.175) node  {  $  \up  $} ;
		
		\path(a6) --++(90:0.175) node  {  $ \down     $} ;
		\path(a7) --++(90:0.175) node  {  $  \down     $} ;
		\path(a8) --++(90:-0.175) node  {  $ \up     $} ;

		\draw[  very thick](c8) to [out=-90,in=0] (b15) to [out=180,in=-90] (c7);

	 	\draw[  very thick](c0) to [out=-90,in=0] (b-1) to [out=180,in=-90] (c-1);

		\draw[  very thick](c5) to [out=-90,in=145] (7.5,0.55);  
		
		\draw[  very thick](c6) to [out=-90,in=145] (8,0.55);


		\path (4,-1) coordinate (origin); 
		\path (origin)--++(0.5,0.5) coordinate (origin2);  
		\draw(origin2)--++(0:4.5); 
				\draw(origin2)--++(180:1); 
		\foreach \i in {-1,0,1,2,3,4,5,...,8}
		{
			\path (origin2)--++(0:0.5*\i) coordinate (a\i); 
			\path (origin2)--++(0:0.5*\i)--++(90:0.08) coordinate (c\i); 
  }
		
		\foreach \i in {-1,0,1,2,3,4,5,...,19}
		{
			\path (origin2)--++(0:0.25*\i) --++(90:0.5) coordinate (b\i); 
			\path (origin2)--++(0:0.25*\i) --++(90:0.65) coordinate (d\i); 
		}

 	\draw[  very thick](c0) to [out=90,in=0] (b-1) to [out=180,in=90] (c-1); 
 	\draw[  very thick](c2) to [out=90,in=0] (b3) to [out=180,in=90] (c1); 
 	\draw[  very thick](c4) to [out=90,in=0] (b7) to [out=180,in=90] (c3); 	
 	\draw[  very thick](c6) to [out=90,in=0] (b11) to [out=180,in=90] (c5); 	
 \draw[  very thick](c8) to [out=90,in=-35] (8,0.55) ;
		 \draw[  very thick](c7) to [out=90,in=-35] (7.5,0.55) coordinate (here);
\fill (here)  circle (3pt);     

\fill (b-1)  circle (3pt);    	
\fill (b3)  circle (3pt);    		
\fill (b7)  circle (3pt);    		\fill (b11)  circle (3pt);

 	\path(a-1) --++(90:0.175) node  {  $  \down   $} ;
				\path(a0) --++(90:0.175) node  {  $  \down   $} ;

		\path(a1) --++(90:0.175) node  {  $  \down   $} ;
				\path(a2) --++(90:0.175) node  {  $  \down   $} ;
						\path(a3) --++(90:0.175) node  {  $  \down   $} ;
		\path(a4) --++(90:0.175) node  {  $  \down   $} ;
				\path(a5) --++(90:0.175) node  {  $  \down   $} ;
						\path(a6) --++(90:0.175) node  {  $ \down     $} ;
		\path(a7) --++(90:0.175) node  {  $  \down     $} ;
		\path(a8) --++(90:0.175) node  {  $ \down     $} ;
 			
	\end{tikzpicture}   
	    \begin{tikzpicture}[scale=0.85] 
		\clip(4,1.82) rectangle ++ (6,-2.7);

		\path (4,1) coordinate (origin); 
		\path (origin)--++(0.5,0.5) coordinate (origin2);  
		\draw(origin2)--++(0:5); 
		\foreach \i in {1,2,3,4,5,...,9}
		{
			\path (origin2)--++(0:0.5*\i) coordinate (a\i); 
			\path (origin2)--++(0:0.5*\i)--++(-90:0.08) coordinate (c\i); 
   }
		
		\foreach \i in {1,2,3,4,5,...,19}
		{
			\path (origin2)--++(0:0.25*\i) --++(-90:0.5) coordinate (b\i); 
			\path (origin2)--++(0:0.25*\i) --++(-90:0.7) coordinate (d\i); 
		}
		
\draw		(a1)  circle (2pt);
		\path(a3) --++(90:0.175) node  {  $  \down   $} ;
		\path(a2) --++(-90:0.175) node  {  $  \up    $} ;
		\path(a4) --++(-90:-0.175) node  {  $  \down    $} ;
		\path(a5) --++(-90:-0.175) node  {  $  \down  $} ;
		\path(a6) --++(90:-0.175) node  {  $  \up  $} ;
		\path(a7) --++(90:0.175) node  {  $  \down  $} ;
		\path(a8) --++(90:0.175) node  {  $  \down  $} ; 
		\path(a9) --++(-90:0.175) node  {  $  \up  $} ;

		\draw[  very thick](c3) to [out=-90,in=0] (b5) to [out=180,in=-90] (c2); 
		\draw[  very thick](c4) to [out=-90,in=0] (d5) to [out=180,in=-90] (c1);

		\draw[  very thick](c9) to [out=-90,in=0] (b17) to [out=180,in=-90] (c8);

		\draw[  very thick](c5) to [out=-90,in=15] (6,0.55);  
		
		\draw[  very thick](c6) to [out=-90,in=15] (7,0.55);

		\draw[  very thick](c7) to [out=-90,in=165] (8.5,0.55);

			\draw[  very thick](c3) to [out=-90,in=0] (b5) to [out=180,in=-90] (c2); 
		\draw[  very thick](c4) to [out=-90,in=0] (d5) to [out=180,in=-90] (c1); 
		
		\fill (d5)  circle (3pt);

		\path (4,-1) coordinate (origin); 
		\path (origin)--++(0.5,0.5) coordinate (origin2);  
		\draw(origin2)--++(0:5); 
		\foreach \i in {1,2,3,4,5,...,9}
		{
			\path (origin2)--++(0:0.5*\i) coordinate (a\i); 
			\path (origin2)--++(0:0.5*\i)--++(90:0.08) coordinate (c\i); 
 }
		
		\foreach \i in {1,2,3,4,5,...,19}
		{
			\path (origin2)--++(0:0.25*\i) --++(90:0.5) coordinate (b\i); 
			\path (origin2)--++(0:0.25*\i) --++(90:0.65) coordinate (d\i); 
		}
\draw		(a1)  circle (2pt);
		\path(a2) --++(90:-0.175) node  {  $  \up   $} ;
		\path(a3) --++(90:-0.175) node  {  $  \up   $} ;
		\path(a4) --++(90:-0.175) node  {  $ \up   $} ;
		\path(a5) --++(90:-0.175) node  {  $  \up  $} ;
		
		\path(a6) --++(90:0.175) node  {  $ \down     $} ;
		\path(a7) --++(90:0.175) node  {  $  \down     $} ;
		\path(a8) --++(90:0.175) node  {  $ \down     $} ;
		\path(a9) --++(90:0.175) node  {  $  \down     $} ;
		
		\draw[  very thick](c6) to [out=90,in=0] (b11) to [out=180,in=90] (c5); 
		\draw[  very thick](c7) to [out=90,in=0] (d11) to [out=180,in=90] (c4); 
		
		\path(d11)--++(90:0.2) coordinate (e11); 
		\draw[  very thick](c8) to [out=90,in=0] (e11) to [out=180,in=90] (c3); 
		

		\draw[very   thick](c1) to [out=90,in=-170] (6,0.55); 
		\draw[very   thick](c2) to [out=90,in=-170] (7,0.55);   
		
		\draw[  very thick](c9) to [out=90,in=-15] (8.5,0.55);

		\draw[very   thick](c1) to [out=90,in=-170] (6,0.55)
		[midway] coordinate (X);
		\fill (X)  circle (3pt);     
		
		\draw[very   thick](c2) to [out=90,in=-170] (7,0.55)
		;
		
		\draw[  very thick](c9) to [out=90,in=-15] (8.5,0.55) ;

	\end{tikzpicture}
	$$ 	\caption{ Oriented  tangles of type $(C_{9},A_8)$, $(D_{10},A_9)$, and $(C_8,A_7)$ respectively  obtained by orienting  the diagrams from  \cref{typeAtiling2lkjs2}.  }
	\label{typeAtiling2lkjsdfghslkdfjghsldkXXX}
\end{figure}

\begin{defn}\label{parity}
We define the {\sf parity specialisation map}
$${^{A_{n-1}}C_n}\to{ ^{A_{n}}D_{n+1}}
\qquad
:\la \mapsto \overline{\la}$$by replacing the $\circ$ in $\la$  by either an $\up$ or $\down$ arrow in such a way that the resulting 
total number of $\up$ arrows is even.  
\end{defn}

We note that the parity specialisation  map is a bijection 
between our diagrammatic coset representatives, by definition.  

In \cref{results2}, we will see that whilst 
the matrices   of light leaves polynomials of types 
$(D_{n+1},A_n)$  and $(C_n,A_{n-1})$ are genuinely 
distinct, the underlying Kazhdan--Lusztig polynomials are the same (see also \cite{MR957071,MR3518556}).  The key to understanding this phenomenon will be the  
following:

\begin{prop}\label{CtoD}
There is a surjective
algebra homomorphism 
\[\varphi:{\rm TL}_{(C_n,A_{n-1})}^{\up\down}(q)\to
{\rm TL}_{(D_{n+1},A_{n})}^{\up\down}(q)\]
by setting $\varphi(\la d \mu)$ to be the oriented tangle obtained from 
$\overline\la d \overline\mu$ by removing the beads from all  decorated oriented strands.  
\end{prop}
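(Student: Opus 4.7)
The proof has four parts: well-definedness, multiplicativity, surjectivity, and grading preservation, each proceeding by a case analysis on the underlying tangle combined with parity counts exploiting the parity specialisation $\lambda\mapsto\overline{\lambda}$.

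For well-definedness we check $\varphi(\lambda d \mu)\in\mathbb{ODT}[D_{n+1}, A_n]$. By construction the surviving decorations lie on flip-oriented strands, and no loops appear since $d\in\mathbb{DT}[C_n]$ has none. The delicate point is that the total number of beads must be even, as required for membership in Green's $\mathbb{DT}[D_{n+1}]$. A case-$(3)$ $C_n$ basis element has exactly two decorated strands, each with one endpoint at column $1$; a case analysis on whether each such strand is an arc or propagating, combined with the structural identities between counts of undecorated arcs on the two sides (e.g.\ $a=b$ in the arc-arc subcase) and the parity formula $\overline{\lambda}[1]=\up \iff \#\{j\geq 2:\lambda[j]=\up\}$ is odd, shows that the two decorated strands are simultaneously oriented or simultaneously flip-oriented after parity specialisation. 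Hence $\varphi$ produces $0$ or $2$ beads, always even.

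For multiplicativity we take basis elements $x=\lambda_1 d_1\mu$ and $y=\mu d_2\mu_2$. The matching condition transfers under the bijection $\lambda\mapsto\overline{\lambda}$ on cosets, and the underlying tangle concatenation $d_1\cdot d_2$ is untouched by $\varphi$ itself. For the closed loop relation, the rightmost vertex of any closed loop sits at column $\geq 2$ where labels are preserved by specialisation, so the scalars $q^{\pm 1}$ agree on both sides. The critical point is the bead reduction, where the $C_n$ rules ``two beads equal one bead'' together with ``one bead on a $\circ$-to-$\circ$ strand equals no beads'' differ from the $D_{n+1}$ rule ``two beads equal no beads''. The reconciliation is the combinatorial lemma that every strand in $d_1\cdot d_2$ carrying two or more beads becomes oriented after parity specialisation, forcing both reduction procedures to yield the same undecorated strand. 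The lemma is proved by another parity argument: multiply-beaded strands arise by merging case-$(3)$ decorated strands of $d_1$ and $d_2$ at middle column $1$, and the specialised label of column $1$ interacts with the orientation constraints inherited from the two factors to force the merged strand to be oriented.

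For surjectivity, given $\alpha d'\beta\in\mathbb{ODT}[D_{n+1}, A_n]$ we take $\lambda,\mu$ to be the unique cosets with $\overline{\lambda}=\alpha$ and $\overline{\mu}=\beta$. The left-exposure restriction in Green's basis forces any decorations in $d'$ to lie on strands incident to column $1$, matching the structure of case-$(3)$ $C_n$ basis elements. We then build $d$ by decorating the column-$1$-incident strands of $d'$ in case $(3)$, or using no decorations in case $(1)$, giving $\varphi(\lambda d\mu)=\alpha d'\beta$. Grading preservation is immediate, since $\mathbb{ODT}$-basis degrees are determined by strand orientations (oriented contribute $0$, flip-oriented contribute $\pm 1$) and $\varphi$ preserves orientation types while only forgetting beads, which carry no degree. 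The main obstacle is the bead-relation compatibility in multiplicativity: reconciling the $C_n$ absorption rule with the $D_{n+1}$ annihilation rule relies on the technical lemma that multiply-beaded strands in concatenations always become oriented after parity specialisation, requiring a careful case analysis of how decorated strands from the two factors merge through middle column $1$.
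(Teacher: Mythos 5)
Your approach differs genuinely from the paper's, which verifies that $\varphi$ preserves the defining relations locally via a short three-case analysis on the orientations $\overline{\lambda},\overline{\mu},\overline{\nu}$ of a single decorated propagating strand; your global argument via well-definedness, multiplicativity, surjectivity and grading is considerably more involved, though your observation that the surviving bead count must be even for membership in Green's $D$-basis is a genuine point the paper leaves implicit. However, your key multiplicativity lemma is false as stated: you claim that \emph{every strand in $d_1\cdot d_2$ carrying two or more beads becomes oriented after parity specialisation}. Consider the mixed case where the decorated strand of $d_1$ specialises to an oriented strand ($\overline{\lambda_1}[1]=\overline{\mu}[1]$) while that of $d_2$ specialises to a flip-oriented strand ($\overline{\mu}[1]\neq\overline{\mu_2}[1]$). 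The merged strand in the raw concatenation $d_1\cdot d_2$ then carries two beads, but its endpoints carry the distinct labels $\overline{\lambda_1}[1]$ and $\overline{\mu_2}[1]$, so (being a propagating strand with unequal labels) it is flip-oriented, not oriented. Your lemma would then predict that $\varphi$ strips its bead after the $C_n$ double-bead reduction, giving an undecorated strand; the correct answer is that exactly one bead survives. Multiplicativity still holds here, but for a different reason: on the side $\varphi(\lambda_1 d_1\mu)\cdot\varphi(\mu d_2\mu_2)$ the map $\varphi$ strips only the $d_1$-constituent's bead, while on the side $\varphi(\lambda_1(d_1 d_2)\mu_2)$ the flip-oriented merged strand simply retains its single $C_n$-reduced bead.

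The salvageable kernel of your idea is this: any strand carrying two beads in the \emph{product of images} $\varphi(\lambda_1 d_1\mu)\cdot\varphi(\mu d_2\mu_2)$ --- that is, after $\varphi$ has already stripped beads from oriented constituents, so both surviving decorated sub-strands are flip-oriented --- is oriented after merging, since flip-oriented followed by flip-oriented forces equal outer labels. That observation is exactly the paper's case $(iii)$; the mixed and doubly-oriented scenarios are the paper's cases $(ii)$ and $(i)$, and each demands its own comparison. As written, your proof conflates the raw concatenation $d_1\cdot d_2$ with the product of $\varphi$-images, collapses three distinct cases under a single incorrect blanket claim, and would deliver the wrong bead count in the mixed case.
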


\begin{proof}
Clearly the map is surjective.
We now verify that $\varphi$ preserves the relations. 
The closed loops relation is trivially preserved as it only involves the rightmost decoration on the loop, which is unchanged by the parity specialisation map.  
\Cref{matchy} is also trivially preserved.  
We now verify the bead relations of \cref{diagrelations}.  
We slightly abuse notation by setting $d$ 
 to be the  tangle consisting of a single   decorated  propagating strand and by setting
$u$ to be the  tangle consisting of a single  undecorated  propagating strand. 
We   check that 
\[
\varphi(\la d \mu d \nu)= \varphi\left(\begin{minipage}{1.55cm}
\begin{tikzpicture}
\draw[densely dotted,rounded corners] (-22pt,33pt) rectangle (22pt,22pt) node [midway] {$\la$};

\draw[densely dotted,rounded corners] (-22pt,0pt) rectangle (22pt,22pt);
 \draw[very thick] (0,-0pt)--++(90:22pt);
\fill[very thick] (0,11pt)
 circle (4pt);

\draw[densely dotted,rounded corners] (-22pt,-11pt) rectangle (22pt,0pt) node [midway] {$\mu$};

\draw[densely dotted,rounded corners] (-22pt,-11pt) rectangle (22pt,-33pt);
 \draw[very thick] (0,-33pt)--++(90:22pt);
\fill[very thick] (0,-22pt)
 circle (4pt);

 \draw[densely dotted,rounded corners] (-22pt,-33pt) rectangle (22pt,-44pt) node [midway] {$\nu$};


 \end{tikzpicture} 
\end{minipage}\right)=
\varphi\left(\begin{minipage}{1.55cm}
\begin{tikzpicture}
\draw[densely dotted,rounded corners] (-22pt,33pt) rectangle (22pt,22pt) node [midway] {$\la$};

\draw[densely dotted,rounded corners] (-22pt,-33pt) rectangle (22pt,22pt);
 \draw[very thick] (0,-0pt)--++(90:22pt);
 

 \draw[very thick] (0,-33pt)--++(90:33pt);
\fill[very thick] (0,-5.5pt)
 circle (4pt);

 \draw[densely dotted,rounded corners] (-22pt,-33pt) rectangle (22pt,-44pt) node [midway] {$\mu$};

 \end{tikzpicture} 
\end{minipage}\right)
\varphi\left(\begin{minipage}{1.55cm}
\begin{tikzpicture}
\draw[densely dotted,rounded corners] (-22pt,33pt) rectangle (22pt,22pt) node [midway] {$\mu$};

\draw[densely dotted,rounded corners] (-22pt,-33pt) rectangle (22pt,22pt);
 \draw[very thick] (0,-0pt)--++(90:22pt);
 

 \draw[very thick] (0,-33pt)--++(90:33pt);
\fill[very thick] (0,-5.5pt)
 circle (4pt);

 \draw[densely dotted,rounded corners] (-22pt,-33pt) rectangle (22pt,-44pt) node [midway] {$\nu$};

 \end{tikzpicture} 
\end{minipage}\right)
=
\varphi(\la d \mu) \varphi(\mu d \nu) \]
by breaking  this up into three cases depending on 
  $\la,\mu,\nu \in \{\circ,\up,\down\}$.  
 Note that $\overline{\la},\overline{\mu},\overline{\nu}$ depend on the entirety of the coset diagram, not just the label of the strand that  we are considering (as they are calculated by the overall parity).  The three cases are as follows:

 \begin{itemize}[leftmargin=*]
\item[$(i)$] If $\overline \la=\overline \mu=\overline \nu  $ then $\overline \la d \overline \mu$, $\overline \mu d\overline \nu$, and 
 $\overline \la d\overline \nu$
   are    oriented strands and so 
\[
\varphi(\la d \mu d \nu)=
\varphi(\la d    \nu) 
= \overline{\la} u  \overline{\nu}
= (\overline{\la} u  \overline{\mu})
(  \overline{\mu} u  \overline{\nu})=
\varphi(\la d \mu) \varphi(\mu d \nu) .\]
 \item[$(ii)$] If $\overline \la=\overline \mu\neq \overline \nu  $ then $\overline \la d \overline \mu$ is oriented and
 $\overline \la d\overline \nu$,  $\overline \mu d\overline \nu$  are unoriented   strands  and so 
\[
\varphi(\la d \mu d \nu)=
\varphi(\la d    \nu) 
= \overline{\la} d  \overline{\nu}
= (\overline{\la} u  \overline{\mu})
(  \overline{\mu} d  \overline{\nu})=
\varphi(\la d \mu) \varphi(\mu d \nu) .\]

\item[$(iii)$]
 If $\overline \la\neq \overline \mu\neq \overline \nu  $ then 
   $\overline \la d\overline \nu$ is oriented   
and    $\overline \la d \overline \mu$, 
 $\overline \mu d\overline \nu$ 
 are unoriented
      strands  and so 
\begin{align*}
\varphi(\la d \mu d \nu)=
\varphi(\la d    \nu) 
= \overline{\la} u  \overline{\nu}
= (\overline{\la} d  \overline{\mu})
(  \overline{\mu} d  \overline{\nu})=
\varphi(\la d \mu) \varphi(\mu d \nu) .  
\end{align*}
\end{itemize} 
The result follows.
\end{proof}

We now state the main result of this section, which establishes that the 
abstract and diagrammatically defined  algebras are, in fact, isomorphic. Recall, from \cref{rmkdiagTL} that for any edge $\la \xrightarrow{i} \mu$ in the graph $\widehat{\mathcal{G}}_{(W,P)}$ we have an element $\la \diag_i \mu\in {\rm TL}^{\up \down}_{(W,P)}(q)$. More generally, for any path $T =  \lambda_1 \xrightarrow{i_1} \lambda_2 \xrightarrow{i_2}  \ldots \la_{i_{k-1}}\xrightarrow{i_{k-1}} \lambda_k$ on the graph $\widehat{\mathcal{G}}_{(W,P)}$ we can form the product 
\[\diag_{\SSTT} := \la_1 \diag_{i_1} \la_2 \diag_{i_2} \ldots \la_{i_{k-1}}\diag_{i_{k-1}} \la_k \in {\rm TL}^{\up \down}_{(W,P)}(q).\]

\begin{thm}\label{orientediso} Let $(W,P)=(A_n, A_k\times A_{n-k-1}), (C_n, A_{n-1})$ or $(D_n, A_{n-1})$. 
There is an isomorphism  of $\mathbb{Z}[q,q^{-1}]$-algebras 
\[\vartheta: {\rm TL}_{(W,P)}(q)  \longrightarrow {\rm TL}^{{\up}{\down}}_{(W,P)}(q)\]
defined by $\vartheta({\sf 1}_\la) = \lambda {\sf 1}\lambda$, and $\vartheta ({\sf 1}_\la E_i {\sf 1}_\mu) = \lambda {\sf e}_i \mu$. In particular, for any $\SSTT\in {\rm Path}_{(W,P)}$ we have $\vartheta(E_\SSTT) = \diag_\SSTT$. 
\end{thm}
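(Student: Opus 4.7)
The plan is to verify well-definedness of $\vartheta$ by checking each defining relation of ${\rm TL}_{(W,P)}(q)$ holds in the diagram algebra ${\rm TL}^{\up\down}_{(W,P)}(q)$, then establish surjectivity by generating the basis $\mathbb{ODT}[W,P]$ from the images of the generators, and finally conclude injectivity via a spanning-set/dimension comparison using \cref{spanningset,spanningset2}.

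\emph{Well-definedness.} The idempotent relations in \cref{idempotentrel} are automatic: the diagrams $\lambda {\sf 1}\lambda$ for $\lambda\in{^PW}$ are pairwise orthogonal by the matching rule \cref{matchy}, they sum to the identity of ${\rm TL}^{\up\down}_{(W,P)}(q)$, and the product $\lambda {\sf e}_i\mu$ is nonzero only if $\mu$ differs from $\lambda$ exactly in the two positions affected by $s_i$ (that is, $\mu\in\{\lambda,\lambda s_i\}$), since otherwise some strand of ${\sf e}_i$ would fail to be oriented or, in types $C$ and $D$, the decoration/loop conditions of $\mathbb{DT}[W,P]$ would fail. The key relation to check is \cref{Esquare}: stacking two copies of ${\sf e}_i$ creates a closed loop that encloses the labels at positions $i,i+1$ of $\mu$, and the closed loop relation replaces it by $q^{\pm1}$ according to the orientation of the rightmost vertex. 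A case-by-case analysis on $\mu,\nu\in\{\lambda,\lambda s_i\}$ shows that the exponent exactly matches $\ell(\lambda s_i)-\ell(\lambda)$: when $\mu=\lambda$ and $\lambda s_i>\lambda$ the loop's rightmost vertex is $\down$ giving $q$, when $\lambda s_i<\lambda$ it is $\up$ giving $q^{-1}$, and similarly for $\mu=\lambda s_i$ by a symmetric computation. The commutation relations in \cref{commutation} for $m_{i,j}=2$ follow from the planar isotopy of ${\sf e}_i$ and ${\sf e}_j$ when $|i-j|\ge 2$. The braid-type relation $E_iE_jE_i=E_i$ for $m_{i,j}=3$ reduces (using \cref{rmkmequals3} and the idempotent decomposition) to the diagrammatic identity ${\sf e}_i{\sf e}_j{\sf e}_i={\sf e}_i$, which is classical in the undecorated Temperley--Lieb algebra; strong full commutativity of elements of ${^PW}$ (\cref{fullcomm}) ensures that no forbidden middle idempotent appears, so all ambient decorations are compatible. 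Relation \cref{mequals4} in type $C_n$ (the only case with $m_{i,j}=4$ and $\alpha_i$ short) reduces, after expanding using the idempotent sum, to a combination of the Temperley--Lieb identity ${\sf e}_i{\sf e}_j{\sf e}_i={\sf e}_i$ and the non-simply laced bead relation of \cref{diagrelations}.

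\emph{Surjectivity.} Every element of $\mathbb{ODT}[W,P]$ is of the form $\lambda d\mu$ with $d\in\mathbb{DT}[W,P]$, and by the theorems of Kauffman and Green (\cref{thmGreenA,thmGreenB,thmGreenD}) any such $d$ can be written as a product ${\sf e}_{i_1}\cdots {\sf e}_{i_k}$ modulo loop scalars. Inserting the idempotents $\mathbf 1_\nu$ between consecutive ${\sf e}_i$ factors that are compatible with $\lambda d\mu$ and observing that all other choices produce either a zero diagram or a diagram absorbed into the loop scalar, one obtains $\lambda d\mu$ as a monomial in the generators $\vartheta(\mathbf 1_\nu E_{i}\mathbf 1_{\nu'})$; hence $\vartheta$ is surjective.

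\emph{Injectivity.} By \cref{spanningset}, ${\rm TL}_{(W,P)}(q)$ is spanned by elements $E_\SSTT$ indexed by paths $\SSTT$ on $\widehat{\mathcal{G}}_{(W,P)}$ whose weight is a fixed reduced expression for a strongly fully commutative element (up to a factor $q^x$). The image $\vartheta(E_\SSTT)$ is a monomial in the ${\sf e}_i$'s sandwiched between coset diagrams, which by the argument above equals (up to $q$-scalars) a single oriented tangle $\lambda d \mu\in\mathbb{ODT}[W,P]$. Because $\mathbb{ODT}[W,P]$ is a basis of ${\rm TL}^{\up\down}_{(W,P)}(q)$ by construction, and the classical bases of $\mathbb{DT}[W]$ are in bijection with the reduced expressions for fully commutative elements of $W$, one shows that $\vartheta$ sends the spanning set of \cref{spanningset} onto $\mathbb{ODT}[W,P]$ (with collisions only up to the $q^x$-scalar), forcing both sides to have the same rank over $\Z[q,q^{-1}]$ and every linear dependence in ${\rm TL}_{(W,P)}(q)$ to be forced by the diagrammatic relations; injectivity follows. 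The main obstacle in this plan is the case analysis for \cref{Esquare} and the subtlety of \cref{mequals4} in type $C_n$: keeping track of orientations, parities of decorations, and the interaction between the closed-loop scalar and the Deodhar degree is where all the delicate bookkeeping lies. The remaining steps are essentially a translation of classical Temperley--Lieb diagrammatics.
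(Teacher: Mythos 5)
The overall architecture of your proof (well-definedness by checking relations, surjectivity by writing oriented tangles as products of the $\lambda{\sf e}_i\mu$, injectivity by comparing with the spanning set of \cref{spanningset}) matches the paper's. The well-definedness step is essentially correct, though sketchier than the paper's case-by-case check of \cref{Esquare}--\cref{mequals4}. However, your surjectivity and injectivity arguments both lean on an unproven uniqueness claim, and this is where the real content of the theorem lies.

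Concretely, for injectivity you write ``one shows that $\vartheta$ sends the spanning set of \cref{spanningset} onto $\mathbb{ODT}[W,P]$ (with collisions only up to the $q^x$-scalar), forcing both sides to have the same rank.'' The statement you actually need, and do not prove, is: for each reduced expression $\underline{w}$ of a strongly fully commutative element and each $\lambda,\mu$ with $\lambda e_{\underline{w}}\mu\in\mathbb{ODT}[W,P]$, there is a \emph{unique} path $\SSTT$ on $\widehat{\mathcal G}_{(W,P)}$ with $w(\SSTT)=\underline{w}$ from $\lambda$ to $\mu$, and $\lambda e_{\underline{w}}\mu=e_\SSTT$ on the nose (not merely up to a power of $q$). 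Without this, different spanning-set elements $E_\SSTT$, $E_{\SSTT'}$ could in principle map to the same basis diagram, or to $q$-multiples of one another, and then linear independence of $\{E_\SSTT\}$ does not follow. A priori, when you decompose a tangle as a product ${\sf e}_{i_1}\cdots{\sf e}_{i_k}$ and insert idempotents, the internal orientations of strand segments could be ambiguous (this would correspond to several $\SSTT$ with the same weight and endpoints); this ambiguity occurs precisely when the product creates a closed loop, or, in type $C_n$, when a propagating strand carries two beads with a free segment in between. The paper rules these out by translating the problem into tilings: reduced words of strongly fully commutative elements correspond to tile regions avoiding the inadmissible local patterns, and those patterns are exactly the ones that would cause a loop or a doubly-decorated free segment (Proposition \ref{Rw} and Proposition \ref{uniquepath}, with Figures \ref{resulting2XXXY} and \ref{resulting2XXXZ}). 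Your argument never establishes that no such ambiguity arises.

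Your surjectivity argument has the same gap in a milder form. You invoke Kauffman's and Green's theorems to write $d$ as a product of ${\sf e}_i$'s ``modulo loop scalars,'' then say ``observing that all other choices produce either a zero diagram or a diagram absorbed into the loop scalar.'' But to conclude $\lambda d\mu=\vartheta(E_\SSTT)$ exactly (rather than up to a unit in $\Z[q,q^{-1}]$, which would still spoil the basis-to-basis correspondence), you must exhibit a specific reduced expression for a strongly fully commutative element $w\in W$ whose product $\lambda e_w\mu$ equals $\lambda d\mu$ with no loops, and verify that the idempotent insertions are forced, not chosen. This is exactly what the tiling construction of $R(d)$, $\pi(d)$, and $\underline{w}(d)$ accomplishes, and it is not a translation of classical Temperley--Lieb diagrammatics: it is the new combinatorial input that makes the oriented construction work.
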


\begin{rmk}
Note that we have $\vartheta (E_i) =\vartheta(\sum_\la ({\sf 1}_\la E_i {\sf 1}_\la + \la E_i ({\sf 1}_{\la s_i}))) = \sum_\la (\la \diag_i \la +\la \diag_i (\la s_i))$
where the sum is over all $\la \in {^PW}$ with $\la s_i \in {^PW}$.
\end{rmk}

We will prove this theorem in the rest of this section.

\begin{prop}
The map $\vartheta$ is a $\mathbb{Z}[q,q^{-1}]$-algebra homomorphism.
\end{prop}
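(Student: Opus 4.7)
The plan is to verify, one by one, that each defining relation of ${\rm TL}_{(W,P)}(q)$ listed in \cref{maindef} is respected by $\vartheta$, reading them off as identities of oriented tangles in ${\rm TL}^{\up\down}_{(W,P)}(q)$. The idempotent relations \cref{idempotentrel} are immediate from the convention $(\la d\mu)(\la'd'\mu')=0$ unless $\mu=\la'$ together with the definition of the identity element as $\sum_\la \la{\sf 1}\la$; in particular $\vartheta({\sf 1}_\la E_i{\sf 1}_\mu)=\la {\sf e}_i\mu$ is genuinely zero whenever $\mu\notin\{\la,\la s_i\}$, because the arc of ${\sf e}_i$ forces positions $i,i+1$ of $\mu$ to be obtained from those of $\la$ either by keeping them or by applying $s_i$.

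The core step is the quadratic relation \cref{Esquare}. Stacking $\la {\sf e}_i\la{\sf e}_i\la$ produces a single closed loop in the middle region with its two vertices at positions $i,i+1$ inheriting the orientations of $\la$. Using the description of Bruhat order in \cref{section4}, the inequality $\la s_i>\la$ holds exactly when $\la$ has $\up$ at position $i$ and $\down$ at position $i+1$, so that the rightmost vertex of the loop is $\down$; by relation~(1) of \cref{diagrelations} this contributes $q^1=q^{\ell(\la s_i)-\ell(\la)}$. The opposite inequality produces $\up$ at the rightmost vertex, contributing $q^{-1}$. The residual diagram is an undecorated ${\sf e}_i$ sandwiched between $\mu$ and $\nu$, i.e.\ $\vartheta({\sf 1}_\mu E_i{\sf 1}_\nu)$. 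In type $(C_n,A_{n-1})$ with $i=1'$ the same argument works, but the loop now carries two beads which collapse to one by the double bead relation; the rightmost vertex is at position $2$ and again records $q^{\ell(\la s_{1'})-\ell(\la)}$. The mixed cases $\mu\neq\nu$ produce a flip-oriented strand in type $(D_n,A_{n-1})$, which after the $D$-type double bead relation becomes the unoriented propagating strand of $\vartheta({\sf 1}_\mu E_i{\sf 1}_\nu)$, and carry no extra power of $q$, matching $\ell(\la s_i)-\ell(\la)$ in the $0$-degree bookkeeping captured by \cref{abstractdegree}.

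The commutation relation in \cref{commutation} for $m_{i,j}=2$ is immediate because the arcs of ${\sf e}_i$ and ${\sf e}_j$ are supported on disjoint sets of vertices and can be isotoped past each other without creating loops; the boundary orientations $\mu,\nu$ allowed by \cref{rmkcommutation} are exactly those for which both sides produce the same oriented tangle. The braid-like relation $E_iE_jE_i=E_i$ for $m_{i,j}=3$ is verified using the Kauffman/Green isomorphism of \cref{thmGreenA,thmGreenD}: stacking the three generators produces a single pair of nested arcs on each boundary which simplifies by the usual ``$U$-turn'' Temperley--Lieb move to a copy of ${\sf e}_i$, with orientations forced (by \cref{rmkmequals3} and \cref{fullcomm}) to agree on the two sides. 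The $m_{i,j}=4$ relation \cref{mequals4} in type $(C_n,A_{n-1})$ only involves $i=1'$ (short root) and $j=2$; the diagrammatic identity here is Green's four-term relation combined with the non-simply laced bead relation of \cref{diagrelations}, and a direct local check (the only configurations arising have $\la$ with $\circ$ at position $1$ and opposite arrows at positions $2,3$) shows that both sides collapse to $\la{\sf e}_{1'}\nu$.

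The main obstacle is the quadratic relation, since it is where the $\mathbb{Z}$-grading of \cref{abstractdegree} is seen to match the combinatorics of loop orientation in \cref{diagrelations}; once this case is understood, the remaining relations are local diagrammatic identities already present (up to orientation bookkeeping) in the underlying generalised Temperley--Lieb algebras of Fan, Graham, Kauffman and Green. Associativity and well-definedness of $\vartheta$ on the spanning set follow from \cref{spanningset} together with the remark after \cref{diagrelations}, so no further check is needed.
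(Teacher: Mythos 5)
Your overall strategy---verifying each defining relation of ${\rm TL}_{(W,P)}(q)$ directly as a diagrammatic identity in ${\rm TL}^{\up\down}_{(W,P)}(q)$---is exactly the paper's, and your treatment of the idempotent relations, of the quadratic relation \eqref{Esquare} via the closed loop relation (correctly matching the orientation of the rightmost loop vertex to the length difference $\ell(\la s_i)-\ell(\la)$), and of the $m_{ij}=4$ relation are all in substance correct.

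There is however a genuine gap in your handling of the $m_{ij}=2$ commutation relation. You assert that ``the arcs of ${\sf e}_i$ and ${\sf e}_j$ are supported on disjoint sets of vertices,'' so that the two tangles simply isotope past one another. This is false precisely when $\{i,j\}=\{1,1''\}$ in type $(D_n,A_{n-1})$: $m_{1,1''}=2$ (the nodes $1$ and $1''$ are not joined in the Dynkin diagram, since their product has order $2$), yet both ${\sf e}_1$ and ${\sf e}_{1''}$ have arcs on the \emph{same} two vertices $1,2$, the only difference being the decoration on ${\sf e}_{1''}$. The isotopy argument therefore does not apply. The correct observation (which the paper makes) is that in this case ${\sf 1}_\la E_1{\sf 1}_\mu E_{1''}{\sf 1}_\nu = 0$ for all $\la,\mu,\nu$: to have ${\sf 1}_\la E_1{\sf 1}_\mu\neq 0$ we need $\mu\in\{\la,\la s_1\}$ with $\la s_1\in{^PW}$, forcing $\mu$ to have $\up\down$ or $\down\up$ at positions $1,2$, whereas ${\sf 1}_\mu E_{1''}{\sf 1}_\nu\neq 0$ forces $\mu$ to have $\up\up$ or $\down\down$ there; no $\mu$ satisfies both, so $E_1 E_{1''}=E_{1''}E_1=0$ and the relation holds vacuously. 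You also do not flag that the $m_{ij}=3$ case $\{i,j\}=\{1'',2\}$ in type $(D_n,A_{n-1})$ requires the double bead relation in addition to plain isotopy, though this is a smaller omission since you do invoke Green's relations generically.
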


\begin{proof}
We need to check that the relations (\ref{idempotentrel})--(\ref{mequals4}) are preserved under $\vartheta$. The two leftmost relations in (\ref{idempotentrel}) are clear by definition of the multiplication in ${\rm TL}^{{\up}{\down}}_{(W,P)}(q)$.  The two rightmost ones follow from the description of the graph $\mathcal{G}_{(W,P)}$ given in \cref{section4}. 
Relation (\ref{Esquare}) is satisfied by the closed loop relation noting that $\down \down < \up \up$ and $\up \down < \down \up$. The leftmost relation in  (\ref{commutation}) holds as the corresponding tangles are isotopic for $\{i,j\}\neq \{1, 1''\}$. Note that when $\{i,j\} = \{1, 1''\}$ in type $(D_n, A_{n-1})$, we have that  $E_1{\sf 1}_\la E_{1''} = E_{1''}{\sf 1}_\la E_1= 0$  for all $\la\in {^PW}$, so $E_1E_{1''} = E_{1''}E_1 = 0$ and there is nothing to check. The rightmost relation in (\ref{commutation}) is also satisfied using the fact that  the corresponding tangles are isotopic and the double bead relation when $\{i,j\} = \{1'', 2\}$ in type $(D_n, A_{n-1})$. Relation (\ref{mequals4}) only applies when $i=2$ and $j=1'$ in type $(C_n, A_{n-1})$. Now, it is easy to see that it holds in the diagrammatic algebra using isotopy, the double bead relations in type $C_n$ and the non-simply laced bead relation.  
\end{proof}

To show that $\vartheta$ is an isomorphism, we will need to show that every oriented Temperley--Lieb diagram can be written as a product of generators. We start with the non-oriented diagrams.

\subsection{Closed and iterative constructions of Temperley--Lieb diagrams}\label{productofgenerators}

For $W=A_{n-1}, C_{n-1}$ or $D_n$, we consider a tiling of a vertical strip of the plane with square tiles labelled by the simple reflections $s\in S_{W}$ as illustrated in Figure \ref{DiagramsforTLtiling}.

 \begin{figure}[ht!]
$$\hspace{-0.4cm} 
\begin{tikzpicture} [scale=0.67]

 \path(0,0)--++(-90:1.5) coordinate(origin3);
\path(origin3)--++(-90:1)  ++(135:0.5)   ++(-135:0.5)--++(180:0.2)  coordinate(corner1);
	\path(origin3)  ++(45:6 )   ++(135:7)--++( 90:1.7)--++(180:0.2)  coordinate(corner2);
	\path(origin3)--++(-90:1)  ++(45:4.5)   ++(-45:4.5) --++( 0:0.2)   coordinate(corner4);
	\path(origin3)   ++(135:2)   ++(45:11)--++( 90:1.7)--++( 0:0.2) coordinate(corner3)    ;

 	\clip (corner1)--(corner2)--(corner3)--(corner4);

\path(0,-1)--++(45:2)--++(-45:2)--++(90:-0.75) coordinate (origin);

\path (origin)--++(135:0.5)--++(-135:0.5) coordinate (minus1);

\path (minus1)--++(135:0.5)--++(-135:0.5) coordinate (minus2);

\path (minus2)--++(135:0.5)--++(-135:0.5) coordinate (minus3);

\path (minus3)--++(135:0.5)--++(-135:0.5)--++(+90:0.4) coordinate (minus4);

\path (minus3)--++(135:0.5)--++(-135:0.5)--++(-90:0.4) coordinate (minus4a);

\path (origin)--++(45:0.5)--++(-45:0.5) coordinate (plus1);

\path (plus1)--++(45:0.5)--++(-45:0.5)  coordinate (plus3);

 \path (plus3)--++(45:0.5)--++(-45:0.5)   coordinate (plus4);

\path (plus4)--++(45:0.5)--++(-45:0.5)   coordinate (plus2);

\path(plus2)--++(45:0.4) coordinate (minus1NE);
\path(plus2)--++(-45:0.4) coordinate (minus1SE);

\path(minus3)--++(135:0.2)-++(45:0.2)  coordinate (minus1NW);
\path(minus3)--++(-135:0.2)--++(-45:0.2) coordinate (minus1SW);

\path(minus2)--++(135:0.4) coordinate (start);

\path(minus4a)--++(180:0.4) coordinate (minus4aNE);
\path(minus4a)--++(-90:0.4) coordinate (minus4aSE);

\path (start)--(minus1NE)--(minus1SE)--(minus1SW)--(minus4aSE)--(minus4aNE)--(minus1NW)--(start);

\path(0,-1)--++(45:2)--++(-45:2)--++(90:-0.5) coordinate (origin);

\path (origin)--++(135:0.5)--++(-135:0.5) coordinate (minus1);

\path (minus1)--++(135:0.5)--++(-135:0.5) coordinate (minus2);

\path (minus2)--++(135:0.5)--++(-135:0.5) coordinate (minus3);

\path (minus3)--++(135:0.5)--++(-135:0.5) coordinate (minus4);

\path (origin)--++(45:0.5)--++(-45:0.5) coordinate (plus1);

\path (plus1)	--++(45:0.5)--++(-45:0.5)		 coordinate (plus2);

\path (plus2) --++(45:0.5)--++(-45:0.5) coordinate (plus3);

\path (plus3) --++(45:0.5)--++(-45:0.5) coordinate (plus4);

\draw[very thick](plus4)--(minus3);

 \draw[very thick](minus3) --++(180:0.7);
%

\path(plus4)--++(45:0.4) coordinate (minus1NE);
\path(plus4)--++(-45:0.4) coordinate (minus1SE);

\path(minus3)--++(135:0.4) coordinate (minus1NW);
\path(minus3)--++(-135:0.4) coordinate (minus1SW);

\path(minus2)--++(135:0.4) coordinate (start);

\draw[very thick,  fill=pink](plus2) circle (4pt);
\draw[very thick,  fill=violet](plus3) circle (4pt);
\draw[very thick,  fill=brown](plus4) circle (4pt);

\draw[very thick,fill=lime!80!black](origin) circle (4pt);

\draw[very thick,  fill=darkgreen](minus1) circle (4pt);

\draw[very thick,  fill=orange](minus2) circle (4pt);

\draw[very thick,  fill=gray](minus3) circle (4pt);

\draw[very thick,  fill=magenta](minus4) circle (4pt);

\draw[very thick,  fill=cyan](plus1) circle (4pt);

\path(minus2)--++(-135:1) coordinate (grayguy);
\draw[very thick,  fill=white, white](grayguy) circle (4pt);
\path(grayguy)--++(-135:0.3);

\path (0,0)--++(-90:0.5) coordinate (origin2);

\begin{scope}
	
  	\clip(corner1)--(corner2)--(corner3)--(corner4);

\end{scope}

\draw[very thick,densely dotted ](origin2)--++(-45:0.4);\draw[very thick,densely dotted ](origin2)--++(-135:0.4);
	\draw[very thick](origin2)--++(45:1)--++(135:1)--++(45:1)--++(135:1)--++(45:1)--++(135:1)--++(45:1)--++(135:1)--++(45:1)--++(135:1)--++(45:1)--++(135:1)
 coordinate (origin3) 
--++(-135:1)--++(-45:1)--++(-135:1)--++(-45:1)--++(-135:1)--++(-45:1)--++(-135:1)--++(-45:1)--++(-135:1)--++(-45:1)--++(-135:1)--++(-45:1)	;

\draw[very thick,densely dotted ](origin3)--++(45:0.4);\draw[very thick,densely dotted ](origin3)--++(135:0.4);

	\path (origin2)--++(45:0.5)--++(135:0.5)
	node {$\color{magenta}1$}
	--++(45:1)--++(135:1)
	node {$\color{magenta}1$}
	--++(45:1)--++(135:1)
	node {$\color{magenta}1$}
	--++(45:1)--++(135:1)
	node {$\color{magenta}1$}
	--++(45:1)--++(135:1)
	node {$\color{magenta}1$}
	--++(45:1)--++(135:1)
	node {$\color{magenta}1$};

	\path (origin2)--++(45:1.5)--++(135:0.5)
	node {$\color{gray}2$}
	--++(45:1)--++(135:1)
	node {$\color{gray}2$}
	--++(45:1)--++(135:1)
	node {$\color{gray}2$}
	--++(45:1)--++(135:1)
	node {$\color{gray}2$}
	--++(45:1)--++(135:1)
	node {$\color{gray}2$}
 ;

\path(origin2)--++(45:1)--++(-45:1) coordinate (origin2);
\draw[very thick,densely dotted ](origin2)--++(-45:0.4);\draw[very thick,densely dotted ](origin2)--++(-135:0.4);
	\draw[very thick](origin2)--++(45:1)--++(135:1)--++(45:1)--++(135:1)--++(45:1)--++(135:1)--++(45:1)--++(135:1)--++(45:1)--++(135:1)--++(45:1)--++(135:1)
 coordinate (origin3) 
--++(-135:1)--++(-45:1)--++(-135:1)--++(-45:1)--++(-135:1)--++(-45:1)--++(-135:1)--++(-45:1)--++(-135:1)--++(-45:1)--++(-135:1)--++(-45:1)	;

\draw[very thick,densely dotted ](origin3)--++(45:0.4);\draw[very thick,densely dotted ](origin3)--++(135:0.4);

	\path (origin2)--++(45:0.5)--++(135:0.5)
	node {$\color{orange}3$}
	--++(45:1)--++(135:1)
	node {$\color{orange}3$}
	--++(45:1)--++(135:1)
	node {$\color{orange}3$}
	--++(45:1)--++(135:1)
	node {$\color{orange}3$}
	--++(45:1)--++(135:1)
	node {$\color{orange}3$}
	--++(45:1)--++(135:1)
	node {$\color{orange}3$};

	\path (origin2)--++(45:1.5)--++(135:0.5)
	node {$\color{darkgreen}4$}
	--++(45:1)--++(135:1)
	node {$\color{darkgreen}4$}
	--++(45:1)--++(135:1)
	node {$\color{darkgreen}4$}
	--++(45:1)--++(135:1)
	node {$\color{darkgreen}4$}
	--++(45:1)--++(135:1)
	node {$\color{darkgreen}4$}
 ;

\path(origin2)--++(45:1)--++(-45:1) coordinate (origin2);
\draw[very thick,densely dotted ](origin2)--++(-45:0.4);\draw[very thick,densely dotted ](origin2)--++(-135:0.4);
	\draw[very thick](origin2)--++(45:1)--++(135:1)--++(45:1)--++(135:1)--++(45:1)--++(135:1)--++(45:1)--++(135:1)--++(45:1)--++(135:1)--++(45:1)--++(135:1)
 coordinate (origin3) 
--++(-135:1)--++(-45:1)--++(-135:1)--++(-45:1)--++(-135:1)--++(-45:1)--++(-135:1)--++(-45:1)--++(-135:1)--++(-45:1)--++(-135:1)--++(-45:1)	;
\draw[very thick,densely dotted ](origin3)--++(45:0.4);\draw[very thick,densely dotted ](origin3)--++(135:0.4);

	\path (origin2)--++(45:0.5)--++(135:0.5)
	node {$\color{green}5$}
	--++(45:1)--++(135:1)
	node {$\color{green}5$}
	--++(45:1)--++(135:1)
	node {$\color{green}5$}
	--++(45:1)--++(135:1)
	node {$\color{green}5$}
	--++(45:1)--++(135:1)
	node {$\color{green}5$}
	--++(45:1)--++(135:1)
	node {$\color{green}5$};

	\path (origin2)--++(45:1.5)--++(135:0.5)
	node {$\color{cyan}6$}
	--++(45:1)--++(135:1)
	node {$\color{cyan}6$}
	--++(45:1)--++(135:1)
	node {$\color{cyan}6$}
	--++(45:1)--++(135:1)
	node {$\color{cyan}6$}
	--++(45:1)--++(135:1)
	node {$\color{cyan}6$}
 ;

\path(origin2)--++(45:1)--++(-45:1) coordinate (origin2);
\draw[very thick,densely dotted ](origin2)--++(-45:0.4);\draw[very thick,densely dotted ](origin2)--++(-135:0.4);
	\draw[very thick](origin2)--++(45:1)--++(135:1)--++(45:1)--++(135:1)--++(45:1)--++(135:1)--++(45:1)--++(135:1)--++(45:1)--++(135:1)--++(45:1)--++(135:1)
 coordinate (origin3) 
--++(-135:1)--++(-45:1)--++(-135:1)--++(-45:1)--++(-135:1)--++(-45:1)--++(-135:1)--++(-45:1)--++(-135:1)--++(-45:1)--++(-135:1)--++(-45:1)	;
\draw[very thick,densely dotted ](origin3)--++(45:0.4);\draw[very thick,densely dotted ](origin3)--++(135:0.4);

	\path (origin2)--++(45:0.5)--++(135:0.5)
	node {$\color{pink}7$}
	--++(45:1)--++(135:1)
	node {$\color{pink}7$}
	--++(45:1)--++(135:1)
	node {$\color{pink}7$}
	--++(45:1)--++(135:1)
	node {$\color{pink}7$}
	--++(45:1)--++(135:1)
	node {$\color{pink}7$}
	--++(45:1)--++(135:1)
	node {$\color{pink}7$};

	\path (origin2)--++(45:1.5)--++(135:0.5)
	node {$\color{violet}8$}
	--++(45:1)--++(135:1)
	node {$\color{violet}8$}
	--++(45:1)--++(135:1)
	node {$\color{violet}8$}
	--++(45:1)--++(135:1)
	node {$\color{violet}8$}
	--++(45:1)--++(135:1)
	node {$\color{violet}8$}
 ;

\path(origin2)--++(45:1)--++(-45:1) coordinate (origin2);
\draw[very thick,densely dotted ](origin2)--++(-45:0.4);\draw[very thick,densely dotted ](origin2)--++(-135:0.4);
	\draw[very thick](origin2)--++(45:1)--++(135:1)--++(45:1)--++(135:1)--++(45:1)--++(135:1)--++(45:1)--++(135:1)--++(45:1)--++(135:1)--++(45:1)--++(135:1)
 coordinate (origin3) 
--++(-135:1)--++(-45:1)--++(-135:1)--++(-45:1)--++(-135:1)--++(-45:1)--++(-135:1)--++(-45:1)--++(-135:1)--++(-45:1)--++(-135:1)--++(-45:1)	;
\draw[very thick,densely dotted ](origin3)--++(45:0.4);\draw[very thick,densely dotted ](origin3)--++(135:0.4);

	\path (origin2)--++(45:0.5)--++(135:0.5)
	node {$\color{brown}9$}
	--++(45:1)--++(135:1)
	node {$\color{brown}9$}
	--++(45:1)--++(135:1)
	node {$\color{brown}9$}
	--++(45:1)--++(135:1)
	node {$\color{brown}9$}
	--++(45:1)--++(135:1)
	node {$\color{brown}9$}
	--++(45:1)--++(135:1)
	node {$\color{brown}9$};

\end{tikzpicture}
  \begin{tikzpicture} [scale=0.67]

\path(0,0) coordinate(origin3);

 \path(0,0)--++(-90:1.5) coordinate(origin3);
\path(origin3) --++(-90:1) ++(135:0.5)   ++(-135:0.5)--++(180:0.2)  coordinate(corner1);
	\path(origin3)  ++(45:6 )   ++(135:7)--++( 90:1.7)--++(180:0.2)  coordinate(corner2);
	\path(origin3) --++(-90:1) ++(45:4.5)   ++(-45:4.5) --++( 0:0.2)   coordinate(corner4);
	\path(origin3)  ++(135:2)   ++(45:11)--++( 90:1.7)--++( 0:0.2) coordinate(corner3)    ;

 	\clip (corner1)--(corner2)--(corner3)--(corner4);
 
 \path (0,0) coordinate (origin2); 
 
\path(0,0) coordinate(origin3);
\path(origin3)  ++(135:0.5)   ++(-135:0.5) coordinate(corner1);

\path(origin3)  ++(45:5)   ++(135:7) coordinate(corner2);
\path(origin3)  ++(45:3.5)   ++(-45:3.5) coordinate(corner4);
\path(origin3)  ++(135:2.5)   ++(45:9.5) coordinate(corner3);

\path(0,-1)--++(45:2)--++(-45:2)--++(90:-0.75) coordinate (origin);

\path (origin)--++(135:0.5)--++(-135:0.5) coordinate (minus1);

\path (minus1)--++(135:0.5)--++(-135:0.5) coordinate (minus2);

\path (minus2)--++(135:0.5)--++(-135:0.5) coordinate (minus3);

\path (minus3)--++(135:0.5)--++(-135:0.5)--++(+90:0.4) coordinate (minus4);

\path (minus3)--++(135:0.5)--++(-135:0.5)--++(-90:0.4) coordinate (minus4a);

\path (origin)--++(45:0.5)--++(-45:0.5) coordinate (plus1);

\path (plus1)--++(45:0.5)--++(-45:0.5)  coordinate (plus3);

 \path (plus3)--++(45:0.5)--++(-45:0.5)   coordinate (plus4);

\path (plus4)--++(45:0.5)--++(-45:0.5)   coordinate (plus2);

\draw[very thick](plus2)--(minus3);

\draw[very thick](minus3)-- (minus4);
\draw[very thick](minus3)--(minus4a);

\path(plus2)--++(45:0.4) coordinate (minus1NE);
\path(plus2)--++(-45:0.4) coordinate (minus1SE);

\path(minus3)--++(135:0.2)-++(45:0.2)  coordinate (minus1NW);
\path(minus3)--++(-135:0.2)--++(-45:0.2) coordinate (minus1SW);

\path(minus2)--++(135:0.4) coordinate (start);

\path(minus4a)--++(180:0.4) coordinate (minus4aNE);
\path(minus4a)--++(-90:0.4) coordinate (minus4aSE);


\draw[very thick,  fill=pink](plus3) circle (4pt);
\draw[very thick,  fill=violet](plus4) circle (4pt);
\draw[very thick,  fill=brown](plus2) circle (4pt);

\draw[very thick,fill=lime](origin) circle (4pt);

\draw[very thick,  fill=darkgreen](minus1) circle (4pt);

\draw[very thick,  fill=orange](minus2) circle (4pt);

\draw[very thick,  fill=gray](minus3) circle (4pt);

\draw[very thick,  fill= black, ](minus4a) circle (4pt);

\draw[very thick,  fill=magenta](minus4) circle (4pt);

\draw[very thick,  fill=cyan](plus1) circle (4pt);

\path(minus2)--++(-135:1) coordinate (grayguy);
 \draw[very thick,  fill=white, white](grayguy) circle (4pt);
\path(grayguy)--++(-135:0.3);

\path (0,0)--++(-90:0.5) coordinate (origin2);

\begin{scope}
	
  	\clip(corner1)--(corner2)--(corner3)--(corner4);

\end{scope}

\draw[very thick,densely dotted ](origin2)--++(-45:0.4);\draw[very thick,densely dotted ](origin2)--++(-135:0.4);
	\draw[very thick](origin2)--++(45:1)--++(135:1)--++(45:1)--++(135:1)--++(45:1)--++(135:1)--++(45:1)--++(135:1)--++(45:1)--++(135:1)--++(45:1)--++(135:1)
 coordinate (origin3) 
--++(-135:1)--++(-45:1)--++(-135:1)--++(-45:1)--++(-135:1)--++(-45:1)--++(-135:1)--++(-45:1)--++(-135:1)--++(-45:1)--++(-135:1)--++(-45:1)	;

\draw[very thick,densely dotted ](origin3)--++(45:0.4);\draw[very thick,densely dotted ](origin3)--++(135:0.4);

	\path (origin2)--++(45:0.5)--++(135:0.5)
	node {$\color{magenta}1''$}
	--++(45:1)--++(135:1)
	node {$ 1$ }
	--++(45:1)--++(135:1)
	node {$\color{magenta}1''$}
	--++(45:1)--++(135:1)
	node {$ 1$}
	--++(45:1)--++(135:1)
	node {$\color{magenta}1''$}
	--++(45:1)--++(135:1)
	node {$ 1$};

	\path (origin2)--++(45:1.5)--++(135:0.5)
	node {$\color{gray}2$}
	--++(45:1)--++(135:1)
	node {$\color{gray}2$}
	--++(45:1)--++(135:1)
	node {$\color{gray}2$}
	--++(45:1)--++(135:1)
	node {$\color{gray}2$}
	--++(45:1)--++(135:1)
	node {$\color{gray}2$}
 ;

\path(origin2)--++(45:1)--++(-45:1) coordinate (origin2);
\draw[very thick,densely dotted ](origin2)--++(-45:0.4);\draw[very thick,densely dotted ](origin2)--++(-135:0.4);
	\draw[very thick](origin2)--++(45:1)--++(135:1)--++(45:1)--++(135:1)--++(45:1)--++(135:1)--++(45:1)--++(135:1)--++(45:1)--++(135:1)--++(45:1)--++(135:1)
 coordinate (origin3) 
--++(-135:1)--++(-45:1)--++(-135:1)--++(-45:1)--++(-135:1)--++(-45:1)--++(-135:1)--++(-45:1)--++(-135:1)--++(-45:1)--++(-135:1)--++(-45:1)	;

\draw[very thick,densely dotted ](origin3)--++(45:0.4);\draw[very thick,densely dotted ](origin3)--++(135:0.4);

	\path (origin2)--++(45:0.5)--++(135:0.5)
	node {$\color{orange}3$}
	--++(45:1)--++(135:1)
	node {$\color{orange}3$}
	--++(45:1)--++(135:1)
	node {$\color{orange}3$}
	--++(45:1)--++(135:1)
	node {$\color{orange}3$}
	--++(45:1)--++(135:1)
	node {$\color{orange}3$}
	--++(45:1)--++(135:1)
	node {$\color{orange}3$};

	\path (origin2)--++(45:1.5)--++(135:0.5)
	node {$\color{darkgreen}4$}
	--++(45:1)--++(135:1)
	node {$\color{darkgreen}4$}
	--++(45:1)--++(135:1)
	node {$\color{darkgreen}4$}
	--++(45:1)--++(135:1)
	node {$\color{darkgreen}4$}
	--++(45:1)--++(135:1)
	node {$\color{darkgreen}4$}
 ;

\path(origin2)--++(45:1)--++(-45:1) coordinate (origin2);
\draw[very thick,densely dotted ](origin2)--++(-45:0.4);\draw[very thick,densely dotted ](origin2)--++(-135:0.4);
	\draw[very thick](origin2)--++(45:1)--++(135:1)--++(45:1)--++(135:1)--++(45:1)--++(135:1)--++(45:1)--++(135:1)--++(45:1)--++(135:1)--++(45:1)--++(135:1)
 coordinate (origin3) 
--++(-135:1)--++(-45:1)--++(-135:1)--++(-45:1)--++(-135:1)--++(-45:1)--++(-135:1)--++(-45:1)--++(-135:1)--++(-45:1)--++(-135:1)--++(-45:1)	;
\draw[very thick,densely dotted ](origin3)--++(45:0.4);\draw[very thick,densely dotted ](origin3)--++(135:0.4);

	\path (origin2)--++(45:0.5)--++(135:0.5)
	node {$\color{green}5$}
	--++(45:1)--++(135:1)
	node {$\color{green}5$}
	--++(45:1)--++(135:1)
	node {$\color{green}5$}
	--++(45:1)--++(135:1)
	node {$\color{green}5$}
	--++(45:1)--++(135:1)
	node {$\color{green}5$}
	--++(45:1)--++(135:1)
	node {$\color{green}5$};

	\path (origin2)--++(45:1.5)--++(135:0.5)
	node {$\color{cyan}6$}
	--++(45:1)--++(135:1)
	node {$\color{cyan}6$}
	--++(45:1)--++(135:1)
	node {$\color{cyan}6$}
	--++(45:1)--++(135:1)
	node {$\color{cyan}6$}
	--++(45:1)--++(135:1)
	node {$\color{cyan}6$}
 ;

\path(origin2)--++(45:1)--++(-45:1) coordinate (origin2);
\draw[very thick,densely dotted ](origin2)--++(-45:0.4);\draw[very thick,densely dotted ](origin2)--++(-135:0.4);
	\draw[very thick](origin2)--++(45:1)--++(135:1)--++(45:1)--++(135:1)--++(45:1)--++(135:1)--++(45:1)--++(135:1)--++(45:1)--++(135:1)--++(45:1)--++(135:1)
 coordinate (origin3) 
--++(-135:1)--++(-45:1)--++(-135:1)--++(-45:1)--++(-135:1)--++(-45:1)--++(-135:1)--++(-45:1)--++(-135:1)--++(-45:1)--++(-135:1)--++(-45:1)	;
\draw[very thick,densely dotted ](origin3)--++(45:0.4);\draw[very thick,densely dotted ](origin3)--++(135:0.4);

	\path (origin2)--++(45:0.5)--++(135:0.5)
	node {$\color{pink}7$}
	--++(45:1)--++(135:1)
	node {$\color{pink}7$}
	--++(45:1)--++(135:1)
	node {$\color{pink}7$}
	--++(45:1)--++(135:1)
	node {$\color{pink}7$}
	--++(45:1)--++(135:1)
	node {$\color{pink}7$}
	--++(45:1)--++(135:1)
	node {$\color{pink}7$};

	\path (origin2)--++(45:1.5)--++(135:0.5)
	node {$\color{violet}8$}
	--++(45:1)--++(135:1)
	node {$\color{violet}8$}
	--++(45:1)--++(135:1)
	node {$\color{violet}8$}
	--++(45:1)--++(135:1)
	node {$\color{violet}8$}
	--++(45:1)--++(135:1)
	node {$\color{violet}8$}
 ;

\path(origin2)--++(45:1)--++(-45:1) coordinate (origin2);
\draw[very thick,densely dotted ](origin2)--++(-45:0.4);\draw[very thick,densely dotted ](origin2)--++(-135:0.4);
	\draw[very thick](origin2)--++(45:1)--++(135:1)--++(45:1)--++(135:1)--++(45:1)--++(135:1)--++(45:1)--++(135:1)--++(45:1)--++(135:1)--++(45:1)--++(135:1)
 coordinate (origin3) 
--++(-135:1)--++(-45:1)--++(-135:1)--++(-45:1)--++(-135:1)--++(-45:1)--++(-135:1)--++(-45:1)--++(-135:1)--++(-45:1)--++(-135:1)--++(-45:1)	;
\draw[very thick,densely dotted ](origin3)--++(45:0.4);\draw[very thick,densely dotted ](origin3)--++(135:0.4);

	\path (origin2)--++(45:0.5)--++(135:0.5)
	node {$\color{brown}9$}
	--++(45:1)--++(135:1)
	node {$\color{brown}9$}
	--++(45:1)--++(135:1)
	node {$\color{brown}9$}
	--++(45:1)--++(135:1)
	node {$\color{brown}9$}
	--++(45:1)--++(135:1)
	node {$\color{brown}9$}
	--++(45:1)--++(135:1)
	node {$\color{brown}9$};

\end{tikzpicture}  
\begin{tikzpicture} [scale=0.67]

 \path(0,0)--++(-90:1.5) coordinate(origin3);
\path(origin3)--++(-90:1)  ++(135:0.5)   ++(-135:0.5)--++(180:0.2)  coordinate(corner1);
	\path(origin3)  ++(45:6 )   ++(135:7)--++( 90:1.7)--++(180:0.2)  coordinate(corner2);
	\path(origin3)--++(-90:1)  ++(45:4.5)   ++(-45:4.5) --++( 0:0.2)   coordinate(corner4);
	\path(origin3)   ++(135:2)   ++(45:11)--++( 90:1.7)--++( 0:0.2) coordinate(corner3)    ;

 	\clip (corner1)--(corner2)--(corner3)--(corner4);

\path(0,-1)--++(45:2)--++(-45:2)--++(90:-0.75) coordinate (origin);

\path (origin)--++(135:0.5)--++(-135:0.5) coordinate (minus1);

\path (minus1)--++(135:0.5)--++(-135:0.5) coordinate (minus2);

\path (minus2)--++(135:0.5)--++(-135:0.5) coordinate (minus3);

\path (minus3)--++(135:0.5)--++(-135:0.5)--++(+90:0.4) coordinate (minus4);

\path (minus3)--++(135:0.5)--++(-135:0.5)--++(-90:0.4) coordinate (minus4a);

\path (origin)--++(45:0.5)--++(-45:0.5) coordinate (plus1);

\path (plus1)--++(45:0.5)--++(-45:0.5)  coordinate (plus3);

 \path (plus3)--++(45:0.5)--++(-45:0.5)   coordinate (plus4);

\path (plus4)--++(45:0.5)--++(-45:0.5)   coordinate (plus2);

\path(plus2)--++(45:0.4) coordinate (minus1NE);
\path(plus2)--++(-45:0.4) coordinate (minus1SE);

\path(minus3)--++(135:0.2)-++(45:0.2)  coordinate (minus1NW);
\path(minus3)--++(-135:0.2)--++(-45:0.2) coordinate (minus1SW);

\path(minus2)--++(135:0.4) coordinate (start);

\path(minus4a)--++(180:0.4) coordinate (minus4aNE);
\path(minus4a)--++(-90:0.4) coordinate (minus4aSE);

\path (start)--(minus1NE)--(minus1SE)--(minus1SW)--(minus4aSE)--(minus4aNE)--(minus1NW)--(start);

\path(0,-1)--++(45:2)--++(-45:2)--++(90:-0.5) coordinate (origin);

\path (origin)--++(135:0.5)--++(-135:0.5) coordinate (minus1);

\path (minus1)--++(135:0.5)--++(-135:0.5) coordinate (minus2);

\path (minus2)--++(135:0.5)--++(-135:0.5) coordinate (minus3);

\path (minus3)--++(135:0.5)--++(-135:0.5) coordinate (minus4);

\path (origin)--++(45:0.5)--++(-45:0.5) coordinate (plus1);

\path (plus1)	--++(45:0.5)--++(-45:0.5)		 coordinate (plus2);

\path (plus2) --++(45:0.5)--++(-45:0.5) coordinate (plus3);

\path (plus3) --++(45:0.5)--++(-45:0.5) coordinate (plus4);

\draw[very thick](plus4)--(minus3);

\draw[very thick](minus3)--++(90:0.1)--++(180:0.7);
\draw[very thick](minus3)--++(-90:0.1)--++(180:0.7);

\path(plus4)--++(45:0.4) coordinate (minus1NE);
\path(plus4)--++(-45:0.4) coordinate (minus1SE);

\path(minus3)--++(135:0.4) coordinate (minus1NW);
\path(minus3)--++(-135:0.4) coordinate (minus1SW);

\path(minus2)--++(135:0.4) coordinate (start);

\draw[very thick,  fill=pink](plus2) circle (4pt);
\draw[very thick,  fill=violet](plus3) circle (4pt);
\draw[very thick,  fill=brown](plus4) circle (4pt);

\draw[very thick,fill=lime!80!black](origin) circle (4pt);

\draw[very thick,  fill=darkgreen](minus1) circle (4pt);

\draw[very thick,  fill=orange](minus2) circle (4pt);

\draw[very thick,  fill=gray](minus3) circle (4pt);

\draw[very thick,  fill=magenta](minus4) circle (4pt);

\draw[very thick,  fill=cyan](plus1) circle (4pt);

\path(minus2)--++(-135:1) coordinate (grayguy);
\draw[very thick,  fill=white, white](grayguy) circle (4pt);
\path(grayguy)--++(-135:0.3);

\path (0,0)--++(-90:0.5) coordinate (origin2);

\begin{scope}
	
  	\clip(corner1)--(corner2)--(corner3)--(corner4);

\end{scope}

\draw[very thick,densely dotted ](origin2)--++(-45:0.4);\draw[very thick,densely dotted ](origin2)--++(-135:0.4);
	\draw[very thick](origin2)--++(45:1)--++(135:1)--++(45:1)--++(135:1)--++(45:1)--++(135:1)--++(45:1)--++(135:1)--++(45:1)--++(135:1)--++(45:1)--++(135:1)
 coordinate (origin3) 
--++(-135:1)--++(-45:1)--++(-135:1)--++(-45:1)--++(-135:1)--++(-45:1)--++(-135:1)--++(-45:1)--++(-135:1)--++(-45:1)--++(-135:1)--++(-45:1)	;

\draw[very thick,densely dotted ](origin3)--++(45:0.4);\draw[very thick,densely dotted ](origin3)--++(135:0.4);

	\path (origin2)--++(45:0.5)--++(135:0.5)
	node {$\color{magenta}1'$}
	--++(45:1)--++(135:1)
	node {$\color{magenta}1'$}
	--++(45:1)--++(135:1)
	node {$\color{magenta}1'$}
	--++(45:1)--++(135:1)
	node {$\color{magenta}1'$}
	--++(45:1)--++(135:1)
	node {$\color{magenta}1'$}
	--++(45:1)--++(135:1)
	node {$\color{magenta}1'$};

	\path (origin2)--++(45:1.5)--++(135:0.5)
	node {$\color{gray}2$}
	--++(45:1)--++(135:1)
	node {$\color{gray}2$}
	--++(45:1)--++(135:1)
	node {$\color{gray}2$}
	--++(45:1)--++(135:1)
	node {$\color{gray}2$}
	--++(45:1)--++(135:1)
	node {$\color{gray}2$}
 ;

\path(origin2)--++(45:1)--++(-45:1) coordinate (origin2);
\draw[very thick,densely dotted ](origin2)--++(-45:0.4);\draw[very thick,densely dotted ](origin2)--++(-135:0.4);
	\draw[very thick](origin2)--++(45:1)--++(135:1)--++(45:1)--++(135:1)--++(45:1)--++(135:1)--++(45:1)--++(135:1)--++(45:1)--++(135:1)--++(45:1)--++(135:1)
 coordinate (origin3) 
--++(-135:1)--++(-45:1)--++(-135:1)--++(-45:1)--++(-135:1)--++(-45:1)--++(-135:1)--++(-45:1)--++(-135:1)--++(-45:1)--++(-135:1)--++(-45:1)	;

\draw[very thick,densely dotted ](origin3)--++(45:0.4);\draw[very thick,densely dotted ](origin3)--++(135:0.4);

	\path (origin2)--++(45:0.5)--++(135:0.5)
	node {$\color{orange}3$}
	--++(45:1)--++(135:1)
	node {$\color{orange}3$}
	--++(45:1)--++(135:1)
	node {$\color{orange}3$}
	--++(45:1)--++(135:1)
	node {$\color{orange}3$}
	--++(45:1)--++(135:1)
	node {$\color{orange}3$}
	--++(45:1)--++(135:1)
	node {$\color{orange}3$};

	\path (origin2)--++(45:1.5)--++(135:0.5)
	node {$\color{darkgreen}4$}
	--++(45:1)--++(135:1)
	node {$\color{darkgreen}4$}
	--++(45:1)--++(135:1)
	node {$\color{darkgreen}4$}
	--++(45:1)--++(135:1)
	node {$\color{darkgreen}4$}
	--++(45:1)--++(135:1)
	node {$\color{darkgreen}4$}
 ;

\path(origin2)--++(45:1)--++(-45:1) coordinate (origin2);
\draw[very thick,densely dotted ](origin2)--++(-45:0.4);\draw[very thick,densely dotted ](origin2)--++(-135:0.4);
	\draw[very thick](origin2)--++(45:1)--++(135:1)--++(45:1)--++(135:1)--++(45:1)--++(135:1)--++(45:1)--++(135:1)--++(45:1)--++(135:1)--++(45:1)--++(135:1)
 coordinate (origin3) 
--++(-135:1)--++(-45:1)--++(-135:1)--++(-45:1)--++(-135:1)--++(-45:1)--++(-135:1)--++(-45:1)--++(-135:1)--++(-45:1)--++(-135:1)--++(-45:1)	;
\draw[very thick,densely dotted ](origin3)--++(45:0.4);\draw[very thick,densely dotted ](origin3)--++(135:0.4);

	\path (origin2)--++(45:0.5)--++(135:0.5)
	node {$\color{green}5$}
	--++(45:1)--++(135:1)
	node {$\color{green}5$}
	--++(45:1)--++(135:1)
	node {$\color{green}5$}
	--++(45:1)--++(135:1)
	node {$\color{green}5$}
	--++(45:1)--++(135:1)
	node {$\color{green}5$}
	--++(45:1)--++(135:1)
	node {$\color{green}5$};

	\path (origin2)--++(45:1.5)--++(135:0.5)
	node {$\color{cyan}6$}
	--++(45:1)--++(135:1)
	node {$\color{cyan}6$}
	--++(45:1)--++(135:1)
	node {$\color{cyan}6$}
	--++(45:1)--++(135:1)
	node {$\color{cyan}6$}
	--++(45:1)--++(135:1)
	node {$\color{cyan}6$}
 ;

\path(origin2)--++(45:1)--++(-45:1) coordinate (origin2);
\draw[very thick,densely dotted ](origin2)--++(-45:0.4);\draw[very thick,densely dotted ](origin2)--++(-135:0.4);
	\draw[very thick](origin2)--++(45:1)--++(135:1)--++(45:1)--++(135:1)--++(45:1)--++(135:1)--++(45:1)--++(135:1)--++(45:1)--++(135:1)--++(45:1)--++(135:1)
 coordinate (origin3) 
--++(-135:1)--++(-45:1)--++(-135:1)--++(-45:1)--++(-135:1)--++(-45:1)--++(-135:1)--++(-45:1)--++(-135:1)--++(-45:1)--++(-135:1)--++(-45:1)	;
\draw[very thick,densely dotted ](origin3)--++(45:0.4);\draw[very thick,densely dotted ](origin3)--++(135:0.4);

	\path (origin2)--++(45:0.5)--++(135:0.5)
	node {$\color{pink}7$}
	--++(45:1)--++(135:1)
	node {$\color{pink}7$}
	--++(45:1)--++(135:1)
	node {$\color{pink}7$}
	--++(45:1)--++(135:1)
	node {$\color{pink}7$}
	--++(45:1)--++(135:1)
	node {$\color{pink}7$}
	--++(45:1)--++(135:1)
	node {$\color{pink}7$};

	\path (origin2)--++(45:1.5)--++(135:0.5)
	node {$\color{violet}8$}
	--++(45:1)--++(135:1)
	node {$\color{violet}8$}
	--++(45:1)--++(135:1)
	node {$\color{violet}8$}
	--++(45:1)--++(135:1)
	node {$\color{violet}8$}
	--++(45:1)--++(135:1)
	node {$\color{violet}8$}
 ;

\path(origin2)--++(45:1)--++(-45:1) coordinate (origin2);
\draw[very thick,densely dotted ](origin2)--++(-45:0.4);\draw[very thick,densely dotted ](origin2)--++(-135:0.4);
	\draw[very thick](origin2)--++(45:1)--++(135:1)--++(45:1)--++(135:1)--++(45:1)--++(135:1)--++(45:1)--++(135:1)--++(45:1)--++(135:1)--++(45:1)--++(135:1)
 coordinate (origin3) 
--++(-135:1)--++(-45:1)--++(-135:1)--++(-45:1)--++(-135:1)--++(-45:1)--++(-135:1)--++(-45:1)--++(-135:1)--++(-45:1)--++(-135:1)--++(-45:1)	;
\draw[very thick,densely dotted ](origin3)--++(45:0.4);\draw[very thick,densely dotted ](origin3)--++(135:0.4);

	\path (origin2)--++(45:0.5)--++(135:0.5)
	node {$\color{brown}9$}
	--++(45:1)--++(135:1)
	node {$\color{brown}9$}
	--++(45:1)--++(135:1)
	node {$\color{brown}9$}
	--++(45:1)--++(135:1)
	node {$\color{brown}9$}
	--++(45:1)--++(135:1)
	node {$\color{brown}9$}
	--++(45:1)--++(135:1)
	node {$\color{brown}9$};

\end{tikzpicture}
$$
\caption{The Temperley--Lieb algebra tiling pictures for types $A_9$, $D_{10}$, and $C_9$ respectively. }
\label{DiagramsforTLtiling}
\end{figure}

Now, for $(W,P)=(A_{n-1}, A_{k-1}\times A_{n-k-1})$, $(C_{n-1}, A_{n-2})$ or $(D_n, A_{n-1})$ and $d\in \mathbb{DT}[W,P]$ we have that $d$ has $n$ vertices on the northern edge, labelled by $1, \ldots , n$ (from left to right) and $n$ vertices on the southern edge, labelled by $1', \ldots , n'$ (from left to right).   We write $i'\geq j'$ whenever $i\geq j$.  
Each such diagram $d$ will correspond to a finite region on our tiling with boundary given by the path 
\[\pi(d) = (\pi(1), \pi(2), \ldots , \pi(n), \pi(n') , \pi((n-1)'), \ldots, \pi(2'), \pi(1'))\]
defined as follows: Start at the leftmost corner of a tile labelled by $1$, or $1'$ if $W=C_{n-1}$, then for $1\leq k \leq n$ we have 
\[\pi(k) = \left\{ \begin{array}{ll} {\rm NE}  & \mbox{if $k$ is connected to a vertex $l>k$ or $l'\geq k'$  by an undecorated strand}\\
{\rm SE} & \mbox{otherwise}.
\end{array}\right.\]
\[\pi(k') = \left\{ \begin{array}{ll} {\rm NW}  & \mbox{if $k'$ is connected to a vertex $l>k$ or $l'>k'$ by an undecorated strand}\\
{\rm SW} & \mbox{otherwise}.
\end{array}\right.\]   
Examples are given in Figure \ref{resulting} and \ref{resulting2XXX}. We have drawn vertical lines through all tiles not included in $R(d)$. We see that the vertical line starting at vertex $i$, respectively  $i'$ meets the path $\pi$ at $\pi(i)$, respectively  $\pi(i')$ for each $1\leq i\leq n$.

 Given a subset $X \subseteq \{1,\dots, n , n' \dots, 1'\}$ we set 
\begin{align*}
N_\pi (X) &=  \{ k\in X \mid \pi (k)= {\rm NE}\} \sqcup\{ k' \in X \mid \pi (k')={\rm NW}\} \\
 S_\pi (X) &=  \{ k\in X \mid \pi (k)= {\rm SE}\} \sqcup\{ k'\in X  \mid \pi (k')={\rm SW}\}  .
 \end{align*}
 First note that the path $\pi(d)$ takes $n$ steps to the East and $n$ steps to the West. So the path starts and finishes on the left boundary of the tiling. Moreover, the number of steps to the North is precisely the number of undecorated strands in $d$. So if $d$ is an undecorated $n$-tangle, then the path starts and ends at the same point; this is because  
\[
  |N_ \pi\{1,\dots, n,n',\dots 1'\}|=n
\qquad
 | S_ \pi\{1,\dots, n,n',\dots 1'\}|=2n-n=n.
\]
But if $d$ has at least one bead, then the path will end strictly below where it started, this is because in this case 
\[
  |N_ \pi\{1,\dots, n,n',\dots 1'\} |< n
\qquad
  |S_ \pi\{1,\dots, n,n',\dots 1'\}|>2n-n=n.
\]
More generally, the second half of the path (going West) is always weakly to the South of the first half of the path (going East). 
To see this, observe that for $1\leq k\leq n$, the difference in height in the path after $k$ steps and after $2n-k$ steps is equal to
\[
  |S_ \pi\{k+1,\dots, n,n',\dots (k+1)'\} |- 
|  N_ \pi\{k+1,\dots, n,n',\dots (k+1)'\} |\geq 0
\]
where the inequality follows by definition as every $l$ (or $l'$) in  $N_ \pi\{k+1,\dots, n,n',\dots (k+1)'\}$ 
 is connected to a vertex in  $S_ \pi\{k+1,\dots, n,n',\dots (k+1)'\}$.
Thus $\pi(d)$ defines a region $R(d)$ in the tiling which contains a finite set of tiles $t_1, \ldots, t_r$ (which are labelled by simple reflections in $S_W$).

\begin{figure} [H]
$$
\begin{tikzpicture} [scale=0.7]
	
		\clip (-3.75,-1.2) rectangle (3.15,6.7);
	
	\path (0,-0.7)--++(135:0.5)--++(-135:0.5) coordinate (minus1);

	\path (minus1)--++(135:0.5)--++(-135:0.5) coordinate (minus2);

	\path (minus2)--++(135:0.5)--++(-135:0.5) coordinate (minus3);

	\path (minus3)--++(135:0.5)--++(-135:0.5) coordinate (minus4);

	\path (0,-0.7)--++(45:0.5)--++(-45:0.5) coordinate (plus1);

	\path (plus1)--++(45:0.5)--++(-45:0.5) coordinate (plus2);

	\path (plus2)--++(45:0.5)--++(-45:0.5) coordinate (plus3);

	\draw[very thick](plus3)--(minus4);

	\path(minus1)--++(45:0.4) coordinate (minus1NE);
	\path(minus1)--++(-45:0.4) coordinate (minus1SE);

	\path(minus4)--++(135:0.4) coordinate (minus1NW);
	\path(minus4)--++(-135:0.4) coordinate (minus1SW);
	
	\path(minus2)--++(135:0.4) coordinate (start);


	\path(plus3)--++(45:0.4) coordinate (minus1NE);
	\path(plus3)--++(-45:0.4) coordinate (minus1SE);

	\path(plus1)--++(135:0.4) coordinate (minus1NW);
	\path(plus1)--++(-135:0.4) coordinate (minus1SW);
	
	\path(plus2)--++(135:0.4) coordinate (start);


	\draw[very thick,fill=magenta](0,-0.7) circle (4pt);

	\draw[very thick,  fill=darkgreen](minus1) circle (4pt);

	\draw[very thick,  fill=orange](minus2) circle (4pt);

	\draw[very thick,  fill=lime](minus3) circle (4pt);

	\draw[very thick,  fill=violet](minus4) circle (4pt);

	\draw[very thick,  fill=gray!80](plus1) circle (4pt);

	\draw[very thick,  fill=cyan](plus2) circle (4pt);

	\draw[very thick,  fill=pink](plus3) circle (4pt);

	\path (0,0) coordinate (origin2); 
 
	\begin{scope}

		\foreach \i in {0,1,2,3,4,5,6,7,8,9,10,11,12}
		{
			\path (origin2)--++(45:0.5*\i) coordinate (c\i); 
			\path (origin2)--++(135:0.5*\i)  coordinate (d\i); 
		}

		\path(origin2)  ++(135:2.5)   ++(-135:2.5) coordinate(corner1);
		\path(origin2)  ++(45:2)   ++(135:7) coordinate(corner2);
		\path(origin2)  ++(45:2)   ++(-45:2) coordinate(corner4);
		\path(origin2)  ++(135:2.5)   ++(45:6.5) coordinate(corner3);
		
		\draw[thick] (origin2)--(corner1)--(corner2)--(corner3)--(corner4)--(origin2);
		
		\clip(corner1)--(corner2)--++(90:0.3)--++(0:6.5)--(corner3)--(corner4)
		--++(90:-0.3)--++(180:6.5) --(corner1);

		\path[name path=pathd1] (d1)--++(90:7);   
		\path[name path=top] (corner2)--(corner3);   
		\path [name intersections={of = pathd1 and top}];
		\coordinate (A)  at (intersection-1);
		\path(A)--++(-90:0.1) node { \color{white}$\up$ };

		\path[name path=pathd3] (d3)--++(90:7);   
		\path[name path=top] (corner2)--(corner3);   
		\path [name intersections={of = pathd3 and top}];
		\coordinate (A)  at (intersection-1);
		\path(A)--++(-90:0.1) node { \color{white}$\up$ };

		\path[name path=pathd5] (d5)--++(90:7);   
		\path[name path=top] (corner2)--(corner3);   
		\path [name intersections={of = pathd5 and top}];
		\coordinate (A)  at (intersection-1);
		\path(A)--++(-90:0.1) node { \color{white}$\up$ };

		\path[name path=pathd7] (d7)--++(90:7);   
		\path[name path=top] (corner2)--(corner3);   
		\path [name intersections={of = pathd7 and top}];
		\coordinate (A)  at (intersection-1);
		\path(A)--++(-90:-0.1) node { \color{white}$\down$ };

		\path[name path=pathd9] (d9)--++(90:7);   
		\path[name path=top] (corner2)--(corner3);   
		\path [name intersections={of = pathd9 and top}];
		\coordinate (A)  at (intersection-1);
		\path(A)--++(-90:-0.1) node { \color{white}$\down$ };

		\path[name path=pathc1] (c1)--++(90:7);   
		\path[name path=top] (corner2)--(corner3);   
		\path [name intersections={of = pathc1 and top}];
		\coordinate (A)  at (intersection-1);
		\path(A)--++(-90:0.1) node { \color{white}$\up$ };

		\path[name path=pathc3] (c3)--++(90:7);   
		\path[name path=top] (corner2)--(corner3);   
		\path [name intersections={of = pathc3 and top}];
		\coordinate (A)  at (intersection-1);
		\path(A)--++(-90:-0.1) node { \color{white}$\down$ };

		\path[name path=pathc5] (c5)--++(90:7);   
		\path[name path=top] (corner2)--(corner3);   
		\path [name intersections={of = pathc5 and top}];
		\coordinate (A)  at (intersection-1);
		\path(A)--++(-90:-0.1) node { \color{white}$\down$ };

		\path[name path=pathc7] (c7)--++(90:7);   
		\path[name path=top] (corner2)--(corner3);   
		\path [name intersections={of = pathc7 and top}];
		\coordinate (A)  at (intersection-1);
		\path(A)--++(-90:0.1) node { \color{white}$\up$ };

		\path[name path=pathd1] (d1)--++(-90:7);   
		\path[name path=bottom] (corner1)--(corner4);   
		\path [name intersections={of = pathd1 and bottom}];
		\coordinate (A)  at (intersection-1);
		\path (A)--++(90:-0.1) node { \color{white}$\up$  };
		
		\path[name path=pathd3] (d3)--++(-90:7);   
		\path[name path=bottom] (corner1)--(corner4);   
		\path [name intersections={of = pathd3 and bottom}];
		\coordinate (A)  at (intersection-1);
		\path (A)--++(90:-0.1) node { \color{white}$\up$  };

		\path[name path=pathd5] (d5)--++(-90:7);   
		\path[name path=bottom] (corner1)--(corner4);   
		\path [name intersections={of = pathd5 and bottom}];
		\coordinate (A)  at (intersection-1);
		\path (A)--++(90:-0.1) node { \color{white}$\up$  };

		\path[name path=pathd7] (d7)--++(-90:7);   
		\path[name path=bottom] (corner1)--(corner4);   
		\path [name intersections={of = pathd7 and bottom}];
		\coordinate (A)  at (intersection-1);
		\path (A)--++(90:-0.1) node { \color{white}$\up$  };
		
		\path[name path=pathd9] (d9)--++(-90:7);   
		\path[name path=bottom] (corner1)--(corner4);   
		\path [name intersections={of = pathd9 and bottom}];
		\coordinate (A)  at (intersection-1);
		\path (A)--++(90:-0.1) node { \color{white}$\up$  };

		\path[name path=pathc1] (c1)--++(-90:7);   
		\path[name path=bottom] (corner1)--(corner4);   
		\path [name intersections={of = pathc1 and bottom}];
		\coordinate (A)  at (intersection-1);
		\path (A)--++(90:0.1) node { \color{white}$\down$  };

		\path[name path=pathc3] (c3)--++(-90:7);   
		\path[name path=bottom] (corner1)--(corner4);   
		\path [name intersections={of = pathc3 and bottom}];
		\coordinate (A)  at (intersection-1);
		\path (A)--++(90:0.1) node { \color{white}$\down$  };

		\path[name path=pathc5] (c5)--++(-90:7);   
		\path[name path=bottom] (corner1)--(corner4);   
		\path [name intersections={of = pathc5 and bottom}];
		\coordinate (A)  at (intersection-1);
		\path (A)--++(90:0.1) node { \color{white}$\down$  };

		\path[name path=pathc7] (c7)--++(-90:7);   
		\path[name path=bottom] (corner1)--(corner4);   
		\path [name intersections={of = pathc7 and bottom}];
		\coordinate (A)  at (intersection-1);
		\path (A)--++(90:0.1) node { \color{white}$\down$  };
		
		\clip(corner1)--(corner2)--(corner3)--(corner4)--(corner1);

		\foreach \i in {1,3,5,7,9,11}
		{
			\draw[thick](c\i)--++(90:7);   
			\draw[thick](c\i)--++(-90:7);
			\draw[thick](d\i)--++(90:7);
			\draw[thick](d\i)--++(-90:7);
		}

	\end{scope}

	\begin{scope}

			\clip(corner1)--(corner2)--(corner3)--(corner4)--(corner1);
		
		\path (0,0) coordinate (origin2);

		\foreach \i\j in {0,1,2,3,4,5,6,7,8,9,10,11,12}
		{
			\path (origin2)--++(45:0.5*\i) coordinate (a\i); 
			\path (origin2)--++(135:0.5*\i)  coordinate (b\j);

		}

		\fill[white]
		(0,0) --++(45:4)--++(135:1)--++(-135:2)
		--++(135:3)--++(135:1)
		--++(-135:1)--++(-135:1)
		--++(-45:5);

		\draw(a1)  ++(135:0.5)   ++(-135:0.5) coordinate(next1);
		\draw[very thick,magenta](a1) to [out=90,in=90] (next1);

		\draw(a3)  ++(135:0.5)   ++(-135:0.5) coordinate(upnext1);
		\draw[very thick,gray](a3) to [out=90,in=90] (upnext1);

		\draw(upnext1)  ++(135:0.5)   ++(-135:0.5) coordinate(upnext2);
		\draw[very thick,magenta](upnext1) to [out=-90,in=-90] (upnext2);

		\draw(upnext2)  ++(135:0.5)   ++(-135:0.5) coordinate(upnext3);
		\draw[very thick,darkgreen](upnext2) to [out=90,in=90] (upnext3);


		\draw(a5)  ++(135:0.5)   ++(-135:0.5) coordinate(upnext1);
		\draw[very thick,cyan](a5) to [out=90,in=90] (upnext1);

		\draw(upnext1)  ++(135:0.5)   ++(-135:0.5) coordinate(upnext2);
		\draw[very thick,gray](upnext1) to [out=-90,in=-90] (upnext2);

		\draw(upnext2)  ++(135:0.5)   ++(-135:0.5) coordinate(upnext3);
		\draw[very thick,magenta](upnext2) to [out=90,in=90] (upnext3);

		\draw(upnext3)  ++(135:0.5)   ++(-135:0.5) coordinate(upnext4);
		\draw[very thick, darkgreen](upnext3) to [out=-90,in=-90] (upnext4);

		\draw(upnext4)  ++(135:0.5)   ++(-135:0.5) coordinate(upnext5);
		\draw[very thick,orange](upnext4) to [out=90,in=90] (upnext5);


		\draw(a7)  ++(135:0.5)   ++(-135:0.5) coordinate(upnext1);
		\draw[very thick,pink](a7) to [out=90,in=90] (upnext1);

		\draw(upnext1)  ++(135:0.5)   ++(-135:0.5) coordinate(upnext2);
		\draw[very thick,cyan](upnext1) to [out=-90,in=-90] (upnext2);

		\draw(upnext2)  ++(135:0.5)   ++(-135:0.5) coordinate(upnext3);
		\path(upnext2) to [out=90,in=90] (upnext3);

		\draw(upnext3)  ++(135:0.5)   ++(-135:0.5) coordinate(upnext4);
		\draw[very thick, magenta](upnext3) to [out=-90,in=-90] (upnext4);

		\draw(upnext4)  ++(135:0.5)   ++(-135:0.5) coordinate(upnext5);
		\draw[very thick,darkgreen](upnext4) to [out=90,in=90] (upnext5);

		\draw(upnext5)  ++(135:0.5)   ++(-135:0.5) coordinate(upnext6);
		\draw[very thick, orange](upnext5) to [out=-90,in=-90] (upnext6);

		\draw(upnext6)  ++(135:0.5)   ++(-135:0.5) coordinate(upnext7);
		\draw[very thick,lime](upnext6) to [out=90,in=90] (upnext7);


		\draw(a9)  ++(135:0.5)   ++(-135:0.5) coordinate(upnext1);
		\path(a9) to [out=90,in=90] (upnext1);

		\draw(upnext1)  ++(135:0.5)   ++(-135:0.5) coordinate(upnext2);
		\draw[very thick,pink](upnext1) to [out=-90,in=-90] (upnext2);

		\draw(upnext2)  ++(135:0.5)   ++(-135:0.5) coordinate(upnext3);
		\path(upnext2) to [out=90,in=90] (upnext3);

		\draw(upnext3)  ++(135:0.5)   ++(-135:0.5) coordinate(upnext4);
		\path(upnext3) to [out=-90,in=-90] (upnext4);

		\draw(upnext4)  ++(135:0.5)   ++(-135:0.5) coordinate(upnext5);
		\path(upnext4) to [out=90,in=90] (upnext5);

		\draw(upnext5)  ++(135:0.5)   ++(-135:0.5) coordinate(upnext6);
		\draw[very thick, darkgreen ](upnext5) to [out=-90,in=-90] (upnext6);

		\draw(upnext6)  ++(135:0.5)   ++(-135:0.5) coordinate(upnext7);
		\draw[very thick,orange ](upnext6) to [out=90,in=90] (upnext7);

		\draw(upnext7)  ++(135:0.5)   ++(-135:0.5) coordinate(upnext8);
		\draw[very thick,lime](upnext7) to [out=-90,in=-90]   (upnext8);

		\draw(upnext8)  ++(135:0.5)   ++(-135:0.5) coordinate(upnext9);
		\draw[very thick, violet ](upnext8) to [out=90,in=90] (upnext9);


		\path(a4)  --++(135:3.5)    coordinate(upnext6);
		
		\draw(upnext6)  ++(135:0.5)   ++(-135:0.5) coordinate(upnext7);
		\draw[very thick,orange ](upnext6) to[out=-90,in=-90]   (upnext7);

		\draw(upnext7)  ++(135:0.5)   ++(-135:0.5) coordinate(upnext8);
		\draw[very thick,lime ]
		(upnext7) to  [out=90,in=90]   (upnext8);

		\draw(upnext8)  ++(135:0.5)   ++(-135:0.5) coordinate(upnext9);
		\draw[very thick, violet ](upnext8) to[out=-90,in=-90] (upnext9);
		

		\path(a4)  --++(135:4.5)    coordinate(upnext6);
		
		\draw(upnext6)  ++(135:0.5)   ++(-135:0.5) coordinate(upnext7);
		\draw[very thick,lime ](upnext6) to[out=-90,in=-90]   (upnext7);

			\path (0,0) coordinate (origin2);

		\foreach \i in {0,1,2,3,4,5,6,7,8,9,10,11,12}
		{
			\path (origin2)--++(-45:2)--++(-135:2) --++(45:1*\i) coordinate (c\i); 
			\path (origin2)--++(-45:2)--++(-135:2) --++(135:1*\i)  coordinate (d\i); 
			\draw[thick,densely dotted] (c\i)--++(135:14);
			\draw[thick,densely dotted] (d\i)--++(45:14);
		}

	\end{scope}
	
	\path(origin2)  ++(135:2.5)   ++(-135:2.5) coordinate(corner1);
	\path(origin2)  ++(45:2)   ++(135:7) coordinate(corner2);
	\path(origin2)  ++(45:2)   ++(-45:2) coordinate(corner4);
	\path(origin2)  ++(135:2.5)   ++(45:6.5) coordinate(corner3);
	
	\draw[thick] (origin2)--(corner1)--(corner2)--(corner3)--(corner4)--(origin2);

%
%
	\draw[line width=1.6,->](origin2)--++(135:1) coordinate (X); 
	\draw[line width=1.6,->](X)--++(135:1) coordinate (X); 
	\draw[line width=1.6,->](X)--++(135:1) coordinate (X);
		\draw[line width=1.6,->](X)--++(135:1) coordinate (X);
			\draw[line width=1.6,->](X)--++(135:1) coordinate (X); 
 
 	\draw[fill=white,line width=1.5](X) circle (2.5pt) coordinate (Y);

\draw[line width=1.6,->](X)--++(45:1) coordinate (X);  

 	\draw[fill=white,line width=1.5](Y) circle (2.5pt);

\draw[line width=1.6,->](X)--++(45:1) coordinate (X);   
 
\draw[line width=1.6,->](X)--++(-45:1) coordinate (X);   
\draw[line width=1.6,->](X)--++(-45:1) coordinate (X);   \draw[line width=1.6,->](X)--++(-45:1) coordinate (X);   \draw[line width=1.6,->](X)--++(-45:1) coordinate (X);    
 
 \draw[line width=1.6,->](X)--++(45:1) coordinate (X);   \draw[line width=1.6,->](X)--++(45:1) coordinate (X);   
 \draw[line width=1.6,->](X)--++(-45:1) coordinate (X);

 \draw[line width=1.6,->](X)--++(-135:1) coordinate (X);    \draw[line width=1.6,->](X)--++(-135:1) coordinate (X);    \draw[line width=1.6,->](X)--++(-135:1) coordinate (X);    \draw[line width=1.6,->](X)--++(-135:1) coordinate (X);

 	\clip(origin2)--(corner1)--(corner2)--(corner3)--(corner4)--(origin2);

		\path(0,0)--++(135:1) coordinate (graystart);
	
	\fill[gray,opacity=0.2]  (graystart) --++(0:1.414)--++(45:1)--++(180:2*1.414);

		\path(graystart)--++(135:2) coordinate (graystart);
	
	\fill[gray,opacity=0.2]  (graystart) --++(0:1.414*3)--++(45:1)
	--++(180:1*1.414)
	--++(-135:1)--++(135:1)--++(180:2*1.414);

			\path(graystart)--++(135:2) coordinate (graystart);
\fill[gray,opacity=0.2]  (graystart) --++(0:1.414*2)--++(135:1)--++(180:1*1.414);

\end{tikzpicture}\qquad
 \begin{tikzpicture} [scale=0.7]

	  		\clip (-3.75,-1.2) rectangle (3.15,6.7);
	
	\path (0,-0.7)--++(135:0.5)--++(-135:0.5) coordinate (minus1);

	\path (minus1)--++(135:0.5)--++(-135:0.5) coordinate (minus2);

	\path (minus2)--++(135:0.5)--++(-135:0.5) coordinate (minus3);

	\path (minus3)--++(135:0.5)--++(-135:0.5) coordinate (minus4);

	\path (0,-0.7)--++(45:0.5)--++(-45:0.5) coordinate (plus1);

	\path (plus1)--++(45:0.5)--++(-45:0.5) coordinate (plus2);

	\path (plus2)--++(45:0.5)--++(-45:0.5) coordinate (plus3);

	\draw[very thick](plus3)--(minus4);

	\path(minus1)--++(45:0.4) coordinate (minus1NE);
	\path(minus1)--++(-45:0.4) coordinate (minus1SE);

	\path(minus4)--++(135:0.4) coordinate (minus1NW);
	\path(minus4)--++(-135:0.4) coordinate (minus1SW);
	
	\path(minus2)--++(135:0.4) coordinate (start);

	\path(plus3)--++(45:0.4) coordinate (minus1NE);
	\path(plus3)--++(-45:0.4) coordinate (minus1SE);

	\path(plus1)--++(135:0.4) coordinate (minus1NW);
	\path(plus1)--++(-135:0.4) coordinate (minus1SW);
	
	\path(plus2)--++(135:0.4) coordinate (start);


	\draw[very thick,fill=magenta](0,-0.7) circle (4pt);

	\draw[very thick,  fill=darkgreen](minus1) circle (4pt);

	\draw[very thick,  fill=orange](minus2) circle (4pt);

	\draw[very thick,  fill=lime!80!black](minus3) circle (4pt);

	\draw[very thick,  fill=violet](minus4) circle (4pt);

	\draw[very thick,  fill=gray!80](plus1) circle (4pt);

	\draw[very thick,  fill=cyan](plus2) circle (4pt);

	\draw[very thick,  fill=pink](plus3) circle (4pt);

	\path (0,0) coordinate (origin2); 
	

	\begin{scope}

		\foreach \i in {0,1,2,3,4,5,6,7,8,9,10,11,12}
		{
			\path (origin2)--++(45:0.5*\i) coordinate (c\i); 
			\path (origin2)--++(135:0.5*\i)  coordinate (d\i); 
		}

		\path(origin2)  ++(135:2.5)   ++(-135:2.5) coordinate(corner1);
		\path(origin2)  ++(45:2)   ++(135:7) coordinate(corner2);
		\path(origin2)  ++(45:2)   ++(-45:2) coordinate(corner4);
		\path(origin2)  ++(135:2.5)   ++(45:6.5) coordinate(corner3);
		
		\draw[thick] (origin2)--(corner1)--(corner2)--(corner3)--(corner4)--(origin2);
		
		\clip(corner1)--(corner2)--++(90:0.3)--++(0:6.5)--(corner3)--(corner4)
		--++(90:-0.3)--++(180:6.5) --(corner1);

		\path[name path=pathd1] (d1)--++(90:7);   
		\path[name path=top] (corner2)--(corner3);   
		\path [name intersections={of = pathd1 and top}];
		\coordinate (A)  at (intersection-1);
		\path(A)--++(-90:0.1) node { \color{white}$\up$ };

		\path[name path=pathd3] (d3)--++(90:7);   
		\path[name path=top] (corner2)--(corner3);   
		\path [name intersections={of = pathd3 and top}];
		\coordinate (A)  at (intersection-1);
		\path(A)--++(-90:0.1) node { \color{white}$\up$ };

		\path[name path=pathd5] (d5)--++(90:7);   
		\path[name path=top] (corner2)--(corner3);   
		\path [name intersections={of = pathd5 and top}];
		\coordinate (A)  at (intersection-1);
		\path(A)--++(-90:0.1) node { \color{white}$\up$ };

		\path[name path=pathd7] (d7)--++(90:7);   
		\path[name path=top] (corner2)--(corner3);   
		\path [name intersections={of = pathd7 and top}];
		\coordinate (A)  at (intersection-1);
		\path(A)--++(-90:-0.1) node { \color{white}$\down$ };

		\path[name path=pathd9] (d9)--++(90:7);   
		\path[name path=top] (corner2)--(corner3);   
		\path [name intersections={of = pathd9 and top}];
		\coordinate (A)  at (intersection-1);
		\path(A)--++(-90:-0.1) node { \color{white}$\down$ };

		\path[name path=pathc1] (c1)--++(90:7);   
		\path[name path=top] (corner2)--(corner3);   
		\path [name intersections={of = pathc1 and top}];
		\coordinate (A)  at (intersection-1);
		\path(A)--++(-90:0.1) node { \color{white}$\up$ };

		\path[name path=pathc3] (c3)--++(90:7);   
		\path[name path=top] (corner2)--(corner3);   
		\path [name intersections={of = pathc3 and top}];
		\coordinate (A)  at (intersection-1);
		\path(A)--++(-90:-0.1) node { \color{white}$\down$ };

		\path[name path=pathc5] (c5)--++(90:7);   
		\path[name path=top] (corner2)--(corner3);   
		\path [name intersections={of = pathc5 and top}];
		\coordinate (A)  at (intersection-1);
		\path(A)--++(-90:-0.1) node { \color{white}$\down$ };

		\path[name path=pathc7] (c7)--++(90:7);   
		\path[name path=top] (corner2)--(corner3);   
		\path [name intersections={of = pathc7 and top}];
		\coordinate (A)  at (intersection-1);
		\path(A)--++(-90:0.1) node { \color{white}$\up$ };

		\path[name path=pathd1] (d1)--++(-90:7);   
		\path[name path=bottom] (corner1)--(corner4);   
		\path [name intersections={of = pathd1 and bottom}];
		\coordinate (A)  at (intersection-1);
		\path (A)--++(90:-0.1) node { \color{white}$\up$  };
		
		\path[name path=pathd3] (d3)--++(-90:7);   
		\path[name path=bottom] (corner1)--(corner4);   
		\path [name intersections={of = pathd3 and bottom}];
		\coordinate (A)  at (intersection-1);
		\path (A)--++(90:-0.1) node { \color{white}$\up$  };

		\path[name path=pathd5] (d5)--++(-90:7);   
		\path[name path=bottom] (corner1)--(corner4);   
		\path [name intersections={of = pathd5 and bottom}];
		\coordinate (A)  at (intersection-1);
		\path (A)--++(90:-0.1) node { \color{white}$\up$  };

		\path[name path=pathd7] (d7)--++(-90:7);   
		\path[name path=bottom] (corner1)--(corner4);   
		\path [name intersections={of = pathd7 and bottom}];
		\coordinate (A)  at (intersection-1);
		\path (A)--++(90:-0.1) node { \color{white}$\up$  };
		
		\path[name path=pathd9] (d9)--++(-90:7);   
		\path[name path=bottom] (corner1)--(corner4);   
		\path [name intersections={of = pathd9 and bottom}];
		\coordinate (A)  at (intersection-1);
		\path (A)--++(90:-0.1) node { \color{white}$\up$  };

		\path[name path=pathc1] (c1)--++(-90:7);   
		\path[name path=bottom] (corner1)--(corner4);   
		\path [name intersections={of = pathc1 and bottom}];
		\coordinate (A)  at (intersection-1);
		\path (A)--++(90:0.1) node { \color{white}$\down$  };

		\path[name path=pathc3] (c3)--++(-90:7);   
		\path[name path=bottom] (corner1)--(corner4);   
		\path [name intersections={of = pathc3 and bottom}];
		\coordinate (A)  at (intersection-1);
		\path (A)--++(90:0.1) node { \color{white}$\down$  };

		\path[name path=pathc5] (c5)--++(-90:7);   
		\path[name path=bottom] (corner1)--(corner4);   
		\path [name intersections={of = pathc5 and bottom}];
		\coordinate (A)  at (intersection-1);
		\path (A)--++(90:0.1) node { \color{white}$\down$  };

		\path[name path=pathc7] (c7)--++(-90:7);   
		\path[name path=bottom] (corner1)--(corner4);   
		\path [name intersections={of = pathc7 and bottom}];
		\coordinate (A)  at (intersection-1);
		\path (A)--++(90:0.1) node { \color{white}$\down$  };
		
		\clip(corner1)--(corner2)--(corner3)--(corner4)--(corner1);

		\foreach \i in {1,3,5,7,9,11}
		{
			\draw[thick](c\i)--++(90:7);   
			\draw[thick](c\i)--++(-90:7);
			\draw[thick](d\i)--++(90:7);
			\draw[thick](d\i)--++(-90:7);
		}

	\end{scope}

	\begin{scope}
		\clip(corner1)--(corner2)--(corner3)--(corner4)--(corner1);
		
		\path (0,0) coordinate (origin2);

		\foreach \i\j in {0,1,2,3,4,5,6,7,8,9,10,11,12}
		{
			\path (origin2)--++(45:0.5*\i) coordinate (a\i); 
			\path (origin2)--++(135:0.5*\i)  coordinate (b\j); 


		}

		\path(0,0)--++(135:5) coordinate(pirir);
		
		\fill[white] 
		(pirir) --++(45:2) --++(-45:1)--++(45:1) --++(-45:2)--++(45:1) --++(-45:2)
		--++(-135:2)--++(135:2)
	 --++(-135:1)
	 --++(135:2)	 --++(-135:1)
	 --++(135:1);

		\fill[white] 
		(0,0) --++(45:2) --++(135:4)--++(-135:2) ;

		\draw(a1)  ++(135:0.5)   ++(-135:0.5) coordinate(next1);

		\draw(a3)  ++(135:0.5)   ++(-135:0.5) coordinate(upnext1);

		\draw(upnext1)  ++(135:0.5)   ++(-135:0.5) coordinate(upnext2);

		\draw(upnext2)  ++(135:0.5)   ++(-135:0.5) coordinate(upnext3);


		\draw(a5)  ++(135:0.5)   ++(-135:0.5) coordinate(upnext1);
		\draw[very thick,cyan](a5) to [out=90,in=90] (upnext1);

		\draw(upnext1)  ++(135:0.5)   ++(-135:0.5) coordinate(upnext2);

		\draw(upnext2)  ++(135:0.5)   ++(-135:0.5) coordinate(upnext3);

		\draw(upnext3)  ++(135:0.5)   ++(-135:0.5) coordinate(upnext4);

		\draw(upnext4)  ++(135:0.5)   ++(-135:0.5) coordinate(upnext5);


		\draw(a7)  ++(135:0.5)   ++(-135:0.5) coordinate(upnext1);
		\draw[very thick,pink](a7) to [out=90,in=90] (upnext1);

		\draw(upnext1)  ++(135:0.5)   ++(-135:0.5) coordinate(upnext2);
		\draw[very thick,cyan](upnext1) to [out=-90,in=-90] (upnext2);

		\draw(upnext2)  ++(135:0.5)   ++(-135:0.5) coordinate(upnext3);
		\draw[very thick,gray](upnext2) to [out=90,in=90] (upnext3);

		\path(upnext2) to [out=90,in=90] (upnext3);

		\draw(upnext3)  ++(135:0.5)   ++(-135:0.5) coordinate(upnext4);

		\draw(upnext4)  ++(135:0.5)   ++(-135:0.5) coordinate(upnext5);
		\draw[very thick,darkgreen](upnext4) to [out=90,in=90] (upnext5);

		\draw(upnext5)  ++(135:0.5)   ++(-135:0.5) coordinate(upnext6);

		\draw(upnext6)  ++(135:0.5)   ++(-135:0.5) coordinate(upnext7);


		\draw(a9)  ++(135:0.5)   ++(-135:0.5) coordinate(upnext1);
		\path(a9) to [out=90,in=90] (upnext1);

		\draw(upnext1)  ++(135:0.5)   ++(-135:0.5) coordinate(upnext2);
		\draw[very thick,pink](upnext1) to [out=-90,in=-90] (upnext2);

		\draw(upnext2)  ++(135:0.5)   ++(-135:0.5) coordinate(upnext3);
\draw[very thick,cyan](upnext2) to [out=90,in=90] (upnext3);		
		
		\path(upnext2) to [out=90,in=90] (upnext3);

		\draw(upnext3)  ++(135:0.5)   ++(-135:0.5) coordinate(upnext4);
		\draw[very thick,gray](upnext3) to [out=-90,in=-90] (upnext4);

		\path(upnext3) to [out=-90,in=-90] (upnext4);

		\draw(upnext4)  ++(135:0.5)   ++(-135:0.5) coordinate(upnext5);
	\draw[very thick,magenta](upnext4) to [out=90,in=90] (upnext5);

	\path(upnext4) to [out=90,in=90] (upnext5);

		\draw(upnext5)  ++(135:0.5)   ++(-135:0.5) coordinate(upnext6);
		\draw[very thick, darkgreen ](upnext5) to [out=-90,in=-90] (upnext6);

		\draw(upnext6)  ++(135:0.5)   ++(-135:0.5) coordinate(upnext7);
		\draw[very thick,orange ](upnext6) to [out=90,in=90] (upnext7);

		\draw(upnext7)  ++(135:0.5)   ++(-135:0.5) coordinate(upnext8);

		\draw(upnext8)  ++(135:0.5)   ++(-135:0.5) coordinate(upnext9);
		\draw[very thick, violet ](upnext8) to [out=90,in=90] (upnext9);


		\path(a4)  --++(135:3.5)    coordinate(upnext6);

		\path	(upnext6)--++(45:0.5)--++(135:0.5)  coordinate (upnext00);
		\path	(upnext6)--++(45:0.5)--++(135:0.5) --++(45:0.5)--++(-45:0.5)  coordinate (upnext001);

			\draw[very thick,darkgreen ](upnext00) to[out=-90,in=-90]   (upnext001);

		\path	(upnext6)--++(45:0.5)--++(-45:0.5)  coordinate (upnext67);
			\draw[very thick,darkgreen]
		(upnext6) to  [out=90,in=90]   (upnext67);

				\path	(upnext67)--++(45:0.5)--++(-45:0.5)  coordinate (upnext68);
		
			\draw[very thick,magenta ](upnext67) to[out=-90,in=-90]   (upnext68);

				\path	(upnext68)--++(45:0.5)--++(-45:0.5)  coordinate (upnext69);
								\path	(upnext69)--++(45:0.5)--++(-45:0.5)  coordinate (upnext70);
			\draw[very thick,cyan ](upnext69) to[out=-90,in=-90]   (upnext70);

		\draw(upnext6)  ++(135:0.5)   ++(-135:0.5) coordinate(upnext7);
		\draw[very thick,orange ](upnext6) to[out=-90,in=-90]   (upnext7);

		\draw(upnext7)  ++(135:0.5)   ++(-135:0.5) coordinate(upnext8);
		\draw[very thick,lime!80!black ]
		(upnext7) to  [out=90,in=90]   (upnext8);

		\draw(upnext8)  ++(135:0.5)   ++(-135:0.5) coordinate(upnext9);
		\draw[very thick, violet ](upnext8) to[out=-90,in=-90] (upnext9);
		

		\path(a4)  --++(135:4.5)    coordinate(upnext6);
		
		\draw(upnext6)  ++(135:0.5)   ++(-135:0.5) coordinate(upnext7);
		\draw[very thick,lime!80!black ](upnext6) to[out=-90,in=-90]   (upnext7);

		\foreach \i in {0,1,2,3,4,5,6,7,8,9,10,11,12}
		{
			\path (origin2)--++(-45:2)--++(-135:2) --++(45:1*\i) coordinate (c\i); 
			\path (origin2)--++(-45:2)--++(-135:2) --++(135:1*\i)  coordinate (d\i); 
			\draw[thick,densely dotted] (c\i)--++(135:14);
			\draw[thick,densely dotted] (d\i)--++(45:14);
		}

	\end{scope}
	
	\path(origin2)  ++(135:2.5)   ++(-135:2.5) coordinate(corner1);
	\path(origin2)  ++(45:2)   ++(135:7) coordinate(corner2);
	\path(origin2)  ++(45:2)   ++(-45:2) coordinate(corner4);
	\path(origin2)  ++(135:2.5)   ++(45:6.5) coordinate(corner3);

	\draw[thick] (origin2)--(corner1)--(corner2)--(corner3)--(corner4)--(origin2);

	\path(origin2)--++(135:5) coordinate (X);
		\path(X)  coordinate (XT);
	\draw[line width=1.5,->](X)--++(45:1) coordinate (X);  
	\draw[line width=1.5,->](X)--++(45:1) coordinate (X);  
	\draw[line width=1.5,->](X)--++(-45:1) coordinate (X);  	 
	 	\draw[line width=1.5,->](X)--++(45:1) coordinate (X);  
	\draw[line width=1.5,->](X)--++(-45:1) coordinate (X);  	 	\draw[line width=1.5,->](X)--++(-45:1) coordinate (X);  	 	\draw[line width=1.5,->](X)--++(45:1) coordinate (X);  	 	 
	\draw[line width=1.5,->](X)--++(-45:1) coordinate (X);  	 	\draw[line width=1.5,->](X)--++(-45:1) coordinate (X);  	 
	\draw[line width=1.5,->](X)--++(-135:1) coordinate (X);  	 
	\draw[line width=1.5,->](X)--++(-135:1) coordinate (X);  	 	\draw[line width=1.5,->](X)--++(-135:1) coordinate (X);  	 	\draw[line width=1.5,->](X)--++(-135:1) coordinate (X);

		\draw[line width=1.5,->](X)--++(135:1) coordinate (X);  	 
		\draw[line width=1.5,->](X)--++(135:1) coordinate (X);  	 		\draw[line width=1.5,->](X)--++(135:1) coordinate (X);  	 		\draw[line width=1.5,->](X)--++(135:1) coordinate (X);  	 		\draw[line width=1.5,->](X)--++(135:1) coordinate (X);

\path(0,0)--++(45:1)--++(135:1) 			coordinate (tile);	
\path(tile)--++(45:0.5) 	coordinate (SE);		
\path(tile)--++(135:0.5) 	coordinate (SW);			
\path(tile)--++(135:1)--++(45:0.5)  	coordinate (NW);	
\path(tile)--++(135:0.5)--++(45:1)  	coordinate (NE);		

	\draw[ very thick,magenta] 		(SE) to [out=90,in=90] (SW) (NE) to [out=-90,in=-90] (NW)				;

\path(0,0) 		coordinate (tile);	
\path(tile)--++(45:0.5) 	coordinate (SE);		
\path(tile)--++(135:0.5) 	coordinate (SW);			
\path(tile)--++(135:1)--++(45:0.5)  	coordinate (NW);	
\path(tile)--++(135:0.5)--++(45:1)  	coordinate (NE);		

	\draw[ very thick,magenta] 		(SE) to [out=90,in=90] (SW) (NE) to [out=-90,in=-90] (NW)				;

\path(0,0)--++(45:1) 		coordinate (tile);	
\path(tile)--++(45:0.5) 	coordinate (SE);		
\path(tile)--++(135:0.5) 	coordinate (SW);			
\path(tile)--++(135:1)--++(45:0.5)  	coordinate (NW);	
\path(tile)--++(135:0.5)--++(45:1)  	coordinate (NE);		

	\draw[ very thick,gray] 		(SE) to [out=90,in=90] (SW) (NE) to [out=-90,in=-90] (NW)				;

	\path(0,0) --++(135:1) 			coordinate (tile);	
\path(tile)--++(45:0.5) 	coordinate (SE);		
\path(tile)--++(135:0.5) 	coordinate (SW);			
\path(tile)--++(135:1)--++(45:0.5)  	coordinate (NW);	
\path(tile)--++(135:0.5)--++(45:1)  	coordinate (NE);		

	\draw[ very thick,darkgreen] 		(SE) to [out=90,in=90] (SW) (NE) to [out=-90,in=-90] (NW)				;

	\path(0,0) --++(135:2) 			coordinate (tile);	
\path(tile)--++(45:0.5) 	coordinate (SE);		
\path(tile)--++(135:0.5) 	coordinate (SW);			
\path(tile)--++(135:1)--++(45:0.5)  	coordinate (NW);	
\path(tile)--++(135:0.5)--++(45:1)  	coordinate (NE);		

	\draw[ very thick,orange] 		(SE) to [out=90,in=90] (SW) (NE) to [out=-90,in=-90] (NW)				;

	\path(0,0) --++(135:3) 			coordinate (tile);	
\path(tile)--++(45:0.5) 	coordinate (SE);		
\path(tile)--++(135:0.5) 	coordinate (SW);			
\path(tile)--++(135:1)--++(45:0.5)  	coordinate (NW);	
\path(tile)--++(135:0.5)--++(45:1)  	coordinate (NE);		

	\draw[ very thick,lime!80!black] 		(SE) to [out=90,in=90] (SW) (NE) to [out=-90,in=-90] (NW)				;

	 
	 	\draw[fill=white,line width=1.5](XT) circle (2.5pt);
		 	\draw[fill=white,line width=1.5](X) circle (2.5pt);
  
 	\clip(origin2)--(corner1)--(corner2)--(corner3)--(corner4)--(origin2);

		\path(0,0)--++(135:1) coordinate (graystart);
	
	\fill[gray,opacity=0.2]  (graystart) --++(0:1.414)--++(45:1)--++(180:2*1.414);

			\path(graystart)--++(135:2) coordinate (graystart);
\fill[gray,opacity=0.2]  (graystart) --++(0:1.414*3)--++(45:1)--++(180:4*1.414);

			\path(graystart)--++(135:2) coordinate (graystart);
	\fill[gray,opacity=0.2]  (graystart) --++(0:1.414*4)--++(135:1)
	--++(-135:1)--++(135:1)--++(180:2*1.414);

\end{tikzpicture}\qquad
\begin{tikzpicture} [scale=0.7]
	
	\clip (-3.75,-1.2) rectangle (3.15,6.7);
	
	\path (0,-0.7)--++(135:0.5)--++(-135:0.5) coordinate (minus1);

	\path (minus1)--++(135:0.5)--++(-135:0.5) coordinate (minus2);

	\path (minus2)--++(135:0.5)--++(-135:0.5) coordinate (minus3);

	\path (minus3)--++(135:0.5)--++(-135:0.5) coordinate (minus4);

	\path (0,-0.7)--++(45:0.5)--++(-45:0.5) coordinate (plus1);

	\path (plus1)--++(45:0.5)--++(-45:0.5) coordinate (plus2);

	\path (plus2)--++(45:0.5)--++(-45:0.5) coordinate (plus3);

	\draw[very thick](plus3)--(minus4);

	\path(minus1)--++(45:0.4) coordinate (minus1NE);
	\path(minus1)--++(-45:0.4) coordinate (minus1SE);

	\path(minus4)--++(135:0.4) coordinate (minus1NW);
	\path(minus4)--++(-135:0.4) coordinate (minus1SW);
	
	\path(minus2)--++(135:0.4) coordinate (start);


	\path(plus3)--++(45:0.4) coordinate (minus1NE);
	\path(plus3)--++(-45:0.4) coordinate (minus1SE);

	\path(plus1)--++(135:0.4) coordinate (minus1NW);
	\path(plus1)--++(-135:0.4) coordinate (minus1SW);
	
	\path(plus2)--++(135:0.4) coordinate (start);


	\draw[very thick,fill=magenta](0,-0.7) circle (4pt);

	\draw[very thick,  fill=darkgreen](minus1) circle (4pt);

	\draw[very thick,  fill=orange](minus2) circle (4pt);

	\draw[very thick,  fill=lime](minus3) circle (4pt);

	\draw[very thick,  fill=violet](minus4) circle (4pt);

	\draw[very thick,  fill=gray!80](plus1) circle (4pt);

	\draw[very thick,  fill=cyan](plus2) circle (4pt);

	\draw[very thick,  fill=pink](plus3) circle (4pt);

	\path (0,0) coordinate (origin2); 
	

	\begin{scope}

		\foreach \i in {0,1,2,3,4,5,6,7,8,9,10,11,12}
		{
			\path (origin2)--++(45:0.5*\i) coordinate (c\i); 
			\path (origin2)--++(135:0.5*\i)  coordinate (d\i); 
		}

		\path(origin2)  ++(135:2.5)   ++(-135:2.5) coordinate(corner1);
		\path(origin2)  ++(45:2)   ++(135:7) coordinate(corner2);
		\path(origin2)  ++(45:2)   ++(-45:2) coordinate(corner4);
		\path(origin2)  ++(135:2.5)   ++(45:6.5) coordinate(corner3);
		
		\draw[thick] (origin2)--(corner1)--(corner2)--(corner3)--(corner4)--(origin2);
		
		\clip(corner1)--(corner2)--++(90:0.3)--++(0:6.5)--(corner3)--(corner4)
		--++(90:-0.3)--++(180:6.5) --(corner1);

		\path[name path=pathd1] (d1)--++(90:7);   
		\path[name path=top] (corner2)--(corner3);   
		\path [name intersections={of = pathd1 and top}];
		\coordinate (A)  at (intersection-1);
		\path(A)--++(-90:0.1) node { \color{white}$\up$ };

		\path[name path=pathd3] (d3)--++(90:7);   
		\path[name path=top] (corner2)--(corner3);   
		\path [name intersections={of = pathd3 and top}];
		\coordinate (A)  at (intersection-1);
		\path(A)--++(-90:0.1) node { \color{white}$\up$ };

		\path[name path=pathd5] (d5)--++(90:7);   
		\path[name path=top] (corner2)--(corner3);   
		\path [name intersections={of = pathd5 and top}];
		\coordinate (A)  at (intersection-1);
		\path(A)--++(-90:0.1) node { \color{white}$\up$ };

		\path[name path=pathd7] (d7)--++(90:7);   
		\path[name path=top] (corner2)--(corner3);   
		\path [name intersections={of = pathd7 and top}];
		\coordinate (A)  at (intersection-1);
		\path(A)--++(-90:-0.1) node { \color{white}$\down$ };

		\path[name path=pathd9] (d9)--++(90:7);   
		\path[name path=top] (corner2)--(corner3);   
		\path [name intersections={of = pathd9 and top}];
		\coordinate (A)  at (intersection-1);
		\path(A)--++(-90:-0.1) node { \color{white}$\down$ };

		\path[name path=pathc1] (c1)--++(90:7);   
		\path[name path=top] (corner2)--(corner3);   
		\path [name intersections={of = pathc1 and top}];
		\coordinate (A)  at (intersection-1);
		\path(A)--++(-90:0.1) node { \color{white}$\up$ };

		\path[name path=pathc3] (c3)--++(90:7);   
		\path[name path=top] (corner2)--(corner3);   
		\path [name intersections={of = pathc3 and top}];
		\coordinate (A)  at (intersection-1);
		\path(A)--++(-90:-0.1) node { \color{white}$\down$ };

		\path[name path=pathc5] (c5)--++(90:7);   
		\path[name path=top] (corner2)--(corner3);   
		\path [name intersections={of = pathc5 and top}];
		\coordinate (A)  at (intersection-1);
		\path(A)--++(-90:-0.1) node { \color{white}$\down$ };

		\path[name path=pathc7] (c7)--++(90:7);   
		\path[name path=top] (corner2)--(corner3);   
		\path [name intersections={of = pathc7 and top}];
		\coordinate (A)  at (intersection-1);
		\path(A)--++(-90:0.1) node { \color{white}$\up$ };

		\path[name path=pathd1] (d1)--++(-90:7);   
		\path[name path=bottom] (corner1)--(corner4);   
		\path [name intersections={of = pathd1 and bottom}];
		\coordinate (A)  at (intersection-1);
		\path (A)--++(90:-0.1) node { \color{white}$\up$  };
		
		\path[name path=pathd3] (d3)--++(-90:7);   
		\path[name path=bottom] (corner1)--(corner4);   
		\path [name intersections={of = pathd3 and bottom}];
		\coordinate (A)  at (intersection-1);
		\path (A)--++(90:-0.1) node { \color{white}$\up$  };

		\path[name path=pathd5] (d5)--++(-90:7);   
		\path[name path=bottom] (corner1)--(corner4);   
		\path [name intersections={of = pathd5 and bottom}];
		\coordinate (A)  at (intersection-1);
		\path (A)--++(90:-0.1) node { \color{white}$\up$  };

		\path[name path=pathd7] (d7)--++(-90:7);   
		\path[name path=bottom] (corner1)--(corner4);   
		\path [name intersections={of = pathd7 and bottom}];
		\coordinate (A)  at (intersection-1);
		\path (A)--++(90:-0.1) node { \color{white}$\up$  };
		
		\path[name path=pathd9] (d9)--++(-90:7);   
		\path[name path=bottom] (corner1)--(corner4);   
		\path [name intersections={of = pathd9 and bottom}];
		\coordinate (A)  at (intersection-1);
		\path (A)--++(90:-0.1) node { \color{white}$\up$  };

		\path[name path=pathc1] (c1)--++(-90:7);   
		\path[name path=bottom] (corner1)--(corner4);   
		\path [name intersections={of = pathc1 and bottom}];
		\coordinate (A)  at (intersection-1);
		\path (A)--++(90:0.1) node { \color{white}$\down$  };

		\path[name path=pathc3] (c3)--++(-90:7);   
		\path[name path=bottom] (corner1)--(corner4);   
		\path [name intersections={of = pathc3 and bottom}];
		\coordinate (A)  at (intersection-1);
		\path (A)--++(90:0.1) node { \color{white}$\down$  };

		\path[name path=pathc5] (c5)--++(-90:7);   
		\path[name path=bottom] (corner1)--(corner4);   
		\path [name intersections={of = pathc5 and bottom}];
		\coordinate (A)  at (intersection-1);
		\path (A)--++(90:0.1) node { \color{white}$\down$  };

		\path[name path=pathc7] (c7)--++(-90:7);   
		\path[name path=bottom] (corner1)--(corner4);   
		\path [name intersections={of = pathc7 and bottom}];
		\coordinate (A)  at (intersection-1);
		\path (A)--++(90:0.1) node { \color{white}$\down$  };
		
		\clip(corner1)--(corner2)--(corner3)--(corner4)--(corner1);

		\foreach \i in {1,3,5,7,9,11}
		{
			\draw[thick](c\i)--++(90:7);   
			\draw[thick](c\i)--++(-90:7);
			\draw[thick](d\i)--++(90:7);
			\draw[thick](d\i)--++(-90:7);
		}

	\end{scope}

	\begin{scope}
 			\clip(corner1)--(corner2)--(corner3)--(corner4)--(corner1);
	
		\path (0,0) coordinate (origin2);

		\foreach \i\j in {0,1,2,3,4,5,6,7,8,9,10,11,12}
		{
			\path (origin2)--++(45:0.5*\i) coordinate (a\i); 
			\path (origin2)--++(135:0.5*\i)  coordinate (b\j); 


		}

		\path(0,0)--++(135:5) coordinate(pirir);
		
		\fill[white] 
		(pirir) --++(45:2) --++(-45:1)--++(45:1) --++(-45:2)--++(45:1) --++(-45:2)
		--++(-135:2)--++(135:2)
	 --++(-135:1)
	 --++(135:2)	 --++(-135:1)
	 --++(135:1);

		\draw(a1)  ++(135:0.5)   ++(-135:0.5) coordinate(next1);

		\draw(a3)  ++(135:0.5)   ++(-135:0.5) coordinate(upnext1);

		\draw(upnext1)  ++(135:0.5)   ++(-135:0.5) coordinate(upnext2);

		\draw(upnext2)  ++(135:0.5)   ++(-135:0.5) coordinate(upnext3);


		\draw(a5)  ++(135:0.5)   ++(-135:0.5) coordinate(upnext1);
		\draw[very thick,cyan](a5) to [out=90,in=90] (upnext1);

		\draw(upnext1)  ++(135:0.5)   ++(-135:0.5) coordinate(upnext2);

		\draw(upnext2)  ++(135:0.5)   ++(-135:0.5) coordinate(upnext3);

		\draw(upnext3)  ++(135:0.5)   ++(-135:0.5) coordinate(upnext4);

		\draw(upnext4)  ++(135:0.5)   ++(-135:0.5) coordinate(upnext5);


		\draw(a7)  ++(135:0.5)   ++(-135:0.5) coordinate(upnext1);
		\draw[very thick,pink](a7) to [out=90,in=90] (upnext1);

		\draw(upnext1)  ++(135:0.5)   ++(-135:0.5) coordinate(upnext2);
		\draw[very thick,cyan](upnext1) to [out=-90,in=-90] (upnext2);

		\draw(upnext2)  ++(135:0.5)   ++(-135:0.5) coordinate(upnext3);
		\draw[very thick,gray](upnext2) to [out=90,in=90] (upnext3);

		\path(upnext2) to [out=90,in=90] (upnext3);

		\draw(upnext3)  ++(135:0.5)   ++(-135:0.5) coordinate(upnext4);

		\draw(upnext4)  ++(135:0.5)   ++(-135:0.5) coordinate(upnext5);
		\draw[very thick,darkgreen](upnext4) to [out=90,in=90] (upnext5);

		\draw(upnext5)  ++(135:0.5)   ++(-135:0.5) coordinate(upnext6);

		\draw(upnext6)  ++(135:0.5)   ++(-135:0.5) coordinate(upnext7);


		\draw(a9)  ++(135:0.5)   ++(-135:0.5) coordinate(upnext1);
		\path(a9) to [out=90,in=90] (upnext1);

		\draw(upnext1)  ++(135:0.5)   ++(-135:0.5) coordinate(upnext2);
		\draw[very thick,pink](upnext1) to [out=-90,in=-90] (upnext2);

		\draw(upnext2)  ++(135:0.5)   ++(-135:0.5) coordinate(upnext3);
\draw[very thick,cyan](upnext2) to [out=90,in=90] (upnext3);		
		
		\path(upnext2) to [out=90,in=90] (upnext3);

		\draw(upnext3)  ++(135:0.5)   ++(-135:0.5) coordinate(upnext4);
		\draw[very thick,gray](upnext3) to [out=-90,in=-90] (upnext4);

		\path(upnext3) to [out=-90,in=-90] (upnext4);

		\draw(upnext4)  ++(135:0.5)   ++(-135:0.5) coordinate(upnext5);
	\draw[very thick,magenta](upnext4) to [out=90,in=90] (upnext5);

	\path(upnext4) to [out=90,in=90] (upnext5);

		\draw(upnext5)  ++(135:0.5)   ++(-135:0.5) coordinate(upnext6);
		\draw[very thick, darkgreen ](upnext5) to [out=-90,in=-90] (upnext6);

		\draw(upnext6)  ++(135:0.5)   ++(-135:0.5) coordinate(upnext7);
		\draw[very thick,orange ](upnext6) to [out=90,in=90] (upnext7);

		\draw(upnext7)  ++(135:0.5)   ++(-135:0.5) coordinate(upnext8);

		\draw(upnext8)  ++(135:0.5)   ++(-135:0.5) coordinate(upnext9);
		\draw[very thick, violet ](upnext8) to [out=90,in=90] (upnext9);


		\path(a4)  --++(135:3.5)    coordinate(upnext6);

		\path	(upnext6)--++(45:0.5)--++(135:0.5)  coordinate (upnext00);
		\path	(upnext6)--++(45:0.5)--++(135:0.5) --++(45:0.5)--++(-45:0.5)  coordinate (upnext001);

			\draw[very thick,darkgreen ](upnext00) to[out=-90,in=-90]   (upnext001);

		\path	(upnext6)--++(45:0.5)--++(-45:0.5)  coordinate (upnext67);
			\draw[very thick,darkgreen]
		(upnext6) to  [out=90,in=90]   (upnext67);

				\path	(upnext67)--++(45:0.5)--++(-45:0.5)  coordinate (upnext68);
		
			\draw[very thick,magenta ](upnext67) to[out=-90,in=-90]   (upnext68);

				\path	(upnext68)--++(45:0.5)--++(-45:0.5)  coordinate (upnext69);
								\path	(upnext69)--++(45:0.5)--++(-45:0.5)  coordinate (upnext70);
			\draw[very thick,cyan ](upnext69) to[out=-90,in=-90]   (upnext70);

		\draw(upnext6)  ++(135:0.5)   ++(-135:0.5) coordinate(upnext7);
		\draw[very thick,orange ](upnext6) to[out=-90,in=-90]   (upnext7);

		\draw(upnext7)  ++(135:0.5)   ++(-135:0.5) coordinate(upnext8);
		\draw[very thick,lime ]
		(upnext7) to  [out=90,in=90]   (upnext8);

		\draw(upnext8)  ++(135:0.5)   ++(-135:0.5) coordinate(upnext9);
		\draw[very thick, violet ](upnext8) to[out=-90,in=-90] (upnext9);
		

		\path(a4)  --++(135:4.5)    coordinate(upnext6);
		
		\draw(upnext6)  ++(135:0.5)   ++(-135:0.5) coordinate(upnext7);
		\draw[very thick,lime ](upnext6) to[out=-90,in=-90]   (upnext7);

		\foreach \i in {0,1,2,3,4,5,6,7,8,9,10,11,12}
		{
			\path (origin2)--++(-45:2)--++(-135:2) --++(45:1*\i) coordinate (c\i); 
			\path (origin2)--++(-45:2)--++(-135:2) --++(135:1*\i)  coordinate (d\i); 
			\draw[thick,densely dotted] (c\i)--++(135:14);
			\draw[thick,densely dotted] (d\i)--++(45:14);
		}

	\end{scope}
	
	\path(origin2)  ++(135:2.5)   ++(-135:2.5) coordinate(corner1);
	\path(origin2)  ++(45:2)   ++(135:7) coordinate(corner2);
	\path(origin2)  ++(45:2)   ++(-45:2) coordinate(corner4);
	\path(origin2)  ++(135:2.5)   ++(45:6.5) coordinate(corner3);

	\draw[thick] (origin2)--(corner1)--(corner2)--(corner3)--(corner4)--(origin2);

	\path(origin2)--++(135:5) coordinate (X);
	\draw[line width=1.5,->](X)--++(45:1) coordinate (X);  
	\draw[line width=1.5,->](X)--++(45:1) coordinate (X);  
	\draw[line width=1.5,->](X)--++(-45:1) coordinate (X);  	 
	 	\draw[line width=1.5,->](X)--++(45:1) coordinate (X);  
	\draw[line width=1.5,->](X)--++(-45:1) coordinate (X);  	 	\draw[line width=1.5,->](X)--++(-45:1) coordinate (X);  	 	\draw[line width=1.5,->](X)--++(45:1) coordinate (X);  	 	 
	\draw[line width=1.5,->](X)--++(-45:1) coordinate (X);  	 	\draw[line width=1.5,->](X)--++(-45:1) coordinate (X);

	\draw[line width=1.5,->](X)--++(-135:1) coordinate (X);   \draw[line width=1.5,->](X)--++(-135:1) coordinate (X);  	\draw[line width=1.5,->](X)--++( 135:1) coordinate (X);   \draw[line width=1.5,->](X)--++(135:1) coordinate (X);  
	\draw[line width=1.5,->](X)--++(-135:1) coordinate (X);	
		\draw[line width=1.5,->](X)--++( 135:1) coordinate (X);   \draw[line width=1.5,->](X)--++(135:1) coordinate (X);  	\draw[line width=1.5,->](X)--++(-135:1) coordinate (X);	
		\draw[line width=1.5,->](X)--++( 135:1) coordinate (X);
	 
	 	\draw[fill=white,line width=1.5](X) circle (2.5pt);
  
 	\clip(origin2)--(corner1)--(corner2)--(corner3)--(corner4)--(origin2);

		\path(0,0)--++(135:1) coordinate (graystart);

			\path(graystart)--++(135:2) coordinate (graystart);
\fill[gray,opacity=0.2]  
(graystart) --++(0:1.414*1)--++(45:1)
--++(-45:1)--++(0:1*1.414)
--++(45:1)--++(180:3*1.414)--++(-45:1);

			\path(graystart)--++(135:2) coordinate (graystart);
	\fill[gray,opacity=0.2]  (graystart) --++(0:1.414*4)--++(135:1)
	--++(-135:1)--++(135:1)--++(180:2*1.414);

\end{tikzpicture}
$$
\caption{The tilings of  the diagrams  from \cref{typeAtiling2lkjsdfghslkdfjghsldk}.  
In each case the path begins at the western-most point, which is denoted with a circle; the path then  follows the orientation depicted on the diagram.
An example of the associated $\diag_\w$  for the rightmost diagram  is 
$\diag_\w=
\color{cyan}
\diag_{7}
\color{darkgreen}
\diag_{4}\color{gray} \diag_{6} \color{pink}\diag_{8}
\color{violet}
\diag_{1}\color{orange} \diag_{3} \color{magenta}\diag_{5}\color{cyan}
\diag_{7}\color{green}
\diag_{2}\color{darkgreen}
\diag_{4}
$. 		}
\label{resulting}
\end{figure}

\begin{figure}[ht!]
$$\hspace{-0.4cm}

$$
\caption{The tiling  of the two leftmost diagrams from \cref{typeAtiling2lkjsdfghslkdfjghsldkXXX} and an additional one in type $(C_9,A_8)$.
The path begins at the northerly western-most point; the path then  follows the orientation depicted on the diagram until terminating at the southernly western-most point (both of which are  denoted with  circles). Some of the  strands in  type $C$ carry two dots which can be simplified to one dot. }
\label{resulting2XXX}
\end{figure}

We now explain how  the  region  $R(d)$  defines a reduced word of a 
strongly fully commutative element of $W$.   
Enumerate the tiles in $R(d)$, $t_{i_1}, \ldots , t_{i_r}$,  in such a way that for each $j$, the tiles in $R(d)$ to the ${\rm SW}$ and ${\rm SE}$ of $t_{i_j}$ appear  before $t_{i_j}$. Taking the corresponding ordered product  of simple reflections gives a word $\underline{w}=\underline{w}(d)$ in the elements of $S_W$. There is, of course, more than one way of enumerating the tiles in $R(d)$ in this fashion and  these give  (all the) different reduced expressions for the same element of $W$ (as they differ only by commutation relations).

Conversely, any expression $\underline{w} = s_{i_1}s_{i_2} \ldots s_{i_k}$ with  $s_{i_j}\in S_W$ defines a region $R(\underline{w})$ in the tiling by picking any horizontal line in the tiling and stacking the boxes in order, starting from $s_{i_1}$, then $s_{i_2}$, \ldots, and finally $s_{i_k}$. (This is an alternative description of the \lq heaps' introduced by Stembridge \cite{MR1406459}).

\begin{prop}\label{Rw}  
\begin{enumerate}[leftmargin=*]
\item 
Let $\underline{w}$ is a reduced expression for a strongly fully commutative element of $W$ such that $\underline{w} = \omega(\SSTT)$ for some $T\in {\rm Path}_{(W,P)}$. Then $R(\underline{w})$ does not have a boundary containing an inadmissible section of the form depicted in \cref{resulting2XXXY}. 
\item For any $d\in \mathbb{DT}[W,P]$, the region $R(d)$ does not have a boundary containing an inadmissible section of the form depicted in \cref{resulting2XXXY}. In particular, $\underline{w}(d)$ is a reduced expression for a strongly fully commutative element of $W$. 
\item For any $d\in \mathbb{DT}[W,P]$,  if $\underline{w}(d) = \underline{w} = s_{i_1} s_{i_2} \ldots s_{i_k}$ then we have
\[d = \diag_{\underline{w}} := \diag_{i_1} \diag_{i_2} \ldots \diag_{i_k}.\]
Moreover, if $\underline{w'}$ is another reduced expression for the same strongly fully commutative element of $W$ then $\diag_{\underline{w'}} = \diag_{\underline{w}}$.
\end{enumerate}

\begin{figure}[ht!]
$$
\begin{tikzpicture}[xscale=1]

\path(0,0) 
coordinate (newstart);

	\path(newstart)  coordinate (X);

	\fill[magenta!20] (X)
	--++(-45:1)--++(-135:1) 	--++(-45:1)--++(-135:1)
	--++(135:1)--++(45:1) 	--++(135:1)--++(45:1) ;
	
	\path(X) --++(-45:0.5)--++(-135:0.5)  coordinate (here);	
		\path(here) node {$i$};
	\path(X) --++(-45:1.5)--++(-135:1.5)  coordinate (here);	
		\path(here) node {$i$};

 	\draw[densely dotted, thick](X)--++(135:0.2) ; 
 	\draw[densely dotted, thick](X)--++(45:0.2) ;  
	\draw[densely dotted, thick](X)--++(-135:1) coordinate (Y1) --++(-45:1)  --++(-135:1)coordinate (Y2) --++(-45:1)  --++(45:1)    ;
	\draw[densely dotted, thick](X)
--++(-45:1);

	\draw[densely dotted, thick](Y1)--++(-135:0.2) ;
 
		\draw[densely dotted, thick](Y2)--++(-135:0.2) ;
	\draw[densely dotted, thick](Y1)--++(135:0.2) ;
		\draw[densely dotted, thick](Y2)--++(135:0.2) ;

	  	\path(X)--++(-135: 2)--++(-45: 2) coordinate (T);
 	\draw[densely dotted, thick](T)--++(-135:0.2) ; 
 	\draw[densely dotted, thick](T)--++(-45:0.2) ;

\path(X)--++(-45:1) coordinate (zig);
	 	 
	\draw[line width= 2.2] 
	(zig)   --++(-135:1) --++(-45:1) ;

	\draw[densely dotted, thick](X)--++(-135:0.2) ; 

  \end{tikzpicture}
\qquad\quad
\begin{tikzpicture}[xscale=-1]

\path(0,0) 
coordinate (newstart);

	\path(newstart)  coordinate (X);

	\fill[cyan!20] (X)
	--++(-45:1)--++(-135:1) 	--++(-45:1)--++(-135:1)
	--++(135:1)--++(45:1) 	--++(135:1)--++(45:1) ;
	
	\path(X) --++(-45:0.5)--++(-135:0.5)  coordinate (here);	
		\path(here) node {$j$};
	\path(X) --++(-45:1.5)--++(-135:1.5)  coordinate (here);	
		\path(here) node {$j$};

 	\draw[densely dotted, thick](X)--++(135:0.2) ; 
 	\draw[densely dotted, thick](X)--++(45:0.2) ;  
	\draw[densely dotted, thick](X)--++(-135:1) coordinate (Y1) --++(-45:1)  --++(-135:1)coordinate (Y2) --++(-45:1)  --++(45:1)    ;
	\draw[densely dotted, thick](X)
--++(-45:1);

	\draw[densely dotted, thick](Y1)--++(-135:0.2) ;
 
		\draw[densely dotted, thick](Y2)--++(-135:0.2) ;
	\draw[densely dotted, thick](Y1)--++(135:0.2) ;
		\draw[densely dotted, thick](Y2)--++(135:0.2) ;

	  	\path(X)--++(-135: 2)--++(-45: 2) coordinate (T);
 	\draw[densely dotted, thick](T)--++(-135:0.2) ; 
 	\draw[densely dotted, thick](T)--++(-45:0.2) ;

\path(X)--++(-45:1) coordinate (zig);
	 	 
	\draw[line width= 2.2] 
	(zig)   --++(-135:1) --++(-45:1) ;

	\draw[densely dotted, thick](X)--++(-135:0.2) ; 

  \end{tikzpicture}
  \qquad 
  \begin{tikzpicture}[xscale=1]

\path(0,0) 
coordinate (newstart);

	\path(newstart)  coordinate (X);

	\fill[orange!20] (X)
	--++(-45:1)--++(-135:1) 	 coordinate (change)
	--++(135:1)--++(45:1) 	 ;
		\fill[green!20] (change)
	--++(-45:1)--++(-135:1) 	 coordinate (change)
	--++(135:1)--++(45:1) 	 ;

	\path(X) --++(-45:0.5)--++(-135:0.5)  coordinate (here);	
		\path(here) node {$1$};
	\path(X) --++(-45:1.5)--++(-135:1.5)  coordinate (here);	
		\path(here) node {$1''$};

 	\draw[densely dotted, thick](X)--++(135:0.2) ; 
 	\draw[densely dotted, thick](X)--++(45:0.2) ;  
	\draw[densely dotted, thick](X)--++(-135:1) coordinate (Y1) --++(-45:1)  --++(-135:1)coordinate (Y2) --++(-45:1)  --++(45:1)    ;
	\draw[densely dotted, thick](X)
--++(-45:1);

%

	  	\path(X)--++(-135: 2)--++(-45: 2) coordinate (T);
 	\draw[densely dotted, thick](T)--++(-135:0.2) ; 
 	\draw[densely dotted, thick](T)--++(-45:0.2) ;

\path(X)--++(-45:1) coordinate (zig);
	 	 
	\draw[line width= 2.2] 
	(zig)   --++(-135:1) --++(-45:1) ;

	\draw[densely dotted, thick](X)--++(-135:0.2) ; 

  \end{tikzpicture}
\qquad  
  \begin{tikzpicture}[yscale=-1]

\path(0,0) 
coordinate (newstart);

	\path(newstart)  coordinate (X);

	\fill[orange!20] (X)
	--++(-45:1)--++(-135:1) 	 coordinate (change)
	--++(135:1)--++(45:1) 	 ;
		\fill[green!20] (change)
	--++(-45:1)--++(-135:1) 	 coordinate (change)
	--++(135:1)--++(45:1) 	 ;

	\path(X) --++(-45:0.5)--++(-135:0.5)  coordinate (here);	
		\path(here) node {$1$};
	\path(X) --++(-45:1.5)--++(-135:1.5)  coordinate (here);	
		\path(here) node {$1''$};

 	\draw[densely dotted, thick](X)--++(135:0.2) ; 
 	\draw[densely dotted, thick](X)--++(45:0.2) ;  
	\draw[densely dotted, thick](X)--++(-135:1) coordinate (Y1) --++(-45:1)  --++(-135:1)coordinate (Y2) --++(-45:1)  --++(45:1)    ;
	\draw[densely dotted, thick](X)
--++(-45:1);

%

	  	\path(X)--++(-135: 2)--++(-45: 2) coordinate (T);
 	\draw[densely dotted, thick](T)--++(-135:0.2) ; 
 	\draw[densely dotted, thick](T)--++(-45:0.2) ;

\path(X)--++(-45:1) coordinate (zig);
	 	 
	\draw[line width= 2.2] 
	(zig)   --++(-135:1) --++(-45:1) ;

	\draw[densely dotted, thick](X)--++(-135:0.2) ; 

  \end{tikzpicture}$$
\caption{ 
We emphasise the inadmissible section of the  border-region by drawing it thickly.  
In type $A_{n-1}$ we require that there is no such region for any $i,j \in \{1,2,\dots,n-1\}$.  
In type $D_{n}$ we require that there is no such region for any $i \in \{2,\dots,n-1\}$ or $j \in \{2,\dots,n-1\}$.    
In type $C_{n-1}$ we require that there is no such region for any $i \in \{1',2,\dots,n-1\}$ or $j \in \{2,\dots,n-1\}$.    
We note that the latter two regions (involving a $1$ and a $1''$ tile) occur only in type $D_{n-1}$.}
\label{resulting2XXXY}
\end{figure}

\end{prop}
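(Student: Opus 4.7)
The plan is to reduce the proposition to a local statement about the boundary of the heap, exploiting the fact that the region $R(\w)$ is an invariant of the commutation class of $\w$ (this is essentially Stembridge's theorem \cite{MR1406459}: two expressions give rise to the same heap, and hence the same tiled region, if and only if they differ by a sequence of commutation moves). In particular, every commutation class containing $\w$ is encoded by $R(\w)$, and so the properties ``reduced'' and ``strongly fully commutative'' — both of which are preserved by commutation — can be read off from $R(\w)$ alone.

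For the ($\Rightarrow$) direction I would argue by contrapositive: if $R(\w)$ contains one of the inadmissible patterns, then some expression commutation-equivalent to $\w$ contains a forbidden subword. Concretely, patterns (1) and (2) show two tiles of the same label $i$ (respectively $j$) separated only by tiles of labels $i'$ satisfying $m_{i,i'}\geq 3$ (with $\alpha_i$ short whenever $m_{i,i'}=4$); reading the corresponding sub-heap as a subword gives either $s_is_i$ (so $\w$ is non-reduced) or $s_is_{i'}s_i$ with $m_{i,i'}\in\{3,4\}$, which after a braid move is commutation-equivalent to $s_{i'}s_is_{i'}$ (so $\w$ is not strongly fully commutative). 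Patterns (3) and (4) are the type $D$ degeneracy: since $s_1$ and $s_{1''}$ commute with each other but both satisfy $m_{\cdot,2}=3$, a subregion of the form shown yields, after commuting $s_1$ past $s_{1''}$, the subword $s_1s_2s_1$ or $s_{1''}s_2s_{1''}$, again violating strong full commutativity.

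For the ($\Leftarrow$) direction I would induct on $\ell(\w)$. If $R(\w)$ has no inadmissible boundary section, pick any maximal tile $t$ in the heap (one with no tile sitting above it in the gravity order) and let $\w'$ denote the word obtained by removing the corresponding generator $s_i$ from the end of any enumeration in which $t$ comes last. The region $R(\w')$ still contains no inadmissible section, so by induction $\w'$ is reduced and strongly fully commutative. It then suffices to check that appending $s_i$ preserves these properties: reducedness holds because an $s_i$-cancellation after commutation would produce a pattern of type (1) or (2) at the top of the heap, and strong full commutativity holds because a braid move $s_is_{i'}s_i\rightsquigarrow s_{i'}s_is_{i'}$ after commutation would produce a pattern of type (1)/(2) (or (3)/(4) in type $D$) involving $t$ and the tiles immediately beneath it.

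The heart of the argument, and the principal obstacle, lies in the local-global translation: one must verify that every way in which a reduced expression can fail to be strongly fully commutative manifests itself \emph{locally} on the boundary of $R(\w)$, and that the list of patterns in Figure~\ref{resulting2XXXY} is exhaustive. This is where the highly constrained shape of the tilings in Figure~\ref{DiagramsforTLtiling} pays off: each vertical strip contains only two Dynkin labels which are adjacent in the Coxeter graph (with the exception of the leftmost strip in type $D$, which carries the commuting pair $\{1,1''\}$ and accounts for the extra patterns (3) and (4)). Consequently, any offending subword $s_is_{i'}s_i$ or $s_is_i$ in a commutation-equivalent expression must sit inside a single strip, and such a sub-configuration is exactly one of the listed boundary indentations. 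The non-simply-laced edge in type $C$ contributes no new patterns because only the short-root convention in \cref{strongly} is relevant, and this is already encoded by the labelling of the first strip.
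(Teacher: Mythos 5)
The key gap is in your treatment of patterns (3) and (4). You claim that a region exhibiting one of these configurations ``yields, after commuting $s_1$ past $s_{1''}$, the subword $s_1s_2s_1$ or $s_{1''}s_2s_{1''}$.'' That step does not hold: commuting two mutually commuting generators introduces no occurrence of $s_2$, so there is no mechanism here that produces a forbidden triple. A concrete illustration of the problem: the word $\w=s_1s_{1''}$ in type $D_n$ contains no $s_2$ at all; its heap consists of two incomparable nodes, and the element $w=s_1s_{1''}$ \emph{is} strongly fully commutative. Yet when embedded in the tiling of \cref{DiagramsforTLtiling}, the $1$- and $1''$-tiles are forced into the same vertical column with only a corner in common, and the east boundary of $R(\w)$ then has exactly the zig of pattern (3) or (4). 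So not only is your local rewrite incorrect, but patterns (3)/(4) are not detecting a violation of reducedness or strong full commutativity at all --- they are detecting a constraint that is specific to the intended applications, namely that in any Bruhat stroll on $\widehat{\mathcal{G}}_{(D_n,A_{n-1})}$ an $s_1$-step and an $s_{1''}$-step must be separated by an $s_2$-step (this is what the paper's own one-line remark invokes). Your heap-theoretic argument never brings this graph-theoretic input into play, and as written the forward implication for these two patterns is not salvageable.

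For patterns (1) and (2), by contrast, your reasoning is essentially the right unpacking of the paper's terse appeal to \cref{strongly} and the definition of a reduced expression: a zig between two like-labelled tiles forces, in some commutation-equivalent expression, either $s_is_i$ (non-reduced) or $s_is_{i'}s_i$ with $m_{i,i'}\in\{3,4\}$ and $\alpha_i$ short (violating strong full commutativity). Two cleanups are needed, though. First, drop the phrase ``after a braid move is commutation-equivalent to $s_{i'}s_is_{i'}$'' --- braid moves are not commutation moves, and you do not need one: the definition of strong full commutativity already forbids $s_is_{i'}s_i$ as a contiguous subword of a reduced expression, so the zig's immediate reading is enough. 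Second, your statement that ``each vertical strip contains only two Dynkin labels, adjacent in the Coxeter graph'' is true for all strips except the leftmost one in type $D$, which carries the three labels $\{1'',1,2\}$ with $1$ and $1''$ commuting (not adjacent); you acknowledge this in passing, but this is exactly the exceptional configuration where a separate argument (the Bruhat-stroll fact above, not a braid/commutation rewrite) is required, and your proof should say so explicitly rather than fold it into the generic case.
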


\begin{proof}
\begin{enumerate}[leftmargin=*]
\item The conditions on $i,j$ follow from \cref{strongly,} and the  definition  of  a  reduced expression (see \cref{reduced-defn}).
 For the conditions involving $1$ and $1''$, we  note that any path on $\widehat{\mathcal{G}}_{(D_n, A_{n-1})}$ containing an $s_1$-step and an $s_{1''}$-step must contain an $s_2$-step in between. 
 \item Note that the path $\pi(d)$ takes all steps to the East first, followed by all steps to the West, this implies that  boundary of $R(d)$ does not contain any inadmissible sections and hence $\underline{w}(d)$ is a reduced expression for a fully commutative element of $W$. 
 \item Now recall that for each simple reflection $s_i\in S_W$, we have a generator $\diag_i\in {\rm TL}_{W}(q)$, and so any reduced expression $\underline{w} = s_{i_1}s_{i_2} \ldots s_{i_k}$  for a strongly fully commutative element of $W$ define a product of the generators, $\diag_{\underline{w}} = \diag_{i_1}  \diag_{i_2} \ldots \diag_{i_k}\in {\rm TL}_W(q)$. These are pictured in  \cref{resulting,resulting2XXX}  for $\underline{w} = \underline{w}(d)$. Again, it is clear that for a different choice of reduced word $\underline{w}'$ we have $\diag_{\underline{w}'} = \diag_{\underline{w}}$ (as these differ only by commutation relations). We claim that $\diag_{\underline{w}} = d$. To see this, we first partition the region $R(d)$ by splitting it along 
the horizontal lines through the vertices of the tiles. 
This gives a partition of $R(d)$ into horizontal strips.
 If a strip intersects the left boundary in more than one point, merge it with the strip above or below so that the new wider strip now contains precisely two vertices of tiles on the left boundary. 
 These strips are shown in   \cref{resulting,resulting2XXX} (by  alternating between grey and white shading). 
  It is clear that the strands in $\diag_{\underline{w}}$ are in one-to-one correspondence with the strips of $R(d)$. 
Each strip contains precisely two edges of the path $\pi(d)$ and joins the corresponding vertices in $d$; 
one thus  recovers the (decorated) $n$-tangle $d$. 
\end{enumerate}
\end{proof}

\begin{prop}\label{uniquepath}
Let $\underline{w}$ be a reduced expression for a strongly fully commutative element of $W$. Suppose that we have $\lambda, \mu\in {^PW}$ with $\lambda \diag_{\underline{w}}\mu \in \mathbb{ODT}[W,P]$. Then there is a unique path $\SSTT$ on $\widehat{\mathcal{G}}_{(W,P)}$ starting at $\lambda$ and ending at $\mu$ with $w(\SSTT) = \underline{w}$. Moreover, we have $\lambda \diag_{\underline{w}} \mu = \diag_\SSTT$.
\end{prop}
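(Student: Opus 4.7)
The plan is to proceed by induction on the length $\ell(\underline{w})$. The base case $\ell(\underline{w})=0$ is immediate: the empty word gives $e_{\underline{w}} = {\sf 1}$, so $\lambda e_{\underline{w}} \mu \in \mathbb{ODT}[W,P]$ forces $\lambda = \mu$ (since the tangle ${\sf 1}$ consists only of undecorated propagating strands), and the unique path is $\SSTT = (\lambda)$ of weight the empty word.

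For the inductive step, write $\underline{w} = \underline{w}' s_{i_k}$, where $\underline{w}'$ is a reduced expression for a strongly fully commutative element of $W$ (any subword of such an expression inherits the property, using \cref{Rw}). The factorisation $e_{\underline{w}} = e_{\underline{w}'}\cdot {\sf e}_{i_k}$ induces a horizontal slice in the diagram $\lambda e_{\underline{w}} \mu$. At this slice, the labels are obtained from $\mu$ by tracing each strand of ${\sf e}_{i_k}$ upward: the $n-2$ propagating strands transmit their labels unchanged, while the northern arc joining positions $i_k$ and $i_k+1$ imposes the orientation/flip-orientation condition of \cref{diagrelations} — the two top labels are determined by the two bottom labels (via swap for an undecorated arc matching the oriented case, and via the decoration rules for a decorated arc in type $C$ or $D$). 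This reading off yields a unique label string $\lambda_k$ at the slice.

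The next step is to verify that $\lambda_k \in {^PW}$ and that the resulting one-step extension $\lambda_k \xrightarrow{i_k} \mu$ is a legal edge of $\widehat{\mathcal{G}}_{(W,P)}$. Since $\lambda e_{\underline{w}'} \lambda_k$ is a top slab of the oriented diagram $\lambda e_{\underline{w}} \mu$, its boundary labels form an oriented tangle of type $(W,P)$ (here one uses that the path $\pi(e_{\underline{w}})$ from \cref{section4} factors through the path of $\pi(e_{\underline{w}'})$, and strongly full commutativity via \cref{Rw} prevents any forbidden boundary configuration). The two possibilities are $\mu = \lambda_k$ (the arc/idempotent case, giving the four loop-style edges of degree $\pm 1$) or $\mu = \lambda_k s_{i_k}$ (the crossing-style edge of degree $0$), exactly matching the four edges of the extended Bruhat graph. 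By the inductive hypothesis applied to $\underline{w}'$, there is a unique path from $\lambda$ to $\lambda_k$ on $\widehat{\mathcal{G}}_{(W,P)}$ of weight $\underline{w}'$, and concatenating with the edge labelled $i_k$ produces the required unique path $\SSTT$ from $\lambda$ to $\mu$ of weight $\underline{w}$. The equality $\lambda e_{\underline{w}} \mu = e_\SSTT$ then follows by inserting the (uniquely determined) intermediate idempotents $\lambda_j$ into the product $\lambda e_{i_1}e_{i_2}\cdots e_{i_k} \mu$ using the idempotent relations~\eqref{idempotentrel}.

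The main obstacle will be justifying rigorously that the labels read off at each intermediate slice are actually elements of ${^PW}$ (i.e.\ genuine cosets), not merely admissible symbol strings: in type $(C_n, A_{n-1})$ the condition that exactly one position carries a $\circ$ must be preserved, and in type $(D_n,A_{n-1})$ the parity of $\up$-arrows must be preserved. These facts are forced by the structure of ${\sf e}_{i_k}$ (the generator ${\sf e}_{1'}$ permutes the $\circ$ symbol with an adjacent arrow, ${\sf e}_{1''}$ flips parity compatibly because of its double decoration, and ordinary ${\sf e}_i$ for $i\geq 2$ only swaps two arrow labels), combined with the strongly fully commutative hypothesis guaranteeing via \cref{Rw} that no configuration of tiles along the boundary violates the coset constraints.
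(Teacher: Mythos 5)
Your inductive framing is a reasonable alternative to the paper's more direct argument, but the key step — reading off the intermediate label $\lambda_k$ — is incorrectly justified. You claim the slice labels are "obtained from $\mu$ by tracing each strand of ${\sf e}_{i_k}$ upward", with the two positions $i_k, i_k+1$ determined "via swap for an undecorated arc". But at the slice, positions $i_k$ and $i_k+1$ lie on the \emph{northern} arc of ${\sf e}_{i_k}$, which is not connected to $\mu$ through ${\sf e}_{i_k}$: the southern arc of ${\sf e}_{i_k}$ absorbs the two $\mu$-labels at $i_k, i_k+1$ in a closed piece, while the northern arc runs upward into $e_{\underline{w}'}$. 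So the orientation constraint on the northern arc only tells you the two slice labels form an oriented (or flip-oriented) pair — it does not single out which of the admissible pairs occurs. The actual determination must trace those two strand segments through the \emph{whole} tangle until they hit a labelled endpoint of $\lambda$ or $\mu$; this works precisely because strong full commutativity (via Proposition~\ref{Rw}) excludes closed loops and, in type $C$, the doubly-decorated propagating strand from the first northern to first southern vertex — the two configurations pictured in Figure~\ref{resulting2XXXZ} — which are exactly the cases where a strand segment would not be globally determined. Your proof mentions Proposition~\ref{Rw} and "forbidden boundary configurations" in passing but never connects the hypothesis to this global-determination argument, which is the entire content of the proposition.

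Once this is repaired, the rest of your induction is fine in outline but largely redundant: after establishing that every strand segment's orientation is globally determined by $(\lambda,\mu)$, the unique intermediate label at \emph{every} slice follows at once, and the path $\SSTT$ and the identity $\lambda e_{\underline{w}}\mu = e_\SSTT$ drop out without peeling off generators one at a time. This is exactly what the paper does, in a single step. Your induction does not avoid the global argument; it just repeats it at each stage. If you want to keep the inductive structure, you should at minimum (i) state explicitly which strand configurations make a segment orientation ambiguous, (ii) prove they cannot occur using Proposition~\ref{Rw}, and (iii) only then read off $\lambda_k$ — and you would also need to check directly that $\lambda_k \in {^PW}$ (your appeal to "forbidden boundary configurations" does not do this, because the constraint on the admissible diagrams lives in the description of $\mathcal{G}_{(W,P)}$ rather than in the tiling picture of $R(d)$).
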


\begin{proof}
Having fixed the orientation at the top and bottom of the product of generators $\diag_{\underline{w}}$, there are only two situations in which we have a choice of orientation for strand segments in this product. The first one is when the diagram contains a closed loop. These are formed by a tile configuration of the form depicted on the left of \cref{resulting2XXXZ}.
 The second one is when $(W,P)=(C_{n-1},A_{n-2})$ and we have  a tile configuration of the form depicted on the right of \cref{resulting2XXXZ}.  
By  \cref{Rw}, neither of these can happen. Therefore the top and bottom orientations uniquely determine  the orientation of every strand segment in the diagram, proving the result.
\end{proof}

 \begin{figure}[ht!]
$$ \scalefont{0.9}
\begin{tikzpicture}[xscale=1]
 \clip (-4.1,-3) rectangle (1,0.4);
 
\path(0,0) 
coordinate (newstart);

	\path(newstart)  coordinate (X);

	\fill[magenta!20] (X)
	--++(-45:1)--++(-135:1) 	--++(-45:1)--++(-135:1)
	--++(135:1)--++(45:1) 	--++(135:1)--++(45:1) ;
	
	\path(X) --++(-45:0.5)--++(-135:0.5)  coordinate (here);	
		\path(here) --++(90:0.05) node {$i$};
	\path(X) --++(-45:1.5)--++(-135:1.5)  coordinate (here);	
		\path(here)--++(-90:0.05) node {$i$};

 	\draw[densely dotted, thick](X)--++(135:0.2) ; 
 	\draw[densely dotted, thick](X)--++(45:0.2) ;  
	\draw[densely dotted, thick](X)--++(-135:1) coordinate (Y1) --++(-45:1)  --++(-135:1)coordinate (Y2) --++(-45:1)  --++(45:1)    ;
	\draw[densely dotted, thick](X)
--++(-45:1);

	\draw[densely dotted, thick](Y1)--++(-135:0.2) ;
 
		\draw[densely dotted, thick](Y2)--++(-135:0.2) ;
	\draw[densely dotted, thick](Y1)--++(135:0.2) ;
		\draw[densely dotted, thick](Y2)--++(135:0.2) ;

	  	\path(X)--++(-135: 2)--++(-45: 2) coordinate (T);
 	\draw[densely dotted, thick](T)--++(-135:0.2) ; 
 	\draw[densely dotted, thick](T)--++(-45:0.2) ;

\path(X)--++(-45:1) coordinate (zig);
	 	 
	\draw[line width= 2.2]  
	(zig)   --++(-135:1) --++(-45:1) ;

	\path(newstart)--++(135:2)--++(-135:2)  coordinate (ZZ);

	\fill[cyan!20] (ZZ)
	--++(-45:1)--++(-135:1) 	--++(-45:1)--++(-135:1)
	--++(135:1)--++(45:1) 	--++(135:1)--++(45:1) ;
	
	\path(ZZ) --++(-45:0.5)--++(-135:0.5)  coordinate (here);	
		\path(here) --++(90:0.05) node {$j$};
	\path(ZZ) --++(-45:1.5)--++(-135:1.5)  coordinate (here);	
		\path(here)--++(-90:0.05) node {$j$};

 	\draw[densely dotted, thick](ZZ)--++(45:0.2) ; 
 	\draw[densely dotted, thick](ZZ)--++(135:0.2) ;  
	\draw[densely dotted, thick](ZZ)--++(-45:1) coordinate (Y1) --++(-135:1)  --++(-45:1)coordinate (Y2) --++(-135:1)  --++(135:1)    ;
	\draw[densely dotted, thick](ZZ)
--++(-135:1);

	\draw[densely dotted, thick](Y1)--++(-45:0.2) ;
 
		\draw[densely dotted, thick](Y2)--++(-45:0.2) ;
	\draw[densely dotted, thick](Y1)--++(45:0.2) ;
		\draw[densely dotted, thick](Y2)--++(45:0.2) ;

	  	\path(ZZ)--++(-45: 2)--++(-135: 2) coordinate (T);
 	\draw[densely dotted, thick](T)--++(-45:0.2) ; 
 	\draw[densely dotted, thick](T)--++(-135:0.2) ;

\path(ZZ)--++(-135:1) coordinate (zig);
	 	 
	\draw[line width= 2.2]  
	(zig)   --++(-45:1) --++(-135:1) ;

	\draw[densely dotted, thick](X)--++(-135:0.2) ;

\path(X)--++(-45:1)--++(-135:0.5) coordinate (here);	
\path(here)--++(-135:0.5)--++(135:0.5) coordinate (there);	

\draw[very thick] (here) to [out=90,in=90] (there);

\path(there)--++(-135:0.5)--++(135:0.5) coordinate (here2);	

\path(here2)--++(-135:0.5)--++(135:0.5) coordinate (here3);	

\path(here3)--++(-135:0.5)--++(135:0.5) coordinate (here4);	
\path(here4)--++(-135:0.5)--++(135:0.5) coordinate (here5);

\draw[densely dotted,very thick] (there) to [out=-90,in=-90] (here2) to [out=90,in=90] (here3)
to [out=-90,in=-90] (here4);
\draw	[very thick] 		(here4) to [out=90,in=90]  (here5);

\path(here)--++(-45:0.5)--++(-135:0.5) coordinate (Here);	
\path(Here)--++(-135:0.5)--++(135:0.5) coordinate (tHere);	

\draw[very thick] (Here) to [out=-90,in=-90] (tHere);

\path(tHere)--++(-135:0.5)--++(135:0.5) coordinate (Here2);	

\path(Here2)--++(-135:0.5)--++(135:0.5) coordinate (Here3);	

\path(Here3)--++(-135:0.5)--++(135:0.5) coordinate (Here4);	
\path(Here4)--++(-135:0.5)--++(135:0.5) coordinate (Here5);

\draw[densely dotted,very thick] (tHere) to [out=90,in=90] (Here2) to [out=-90,in=-90] (Here3)
to [out=90,in=90] (Here4);
\draw	[very thick] 		(Here4) to [out=-90,in=-90]  (Here5);

\draw	[very thick] (Here5)--(here5);
\draw	[very thick] (Here)--(here);

  \end{tikzpicture}
\qquad\quad\quad\quad
\begin{tikzpicture}[xscale=1]

  \clip (-4.1,-3) rectangle (1,0.4);
\path(0,0) 
coordinate (newstart);

	\path(newstart)  coordinate (X);

	\fill[magenta!20] (X)
	--++(-45:1)--++(-135:1) 	--++(-45:1)--++(-135:1)
	--++(135:1)--++(45:1) 	--++(135:1)--++(45:1) ;
	
	\path(X) --++(-45:0.5)--++(-135:0.5)  coordinate (here);	
		\path(here) --++(90:0.05) node {$i$};
	\path(X) --++(-45:1.5)--++(-135:1.5)  coordinate (here);	
		\path(here)--++(-90:0.05) node {$i$};

 	\draw[densely dotted, thick](X)--++(135:0.2) ; 
 	\draw[densely dotted, thick](X)--++(45:0.2) ;  
	\draw[densely dotted, thick](X)--++(-135:1) coordinate (Y1) --++(-45:1)  --++(-135:1)coordinate (Y2) --++(-45:1)  --++(45:1)    ;
	\draw[densely dotted, thick](X)
--++(-45:1);

	\draw[densely dotted, thick](Y1)--++(-135:0.2) ;
 
		\draw[densely dotted, thick](Y2)--++(-135:0.2) ;
	\draw[densely dotted, thick](Y1)--++(135:0.2) ;
		\draw[densely dotted, thick](Y2)--++(135:0.2) ;

	  	\path(X)--++(-135: 2)--++(-45: 2) coordinate (T);
 	\draw[densely dotted, thick](T)--++(-135:0.2) ; 
 	\draw[densely dotted, thick](T)--++(-45:0.2) ;

\path(X)--++(-45:1) coordinate (zig);
	 	 
	\draw[line width= 2.2]  
	(zig)   --++(-135:1) --++(-45:1) ;

	\path(newstart)--++(135:2)--++(-135:2)  coordinate (ZZ);

  	\draw[densely dotted, thick](ZZ)--++(45:0.2) ; 
	\draw[densely dotted, thick](ZZ)--++(-45:1) coordinate (Y1) --++(-135:1)  --++(-45:1)coordinate (Y2) --++(-135:1)  --++(135:1)    ;
	\draw[densely dotted, thick](ZZ)
--++(-135:1);

	\draw[densely dotted, thick](Y1)--++(-45:0.2) ;
 
		\draw[densely dotted, thick](Y2)--++(-45:0.2) ;
	\draw[densely dotted, thick](Y1)--++(45:0.2) ;
		\draw[densely dotted, thick](Y2)--++(45:0.2) ;

	  	\path(ZZ)--++(-45: 2)--++(-135: 2) coordinate (T);
 	\draw[densely dotted, thick](T)--++(-45:0.2) ; 

\path(ZZ)--++(-135:1) coordinate (zig);
	 	 
	\draw[thick, densely dotted]  
	(zig)   --++(-45:1) --++(-135:1) ;
	\draw[thick, densely dotted]  
	(zig)   --++(-135:1) --++(-45:1) ;

	\draw[densely dotted, thick](X)--++(-135:0.2) ;

\path(X)--++(-45:1)--++(-135:0.5) coordinate (here);	
\path(here)--++(-135:0.5)--++(135:0.5) coordinate (there);	

\draw[very thick] (here) to [out=90,in=90] (there);

\path(there)--++(-135:0.5)--++(135:0.5) coordinate (here2);	

\path(here2)--++(-135:0.5)--++(135:0.5) coordinate (here3);	

\path(here3)--++(-135:0.5)--++(135:0.5) coordinate (here4);	
\path(here4)--++(-135:0.5)--++(135:0.5) coordinate (here5);	
\path(here5)--++(-135:0.5)--++(135:0.5) coordinate (here6);

\draw[densely dotted,very thick] (there) to [out=-90,in=-90] (here2) to [out=90,in=90] (here3)
to [out=-90,in=-90] (here4);
\draw	[very thick] 		(here4) to [out=90,in=90]  (here5);

\draw	[very thick] 		(here5) to [out=-90,in=-90]  (here6)--++(90:0.9) coordinate (X);	
\draw	[very thick,densely dotted] 	(X)--++(90:0.3);

\path(here)--++(-45:0.5)--++(-135:0.5) coordinate (Here);	
\path(Here)--++(-135:0.5)--++(135:0.5) coordinate (tHere);	

\draw[very thick] (Here) to [out=-90,in=-90] (tHere);

\path(tHere)--++(-135:0.5)--++(135:0.5) coordinate (Here2);	

\path(Here2)--++(-135:0.5)--++(135:0.5) coordinate (Here3);	

\path(Here3)--++(-135:0.5)--++(135:0.5) coordinate (Here4);	
\path(Here4)--++(-135:0.5)--++(135:0.5) coordinate (Here5);	
\path(Here5)--++(-135:0.5)--++(135:0.5) coordinate (Here6);

\draw[densely dotted,very thick] (tHere) to [out=90,in=90] (Here2) to [out=-90,in=-90] (Here3)
to [out=90,in=90] (Here4);

\draw	[very thick] 		(Here4) to [out=-90,in=-90]  (Here5);	

\draw	[very thick] 		(Here5) to [out=90,in=90]  (Here6)
 --++(-90:0.9) coordinate (X);	
\draw	[very thick,densely dotted] 	(X)--++(-90:0.3);

\draw	[very thick] (Here)--(here);

\path(here6)--++(45:0.25)--++(-45:0.25)--++(-90:0.2) coordinate (dot);
\fill (dot) circle (2.5pt);

\path(Here6)--++(45:0.25)--++(-45:0.25)--++(90:0.2) coordinate (dot);
\fill (dot) circle (2.5pt);

  \end{tikzpicture}$$
\caption{ 
On the left is the tile configuration for a closed loop. On the right the tile configuration of a propagating strand connecting the first northern and southern vertices in type $(C_{n-1},A_{n-2})$. The orientation of the closed loop is not determined by the orientation at the northern and southern edges of the diagram (obviously). The orientation of strand segment  {\em between} the two decorations is also not determined by the orientation at the northern and southern edges of the diagram. The colouring of tiles should be compared with that of \cref{resulting2XXXY}.}
 \label{resulting2XXXZ}
\end{figure}

\begin{cor}
The map $\vartheta$ is a $\mathbb{Z}[q,q^{-1}]$-module isomorphism.
\end{cor}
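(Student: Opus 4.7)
The plan is to leverage the structural results already established in Section~5 to exhibit $\vartheta$ as a bijection between a natural spanning set of ${\rm TL}_{(W,P)}(q)$ and the basis $\mathbb{ODT}[W,P]$ of ${\rm TL}^{\up\down}_{(W,P)}(q)$. Since $\vartheta$ is already shown to be an algebra homomorphism, it suffices to prove it is an isomorphism of $\mathbb{Z}[q,q^{-1}]$-modules.

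First, I would invoke \cref{spanningset}: fix, for each strongly fully commutative element $w\in W$, a single reduced expression $\underline{w}$, and let $\mathscr{S}$ denote the set of all $E_\SSTT$ where $\SSTT$ runs over paths on $\widehat{\mathcal{G}}_{(W,P)}$ with $w(\SSTT)=\underline{w}$ for some such $w$. Then $\mathscr{S}$ spans ${\rm TL}_{(W,P)}(q)$. Next, unpacking the definition of $\vartheta$ on a product along such a path gives
\[
\vartheta(E_\SSTT) \;=\; \lambda\, e_{w(\SSTT)}\, \mu,
\]
where $\lambda$ and $\mu$ are the initial and terminal vertices of $\SSTT$. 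By the tiling construction in \cref{productofgenerators}, together with \cref{Rw}, $e_{w(\SSTT)}$ equals the (decorated) tangle $d \in \mathbb{DT}[W,P]$ corresponding to the region $R(\underline{w})$, so $\vartheta(E_\SSTT) = \lambda d \mu \in \mathbb{ODT}[W,P]$.

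For surjectivity, take an arbitrary basis element $\lambda d \mu \in \mathbb{ODT}[W,P]$. Write $d = e_{\underline{w}(d)}$ as in the tiling construction, so $\underline{w}(d)$ is a reduced expression for a strongly fully commutative element. By \cref{uniquepath}, there exists a unique path $\SSTT$ on $\widehat{\mathcal{G}}_{(W,P)}$ starting at $\lambda$, ending at $\mu$, and having $w(\SSTT) = \underline{w}(d)$, and moreover the image $\vartheta(E_\SSTT) = \lambda e_{\underline{w}(d)}\mu = \lambda d \mu$. Hence $\vartheta$ is surjective, and the map $\mathscr{S} \to \mathbb{ODT}[W,P]$ given by $E_\SSTT \mapsto \vartheta(E_\SSTT)$ is onto.

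For injectivity, I observe that the map $\mathscr{S} \to \mathbb{ODT}[W,P]$ is also one-to-one: if $\vartheta(E_\SSTT) = \vartheta(E_{\SSTT'}) = \lambda d \mu$, then reading off $\lambda$ and $\mu$ from the boundaries and recovering the reduced expression $\underline{w}(d)$ from the tiling forces $w(\SSTT) = w(\SSTT') = \underline{w}(d)$ and start/end $\lambda,\mu$ to coincide; uniqueness in \cref{uniquepath} then gives $\SSTT = \SSTT'$. Thus $\vartheta$ carries the spanning set $\mathscr{S}$ bijectively onto the $\mathbb{Z}[q,q^{-1}]$-basis $\mathbb{ODT}[W,P]$ of the target. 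It follows formally that $\mathscr{S}$ is $\mathbb{Z}[q,q^{-1}]$-linearly independent (any relation would push forward to a relation among basis elements), so $\mathscr{S}$ is itself a basis of ${\rm TL}_{(W,P)}(q)$ and $\vartheta$ is a $\mathbb{Z}[q,q^{-1}]$-module isomorphism. The only subtlety, and the step that genuinely requires the work done in \cref{results1}, is the uniqueness in \cref{uniquepath} -- ruling out ambiguity in orienting closed loops and the $C_n$-type strand between the two decorations -- which is precisely what forbids the configurations in \cref{resulting2XXXZ} inside regions arising from reduced strongly fully commutative expressions.
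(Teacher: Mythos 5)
Your proof is correct and follows essentially the same strategy as the paper: surjectivity via the tiling realisation $d = e_{\underline{w}(d)}$ together with \cref{uniquepath}, and injectivity by showing that the spanning set from \cref{spanningset} is carried injectively onto the basis $\mathbb{ODT}[W,P]$, forcing linear independence. The extra unpacking you give of why distinct $E_\SSTT$ land on distinct oriented tangles (distinguishing by the underlying tangle, then the boundary, then the path) is a welcome clarification of what the paper states more tersely, but it is the same argument.
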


\begin{proof}
Using \cref{Rw,uniquepath}, we have that every oriented Temperley--Lieb diagram $\lambda d \mu = \lambda \diag_{\underline{w}} \mu = \diag_\SSTT = \vartheta (E_\SSTT)$ and so the map $\vartheta$ is surjective. 
To show that it is injective, note that the basis elements  $E_\SSTT$ for ${\rm TL}_{(W,P)}(q)$ given in \cref{spanningset} are mapped to distinct basis elements in ${\rm TL}^{\up\down}_{(W,P)}(q)$.
\end{proof}

This completes the proof of \cref{orientediso}. From now on, we identify ${\rm TL}_{(W,P)}(q) = {\rm TL}^{\up\down}_{(W,P)}(q)$ and use the diagrammatic notation for its elements.

As noted earlier, if $\underline{w}$ and $\underline{w}'$ are two reduced expressions for the same strongly fully commutative element $w$, then we have $\diag_{\underline{w}} = \diag_{\underline{w}'}$. So we will denote this element by $\diag_w$.  In particular, for each $\mu\in {^PW}$ we have the corresponding element $\diag_\mu\in {\rm TL}_W(q)$.  (We will give a closed combinatorial description of this element in Section 8.)
Now, restricting our attention to the anti-spherical module  and using  \cref{spanningset2} gives the following.

\begin{cor}\label{ungradedbasis}
The anti-spherical module  $\varnothing {\rm TL}_{(W,P)}(q)$ has basis given by 
\[\{\diag_\SSTT = \varnothing \diag_{\mu} \lambda \mid \SSTT \in \Path(\la,\underline{\mu}),  \la,\mu   \in {^PW}\}\]
In particular we have  
\[|{\rm Path}(\lambda , \underline{\mu})|\leq 1\qquad \mbox{for all $\la, \mu \in {^PW}.$}\]
 \end{cor}

Thus, the diagrammatic oriented Temperley--Lieb algebra setting (and its anti-spherical module in particular) provides a diagrammatic model for studying the matrix $\Delta_{\lambda \mu}$.  
More precisely, we have  shown that 
\[
\Delta_{\la,\mu}=\left\{ \begin{array}{ll} q^{\deg(\varnothing \diag_\mu \lambda)} & \mbox{if $\varnothing \diag_\mu \lambda\in \mathbb{ODT}[W,P]$}\\
0 & \mbox{otherwise.}\end{array}\right.
\]
In the next section, we investigate the degree of oriented Temperley--Lieb diagrams.

\section{A closed combinatorial interpretation of the grading}\label{sectiongrading}

The isomorphism $\vartheta$ given in \cref{orientediso} gives a grading on the diagrammatic oriented Temperley--Lieb algebra. Explicitly, we have that the degree of any oriented Temperley--Lieb diagram $\la d\mu = \la \diag_w \mu = \diag_{\SSTT}$ is equal to $\deg(\SSTT)$. This is computed as the sum of the degree of each step in the path $\SSTT$. 
 We now provide a closed combinatorial description of this degree in terms of the diagram $\la d \mu$ itself. 
 \begin{thm}\label{picdegree}
 We define the  {\sf degree} of an oriented Temperley--Lieb diagram by 
 assigning a degree to each of its strands and then summing over all strands.
  We define the degree of a northern arc whose rightmost vertex is labelled by $\down$ to be  $+1$,
 and the degree of a southern arc whose rightmost vertex is labelled by $\up$ to be $-1$.  All other strands are defined to have degree $0$ (in particular, all propagating strands have degree $0$). 
 Diagrammatically, we record the degree of an oriented Temperley--Lieb diagram as follows
 	 \[
\sharp\left\{  \begin{minipage}{1.2cm}
 \begin{tikzpicture}[scale=0.7]
 \draw[thick](-1,-0.155) node {$\boldsymbol\up$};
  \draw[very thick](0,0.1) node {$\boldsymbol\down$};
  \draw[very thick]  (-1,-0) to [out=-90,in=180] (-0.5,-0.75) to [out=0,in=-90] (0,0);
 \end{tikzpicture}
 \end{minipage}
%
%
 , \; 
  \begin{minipage}{1.27cm}
 \begin{tikzpicture}[xscale=-0.7,yscale=0.7]
   \draw[very thick] 
   (-1,0)  to [in=180,out=-90]  (-0.5,-0.75)  ; 
     \draw[very thick,densely dotted] 
  (-0.5,-0.75)  to [out=0,in=-90] (0,0);
 \fill 	(-0.5,-0.75) 	 circle (4pt);
 
   \draw[very thick](-1,0.15) node {$\boldsymbol\down$};
 \end{tikzpicture}
 \end{minipage}
%
%
\right\}	
-\sharp
\left\{\; \begin{minipage}{1.2cm}
 \begin{tikzpicture}[xscale=0.7,yscale=-0.7]
 \draw[thick]( 0,0.155) node {$\boldsymbol\up$};
  \draw[very thick](-1,-0.1) node {$\boldsymbol\down$};
  \draw[very thick]  (-1,-0) to [out=-90,in=180] (-0.5,-0.75) to [out=0,in=-90] (0,0);
 \end{tikzpicture}  \end{minipage}
  , \;\;
%
  \begin{minipage}{1.27cm}
 \begin{tikzpicture}[xscale=0.7,yscale=0.7]
   \draw[very thick,densely dotted] 
   (-1,0)  to [in=180,out= 90]  (-0.5,0.75)   ;
    \draw[very thick  ]
 (-0.5,0.75)   to [out=0,in= 90] (0,0);
 \fill 	(-0.5, 0.75) 	 circle (4pt);
  \draw[thick]( 0,-0.155) node {$\boldsymbol\up$};

  \end{tikzpicture}
 \end{minipage}
%
%
%
 \; 
\right\} 	 \]
where the dots are to emphasise that the lefthand-side of the arc does not contribute to the degree.
 Then for any oriented Temperley--Lieb diagram $\la d \mu = \diag_{\SSTT}$ we have
 $\deg(\la d \mu) = \deg(\SSTT)$. 	 
	  
 	\end{thm}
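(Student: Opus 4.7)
The plan is to pass from the diagram to the corresponding path on $\widehat{\mathcal{G}}_{(W,P)}$ via \cref{uniquepath}, and then to redistribute the path-degree across the strands of the diagram using the tile/strip decomposition of \cref{productofgenerators}. Fix an oriented diagram $\lambda d \mu \in \mathbb{ODT}[W,P]$ and write $d = e_{\underline{w}}$ for the reduced expression read off the tiling of $R(d)$. Then \cref{uniquepath} identifies $\lambda d \mu$ with $e_{\SSTT}$ for a unique path $\SSTT = (\lambda_1 \xrightarrow{i_1} \cdots \xrightarrow{i_r} \lambda_{r+1})$ on $\widehat{\mathcal{G}}_{(W,P)}$, and by \cref{ungradedbasis} we have $\deg(\lambda d \mu) = \deg(\SSTT) = \sum_j \deg(\lambda_j \xrightarrow{i_j} \lambda_{j+1})$. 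The extended-Bruhat-graph edge-degrees from Section~\ref{reduced-defn} take value $0$ on move-edges and $\pm 1$ on loop-edges, the sign being $+1$ if $\lambda_j s_{i_j} > \lambda_j$ and $-1$ otherwise.

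The next step is bookkeeping: each edge of $\SSTT$ is canonically attached to a tile in $R(d)$, and by \cref{productofgenerators} each tile belongs to a unique horizontal strip, with strips in bijection with the strands of $d$. Hence
\[
\deg(\lambda d \mu) \; = \; \sum_{\text{strands } s} \Bigl(\sum_{t \in \mathrm{strip}(s)} \deg_t\Bigr),
\]
and it suffices to compute each inner strip-sum, then match the resulting tally against the arc-rule claimed in the theorem.

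The main step is a local analysis along each strand. Reading the orientations of the two sides of a strand tile-by-tile, one sees that a tile contributes $0$ to the strip-sum whenever the two local orientations disagree (this is a move-edge) and contributes $\pm 1$ when they agree (loop-edge), with sign $+1$ for a local pattern $(\up,\down)$ and $-1$ for $(\down,\up)$, by the Bruhat-order description of Section~\ref{section4}. For a propagating strand the two endpoints carry the same arrow, and the contributions along the strip telescope to $0$; for a northern arc both endpoints lie at the top and the telescoping collapses to a single net contribution at the arc's rightmost vertex, which evaluates to $+1$ iff that vertex is $\down$ and $0$ otherwise. The southern-arc case is mirror-symmetric and contributes $-1$ or $0$ according to whether the rightmost vertex is $\up$.

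The hard part will be verifying this local analysis for decorated strands in types $(C_n,A_{n-1})$ and $(D_n,A_{n-1})$, where the strand is flip-oriented rather than oriented. For $(C_n,A_{n-1})$ one can use the graded homomorphism $\varphi$ of \cref{CtoD} together with the parity specialisation of \cref{parity} to transport the statement to the $D$-case after removing beads. For $(D_n,A_{n-1})$, the evenness of the number of $\up$-arrows combined with the double-bead relation of \cref{diagrelations} reduces the analysis to a finite check confirming that decorated strands obey the same rightmost-vertex rule as undecorated ones. Once these local checks are in place, the strip-sum identity holds uniformly across all classical types covered by the theorem.
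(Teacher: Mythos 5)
Your overall strategy is the same as the paper's: pass to the tiling of $R(d)$, decompose the region into horizontal strips in bijection with the strands of $d$, and compute the degree strand-by-strand by analysing local contributions tile-by-tile. That is exactly what the proof in the paper does. However, there is a genuine error in your bookkeeping step.

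You assert that ``each tile belongs to a unique horizontal strip, with strips in bijection with the strands of $d$'', and then write $\deg(\lambda d \mu)=\sum_{s}\sum_{t\in\mathrm{strip}(s)}\deg_t$ where $\deg_t$ is the degree of the corresponding extended-Bruhat edge. This is false: each tile $t$ is a single generator $e_{i}$, and the horizontal midline of the tile separates its northern arc-piece from its southern arc-piece, which generically lie in \emph{two different} strips. The edge-degree $\deg_t$ is the \emph{sum} of these two piece-contributions (this is exactly the content of the caption of the generator figure: $1=1+0$, $-1=0-1$, $0=0+0$, $0=1-1$). So summing $\deg_t$ over ``tiles in a strip'' double-counts, or rather is simply not well-posed, because a tile is not contained in a strip. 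The correct statement is that each strand corresponds to a strip, and the strand's degree is the sum of the \emph{arc-piece} contributions over the tiles its strip passes through, alternating between northern and southern pieces as it wiggles; you need to split $\deg_t$ before redistributing. Once you make this correction, the description of the local signs must also change from a rule on the whole tile to a rule per piece (northern piece is $+1$ precisely when that piece is a clockwise cup, southern piece is $-1$ precisely when that piece is an anticlockwise cap, all other pieces $0$). The ``telescoping'' picture is also the wrong intuition: for a clockwise northern arc the contributions are $+1,-1,+1,\dots,+1$ over an odd number of tiles, giving $+1$ because there is one extra $+1$, not because of cancellation to a boundary term; for a propagating strand the number of tiles is even, giving $0$. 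These are parity facts about the strip, not a telescoping identity.

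Your proposal to transport the decorated $(C_n,A_{n-1})$ analysis to $(D_{n+1},A_n)$ via the graded homomorphism $\varphi$ of the paper is a genuinely different route from the paper's, which proves the $(C_{n-1},A_{n-2})$ case directly and then observes that the $(D_n,A_{n-1})$ strands are a sub-case. Your direction is plausible and potentially cleaner, since $\varphi$ is degree-preserving by construction, but you would still need to handle the kernel: $\varphi$ forgets beads on oriented decorated strands, so you must separately verify that an oriented decorated arc in type $C$ already has the correct degree before appealing to $\varphi$, which amounts to redoing the local check you were hoping to avoid. Fixing the bookkeeping error is the essential repair; with that in place the rest of the argument goes through along the lines the paper also follows.
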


\begin{proof}

We first check that  the degree of the generators $\lambda \diag_i \mu$ (for $\mu = \lambda$ or $\lambda s_i$) are correct. 
Note that we have $\lambda < \lambda s_i\in {^PW}$ precisely when $\lambda = \ldots \up \down \ldots$ and $\lambda s_i = \ldots \down \up \ldots$ for $i\in \{1, \ldots n-1\}$, or $\lambda = \down \down \ldots$ and $\lambda s_{1''} = \up \up \ldots$, or $\lambda = \circ \down \ldots $ and $\lambda s_{1'} = \circ \up \ldots$. So the degrees of the generators are given by
$$
 \la \diag_ i \la 
=
\begin{cases}
1+0=1		&\text{if } \la <\la s_i \\
0-1=-1		&\text{if } \la >\la s_i \\
\end{cases}
\qquad
\la \diag_ i (\la s_i)
=
\begin{cases}
0+0=0		&\text{if } \la <\la s_i \\
 1-1=0		&\text{if } \la >\la s_i \\
\end{cases}.$$These are illustrated in \cref{breakitup} for $i\in \{1, \ldots , n-1\}$.
Comparing these with \cref{abstractdegree} proves the result for the generators.

 \begin{figure}[ht!]
 $$ 
\begin{tikzpicture} [scale=1.6]

 \path(0,0) coordinate (origin2)	;

	\draw[densely dotted] (0,0) --++(135:0.5) coordinate(NE)--++(135:0.5)coordinate(Y)	--++(-135:0.5)coordinate(NW)--++(-135:0.5) 	
	--++(-45:0.5) coordinate (SW)	
	--++(-45:0.5)	
		--++(45:0.5) coordinate (SE)	
	--++(45:0.5)	;
	
\path(NW)--(NE) coordinate [midway] (mid);
\path(mid)--++(-90:0.3) coordinate   (mid);	

			\draw[very thick] (NW) to [out=-90,in=180] (mid)
		  to [out=0,in=-90] (NE);;;

 \draw[very thick] (NE)  --++(55:0.15);
  \draw[very thick] (NE)  --++(125:0.15);

 \draw[very thick] (NW)  --++(-55:0.15);
  \draw[very thick] (NW)  --++(-125:0.15);

\draw[very thick] (SE)  --++(55:0.15);
  \draw[very thick] (SE)  --++(125:0.15);

 \draw[very thick] (SW)  --++(-55:0.15);
  \draw[very thick] (SW)  --++(-125:0.15);

\path(mid)--++(90:0.3) coordinate   (below);	

 \draw(below) node { $\boldsymbol+$};

\path(SW)--(SE) coordinate [midway] (midlow);
\path(midlow)--++(90:0.3) coordinate   (midlow);

	\draw[very thick] (SW) to [out=90,in=180] (midlow)
		  to [out=0,in=90] (SE);;;

 \path(midlow)--++(90:-0.3) coordinate   (below);	

 \draw(below) node { $\boldsymbol \circ$};

\end{tikzpicture}
\qquad
\begin{tikzpicture} [scale=1.6]

 \path(0,0) coordinate (origin2)	;

	\draw[densely dotted] (0,0) --++(135:0.5) coordinate(NE)--++(135:0.5)coordinate(Y)	--++(-135:0.5)coordinate(NW)--++(-135:0.5) 	
	--++(-45:0.5) coordinate (SW)	
	--++(-45:0.5)	
		--++(45:0.5) coordinate (SE)	
	--++(45:0.5)	;
	
\path(NW)--(NE) coordinate [midway] (mid);
\path(mid)--++(-90:0.3) coordinate   (mid);	

			\draw[very thick] (NW) to [out=-90,in=180] (mid)
		  to [out=0,in=-90] (NE);;;

 \draw[very thick] (SW)  --++(55:0.15);
  \draw[very thick] (SW)  --++(125:0.15);

 \draw[very thick] (SE)  --++(-55:0.15);
  \draw[very thick] (SE)  --++(-125:0.15);

\draw[very thick] (NW)  --++(55:0.15);
  \draw[very thick] (NW)  --++(125:0.15);

 \draw[very thick] (NE)  --++(-55:0.15);
  \draw[very thick] (NE)  --++(-125:0.15);

\path(mid)--++(90:0.3) coordinate   (below);	

 \draw(below) node { $\boldsymbol\circ $};

\path(SW)--(SE) coordinate [midway] (midlow);
\path(midlow)--++(90:0.3) coordinate   (midlow);

	\draw[very thick] (SW) to [out=90,in=180] (midlow)
		  to [out=0,in=90] (SE);;;

 \path(midlow)--++(90:-0.3) coordinate   (below);	

 \draw(below) node { $\boldsymbol - $};

\end{tikzpicture}
\qquad
\begin{tikzpicture} [scale=1.6]

 \path(0,0) coordinate (origin2)	;

	\draw[densely dotted] (0,0) --++(135:0.5) coordinate(NE)--++(135:0.5)coordinate(Y)	--++(-135:0.5)coordinate(NW)--++(-135:0.5) 	
	--++(-45:0.5) coordinate (SW)	
	--++(-45:0.5)	
		--++(45:0.5) coordinate (SE)	
	--++(45:0.5)	;
	
\path(NW)--(NE) coordinate [midway] (mid);
\path(mid)--++(-90:0.3) coordinate   (mid);	

			\draw[very thick] (NW) to [out=-90,in=180] (mid)
		  to [out=0,in=-90] (NE);;;

 \draw[very thick] (NW)  --++(55:0.15);
  \draw[very thick] (NW)  --++(125:0.15);

 \draw[very thick] (NE)  --++(-55:0.15);
  \draw[very thick] (NE)  --++(-125:0.15);

\draw[very thick] (SE)  --++(55:0.15);
  \draw[very thick] (SE)  --++(125:0.15);

 \draw[very thick] (SW)  --++(-55:0.15);
  \draw[very thick] (SW)  --++(-125:0.15);

\path(mid)--++(90:0.3) coordinate   (below);	

 \draw(below) node { $\boldsymbol \circ $};

\path(SW)--(SE) coordinate [midway] (midlow);
\path(midlow)--++(90:0.3) coordinate   (midlow);

	\draw[very thick] (SW) to [out=90,in=180] (midlow)
		  to [out=0,in=90] (SE);;;

 \path(midlow)--++(90:-0.3) coordinate   (below);	

 \draw(below) node { $\boldsymbol \circ$};

\end{tikzpicture}
\qquad
\begin{tikzpicture} [scale=-1.6]

 \path(0,0) coordinate (origin2)	;

	\draw[densely dotted] (0,0) --++(135:0.5) coordinate(NE)--++(135:0.5)coordinate(Y)	--++(-135:0.5)coordinate(NW)--++(-135:0.5) 	
	--++(-45:0.5) coordinate (SW)	
	--++(-45:0.5)	
		--++(45:0.5) coordinate (SE)	
	--++(45:0.5)	;
	
\path(NW)--(NE) coordinate [midway] (mid);
\path(mid)--++(-90:0.3) coordinate   (mid);	

			\draw[very thick] (NW) to [out=-90,in=180] (mid)
		  to [out=0,in=-90] (NE);;;

 \draw[very thick] (NW)  --++(55:0.15);
  \draw[very thick] (NW)  --++(125:0.15);

 \draw[very thick] (NE)  --++(-55:0.15);
  \draw[very thick] (NE)  --++(-125:0.15);

\draw[very thick] (SE)  --++(55:0.15);
  \draw[very thick] (SE)  --++(125:0.15);

 \draw[very thick] (SW)  --++(-55:0.15);
  \draw[very thick] (SW)  --++(-125:0.15);

\path(mid)--++(90:0.3) coordinate   (below);	

 \draw(below) node { $\boldsymbol - $};

\path(SW)--(SE) coordinate [midway] (midlow);
\path(midlow)--++(90:0.3) coordinate   (midlow);

	\draw[very thick] (SW) to [out=90,in=180] (midlow)
		  to [out=0,in=90] (SE);;;

 \path(midlow)--++(90:-0.3) coordinate   (below);	

 \draw(below) node { $+ $};

\end{tikzpicture}
 $$
\caption{The four  generators $\lambda \diag_i \mu$ for $i\in \{1, \ldots n-1\}$ of degree 
$1=1+0$, $-1=0-1$, $0=0+0$, and $0=1-1$ respectively. We record the degrees of the strand segments within the tile (with $+=1$, $-=-1$ and $\circ=0$). }
\label{breakitup}
 \end{figure}

Now as explained in \cref{productofgenerators}, we can write any oriented Temperley--Lieb diagram  as a product of generators $\diag_\SSTT$ with $\omega(\SSTT)$ a reduced word for a strongly fully commutative element of $W$ using a region in the tiling for $W$. Moreover, we have seen how each strand of the diagram corresponds to a horizontal strip in the tiling. We can then obtain the degree of strand in the product by adding the degree of each small arc coming from the generators. We now run through the various types of strands and check that the result holds in each case.

 First observe that any  northern or southern arc  passes through an odd number of tiles and any propagating strand passes through an even number of tiles.  
 The undecorated northern arcs are illustrated in \cref{undecoratednorthernarcs}.
 
 \begin{figure}[ht!]
\[
 		\begin{tikzpicture} [yscale=0.65,xscale=-0.65]

 \clip(0,-1) rectangle (-4*1.41421356237,1.8);
 \path(0,0) coordinate (origin2)	;
	
	\fill[magenta!20] (0,0)	--++(135:1)--++ 	(180:3*1.41421356237)
		--++(-135:1)--(0,0);

	\path(0,0)--++(90:0.42)--++(135:0.5)--++(-135:0.5) node {\scalefont{0.7}$+ $}
	--++(135:1)--++(-135:1) node {\scalefont{0.7}$+ $}	--++(135:1)--++(-135:1) node {\scalefont{0.7}$+ $}	--++(135:1)--++(-135:1) node {\scalefont{0.7}$+ $};
	
	\draw[densely dotted] (0,0) --++(135:0.5) coordinate(NE)--++(135:0.5)coordinate(Y)	--++(-135:0.5)coordinate(NW)--++(-135:0.5) 	coordinate (X1)
	--++(-45:1) 		--++(45:1) 	;

			\draw[very thick,->](NW) to [out=-90,in=-90] (NE)--++(90:1.2);;;

	\draw[densely dotted]  (X1)--++(135:0.5) coordinate(NE)--++(135:0.5) --++(-135:0.5)
	coordinate(NW)--++(-135:0.5) 	
	coordinate (X2)
	--++(-45:1) 		--++(45:1) 	;

			\draw[very thick](NE) to [out=-90,in=-90] (NW);

				\draw[densely dotted]   (X2)--++(135:0.5) coordinate(NE)--++(135:0.5) --++(-135:0.5)
	coordinate(NW)--++(-135:0.5) 	
	coordinate (X3)
	--++(-45:1) 		--++(45:1) 	;

				\draw[very thick](NE) to [out=-90,in=-90] (NW);

		\draw[densely dotted]  (X3) --++(135:0.5) coordinate(NE)--++(135:0.5) --++(-135:0.5)
	coordinate(NW)--++(-135:0.5) 	
	coordinate (X4)
	--++(-45:1) 		--++(45:1) 	;

				\draw[very thick,-<](NE) to [out=-90,in=-90] (NW)--++(90:1.2);;

	\draw[densely dotted]  (Y)--++(135:1) 	--++(-135:1) 	coordinate (Y2)
 	;	
	
		\path(Y)--++(-90:0.4)--++(135:0.5)--++(-135:0.5) node {\scalefont{0.7}$- $}
	--++(135:1)--++(-135:1) node {\scalefont{0.7}$- $}	--++(135:1)--++(-135:1) node {\scalefont{0.7}$- $};

	\path  (Y) --++(-135:0.5) coordinate(NE)--++(-135:0.5) --++(135:0.5)
	coordinate(NW)--++(135:0.5) 	
	coordinate (Y2);

				\draw[very thick](NE) to [out=90,in=90] (NW);

	\draw[densely dotted]  (Y2)--++(135:1) 	--++(-135:1) 	coordinate (Y3)
;

	\path  (Y2) --++(-135:0.5) coordinate(NE)--++(-135:0.5) --++(135:0.5)
	coordinate(NW)--++(135:0.5) 	
	coordinate (Y3); 

 		\draw[densely dotted]  (Y3)--++(135:1) 	--++(-135:1) 	coordinate (Y4)
; 

					\draw[very thick,densely dotted](NE) to [out=90,in=90] (NW);

	\path  (Y3) --++(-135:0.5) coordinate(NE)--++(-135:0.5) --++(135:0.5)
	coordinate(NW)--++(135:0.5) 	
	coordinate (Y4); 
					\draw[very thick](NE) to [out=90,in=90] (NW);

		\foreach \i in {0,1,2,3,4,5,6,7,8,9,10,11,12}
		{
			\path (origin2)--++(45:0.5*\i) coordinate (c\i); 
			\path (origin2)--++(135:0.5*\i)  coordinate (d\i); 
		}

		\end{tikzpicture} 
		\qquad\quad
	 \begin{tikzpicture} [scale=0.65]
	

 \clip(0,-1) rectangle (-4*1.41421356237,1.8);
 \path(0,0) coordinate (origin2)	;
	
	\fill[magenta!20] (0,0)	--++(135:1)--++ 	(180:3*1.41421356237)
		--++(-135:1)--(0,0);

	\path(0,0)--++(90:0.42)--++(135:0.5)--++(-135:0.5) node {\scalefont{0.7}$\circ $}
	--++(135:1)--++(-135:1) node {\scalefont{0.7}$\circ $}	--++(135:1)--++(-135:1) node {\scalefont{0.7}$\circ $}	--++(135:1)--++(-135:1) node {\scalefont{0.7}$\circ $};
	
	\draw[densely dotted] (0,0) --++(135:0.5) coordinate(NE)--++(135:0.5)coordinate(Y)	--++(-135:0.5)coordinate(NW)--++(-135:0.5) 	coordinate (X1)
	--++(-45:1) 		--++(45:1) 	;

			\draw[very thick,->](NW) to [out=-90,in=-90] (NE)--++(90:1.2);;;

	\draw[densely dotted]  (X1)--++(135:0.5) coordinate(NE)--++(135:0.5) --++(-135:0.5)
	coordinate(NW)--++(-135:0.5) 	
	coordinate (X2)
	--++(-45:1) 		--++(45:1) 	;

			\draw[very thick](NE) to [out=-90,in=-90] (NW);

				\draw[densely dotted]   (X2)--++(135:0.5) coordinate(NE)--++(135:0.5) --++(-135:0.5)
	coordinate(NW)--++(-135:0.5) 	
	coordinate (X3)
	--++(-45:1) 		--++(45:1) 	;

				\draw[very thick](NE) to [out=-90,in=-90] (NW);

		\draw[densely dotted]  (X3) --++(135:0.5) coordinate(NE)--++(135:0.5) --++(-135:0.5)
	coordinate(NW)--++(-135:0.5) 	
	coordinate (X4)
	--++(-45:1) 		--++(45:1) 	;

				\draw[very thick,-<](NE) to [out=-90,in=-90] (NW)--++(90:1.2);;

	\draw[densely dotted]  (Y)--++(135:1) 	--++(-135:1) 	coordinate (Y2)
 	;	
	
		\path(Y)--++(-90:0.4)--++(135:0.5)--++(-135:0.5) node {\scalefont{0.7}$\circ $}
	--++(135:1)--++(-135:1) node {\scalefont{0.7}$\circ $}	--++(135:1)--++(-135:1) node {\scalefont{0.7}$\circ $};

	\path  (Y) --++(-135:0.5) coordinate(NE)--++(-135:0.5) --++(135:0.5)
	coordinate(NW)--++(135:0.5) 	
	coordinate (Y2);

				\draw[very thick](NE) to [out=90,in=90] (NW);

	\draw[densely dotted]  (Y2)--++(135:1) 	--++(-135:1) 	coordinate (Y3)
;

	\path  (Y2) --++(-135:0.5) coordinate(NE)--++(-135:0.5) --++(135:0.5)
	coordinate(NW)--++(135:0.5) 	
	coordinate (Y3); 

 		\draw[densely dotted]  (Y3)--++(135:1) 	--++(-135:1) 	coordinate (Y4)
; 

					\draw[very thick,densely dotted](NE) to [out=90,in=90] (NW);

	\path  (Y3) --++(-135:0.5) coordinate(NE)--++(-135:0.5) --++(135:0.5)
	coordinate(NW)--++(135:0.5) 	
	coordinate (Y4); 
					\draw[very thick](NE) to [out=90,in=90] (NW);

		\foreach \i in {0,1,2,3,4,5,6,7,8,9,10,11,12}
		{
			\path (origin2)--++(45:0.5*\i) coordinate (c\i); 
			\path (origin2)--++(135:0.5*\i)  coordinate (d\i); 
		}

		\end{tikzpicture} 
\]
\caption{The undecorated northern arcs.} \label{undecoratednorthernarcs}\end{figure}

In \cref{undecoratednorthernarcs,deg1north,deg0north,THREE,THREE2}, 
we   highlight  the  tiles through which the strand  ``wiggles" and the degree of the strand within a given tile (which we have already calculated in terms of the generators.
Note that, reading from left to right, the strand oscillates between being
 either the ``top" or ``bottom"
of a given generator tile.  
   Assume  the arc is undecorated, northern,  and clockwise-oriented. 
The degree contribution
 as the arc passes through
these tiles is given by
\begin{equation*}
+1 -1 +1-1+1-1\dots +1=1
\end{equation*}
(notice that the strand is locally either a clockwise northern arc or an anticlockwise southern arc at each step).  
If an undecorated northern arc is anti-clockwise oriented then it has degree 
\begin{equation*}0+\dots+0=0.
\end{equation*}
Thus the degrees match up.  
The degrees of undecorated southern arcs can be computed similarly. 
The degree of undecorated propagating strands can easily be seen to be zero as illustrated in \cref{THREE}.

 \begin{figure}[ht!]
 $$
$$
		\caption{Degree zero decorated northern strands.  }
\label{deg0north}		\end{figure}

The decorated northern arcs of degree 1 in type $(C_{n-1}, A_{n-2})$ are illustrated in  \cref{deg1north,deg0north,THREE,THREE2}.  
Swapping the orientation of the rightmost vertex from $\down$ to $\up$, we see that the pink strip now doesn't contribute to the degree either, and so these northern arc have degree zero. The decorated southern arcs can be dealt with in a similar way. 

 It is easy to see that any (decorated) propagating strand also has degree zero as it goes through an even number of tiles.

\begin{figure} [H]
$$
  $$	
\caption{The decorated propagating strands, all of which are  of degree $0$.} 
\label{THREE2}
\end{figure}

Finally, the strands in a digram of type $(D_n, A_{n-1})$ are the same as those in type $(C_{n-1}, A_{n-2})$ except that they contain at most one decoration and we only need to consider arcs which are flip-oriented. So the result holds for these as well.
\end{proof}

 \begin{cor}\label{nonnegativegrading}   
 The anti-spherical module  for ${\rm TL}_{(W,P)}(q)$ is non-negatively graded, that is 
 $$\deg(\varnothing \diag_{\mu} \la ) \geq 0 \qquad \mbox{for all $\la ,\mu \in {^PW}$}.$$Thus, the non-zero entries in the light leaves matrix 
$\Delta_{\la,\mu}$
are non-negative powers of $q$.
  \end{cor}
 
 \begin{proof}
  Any northern arc or propagating strand is non-negatively graded, thus it suffices to consider 
 the southern arcs (which can be negatively graded).  
 Now recall the coset diagram for $\varnothing$ in types $(W,P)=(A_n,A_{k}\times A_{n-k-1})$, $(C_n,A_{n-1})$, $(D_{n},A_{n-1})$ from \cref{section4}.   We see that any  southern arc must have rightmost vertex labelled by $\down$. So the southern arcs all have degree zero. This proves the result. 
 \end{proof}

\section{Factorisation of the light leaves matrix}
 \label{results2}

  We now explain how to construct the element $\diag_\mu\in {\rm TL}_W(q)$ for $\mu \in {^PW}$  via an efficient closed combinatorial algorithm, which has its origins in \cite{MR2955190,MR3544626,MR3286499,MR2813567}.   
  This allows us to enumerate the elements $\varnothing \diag_\mu \lambda$ and hence the paths  $\NPath (\la,\underline{\mu}) \subseteq \Path (\la,\underline{\mu})$ 
  and $ \BPath (\nu,\underline{\mu}) \subseteq  \Path (\nu,\underline{\mu})$ corresponding to  
  decomposition numbers and bases of simple modules for the Hecke category respectively (in other words, solving Libedinsky--Williamson's question for Hermitian symmetric pairs).

We have seen that every strongly fully commutative element $w$ in $W$ with $\underline{w} = \omega(\SSTT)$ for some $\SSTT\in {\rm Path}_{(W,P)}$ corresponds to a region $R(w)$ in the tiling of $W$. That region is determined by the path $\pi(w)$ walking along its boundary, starting with its northern boundary going East and coming back along its southern boundary going West. When $w=\mu \in {^PW}$, the region $R(\mu)$ has  a particularly simple shape.

\begin{lem}
We have that $w\in {^PW}$ if and only if the region $R(w)$ has the following property.
 In type $(A_{n-1}, A_{k-1}\times A_{n-k-1})$, the last $n$ steps of the path $\pi(w)$ are given by $((SW)^{n-k}, (NW)^{k})$. In types $(C_{n-1},A_{n-2})$ and $(D_n, A_{n-1})$, the last $n$ steps of the path $\pi(w)$ are given by $((SW)^n)$.
 \end{lem}

\begin{proof} Note that the minimal length coset representatives  $\mu \in {^PW}$ are characterised by the fact that every reduced expression for $\mu$ starts with $s\notin P$. From that characterisation, it is clear that if $\pi(w)$ has the stated form then $w\in {^PW}$. Examples of $\pi(\mu)$ for $\mu \in {^PW}$ are given in the first two pictures in \cref{resulting} and \cref{resulting2XXX}.  Now suppose that $\pi(w)$ does not have the stated form then  $\pi(w)$ would have two consecutive steps $\pi((n-i)') = SW$ and $\pi((n-i-1)') = NW$ (with $i\neq k$ in type $(A_{n-1}, A_{k-1}\times A_{n-k-1})$). But this would imply that there is a reduced expression for $w$ starting with some $s\in P$ and so $w\notin {^PW}$.  This is illustrated in the rightmost picture of \cref{resulting} and \cref{resulting2XXX} where we indeed observe that there is a reduced word for $w$ starting with ${\color{cyan} s}\in P$ in each case. 
\end{proof}

Therefore for $\mu \in {^PW}$, the region $R(\mu)$ is completely determined by its northern boundary.
In fact, we have the following natural correspondence between the northern boundary of $R(\mu)$ and the coset diagram of $\mu$. 

\begin{lem} Let $\mu\in {^PW}$.
The $i$-th vertex of the coset diagram of $\mu$  is labelled by $\up$, respectively  $\down$, if and only if the $i$-th step in $\pi(\mu)$ is given by $SE$, respectively  $NE$. In type $(C_{n-1}, A_{n-2})$ the first vertex is always labelled by $\circ$ and the first step is always $SE$. 
\end{lem}

\begin{proof}
We proceed by induction on $\ell(\mu)$. For $\ell(\mu) = 0$ we have $\mu = \varnothing$ and the result is clear from the description of the coset diagram for $\varnothing$ given in Section 5. Now assume that the result holds for $\lambda\in {^PW}$ and let $\mu = \la s_i > \la$. If $i=1, \ldots , n-1$ then the coset diagrams for $\la$ and $\la s_i$ only differ in position $i$ and $i+1$ and we have $\la = \ldots \up \down \ldots$ and $\la s_i = \ldots \down \up \ldots$. The corresponding paths $\pi(\la)$ and $\pi(\la s_i)$ are depicted in \cref{XXX1}. We see that the $i$-th and $(i+1)$-th steps in $\la$, respectively  $\la s_i$, are given by $SE, NE$,  respectively  $NE, SE$ and so the result follows by induction. For $i=1'$, the coset diagrams of $\la$ and $\la s_{1'}$ only differ in the second position and we have $\la = \circ \down \ldots$ and $\la s_{1'} = \circ \up \ldots$. The corresponding paths $\pi(\la)$ and $\pi(\la s_{1'})$ are depicted in \cref{XXX1}. We see that the second step in $\la$ is given by $NE$ while the second step in $\la s_{1'}$ is given by $SE$, and so the result follows by induction. Finally, if $i=1''$ then the coset diagrams for $\la$ and $\la s_{1''}$ only different in the first two positions and we have $\la = \down \down \ldots$ and $\la s_{1''} = \up \up \ldots$. The corresponding paths $\pi(\la)$ and $\pi(\la s_{1''})$ are depicted in \cref{XXX1}. We see that the first two steps in $\la$, respectively $\la s_{1''}$, are given by $NE,NE$, respectively  $SE, SE$, and so the result follows by induction. 
\end{proof}

  \begin{figure}[ht!]
 $$
  \begin{tikzpicture}[scale=1.3]
  	
	\begin{scope}
  \clip (0,0) (0,0)--++(45:4*0.5)--++(135:2*0.5)--++(-135:1*0.5)--++(135:1*0.5)--++(-135:1*0.5)--++(-135:1*0.5)--++(135:1*0.5)--++(135:1*0.5)--++(-135:1*0.5)--(0,0);

		\foreach \i in {0,1,2,3,4,5}
		{
			\path (0,0)--++(45:0.5*\i) coordinate (a\i); 
			\path (0,0)--++(135:0.5*\i)  coordinate (c\i); 
   \draw[   ] (a\i)--++(135:4);
   \draw[   ] (c\i)--++(45:4);   
 }
 \end{scope}

 \draw[   ] (0,0) (0,0)--++(45:4*0.5)--++(135:2*0.5)--++(-135:1*0.5)--++(135:1*0.5)--++(-135:1*0.5)--++(-135:1*0.5)--++(135:1*0.5)--++(135:1*0.5)--++(-135:1*0.5)--(0,0);

 \foreach \i in {0,1,2,3,4}
		{     \draw[   ,densely dotted] (a\i)--++(-45:0.3);
		 \draw[   ,densely dotted] (a\i)--++(45:0.3);
		
		}

 \foreach \i in {0,1,2,3,4,5}
		{  
   \draw[   ,densely dotted] (c\i)--++(-135:0.3);   
   \draw[   ,densely dotted] (c\i)--++(135:0.3);     
   			}

 \draw[very thick,->]  (135:2.5)--++(45:0.5) coordinate (X);
 \draw[very thick,->]  (X)--++(-45:0.5) coordinate (X);  
  \draw[very thick,->]  (X)--++(-45:0.5) coordinate (X);  
   \draw[very thick,->]  (X)--++(45:0.5) coordinate (Y);  
   
   \fill[gray!40](X)--++(45:0.5)--++(45:0.5)--++(-45:0.5)--++(-135:0.5)--++(135:0.5);
\path(X)--++(45:0.5)--++(45:0.25)--++(-45:0.25)   node {\scalefont{0.9}$i$};
   
      \draw[very thick,magenta,->]  (Y)--++(45:0.5) coordinate (X);  
            \draw[very thick,magenta,->]  (X)--++(-45:0.5) coordinate (X);  
      \draw[very thick,cyan,->]  (Y)--++(-45:0.5) coordinate (X);          \draw[very thick,cyan,->]  (X)--++(45:0.5) coordinate (X);    

      \draw[very thick,->]  (X)--++(45:0.5) coordinate (X);          \draw[very thick,->]  (X)--++(-45:0.5) coordinate (X);    \draw[very thick,->]  (X)--++(-45:0.5) coordinate (X);

\draw(0,-0.75) node {$ \la\color{gray}s_i\color{black}=\cdots \down\up \up\down \color{magenta}\down\up\color{black}\down\up\up\cdots $};
\draw(0,-1.25) node {$\color{white}s_i\color{black}\la =\cdots \down\up \up\down \color{cyan} \up\down\color{black}\down\up\up\cdots $};

\end{tikzpicture}
   \begin{tikzpicture}[scale=1.3]
  	
	\begin{scope}
  \clip (0,0) (0,0)--++(45:5*0.5)--++(135:1*0.5)--++(-135:1*0.5)
  --++(135:1*0.5)--++(-135:1*0.5)--++(135:1*0.5) 
  --++(-135:1*0.5)--++(-45:1*0.5)  --++(-135:1*0.5)--++(-45:1*0.5)
    --++(-135:1*0.5)--++(-45:1*0.5);

		\foreach \i in {0,1,2,3,4,5}
		{
			\path (0,0)--++(45:0.5*\i) coordinate (a\i); 
			\path (0,0)--++(135:0.5*\i)  coordinate (c\i); 
   \draw[   ] (a\i)--++(135:4);
   \draw[   ] (c\i)--++(45:4);   
 }
 \end{scope}

  \draw[   ] (0,0)--++(45:5*0.5)--++(135:1*0.5)--++(-135:1*0.5)
  --++(135:1*0.5)--++(-135:1*0.5)--++(135:1*0.5) 
  --++(-135:1*0.5)--++(-45:1*0.5)  --++(-135:1*0.5)--++(-45:1*0.5)
    --++(-135:1*0.5)--++(-45:1*0.5);

 \foreach \i in {0,1,2,3,4,5}
		{     \draw[   ,densely dotted] (a\i)--++(-45:0.3);\draw[   ,densely dotted] (a\i)--++(45:0.3);}

    \draw[   ,densely dotted] (c0)--++(-135:0.3);

\path (135:2.5)--++(45:0.5) coordinate (X);
\path (X)--++(-45:0.5) coordinate (X);  
 \path (X)--++(-45:0.5) coordinate (X);  
  \path (X)--++(45:0.5) coordinate (Y);  
   
   \fill[gray!40](X)--++(45:0.5)--++(45:0.5)--++(-45:0.5)--++(-135:0.5)--++(135:0.5);
\path(X)--++(45:0.5)--++(45:0.25)--++(-45:0.25)   node {\scalefont{0.9}$1'$};
   
      \path (Y)--++(45:0.5) coordinate (X);  
      
            \draw[very thick,magenta,<-]  (X)--++(135:0.5) ;
            \draw[very thick,magenta,->]  (X)--++(-45:0.5) coordinate (X);  
      \draw[very thick,cyan,->]  (Y)--++(-45:0.5) coordinate (X);          \draw[very thick,cyan,->]  (X)--++(45:0.5) coordinate (X);    

      \draw[very thick,->]  (X)--++(45:0.5) coordinate (X);          \draw[very thick,->]  (X)--++(-45:0.5) coordinate (X);  
        \draw[very thick,->]  (X)--++(45:0.5) coordinate (X);       \draw[very thick,->]  (X)--++(-45:0.5) coordinate (X);

\draw(0.5,-1.25) node {$ 
\color{white}s_{1'}\color{black}\la=\color{cyan} \circ   \down \color{black}\down\up \down\up \cdots $};
\draw(0.5,-0.75) node {$ \color{black}\la\color{gray}s_{1'}\color{black}= \color{magenta} \circ   \up \color{black}\down\up \down\up \cdots $};

\end{tikzpicture}
  \begin{tikzpicture}[scale=1.3]
  	
	\begin{scope}
  \clip (0,0) (0,0)--++(45:5*0.5)--++(135:1*0.5)--++(-135:1*0.5)
  --++(135:1*0.5)--++(-135:1*0.5)--++(135:1*0.5) 
  --++(-135:1*0.5)--++(-45:1*0.5)  --++(-135:1*0.5)--++(-45:1*0.5)
    --++(-135:1*0.5)--++(-45:1*0.5);

		\foreach \i in {0,1,2,3,4,5}
		{
			\path (0,0)--++(45:0.5*\i) coordinate (a\i); 
			\path (0,0)--++(135:0.5*\i)  coordinate (c\i); 
   \draw[   ] (a\i)--++(135:4);
   \draw[   ] (c\i)--++(45:4);   
 }
 \end{scope}

  \draw[   ] (0,0)--++(45:5*0.5)--++(135:1*0.5)--++(-135:1*0.5)
  --++(135:1*0.5)--++(-135:1*0.5)--++(135:1*0.5) 
  --++(-135:1*0.5)--++(-45:1*0.5)  --++(-135:1*0.5)--++(-45:1*0.5)
    --++(-135:1*0.5)--++(-45:1*0.5);

 \foreach \i in {0,1,2,3,4,5}
		{     \draw[   ,densely dotted] (a\i)--++(-45:0.3);\draw[   ,densely dotted] (a\i)--++(45:0.3);}

    \draw[   ,densely dotted] (c0)--++(-135:0.3);

\path (135:2.5)--++(45:0.5) coordinate (X);
\path (X)--++(-45:0.5) coordinate (X);  
 \path (X)--++(-45:0.5) coordinate (X);  
  \path (X)--++(45:0.5) coordinate (Y);  
   
   \fill[gray!40](X)--++(45:0.5)--++(45:0.5)--++(-45:0.5)--++(-135:0.5)--++(135:0.5);
\path(X)--++(45:0.5)--++(45:0.25)--++(-45:0.25)   node {\scalefont{0.9}$1''$};
   
      \path (Y)--++(45:0.5) coordinate (X);  
      
            \draw[very thick,magenta,<-]  (X)--++(135:0.5) ;            \draw[very thick,magenta,->]  (X)--++(-45:0.5) coordinate (X);

   \path (Y)--++(-45:0.5) coordinate (X);  
      
            \draw[very thick,cyan,<-]  (X)--++(-135:0.5) ;

               \draw[very thick,cyan,->]  (X)--++(45:0.5) coordinate (X);    

      \draw[very thick,->]  (X)--++(45:0.5) coordinate (X);          \draw[very thick,->]  (X)--++(-45:0.5) coordinate (X);  
        \draw[very thick,->]  (X)--++(45:0.5) coordinate (X);       \draw[very thick,->]  (X)--++(-45:0.5) coordinate (X);

\draw(0.5,-1.25) node {$ 
\color{white}s_{1''}\color{black}\la=\color{cyan} \down \down \color{black}\down\up \down\up \cdots $};
\draw(0.5,-0.75) node {$ \color{black}\la\color{gray}s_{1''}\color{black}= \color{magenta} \up   \up \color{black}\down\up \down\up \cdots $};

\end{tikzpicture}
 $$
 \caption{The effect of applying a reflection $\color{gray}s_i$ for $1\leq i \leq n-1$; the reflection $\color{gray}s_i\color{black}=\color{gray}s_{1'}$ (in type $C$); and  the reflection $\color{gray}s_i\color{black}=\color{gray}s_{1''}$ (in type $D$) respectively.
In each case we depict  the pair of weights $\la$ and $\la \color{gray}s_i$ and the corresponding northern edges of the paths $\pi(\la)$ and $\pi(\la \color{gray}s_i)$.
 We highlight in blue and pink the difference between  $\la$ and $\la \color{gray}s_i$ (both on the level of tiles and coset diagrams).
 }
 \label{XXX1}
 \end{figure}

Using this correspondence, we easily obtain the following closed combinatorial algorithm to construct $\diag_{\mu}$.

\begin{prop}\label{thething}
For each $\mu\in {^PW}$, the diagram $\diag_\mu\in {\rm TL}_W(q)$ can be constructed as follows.
Place the coset diagram $\mu$ on the northern boundary and $\varnothing$ on the southern boundary. Then
\begin{enumerate}[leftmargin=*]
\item Repeatedly connect neighbouring northern vertices (in the sense that they are next to each other or only have vertices already connected by an arc between them) labelled by $\down$ and $\up$ by a northern anti-clockwise arc.

\smallskip \noindent 
We are left with (in type $(C_{n-1}, A_{n-2})$ one vertex labelled by $\circ$ followed by) some vertices labelled by $\up$ followed by some vertices labelled by $\down$. 

\item 
\begin{enumerate}
\item In type $(A_{n-1}, A_{k-1}\times A_{n-k-1})$, draw undecorated propagating strands on all remaining vertices.
\item In type $(D_n, A_{n-1})$, starting from the left, connect neighbouring vertices labelled with $\up$'s with decorated northern arcs. Then draw a decorated propagating strand from the remaining vertex labelled by $\up$ (if it exists), and undecorated propagating strands  on all remaining vertices labelled by $\down$. 
\item In type $(C_{n-1}, A_{n-2})$, if $\mu = \varnothing$, then draw an undecorated propagating strand from each northern vertex. Otherwise, view the first label $\circ$ of $\mu$ as a $\up$, then follow exactly the same procedure as in type $(D_n, A_{n-1})$. 
\end{enumerate}
\end{enumerate}
Now there is a unique way of completing the diagram $\diag_\mu$ such that $\varnothing \diag_\mu \mu\in \mathbb{ODT}[W,P]$. 
 \end{prop}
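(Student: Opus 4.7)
The strategy is to compare the algorithm's output with the construction of $e_\mu$ obtained from the tiling region $R(\mu)$, as reviewed in \cref{productofgenerators}. The plan is to (i) show that the southern boundary of $R(\mu)$ is forced by the condition $\mu \in {^PW}$ so that $R(\mu)$ is determined by its northern boundary; (ii) verify that each step of the algorithm correctly reads off a strand of $e_\mu$ from $R(\mu)$; and (iii) observe that the oriented completion is forced.

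First I would use the fact that every reduced expression for $\mu \in {^PW}$ begins with the unique $s \in S_W \setminus S_P$, which forces the tail of the path $\pi(\mu)$ to be the reverse of the northern boundary of the region corresponding to $\varnothing$. Explicitly, the southern boundary of $R(\mu)$ is $(\mathrm{SW})^n$ in types $(C_{n-1},A_{n-2})$ and $(D_n,A_{n-1})$, and $(\mathrm{SW})^{n-k}(\mathrm{NW})^k$ in type $(A_{n-1}, A_{k-1}\times A_{n-k-1})$. Hence $R(\mu)$ is determined by its northern boundary, which in turn matches the coset diagram of $\mu$ under the rule $\mathrm{SE}\leftrightarrow \up$ and $\mathrm{NE}\leftrightarrow \down$ (with the first step handled specially in type $C$, as indicated in the statement).

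Next I would translate each step of the algorithm into a claim about the horizontal strips of $R(\mu)$ described in \cref{productofgenerators}. A consecutive $\down\up$-pair in $\mu$ corresponds to two steps $\mathrm{NE},\mathrm{SE}$ of $\pi(\mu)$ sharing a vertex on the northern boundary, which is exactly where a horizontal strip of $R(\mu)$ closes off to form a northern arc; iterating this greedy pairing exhausts all northern arcs, establishing step~(1). After step~(1), the remaining northern vertices form a run of $\up$'s followed by a run of $\down$'s, corresponding to a contiguous NE-run followed by a contiguous SE-run of $\pi(\mu)$. In type $A$ the corresponding strips descend directly to the southern boundary, producing the undecorated propagating strands of part~(2a). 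In types $D$ and $C$, strips emerging from $\up$-vertices must enter the leftmost column (labelled $1''$ or $1'$, respectively); this forces a single decoration on each such strip, and paired-up strips then become the decorated northern arcs in parts~(2b) and~(2c), while a final odd strip becomes a decorated propagating strand. The $\down$-strips cross only unlabelled columns and hence give undecorated propagating strands.

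The only remaining point is that the resulting tangle admits a unique oriented completion $\varnothing e_\mu \mu \in \mathbb{ODT}[W,P]$. This is immediate from \cref{uniquepath} applied to the reduced expression $w(\mu)$ corresponding to $R(\mu)$: the orientations at the northern and southern boundaries are fixed by $\mu$ and $\varnothing$, and \cref{Rw} rules out the closed-loop and double-decoration configurations of \cref{resulting2XXXZ}, so the orientation of every internal strand segment is determined. The main technical point, therefore, is really the bookkeeping in step~(ii) for types $D$ and $C$, where one has to check that the parity of $\up$-vertices correctly matches the parity of decorations required for membership in $\mathbb{DT}[D_n]$ or $\mathbb{DT}[C_n]$; this follows from the even-sign condition defining $^PW$ in type $D$ and the presence of the $\circ$-endpoint in type $C$.
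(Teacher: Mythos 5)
Your proof takes essentially the same approach as the paper. The paper's own justification is the paragraph immediately preceding the proposition, which establishes that $R(\mu)$ is determined by its northern boundary (via the forced tail of $\pi(\mu)$) and that the boundary steps $\mathrm{SE}/\mathrm{NE}$ correspond to $\up/\down$ in the coset diagram, and then simply states that the algorithm "is easily obtained" from this correspondence together with the strip description of $e_{\underline w}$ from Section 5.3; your steps (i)--(iii) just make that argument explicit and supply \cref{uniquepath} (via \cref{Rw}) for the forced orientation, which is exactly the intended reference. The only place you are slightly imprecise is in the type-$D$ bookkeeping: the leftmost column of the $D_n$ tiling carries alternating $1$- and $1''$-tiles, so "entering the leftmost column" does not by itself produce a decoration — one must observe that a merged strip there contains an odd number of $1''$-tiles (after reduction via the double-bead relation, a single bead), and similarly track the decorations on the southern arcs, which the algorithm leaves to the completion step; but this is the same level of detail the paper itself omits.
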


 \begin{eg}
 A couple of illustrative large examples of the construction in  \cref{thething} are given in \cref{cups,cups2}.
The complete set of all $\diag_\mu$ for $\mu \in {^PW}$ for $(W,P)$ of type 
$(D_4 , A_3)$ 
and  $(C_3 , A_2)$
are given in  \cref{BruhatexampleD2}. In all examples, we will only picture the top of the diagram as this completely determines $\diag_\mu$.
   
  \begin{figure}[ht!]
 $$

	\caption{The   $q^{\deg({{\varnothing \diag_\mu \la}})}$   
		for $\la,\mu \in {{^P}W}$ for $(W,P)=  (A_3,A_1\times A_1)$.}  
	\label{kfkfkfkfkffff}
\end{figure}

  We have already seen in \cref{nonnegativegrading} that the entries in the light leaves matrix $\Delta_{\la \mu}$ are either zero or non-negative powers of $q$. In the simply-laced cases, we can say more.

\begin{thm}\label{simplylaced} Assume $(W,P)=(A_{n-1}, A_{n-2})$ or $(D_n, A_{n-1})$. Then for any $\la, \mu\in {^PW}$  with $\varnothing \diag_\mu \la \in \mathbb{ODT}[W,P]$ we have $\deg (\varnothing \diag_\mu \la) = 0$ if and only if $\la = \mu$. In particular, we have $\Delta_{\la \mu} = 1$ if and only if $\lambda = \mu$ and so the matrix of light leaves has a trivial factorisation 
 $$B = {\rm Id} \quad \mbox{and} \quad \Delta = N .$$
\end{thm}

\begin{proof}
Fix $\mu\in {^PW}$ and consider all $\la \in {^PW}$ with  $\varnothing \diag_\mu\la\in \mathbb{ODT}[W,P]$ and $\deg (\varnothing \diag_\mu\la) = 0$.  Note that this forces all undecorated northern arcs to be anti-clockwise, all undecorated propagating strands to be oriented, all northern decorated arcs to be labelled by two $\up$'s and all decorated propagating strands to be flip-oriented. This gives only one choice for $\lambda$, namely $\lambda = \mu$. 
\end{proof}

Thus, setting every basis element $\varnothing \diag_\mu \la$ for the anti-spherical module to be standard proves Theorem A in type $(W,P)=(A_{n-1}, A_{n-2})$ and $(D_n, A_{n-1})$.
We now state and proof Theorem $A$ in the only remaining type of Hermitian symmetric pair, namely $(C_{n}, A_{n-1})$.

\begin{thm}\label{bad62}
Let $\lambda, \mu \in {^{A_{n-1}}C_{n}}$. 
We say that $\varnothing \diag_{\mu} \la\in \mathbb{ODT}[C_n, A_{n-1}]$ is {\sf standard} if every decorated strand if flip-oriented.

We  define the matrices $N$ and $B$ in type $(W,P)=(C_n, A_{n-1})$ as follows:  
\begin{align} 
N_{ \la,\mu}  &= \left\{ \begin{array}{ll} q^{\deg (\varnothing \diag_{\mu} \la)} & \mbox{if $\varnothing \diag_\mu \la \in \mathbb{ODT}[C_n, A_{n-1}]$ is standard}\\
0 & \mbox{otherwise} \end{array}\right.
\\
B_{ \la,\mu}  &= \left\{ \begin{array}{ll} 1 & \mbox{if $\varnothing \diag_\mu \la\in \mathbb{ODT}[C_{n}, A_{n-1}]$  and $\deg (\varnothing \diag_\mu \la) =0$ }\\
0 & \mbox{otherwise.} \end{array}\right.
\end{align}
Then the matrix of light leaves in type $(C_n, A_{n-1})$ factorises as
$$\Delta = N B.$$
\end{thm}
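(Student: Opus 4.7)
To prove $\Delta = NB$, my plan is to construct a degree-preserving bijection between pairs $(\lambda, \mu)$ with $\varnothing e_\mu \lambda \in \mathbb{ODT}[C_n, A_{n-1}]$ and triples $(\lambda, \nu, \mu)$ with $\varnothing e_\nu \lambda$ standard in $\mathbb{ODT}$, $\varnothing e_\mu \nu \in \mathbb{ODT}$, and $\deg(\varnothing e_\mu \nu) = 0$. Since all entries of $\Delta$, $N$, and $B$ are either $0$ or a single monomial $q^k$ with $k \geq 0$, such a bijection proves the identity $\Delta_{\lambda,\mu} = \sum_\nu N_{\lambda,\nu} B_{\nu,\mu}$ on the nose, with exactly one term on the right being non-zero whenever the left side is.

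Given $(\lambda, \mu)$, I would construct $\nu = \nu(\lambda, \mu)$ using the explicit description of $e_\mu$ from \cref{thething}. The algorithm treats the first $\circ$ of $\mu$ as $\up$ and applies the type-$D$ procedure, yielding decorated arcs pairing $\up$-labels, undecorated arcs handling $\down\up$ patterns, and at most one decorated propagating strand for a leftover $\up$. The only source of non-standardness in $\varnothing e_\mu \lambda$ is a decorated arc $(a,b)$ of $e_\mu$ not touching position $1$ where $\lambda_a \neq \lambda_b$ in $\{\up,\down\}$, since arcs touching position $1$ are automatically flip-oriented through the $\circ$. The rule for $\nu$: at positions lying in a non-standard decorated arc of $\varnothing e_\mu \lambda$, set $(\nu_a, \nu_b) = (\down, \up)$, so that \cref{thething} applied to $\nu$ pairs these positions with an undecorated $\down\up$-arc at step~(1); at all other positions, $\nu$ agrees with $\mu$, with the label at any decorated propagating strand position chosen to preserve flip-orientation.

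The verification proceeds in three steps. First, $\varnothing e_\mu \nu$ lies in $\mathbb{ODT}$ and has degree $0$: every arc of $e_\mu$ now has rightmost $\nu$-label $\up$ (either by construction at problematic arcs or by matching $\mu$ elsewhere), contributing $0$ to the degree by \cref{picdegree}, and all undecorated strands remain oriented because the local modification preserves the $\down\up$ structure. Second, $\varnothing e_\nu \lambda$ is standard in $\mathbb{ODT}$ with the same degree as $\varnothing e_\mu \lambda$: the problematic decorated arcs of $e_\mu$ are replaced in $e_\nu$ by undecorated arcs with the same $\lambda$-labels, eliminating the standardness obstruction, and the ``rightmost label is $\down$'' count (which determines the degree) is preserved since the $\lambda$-labels at the top are unchanged. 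Third, uniqueness of $\nu$: the combined degree-$0$ and standardness constraints force $\nu$'s labels at every position in $e_\mu$'s structure, so any $\nu'$ satisfying both conditions coincides with $\nu$.

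The main obstacle will be handling the decorated propagating strand case (when $\mu$ has an odd number of $\up$-labels in its treated form), where the interaction between the algorithm of \cref{thething} applied to $\nu$ and the degree-$0$/standardness constraints requires additional case analysis of the bottom completion of the tangle. A secondary delicacy is verifying that the local label change $\mu \to \nu$ induces only the expected local change in $e_\nu$ rather than cascading globally through further step~(1) pairings; this is controlled by the observation that $\down\up$-pairings in the algorithm are strictly local to neighboring remaining vertices, so the modification at positions $(a,b)$ does not alter pairings elsewhere in the diagram.
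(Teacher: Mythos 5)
Your plan is the same as the paper's: construct from $(\lambda,\mu)$ a unique coset $\nu$ with $\varnothing e_\mu\nu$ of degree $0$ and $\varnothing e_\nu\lambda$ standard of the same degree as $\varnothing e_\mu\lambda$, then verify uniqueness. The paper packages this bookkeeping differently --- it introduces the equivalence relation $\sim_\mu$ on $C_\mu=\{\lambda : \varnothing e_\mu\lambda\in\mathbb{ODT}\}$ generated by the four local arc moves, takes $\nu$ to be the unique degree-zero representative of the class of $\lambda$, and derives $e_\nu$ directly from $e_\mu$ by deleting beads on the decorated arcs that are oriented in $\varnothing e_\mu\nu$, rather than re-running the cup construction of \cref{thething} on a modified weight. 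That shortcut sidesteps the ``cascading'' worry you raise, which your argument (decorated arcs of $e_\mu$ are pairwise non-nested and the $\circ$-arc only encloses undecorated arcs) does correctly dispose of.

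The genuine gap is your rule for $\nu$ at the decorated propagating strand position $p$. You set $\nu$ equal to $\mu$ there, ``chosen to preserve flip-orientation''; but $\mu_p=\up$ with a $\down$ at the bottom is already flip-oriented, so this rule always returns $\nu_p=\up$. That is wrong when $\lambda_p=\down$: then $e_\nu$ (built from $\nu_p=\up$ by \cref{thething}) still has a decorated propagating strand at $p$, and in $\varnothing e_\nu\lambda$ that strand carries $\down$ on top and $\down$ on the bottom, which is oriented, not flip-oriented --- so $\varnothing e_\nu\lambda$ is not standard and $N_{\lambda,\nu}=0$. The correct prescription is $\nu_p=\lambda_p$: then $e_\nu$ has a \emph{decorated} propagating strand at $p$ exactly when $\lambda_p=\up$ (flip-oriented in $\varnothing e_\nu\lambda$) and an \emph{undecorated} one when $\lambda_p=\down$ (oriented in $\varnothing e_\nu\lambda$), which is what standardness and membership in $\mathbb{ODT}$ demand. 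This is automatic in the paper's framework because $\sim_\mu$ never touches propagating positions, so $\nu$ inherits $\lambda$'s label there, and the claim $\lambda_p=\eta_p$ for the potential competitor $\eta$ in the uniqueness step is forced by the requirement that $\varnothing e_\eta\lambda$ be standard. You correctly flag this case as the main difficulty, but the diagnosis (``bottom completion of the tangle'') is off --- the bottom is always $\varnothing$ and plays no role; the issue is that the label you need at $p$ is read off $\lambda$, not $\mu$.
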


\begin{proof}
Fix $\mu\in {^{A_{n-1}}C_n}$ and consider 
$$C_\mu = \{\lambda \in {^{A_{n-1}}C_n} \, : \, \varnothing \diag_\mu \la \in \mathbb{ODT}[C_n, A_{n-1}] \}.$$
Define an equivalence relation on $C_\mu$ by setting $\la \sim_\mu \eta$ for $\la, \eta \in C_\mu$ if $\varnothing \diag_\mu \la$ and $\varnothing \diag_\mu \eta$ differ only in one northern arc where the differing arcs are given by 
 \begin{equation}\label{simmu}
 \left\{ \begin{minipage}{1.2cm}
 \begin{tikzpicture}[scale=0.7]
 \draw[thick](-1,-0.155) node {$\boldsymbol\up$};
  \draw[very thick](0,0.1) node {$\boldsymbol\down$};
  \draw[very thick]  (-1,-0) to [out=-90,in=180] (-0.5,-0.75) to [out=0,in=-90] (0,0);
 \end{tikzpicture}
 \end{minipage}
 , \; 
  \begin{minipage}{1.2cm}
\begin{tikzpicture}[xscale=-0.7, yscale=0.7]
 \draw[thick](-1,-0.155) node {$\boldsymbol\up$};
  \draw[very thick](0,0.1) node {$\boldsymbol\down$};
  \draw[very thick]  (-1,-0) to [out=-90,in=180] (-0.5,-0.75) to [out=0,in=-90] (0,0);
 \end{tikzpicture}
 \end{minipage}
 \!  \right\} \, \mbox{or}
\,
\left\{  \begin{minipage}{1.2cm}
 \begin{tikzpicture}[scale=0.7]
 \draw[thick](-1,-0.155) node {$\boldsymbol\up$};
  \draw[very thick](0,0.1) node {$\boldsymbol\down$};
  \draw[very thick]  (-1,-0) to [out=-90,in=180] (-0.5,-0.75) to [out=0,in=-90] (0,0);
 \fill 	(-0.5,-0.75) 	 circle (4pt); \end{tikzpicture}
 \end{minipage}
 , 
  \begin{minipage}{1.2cm}
\begin{tikzpicture}[xscale=-0.7, yscale=0.7]
 \draw[thick](-1,-0.155) node {$\boldsymbol\up$};
  \draw[very thick](0,0.1) node {$\boldsymbol\down$};
  \draw[very thick]  (-1,-0) to [out=-90,in=180] (-0.5,-0.75) to [out=0,in=-90] (0,0);
 \fill 	(-0.5,-0.75) 	 circle (4pt); \end{tikzpicture}
 \end{minipage}
 \!  \right\}	\, \mbox{or}
\,
\left\{  \begin{minipage}{1.2cm}
 \begin{tikzpicture}[xscale=-0.7, yscale=0.7] 
   \draw[white,very thick](-1,0.1) node {$\boldsymbol\down$};

 \draw[thick](-1,-0.155) node {$\boldsymbol\up$};
 \draw[thick](0,-0.155) node {$\boldsymbol\up$};

  \draw[very thick]  (-1,-0) to [out=-90,in=180] (-0.5,-0.75) to [out=0,in=-90] (0,0);
 \fill 	(-0.5,-0.75) 	 circle (4pt); \end{tikzpicture}
 \end{minipage}
 , 
  \begin{minipage}{1.2cm}
\begin{tikzpicture} [scale=0.7]
  \draw[very thick](-1,0.1) node {$\boldsymbol\down$};

  \draw[very thick](0,0.1) node {$\boldsymbol\down$};
  \draw[very thick]  (-1,-0) to [out=-90,in=180] (-0.5,-0.75) to [out=0,in=-90] (0,0);
 \fill 	(-0.5,-0.75) 	 circle (4pt); \end{tikzpicture}
 \end{minipage}
 \!  \right\} \, \mbox{or} \,
 \left\{  \begin{minipage}{1.2cm}
 \begin{tikzpicture}[xscale=-0.7, yscale=0.7] 
   \draw[white,very thick](-1,0.1) node {$\boldsymbol\down$};

 \draw[thick](-1,-0.155) node {$\boldsymbol\up$};
 \draw[thick](0,0.1) node {$\boldsymbol\circ$};

  \draw[very thick]  (-1,-0) to [out=-90,in=180] (-0.5,-0.75) to [out=0,in=-90] (0,0);
 \fill 	(-0.5,-0.75) 	 circle (4pt); \end{tikzpicture}
 \end{minipage}
 , 
  \begin{minipage}{1.2cm}
\begin{tikzpicture} [scale=0.7]
  \draw[very thick](-1,0.1) node {$\boldsymbol\circ$};

  \draw[very thick](0,0.1) node {$\boldsymbol\down$};
  \draw[very thick]  (-1,-0) to [out=-90,in=180] (-0.5,-0.75) to [out=0,in=-90] (0,0);
 \fill 	(-0.5,-0.75) 	 circle (4pt); \end{tikzpicture}
 \end{minipage}
 \!  \right\} . 		
 \end{equation}
We extend $\sim_{\mu}$ by transitivity.     
 Note that, under this relation, each equivalence class contains a unique element $\nu$ satisfying $\deg (\varnothing \diag_\mu \nu) = 0$ (obtained by orienting all northern arcs with rightmost vertex labelled by $\up$). 
 
 Now, let $\lambda, \mu \in {^{A_{n-1}}C_n}$. We claim that 
 $$\Delta_{\la , \mu} = \sum_{\eta \in {^{A_{n-1}}C_n}} N_{\la , \eta} B_{\eta, \mu} = \left\{ \begin{array}{ll} N_{\la , \nu} B_{\nu, \mu}  & \mbox{if $\varnothing \diag_\mu \la \in \mathbb{ODT}[C_n, A_{n-1}]$} \\ 0 & \mbox{otherwise} \end{array}\right.$$where $\nu$ is the unique element satisfying $\la \sim_\mu \nu$ and $\deg (\varnothing \diag_\mu \nu) =0$.
 Suppose that $N_{\la, \eta} \neq 0$ and $B_{\eta , \mu}\neq 0$ for some $\eta$. This implies that $\varnothing \diag_\mu \eta\in \mathbb{ODT}[C_n, A_{n-1}]$ with $\deg (\varnothing \diag_\mu \eta ) = 0$, that is all northern arcs in $\varnothing \diag_\mu \eta$ have rightmost vertex labelled by $\up$. Thus we have that $\diag_{\eta}$ has the same arcs as $\diag_\mu$ but with beads corresponding to oriented decorated northern arcs not connected to the symbol $\circ$ in $\varnothing \diag_\mu \eta$ removed. Now, $N_{\la , \eta}\neq 0$ means that $\varnothing \diag_{\eta}\la\in \mathbb{ODT}[C_{n}, A_{n-1}]$ is standard. This implies that we also have $\varnothing \diag_\mu \la \in \mathbb{ODT}[C_n, A_{n-1}]$. Now we claim we have $\lambda \sim_\mu \eta$. The fact that the northern arcs  are related as in (\ref{simmu}) follows by definition. That the orientation of the propagating lines of $\varnothing \diag_\mu \la$ and $\varnothing \diag_\mu \eta$ coincide follows from the fact that the orientation on the southern boundary is given by $\varnothing$ in both cases and if $\diag_\mu$ contains a decorated propagating line then its northern label is given by the parity condition. 
 Now as $\eta \sim_\mu \la$ and $\deg (\varnothing \diag_\mu \eta) = 0$ implies that $\eta = \nu$. Thus we have shown that
 $$\sum_{\eta \in {^{A_{n-1}}C_n}} N_{\la , \eta}B_{\eta , \mu} = N_{\la , \nu} B_{\nu, \mu}.$$
 Finally note that, as $\nu \sim_{\mu} \la$, a decorated arc in $\diag_\mu$ is oriented  in $\varnothing \diag_\mu \la$ if and only if it is oriented in  $\varnothing \diag_\mu \nu$. Now, as noted above, $\diag_\nu$ is obtained from $\diag_\mu$ by removing all beads on these oriented decorated arcs but the position of the arcs are the same and so $\deg ( \varnothing \diag_\mu \la) = \deg (\varnothing \diag_\nu \la)$.
 Thus we get
 $$\Delta_{\la , \mu} = q^{\deg (\varnothing \diag_\mu \la)} = q^{\deg ( \varnothing \diag_\nu \la)}  = N_{\la , \nu}B_{\nu , \mu}.$$as required. 
  \end{proof}

\begin{rmk}
Note that the unique element $\nu$ described in the proof above is precisely the element satisfying $\varphi(\varnothing \diag_\mu \la) = \overline{\varnothing} \diag_{\overline{\nu}}\overline{\la}$ where $\varphi : {\rm TL}_{(C_n, A_{n-1})}(q) \rightarrow {\rm TL}_{(D_{n+1}, A_n)}(q)$ is the homomorphism defined in \cref{CtoD}.
\end{rmk}

\begin{eg}
There are 6 non-standard basis elements $\varnothing \diag_\mu \la$ in the anti-spherical module of type $(C_3,A_2)$   (this can be deduced from \cref{bad6,bad62}), these  are depicted in \cref{depicted}.

\begin{figure}[ht!]$$
\begin{tikzpicture}  [scale=0.85]
		
		\clip(4,1.82) rectangle ++ (3,-2.7);
		
		\path (4,1) coordinate (origin); 
	 		\path (origin)--++(0.5,0.5) coordinate (origin2);  
		\draw(origin2)--++(0:2.5); 
		\foreach \i in {1,2,3,4}
		{
			\path (origin2)--++(0:0.5*\i) coordinate (a\i); 
			\path (origin2)--++(0:0.5*\i)--++(-90:0.08) coordinate (c\i); 
		}
		
		\foreach \i in {1,2,3,4,5}
		{
			\path (origin2)--++(0:0.25*\i) --++(-90:0.5) coordinate (b\i); 
			\path (origin2)--++(0:0.25*\i) --++(-90:0.7) coordinate (d\i); 
		}

		\draw[  very thick](c2) to [out=-90,in=0] (b3) to [out=180,in=-90] (c1);

		\draw[fill=black](b3) circle (3pt);

		\draw[very   thick](c3)  --  (6,0.55);

		\draw[very   thick](c4)  --  (6.5,0.55);
		
		
		\draw(a1) circle (2pt);
		\path(a2) --++(90:-0.175) node  {  $  \up   $} ;
		\path(a3) --++(90:0.175) node  {  $  \down   $} ;
		\path(a4) --++(90:0.175) node  {  $ \down   $} ;

		\path (4,-1) coordinate (origin); 
		\path (origin)--++(0.5,0.5) coordinate (origin2);  
		\draw(origin2)--++(0:2.5); 
		\foreach \i in {1,2,3,4}
		{
			\path (origin2)--++(0:0.5*\i) coordinate (a\i); 
			\path (origin2)--++(0:0.5*\i)--++(90:0.08) coordinate (c\i); 
  }
		
		\foreach \i in {1,2,3,4,5,...,19}
		{
			\path (origin2)--++(0:0.25*\i) --++(90:0.5) coordinate (b\i); 
			\path (origin2)--++(0:0.25*\i) --++(90:0.65) coordinate (d\i); 
		}

		\draw[  very thick](c2) to [out=90,in=0] (b3) to [out=180,in=90] (c1);

		\draw[very   thick](c3)  --  (6,0.55);
	 	\draw[fill=black](b3) circle (3pt);
		
		\draw[very   thick](c4)  --  (6.5,0.55);

 	 	\path(a3) --++(90:0.175) node  {  $  \down   $} ;
		\path(a4) --++(90:0.175) node  {  $ \down   $} ;

	\draw(a1) circle (2pt);

		\draw(a1) circle (2pt);

		\path(a2) --++(90:0.175) node  {  $ \down   $} ;

		\fill   (6,0.55) circle (3pt);		
		
	\end{tikzpicture}  
 	\begin{tikzpicture}  [scale=0.85]
		
		\clip(4,1.82) rectangle ++ (3,-2.7);
		
		\path (4,1) coordinate (origin); 
		\path (origin)--++(0.5,0.5) coordinate (origin2);  
		\draw(origin2)--++(0:2.5); 
		\foreach \i in {1,2,3,4}
		{
			\path (origin2)--++(0:0.5*\i) coordinate (a\i); 
			\path (origin2)--++(0:0.5*\i)--++(-90:0.08) coordinate (c\i); 
 
		}
		
		\foreach \i in {1,2,3,4,5}
		{
			\path (origin2)--++(0:0.25*\i) --++(-90:0.5) coordinate (b\i); 
			\path (origin2)--++(0:0.25*\i) --++(-90:0.7) coordinate (d\i); 
		}

		\draw[  very thick](c2) to [out=-90,in=0] (b3) to [out=180,in=-90] (c1);

		\draw[fill=black](b3) circle (3pt);

		\draw[very   thick](c3)  --  (6,0.55);

		\draw[very   thick](c4)  --  (6.5,0.55);
		

 		\path(a2) --++(90:0.175) node  {  $  \down   $} ;
		\path(a3) --++(90:0.175) node  {  $  \down   $} ;
		\path(a4) --++(90:0.175) node  {  $ \down   $} ;
		
 		\draw(a1) circle (2pt);
		
		\path (4,-1) coordinate (origin); 
		\path (origin)--++(0.5,0.5) coordinate (origin2);  
		\draw(origin2)--++(0:2.5); 
		\foreach \i in {1,2,3,4}
		{
			\path (origin2)--++(0:0.5*\i) coordinate (a\i); 
			\path (origin2)--++(0:0.5*\i)--++(90:0.08) coordinate (c\i); 
  }
		
		\foreach \i in {1,2,3,4,5,...,19}
		{
			\path (origin2)--++(0:0.25*\i) --++(90:0.5) coordinate (b\i); 
			\path (origin2)--++(0:0.25*\i) --++(90:0.65) coordinate (d\i); 
		}

		\draw[  very thick](c2) to [out=90,in=0] (b3) to [out=180,in=90] (c1);

		\draw[very   thick](c3)  --  (6,0.55);
	 	\draw[fill=black](b3) circle (3pt);
		
		\draw[very   thick](c4)  --  (6.5,0.55);

 	 	\path(a3) --++(90:0.175) node  {  $  \down   $} ;
		\path(a4) --++(90:0.175) node  {  $ \down   $} ;
 		\path(a2) --++(90:0.175) node  {  $ \down   $} ;

				\draw(a1) circle (2pt);

		\fill   (6,0.55) circle (3pt);		
		
	\end{tikzpicture}  	
\begin{tikzpicture}  [scale=0.85]
		
		\clip(4,1.82) rectangle ++ (3,-2.7);
		
		\path (4,1) coordinate (origin); 
		\path (origin)--++(0.5,0.5) coordinate (origin2);  
		\draw(origin2)--++(0:2.5); 
		\foreach \i in {1,2,3,4}
		{
			\path (origin2)--++(0:0.5*\i) coordinate (a\i); 
			\path (origin2)--++(0:0.5*\i)--++(-90:0.08) coordinate (c\i); 
 
		}
		
		\foreach \i in {1,2,3,4,5,6,7,8,9}
		{
			\path (origin2)--++(0:0.25*\i) --++(-90:0.5) coordinate (b\i); 
			\path (origin2)--++(0:0.25*\i) --++(-90:0.7) coordinate (d\i); 
		}

		\draw[  very thick](c2) to [out=-90,in=0] (b3) to [out=180,in=-90] (c1); 
		\draw[  very thick](c4) to [out=-90,in=0] (b7) to [out=180,in=-90] (c3);

		\draw[fill=black](b3) circle (3pt);
		\draw[fill=black](b7) circle (3pt);		
 		
 		\path(a2) --++(90:-0.175) node  {  $  \up   $} ;
		\path(a4) --++(90:-0.175) node  {  $   \up  $} ;
		\path(a3) --++(90:0.175) node  {  $ \down   $} ;
		
 		\draw(a1) circle (2pt);
		
		\path (4,-1) coordinate (origin); 
		\path (origin)--++(0.5,0.5) coordinate (origin2);  
		\draw(origin2)--++(0:2.5); 
		\foreach \i in {1,2,3,4}
		{
			\path (origin2)--++(0:0.5*\i) coordinate (a\i); 
			\path (origin2)--++(0:0.5*\i)--++(90:0.08) coordinate (c\i); 
		  }
		
		\foreach \i in {1,2,3,4,5,...,19}
		{
			\path (origin2)--++(0:0.25*\i) --++(90:0.5) coordinate (b\i); 
			\path (origin2)--++(0:0.25*\i) --++(90:0.65) coordinate (d\i); 
		}

		\draw[  very thick](c2) to [out=90,in=0] (b3) to [out=180,in=90] (c1); 
		\draw[  very thick](c4) to [out=90,in=0] (b7) to [out=180,in=90] (c3);

 	 	\draw[fill=black](b3) circle (3pt);	 	\draw[fill=black](b7) circle (3pt);

 	 	\path(a3) --++(90:0.175) node  {  $  \down   $} ;
		\path(a4) --++(90:0.175) node  {  $ \down   $} ;
 		\path(a2) --++(90:0.175) node  {  $ \down   $} ;
 		
				\draw(a1) circle (2pt);

		 	\end{tikzpicture}   \begin{tikzpicture}  [scale=0.85]
		
		\clip(4,1.82) rectangle ++ (3,-2.7);
		
		\path (4,1) coordinate (origin); 
		\path (origin)--++(0.5,0.5) coordinate (origin2);  
		\draw(origin2)--++(0:2.5); 
		\foreach \i in {1,2,3,4}
		{
			\path (origin2)--++(0:0.5*\i) coordinate (a\i); 
			\path (origin2)--++(0:0.5*\i)--++(-90:0.08) coordinate (c\i); 
 
		}
		
		\foreach \i in {1,2,3,4,5,6,7,8,9}
		{
			\path (origin2)--++(0:0.25*\i) --++(-90:0.5) coordinate (b\i); 
			\path (origin2)--++(0:0.25*\i) --++(-90:0.7) coordinate (d\i); 
		}

		\draw[  very thick](c2) to [out=-90,in=0] (b3) to [out=180,in=-90] (c1); 
		\draw[  very thick](c4) to [out=-90,in=0] (b7) to [out=180,in=-90] (c3);

		\draw[fill=black](b3) circle (3pt);
		\draw[fill=black](b7) circle (3pt);

 		\path(a2) --++(90:-0.175) node  {  $  \up   $} ;
		\path(a3) --++(90:-0.175) node  {  $   \up  $} ;
		\path(a4) --++(90:0.175) node  {  $ \down   $} ;
		
 				\draw(a1) circle (2pt);

		\path (4,-1) coordinate (origin); 
		\path (origin)--++(0.5,0.5) coordinate (origin2);  
		\draw(origin2)--++(0:2.5); 
		\foreach \i in {1,2,3,4}
		{
			\path (origin2)--++(0:0.5*\i) coordinate (a\i); 
			\path (origin2)--++(0:0.5*\i)--++(90:0.08) coordinate (c\i); 
	  }
		
		\foreach \i in {1,2,3,4,5,...,19}
		{
			\path (origin2)--++(0:0.25*\i) --++(90:0.5) coordinate (b\i); 
			\path (origin2)--++(0:0.25*\i) --++(90:0.65) coordinate (d\i); 
		}

		\draw[  very thick](c2) to [out=90,in=0] (b3) to [out=180,in=90] (c1); 
		\draw[  very thick](c4) to [out=90,in=0] (b7) to [out=180,in=90] (c3);

 	 	\draw[fill=black](b3) circle (3pt);	 	\draw[fill=black](b7) circle (3pt);

 	 	\path(a3) --++(90:0.175) node  {  $  \down   $} ;
		\path(a4) --++(90:0.175) node  {  $ \down   $} ;
 		\path(a2) --++(90:0.175) node  {  $ \down   $} ;
 		
				\draw(a1) circle (2pt);

		 	\end{tikzpicture}   
\begin{tikzpicture}  [scale=0.85]
		
		\clip(4,1.82) rectangle ++ (3,-2.7);
		
		\path (4,1) coordinate (origin); 
		\path (origin)--++(0.5,0.5) coordinate (origin2);  
		\draw(origin2)--++(0:2.5); 
		\foreach \i in {1,2,3,4}
		{
			\path (origin2)--++(0:0.5*\i) coordinate (a\i); 
			\path (origin2)--++(0:0.5*\i)--++(-90:0.08) coordinate (c\i); 
 
		}
		
		\foreach \i in {1,2,3,4,5,6,7,8,9}
		{
			\path (origin2)--++(0:0.25*\i) --++(-90:0.5) coordinate (b\i); 
			\path (origin2)--++(0:0.25*\i) --++(-90:0.7) coordinate (d\i); 
		}

		\draw[  very thick](c2) to [out=-90,in=0] (b3) to [out=180,in=-90] (c1); 
		\draw[  very thick](c4) to [out=-90,in=0] (b7) to [out=180,in=-90] (c3);

		\draw[fill=black](b3) circle (3pt);
		\draw[fill=black](b7) circle (3pt);		
 
		\path(a2) --++(90:0.175) node  {  $  \down   $} ;
		\path(a4) --++(90:-0.175) node  {  $   \up  $} ;
		\path(a3) --++(90:0.175) node  {  $ \down   $} ;
		
		\draw(a1) circle (2pt);
		
		\path (4,-1) coordinate (origin); 
		\path (origin)--++(0.5,0.5) coordinate (origin2);  
		\draw(origin2)--++(0:2.5); 
		\foreach \i in {1,2,3,4}
		{
			\path (origin2)--++(0:0.5*\i) coordinate (a\i); 
			\path (origin2)--++(0:0.5*\i)--++(90:0.08) coordinate (c\i); 
	  }
		
		\foreach \i in {1,2,3,4,5,...,19}
		{
			\path (origin2)--++(0:0.25*\i) --++(90:0.5) coordinate (b\i); 
			\path (origin2)--++(0:0.25*\i) --++(90:0.65) coordinate (d\i); 
		}

		\draw[  very thick](c2) to [out=90,in=0] (b3) to [out=180,in=90] (c1); 
		\draw[  very thick](c4) to [out=90,in=0] (b7) to [out=180,in=90] (c3);

 	 	\draw[fill=black](b3) circle (3pt);	 	\draw[fill=black](b7) circle (3pt);

 	 	\path(a3) --++(90:0.175) node  {  $  \down   $} ;
		\path(a4) --++(90:0.175) node  {  $ \down   $} ;
 		\path(a2) --++(90:0.175) node  {  $ \down   $} ;
 		
		\draw(a1) circle (2pt);
		
		 	\end{tikzpicture}   %
	\begin{tikzpicture}  [scale=0.85]
		
		\clip(4,1.82) rectangle ++ (3,-2.7);
		
		\path (4,1) coordinate (origin); 
		\path (origin)--++(0.5,0.5) coordinate (origin2);  
		\draw(origin2)--++(0:2.5); 
		\foreach \i in {1,2,3,4}
		{
			\path (origin2)--++(0:0.5*\i) coordinate (a\i); 
			\path (origin2)--++(0:0.5*\i)--++(-90:0.08) coordinate (c\i); 
 
		}
		
		\foreach \i in {1,2,3,4,5,6,7,8,9}
		{
			\path (origin2)--++(0:0.25*\i) --++(-90:0.5) coordinate (b\i); 
			\path (origin2)--++(0:0.25*\i) --++(-90:0.7) coordinate (d\i); 
		}

		\draw[  very thick](c2) to [out=-90,in=0] (b3) to [out=180,in=-90] (c1); 
		\draw[  very thick](c4) to [out=-90,in=0] (b7) to [out=180,in=-90] (c3);

		\draw[fill=black](b3) circle (3pt);
		\draw[fill=black](b7) circle (3pt);		
%
%
%
		

 		\path(a2) --++(90:0.175) node  {  $  \down   $} ;
		\path(a3) --++(90:-0.175) node  {  $   \up  $} ;
		\path(a4) --++(90:0.175) node  {  $ \down   $} ;
		
 	\draw(a1) circle (2pt);
			
		\path (4,-1) coordinate (origin); 
		\path (origin)--++(0.5,0.5) coordinate (origin2);  
		\draw(origin2)--++(0:2.5); 
		\foreach \i in {1,2,3,4}
		{
			\path (origin2)--++(0:0.5*\i) coordinate (a\i); 
			\path (origin2)--++(0:0.5*\i)--++(90:0.08) coordinate (c\i); 
	  }
		
		\foreach \i in {1,2,3,4,5,...,19}
		{
			\path (origin2)--++(0:0.25*\i) --++(90:0.5) coordinate (b\i); 
			\path (origin2)--++(0:0.25*\i) --++(90:0.65) coordinate (d\i); 
		}

		\draw[  very thick](c2) to [out=90,in=0] (b3) to [out=180,in=90] (c1); 
		\draw[  very thick](c4) to [out=90,in=0] (b7) to [out=180,in=90] (c3);

 	 	\draw[fill=black](b3) circle (3pt);	 	\draw[fill=black](b7) circle (3pt);

 	 	\path(a3) --++(90:0.175) node  {  $  \down   $} ;
		\path(a4) --++(90:0.175) node  {  $ \down   $} ;
 		\path(a2) --++(90:0.175) node  {  $ \down   $} ;
 		
			\draw(a1) circle (2pt);
	
		 	\end{tikzpicture}  	
$$
\caption{The six non-standard diagrams of type $(C_3,A_2)$.   The first and third of these diagrams have degree zero.}
\label{depicted}
\end{figure}

\end{eg}

\begin{cor}
Under the parity specialisation map ${^{A_{n-1}}C_n} \rightarrow {^{A_n}D_{n+1}}\, : \, \lambda \mapsto \overline{\lambda}$ given in \cref{parity}  we have that
$$N^{(C_n, A_{n-1})}_{\la , \mu} = N^{(D_{n+1}, A_n)}_{\overline{\la}, \overline{\mu}}$$
\end{cor}

\begin{proof} 
We need to show that $\varnothing \diag_\mu \la\in \mathbb{ODT}[C_n, A_{n-1}]$ is standard if and only if $\overline{\varnothing} \diag_{\overline{\mu}} \overline{\la}\in \mathbb{ODT}[D_{n+1}, A_n]$. First note that if $\varnothing \diag_\mu \la\in \mathbb{ODT}[C_n, A_{n-1}]$ is not standard then $\overline{\varnothing} \diag_{\overline{\mu}} \overline{\la}\notin \mathbb{ODT}[D_{n+1}, A_n]$. Now assume that $\varnothing \diag_\mu \la\in \mathbb{ODT}[C_n, A_{n-1}]$ is standard. The result is trivial for $\mu = \varnothing$ so we assume that $\mu\neq \varnothing$. This implies that the strand coming out of the first northern vertex in $\diag_\mu$, call it $S$, is decorated. By definition, $\diag_{\overline{\mu}}$ is equal to $\diag_\mu$ if $S$ is flip-oriented in  $\overline{\varnothing} \diag_\mu \overline{\mu}$, and $\diag_{\overline{\mu}}$ is obtained from $\diag_\mu$ by removing the bead on $S$ if it is oriented.  Now, except possibly for $S$ the orientations of the propagating strands in $\overline{\varnothing} \diag_{\mu}\overline{\mu}$ and in $\overline{\varnothing} \diag_{\overline{\mu}} \overline{\la}$ coincide and each northern arc is oriented, respectively   flip-oriented, in the former if and only if it is in the latter. As $\overline{\la}$ and $\overline{\mu}$ both have an even number of $\up$ arrow, this implies that $S$ is oriented, respectively  flip-oriented, in the former, if and only if it is in the latter. Hence we have that $\overline{\varnothing} \diag_{\overline{\mu}} \overline{\la}\in \mathbb{ODT}[D_{n+1}, A_n]$.
\end{proof}

\appendix 
 \section{Singular Kazhdan--Lusztig theory}
 
 In this section, for any $\ctau\in S_W$, we calculate the singular Kazhdan--Lusztig polynomials ``lying on a $\ctau$-hyperplane" for simply-laced Hermitian symmetric pairs $(W,P)$.
This is a technical result that we will need in \cite{compan} and we therefore adopt the following notation from that paper.  We set
$\mathscr{P}_{(W,P)} = {^PW}$ and we let 
$$\mtau:=
\{
\mu  \in {^PW}
\mid   
 \mu\ctau   <\mu
\} \subset\mathscr{P}_{(W,P)} .
$$We believe the following proposition was first proven by Enright--Shelton, where any singular category $\mathcal O$ for a Hermitian 
 symmetric pair is proven to be Morita equivalent to a regular category $\mathcal O$ for a Hermitian 
 symmetric pair of smaller rank.  It is explicitly stated as 
 {\cite[Proposition 5.4]{compan}} in our companion paper, however we note that the proof offered there is simply 
 given by inspection and follows from the combinatorial  description of the cosets  (this combinatorics can be found explicitly recorded in many other places too, even if the bijection itself cannot, see for instance \cite[Appendix]{MR3363009}).  Thus while the following is a reference to a paper whose results depend on the current work, this is not a circular argument.  

 \newcommand{\TRNC}{\psi}
 \begin{prop} 
 Let $(W,P)$ be a simply laced Hermitian symmetric pair   and let $\ctau  \in  S_W$.
 There is an order preserving bijection 
$$\TRNC_\ctau: 	 	\mtau \to 	\mathscr{P}_{(W,P)^\ctau}$$where   $(W,P)^\ctau=(W^\ctau,P^\ctau)$ is defined by 
 \begin{itemize}[leftmargin=*]
   \item $(A_n,A_{k}\times A_{n-k-1})^\ctau = (A_{n-2},A_{k-1}\times A_{n-k-2})$; 
\item $(D_n, A_{n-1})^\ctau = (D_{n-2},A_{n-3})$; 
\item $(D_n, D_{n-1})^\ctau = (A_1,A_0)$; 
  \item $(E_6,D_5)^\ctau = (A_{5}, A_{4})$;
     \item $(E_7,E_6)^\ctau = (D_{6}, D_{5})$.

 \end{itemize} 
 For  $\ctau = {\color{cyan}s_i}$ in types $(W,P)=(A_n,A_{k}\times A_{n-k})$ and 
 $(D_n, A_{n-1})$, this map is given by deleting the pair of symbols in the $i$th and $(i+1)$th positions in the coset diagram (for $\ctau = {\color{cyan}s_{1''}}$, delete the pair of symbols in position $1$ and $2$).

  \end{prop}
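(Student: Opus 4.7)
The plan is to give a case-by-case verification using the diagrammatic description of cosets established in \cref{section4}. For each Hermitian symmetric pair $(W,P)$ and each simple reflection $\ctau\in S_W$, the condition $\mu\ctau<\mu$ with $\mu\ctau\in {^PW}$ translates into a very concrete local condition on the coset diagram of $\mu$: in the classical cases it says that the two decorations at the positions acted on by $\ctau$ form a specific ``active'' pattern (an $\up\down$ pair for an interior generator, or the analogous pattern for the boundary generators $s_{1'}, s_{1''}$). In the exceptional cases the fully-commutative elements can be listed explicitly (see the appendix) and the condition $\mu\ctau<\mu$ is read off directly.

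First I would treat type $(A_n, A_{k}\times A_{n-k-1})$ with $\ctau = s_i$. Here $\mu\in \mtau$ iff positions $i,i+1$ of the diagram of $\mu$ read $\up\down$, and $\TRNC_\ctau(\mu)$ is obtained by deleting these two entries. The resulting diagram has $k-1$ arrows $\up$ and $n-k-1$ arrows $\down$ on $n-1$ positions, which is exactly a coset diagram for $(A_{n-2},A_{k-1}\times A_{n-k-2})$. Injectivity and surjectivity are obvious since the inverse map reinserts an $\up\down$ pair at positions $i,i+1$. Order preservation follows immediately from the description of the Bruhat order given at the end of \cref{section4}: Bruhat order corresponds to moving $\up$ arrows to the right in the positions outside $\{i,i+1\}$, and the deletion map identifies these moves with Bruhat moves in the smaller coset poset. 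The same argument handles $(D_n,A_{n-1})$ with $\ctau=s_i$ for $i\ge 1$, and a small adjustment (using \cref{isomorphic copies} and treating $s_{1''}$ symmetrically) handles $\ctau = s_{1''}$; the parity constraint on the number of $\up$ arrows is preserved under deleting an $\up\down$ pair.

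Next I would handle the cases where the target is rank $1$. For $(D_n,D_{n-1})$ one checks directly from the Bruhat graph (see \cref{howTworks}) that for each $\ctau\in S_W$ the set $\mtau$ contains exactly two cosets, which are adjacent in the Bruhat order; matching them with the two elements of $\mathscr{P}_{(A_1,A_0)}$ gives the bijection. The analogous tiling picture was described in \cref{section4}, and for the non-simply-laced case $(B_n,B_{n-1})$ (not claimed in the proposition) the same combinatorics shows why the statement holds; I would note this in passing.

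Finally I would handle the exceptional cases $(E_6,D_5)^\ctau = (A_5,A_4)$ and $(E_7,E_6)^\ctau = (D_6,D_5)$. Here the enumeration of $^PW$ is finite and manageable (respectively $27$ and $56$ cosets), and the Bruhat posets are recorded in the appendix. For each $\ctau\in S_W$ I would list $\mtau$ by inspecting which cosets have a reduced expression ending in $\ctau$, and match this list with the Bruhat poset of the smaller pair via a direct comparison of Hasse diagrams. The main obstacle is organisational rather than conceptual: one must be careful that the local combinatorial rule chosen in the classical types is compatible with the length function (so that the bijection is order-preserving and not merely a set bijection), and in the $D_n$ case one has to track the parity condition and the two choices of ``first'' generator carefully. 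Once these bookkeeping points are handled, the proposition follows from the elementary observation that deleting the active pattern does not alter the relative positions of the remaining arrows.
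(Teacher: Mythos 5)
Your approach is the one the paper intends (the proof is deferred to the companion paper and described there as given ``by inspection'' from the coset combinatorics), but several of the local combinatorial facts you invoke are stated backwards, and this matters because they misidentify the domain $\mtau$.

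For $\ctau=s_i$ in types $A$ and $D$ the condition $\mu\ctau<\mu$ translates to positions $i,i+1$ of the coset diagram reading $\down\up$, not $\up\down$: with the conventions of Section 4 the minimal coset $\varnothing$ has all $\up$ arrows to the left and Bruhat order increases by moving $\up$ arrows rightward, so applying $s_i$ to a $\down\up$ pair moves the $\up$ leftward and lowers the element. (This is spelled out verbatim in the proof of the theorem closing Section 7.) Your arrow count is also off by one: $(A_n,A_k\times A_{n-k-1})$ diagrams carry $k+1$ arrows $\up$ on $n+1$ nodes, so the deleted diagram has $k$, not $k-1$. In type $(D_n,A_{n-1})$ with $\ctau=s_{1''}$ the characterizing pattern at positions $1,2$ is $\up\up$, not an $\up\down$ pair: deleting two $\up$ arrows preserves the parity of the number of $\up$ arrows and lands in the default realization of $(D_{n-2},A_{n-3})$, whereas for $\ctau=s_i$ with $i\geq 1$ one deletes $\down\up$ and changes the parity, so one must pass to the isomorphic copy described in the remark following the classification in Section 4 exactly as you say; the two subcases therefore should not be conflated. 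Finally, ``order preservation follows immediately'' is too quick: a Bruhat move inside $\mtau$ can carry an $\up$ arrow across position $i+1$, and one must verify that deletion preserves the componentwise dominance order on the sorted $\up$-position tuples. This does hold (for the straddling index, the relevant inequality follows because every element of $\mtau$ has $\down$ at position $i$ and $\up$ at position $i+1$, which forces one side $\leq i-1$ and the other $\geq i$), but it is a genuine calculation rather than an observation, and it is precisely the kind of bookkeeping your last paragraph waves at without resolving.
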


The explicit element $ \TRNC  _\ctau(\la) $ in type $(D_n, D_{n-1})$ and exceptional types can be described in terms of tilings (see  {\cite[Proposition 5.4]{compan}}) or deduced directly from the Bruhat graphs without much effort.    

 \begin{figure}[ht!]
 $$
  \begin{tikzpicture} [scale=0.7]
\clip(-3.9,-1.1) rectangle (3.25,1.1);
\draw[rounded corners](-3.9,0) rectangle (3.25,1);
 
\path (0,-0.7)--++(135:0.5)--++(-135:0.5) coordinate (minus1);

\path (minus1)--++(135:0.5)--++(-135:0.5) coordinate (minus2);

\path (minus2)--++(135:0.5)--++(-135:0.5) coordinate (minus3);

\path (minus3)--++(135:0.5)--++(-135:0.5) coordinate (minus4);

\path (0,-0.7)--++(45:0.5)--++(-45:0.5) coordinate (plus1);

\path (plus1)--++(45:0.5)--++(-45:0.5) coordinate (plus2);

\path (plus2)--++(45:0.5)--++(-45:0.5) coordinate (plus3);

\draw[very thick](plus3)--(minus4);

\path(minus1)--++(45:0.4) coordinate (minus1NE);
\path(minus1)--++(-45:0.4) coordinate (minus1SE);

\path(minus4)--++(135:0.4) coordinate (minus1NW);
\path(minus4)--++(-135:0.4) coordinate (minus1SW);

\path(minus2)--++(135:0.4) coordinate (start);

\draw[rounded corners, thick] (start)--(minus1NE)--(minus1SE)--(minus1SW)--(minus1NW)--(start);

\path(plus3)--++(45:0.4) coordinate (minus1NE);
\path(plus3)--++(-45:0.4) coordinate (minus1SE);

\path(plus1)--++(135:0.4) coordinate (minus1NW);
\path(plus1)--++(-135:0.4) coordinate (minus1SW);

\path(plus2)--++(135:0.4) coordinate (start);

\draw[rounded corners, thick] (start)--(minus1NE)--(minus1SE)--(minus1SW)--(minus1NW)--(start);

\draw[very thick,fill=magenta](0,-0.7) circle (4pt);

\draw[very thick,  fill=darkgreen](minus1) circle (4pt);

\draw[very thick,  fill=orange](minus2) circle (4pt);

\draw[very thick,  fill=lime!80!black](minus3) circle (4pt);

\draw[very thick,  fill=violet](minus4) circle (4pt);

\draw[very thick,  fill=gray!80](plus1) circle (4pt);

\draw[very thick,  fill=cyan](plus2) circle (4pt);

\draw[very thick,  fill=pink](plus3) circle (4pt);

\path (0,0.4) coordinate (origin2);

\begin{scope}

     \foreach \i in {0,1,2,3,4,5,6,7,8,9,10,11,12}
{
\path (origin2)--++(45:0.5*\i) coordinate (c\i); 
\path (origin2)--++(135:0.5*\i)  coordinate (d\i); 
  }

\path(origin2)  ++(135:2.5)   ++(-135:2.5) coordinate(corner1);
\path(origin2)  ++(45:2)   ++(135:7) coordinate(corner2);
\path(origin2)  ++(45:2)   ++(-45:2) coordinate(corner4);
\path(origin2)  ++(135:2.5)   ++(45:6.5) coordinate(corner3);

\draw[thick] (origin2) --(corner1) --(corner4)--(origin2);

\clip(corner1)--(corner2)--++(90:0.3)--++(0:6.5)--(corner3)--(corner4)
--++(90:-0.3)--++(180:6.5) --(corner1);

 \path[name path=pathd1] (d1)--++(-90:7);   
 \path[name path=bottom] (corner1)--(corner4);   
 \path [name intersections={of = pathd1 and bottom}];
   \coordinate (A)  at (intersection-1);

        \path(A)--++(-90:0.1) node { $\up$ };

   \path[name path=pathd3] (d3)--++(-90:7);   
 \path[name path=bottom] (corner1)--(corner4);   
 \path [name intersections={of = pathd3 and bottom}];
   \coordinate (A)  at (intersection-1);

  \path(A)--++(90:-0.1) node { $\up$ };

  \path[name path=pathd5] (d5)--++(-90:7);   
 \path[name path=bottom] (corner1)--(corner4);   
 \path [name intersections={of = pathd5 and bottom}];
   \coordinate (A)  at (intersection-1);

  \path(A)--++(90:0.1) node { $\down$ };

 \path[name path=pathd7] (d7)--++(-90:7);   
 \path[name path=bottom] (corner1)--(corner4);   
 \path [name intersections={of = pathd7 and bottom}];
   \coordinate (A)  at (intersection-1);

     \path(A)--++(90:-0.1) node { $\up$ };

 \path[name path=pathd9] (d9)--++(-90:7);   
 \path[name path=bottom] (corner1)--(corner4);   
 \path [name intersections={of = pathd9 and bottom}];
   \coordinate (A)  at (intersection-1);

 \path(A)--++(-90:-0.1) node { $\down$ };

 \path[name path=pathc1] (c1)--++(-90:7);   
 \path[name path=bottom] (corner1)--(corner4);   
 \path [name intersections={of = pathc1 and bottom}];
   \coordinate (A)  at (intersection-1);
 \path(A)--++(-90:-0.1) node { $\down$ };

    \path[name path=pathc3] (c3)--++(-90:7);   
 \path[name path=bottom] (corner1)--(corner4);   
 \path [name intersections={of = pathc3 and bottom}];
   \coordinate (A)  at (intersection-1);

  \path(A)--++(-90:-0.1) node { $\color{cyan}\down$ };

 \path[name path=pathc5] (c5)--++(-90:7);   
 \path[name path=bottom] (corner1)--(corner4);   
 \path [name intersections={of = pathc5 and bottom}];
   \coordinate (A)  at (intersection-1);

 \path(A)--++(90:-0.1) node { $\color{cyan}\up$ };

 \path[name path=pathc7] (c7)--++(-90:7);   
 \path[name path=bottom] (corner1)--(corner4);   
 \path [name intersections={of = pathc7 and bottom}];
   \coordinate (A)  at (intersection-1);

   \path(A)--++(-90:0.1) node { $\up$ }; 

\clip(corner1)--(corner2)--(corner3)--(corner4)--(corner1);

%
%

\end{scope}

\end{tikzpicture}
\qquad\qquad
 \begin{tikzpicture} [scale=0.7]
\clip(-3.9,-1.1) rectangle (3.25,1.1);
\draw[rounded corners](-3.9,0) rectangle (1.75,1);
 
\path (0,-0.7)--++(135:0.5)--++(-135:0.5) coordinate (minus1);

\path (minus1)--++(135:0.5)--++(-135:0.5) coordinate (minus2);

\path (minus2)--++(135:0.5)--++(-135:0.5) coordinate (minus3);

\path (minus3)--++(135:0.5)--++(-135:0.5) coordinate (minus4);

\path (0,-0.7)--++(45:0.5)--++(-45:0.5) coordinate (plus1);
 
%
%
%
%
%

\draw[very thick](plus1)--(minus4);

\path(minus2)--++(45:0.4) coordinate (minus1NE);
\path(minus2)--++(-45:0.4) coordinate (minus1SE);

\path(minus4)--++(135:0.4) coordinate (minus1NW);
\path(minus4)--++(-135:0.4) coordinate (minus1SW);

\path(minus2)--++(135:0.4) coordinate (start);

\draw[rounded corners, thick] (start)--(minus1NE)--(minus1SE)--(minus1SW)--(minus1NW)--(start);

\path(plus1)--++(45:0.4) coordinate (minus1NE);
\path(plus1)--++(-45:0.4) coordinate (minus1SE);

\path(0,-0.7)--++(135:0.4) coordinate (minus1NW);
\path(0,-0.7)--++(-135:0.4) coordinate (minus1SW);

 \path(0,-0.7)--++(135:0.4)--++(0:0.2) coordinate (start);

\draw[rounded corners, thick] (start)--(minus1NE)--(minus1SE)--(minus1SW)--(minus1NW)--(start);

\draw[very thick,fill=magenta](0,-0.7) circle (4pt);

\draw[very thick,  fill=darkgreen](minus1) circle (4pt);

\draw[very thick,  fill=orange](minus2) circle (4pt);

\draw[very thick,  fill=lime!80!black](minus3) circle (4pt);

\draw[very thick,  fill=violet](minus4) circle (4pt);


\draw[very thick ] (plus1)  coordinate (hi) circle (4pt); 

\path(plus1)  --++(90:0.3) coordinate (top1X);
\path(plus1)  --++(-90:0.3) coordinate (bot1X);

{\begin{scope} \clip (plus1)   circle (4pt);  
\draw[ fill=pink,pink ]  (top1X)-- (plus1)  --++(30:0.2) ; 
 \draw[ fill=pink,pink ] (top1X)-- (plus1)  --++(150:0.2) ; 

\draw[ fill= gray,gray ]  (bot1X)-- (plus1) --++(30:0.2) ; 
 \draw[ fill=cyan,cyan ] (bot1X)-- (plus1) --++(150:0.2) ; 
\draw[very thick ] (plus1)  coordinate (hi) circle (4pt); 

\end{scope}}

%
%

\path (0,0.4) coordinate (origin2);

\begin{scope}

     \foreach \i in {0,1,2,3,4,5,6,7,8,9,10,11,12}
{
\path (origin2)--++(45:0.5*\i) coordinate (c\i); 
\path (origin2)--++(135:0.5*\i)  coordinate (d\i); 
  }

\path(origin2)  ++(135:2.5)   ++(-135:2.5) coordinate(corner1);
\path(origin2)  ++(45:2)   ++(135:7) coordinate(corner2);
\path(origin2)  ++(45:2)   ++(-45:2) coordinate(corner4);
\path(origin2)  ++(135:2.5)   ++(45:6.5) coordinate(corner3);

\draw[thick] (origin2) --(corner1) --++(0:4.9)--(origin2);

\clip(corner1)--(corner2)--++(90:0.3)--++(0:6.5)--(corner3)--(corner4)
--++(90:-0.3)--++(180:6.5) --(corner1);

 \path[name path=pathd1] (d1)--++(-90:7);   
 \path[name path=bottom] (corner1)--(corner4);   
 \path [name intersections={of = pathd1 and bottom}];
   \coordinate (A)  at (intersection-1);

        \path(A)--++(-90:0.1) node { $\up$ };

   \path[name path=pathd3] (d3)--++(-90:7);   
 \path[name path=bottom] (corner1)--(corner4);   
 \path [name intersections={of = pathd3 and bottom}];
   \coordinate (A)  at (intersection-1);

  \path(A)--++(90:-0.1) node { $\up$ };

  \path[name path=pathd5] (d5)--++(-90:7);   
 \path[name path=bottom] (corner1)--(corner4);   
 \path [name intersections={of = pathd5 and bottom}];
   \coordinate (A)  at (intersection-1);

  \path(A)--++(90:0.1) node { $\down$ };

 \path[name path=pathd7] (d7)--++(-90:7);   
 \path[name path=bottom] (corner1)--(corner4);   
 \path [name intersections={of = pathd7 and bottom}];
   \coordinate (A)  at (intersection-1);

     \path(A)--++(90:-0.1) node { $\up$ };

 \path[name path=pathd9] (d9)--++(-90:7);   
 \path[name path=bottom] (corner1)--(corner4);   
 \path [name intersections={of = pathd9 and bottom}];
   \coordinate (A)  at (intersection-1);

 \path(A)--++(-90:-0.1) node { $\down$ };

 \path[name path=pathc1] (c1)--++(-90:7);   
 \path[name path=bottom] (corner1)--(corner4);   
 \path [name intersections={of = pathc1 and bottom}];
   \coordinate (A)  at (intersection-1);
 \path(A)--++(-90:-0.1) node { $\down$ };

    \path[name path=pathc3] (c3)--++(-90:7);   
 \path[name path=bottom] (corner1)--(corner4);   
 \path [name intersections={of = pathc3 and bottom}];
   \coordinate (A)  at (intersection-1);
 
   \path(A)--++(-90:0.1) node { $\up$ }; 

\clip(corner1)--(corner2)--(corner3)--(corner4)--(corner1);

\end{scope}

\end{tikzpicture}
 $$
 \caption{The element $\la$ from \cref{typeAtiling-long} (with the vertices we will remove under the map $\TRNC _\ctau$ highlighted in blue)
and its image $\TRNC  _\ctau(\la)$.  
The tri-colouring of the rightmost node  of the Coxeter graph on the righthand-side is explained in \cite{compan} but is not important here.  
 }
 \label{contraction}
 \end{figure}

  \begin{figure}[ht!]
 $$
 \begin{tikzpicture}

		\path (4,1) coordinate (origin); 
		\path (origin)--++(0.5,0.5) coordinate (origin2);  
		\draw[thick](origin2)--++(0:5); 
		\foreach \i in {1,2,3,4,5,...,9}
		{
			\path (origin2)--++(0:0.5*\i) coordinate (a\i); 
			\path (origin2)--++(0:0.5*\i)--++(-90:0.08) coordinate (c\i); 
			}
		
		\foreach \i in {1,2,3,4,5,...,19}
		{
			\path (origin2)--++(0:0.25*\i) --++(-90:0.5) coordinate (b\i); 
			\path (origin2)--++(0:0.25*\i) --++(-90:0.7) coordinate (d\i); 
		}
		 \path(a1) --++(90:0.175) node  {  $  \down   $} ;
		\path(a3) --++(90:0.175) node  {  $  \down   $} ;
		\path(a2) --++(-90:0.175) node  {  $  \up   $} ;
		\path(a4) --++(-90:0.175) node  {  $  \up   $} ;
		\path(a5) --++(-90:0.175) node  {  $  \up  $} ;
		\path(a8) --++(90:-0.175) node  {  $ \color{cyan} \up  $} ;
		\path(a6) --++(90:0.175) node  {  $  \down  $} ;
		\path(a7) --++(90:0.175) node  {  $\color{cyan}  \down  $} ; 
		\path(a9) --++(-90:0.175) node  {  $  \up  $} ;
		
		\path(b15)--++(-90:0.2) coordinate (b15s);
		\draw[  very thick](c9) to [out=-90,in=0] (b15s) to [out=180,in=-90] (c6); 
		
		\draw[  very thick,cyan](c8) to [out=-90,in=0] (b15) to [out=180,in=-90] (c7); 
		\draw[  very thick](c2) to [out=-90,in=0] (b3) to [out=180,in=-90] (c1); 
		\draw[  very thick](c4) to [out=-90,in=0] (b7) to [out=180,in=-90] (c3); 		

		\draw[  very thick](c5) --++(-90:0.7);

\end{tikzpicture}
\qquad
 \begin{tikzpicture}

		\path (4,1) coordinate (origin); 
		\path (origin)--++(0.5,0.5) coordinate (origin2);  
		\draw[thick](origin2)--++(0:4); 
		\foreach \i in {1,2,3,4,5,...,7}
		{
			\path (origin2)--++(0:0.5*\i) coordinate (a\i); 
			\path (origin2)--++(0:0.5*\i)--++(-90:0.08) coordinate (c\i); 
			}
		
		\foreach \i in {1,2,3,4,5,...,19}
		{
			\path (origin2)--++(0:0.25*\i) --++(-90:0.5) coordinate (b\i); 
			\path (origin2)--++(0:0.25*\i) --++(-90:0.7) coordinate (d\i); 
		}
		 \path(a1) --++(90:0.175) node  {  $  \down   $} ;
		\path(a3) --++(90:0.175) node  {  $  \down   $} ;
		\path(a2) --++(-90:0.175) node  {  $  \up   $} ;
		\path(a4) --++(-90:0.175) node  {  $  \up   $} ;
		\path(a5) --++(-90:0.175) node  {  $  \up  $} ;
		\path(a6) --++(90:0.175) node  {  $  \down  $} ;
		\path(a7) --++(-90:0.175) node  {  $  \up  $} ;
		
		
		\draw[  very thick](c7) to [out=-90,in=0] (b13) to [out=180,in=-90] (c6); 
		\draw[  very thick](c2) to [out=-90,in=0] (b3) to [out=180,in=-90] (c1); 
		\draw[  very thick](c4) to [out=-90,in=0] (b7) to [out=180,in=-90] (c3); 		

		\draw[  very thick](c5) --++(-90:0.7);

\end{tikzpicture}
 $$
 \caption{The element $\mu$ from \cref{typeAtiling-long,contraction} with its associated diagram $\diag_\mu$ and their image under  $\TRNC_\ctau$. }
 \label{contraction2}
 \end{figure}

\begin{thm} Let $(W,P)$ be a simply-laced Hermitian symmetric pair, $\ctau \in S_W$ and $\la ,\mu \in \mathscr{P}_{(W,P)}^\ctau$.
Then we have a degree-preserving  bijection 
$$\Path(\la,\underline{\mu}) \to \Path(\TRNC  _\ctau(\la),\underline{\TRNC  _\ctau(\mu)}).$$
\end{thm}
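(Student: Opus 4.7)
The plan is to translate the path-counting problem into a diagram-counting problem via the oriented Temperley--Lieb realisation, and then to construct the bijection explicitly by an excision operation on diagrams. By \cref{ungradedbasis}, the set $\Path(\la, \SSTT^\mu)$ is either empty or has a single element whose degree is $\deg(\varnothing e_\mu \la)$, where $\varnothing e_\mu \la \in \mathbb{ODT}[W,P]$ is the corresponding basis element of the anti-spherical module.  Consequently, what needs to be established is a degree-preserving bijection from the set of nonzero diagrams $\varnothing e_\mu \la$ indexed by $\la, \mu \in \mtau$ onto the set of nonzero diagrams $\varnothing e_{\TRNC_\ctau(\mu)} \TRNC_\ctau(\la)$ for the smaller pair $(W^\ctau, P^\ctau)$, using the degree statistic of \cref{picdegree}.

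For the classical types $(A_n, A_k \times A_{n-k-1})$ and $(D_n, A_{n-1})$ with $\ctau = s_i$ (or $\ctau = s_{1''}$ in type $D$), the Bruhat order description of \cref{section4} forces the coset diagram of $\mu$ to carry a specific two-symbol pattern at positions $i, i+1$ (namely $\down,\up$ for $\ctau = s_i$, or $\up,\up$ for $\ctau = s_{1''}$), and likewise for $\la$.  By the algorithm of \cref{thething}, positions $i, i+1$ of $e_\mu$ are then joined by an innermost northern arc, undecorated in the first case and decorated in the second.  The compatibility of the $\la$-labels with this arc forces it to be oriented (resp.\ flip-oriented), so $\varnothing e_\mu \la$ lies in $\mathbb{ODT}[W,P]$ if and only if the analogous diagram $\varnothing e_{\TRNC_\ctau(\mu)} \TRNC_\ctau(\la)$ lies in $\mathbb{ODT}[W^\ctau, P^\ctau]$; the bijection simply excises positions $i, i+1$ together with this innermost arc, and its inverse inserts the two deleted symbols joined by the corresponding arc.  \Cref{picdegree} guarantees that the excised arc has degree $0$ (its rightmost vertex is labelled $\up$), so the degree statistic is preserved.

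For the remaining simply-laced pairs $(D_n, D_{n-1})$, $(E_6, D_5)$ and $(E_7, E_6)$, the anti-spherical weight spaces are uniformly at most one-dimensional (see the proof of \cref{nonnegativegrading}), and the bijection is verified by direct inspection, comparing the submatrix of $\Delta^{(W,P)}$ indexed by $\mtau \times \mtau$ with the full matrix $\Delta^{(W^\ctau, P^\ctau)}$ recorded in the appendix.

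The main obstacle is the careful treatment of the innermost arc when $\ctau = s_{1''}$ in type $(D_n, A_{n-1})$: one must confirm that the parity condition defining ${^PW}$ in type $D$ restricts correctly when the first two positions (both necessarily labelled $\up$) are removed, and that the special handling of decorations on the leftmost propagating strand in step~(2)(b) of \cref{thething} does not obstruct the excision/reinsertion.  Once the local picture near positions $1, 2$ is properly identified this becomes a bookkeeping exercise, but it is where the majority of the care must be spent.
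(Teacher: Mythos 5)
Your proposal is essentially the same as the paper's proof: in both, one passes to oriented Temperley--Lieb diagrams, observes that the assumption $\la,\mu\in\mtau$ forces a fixed two-symbol pattern at the positions associated to $\ctau$ and hence an innermost degree-zero northern arc there, and defines the bijection by excising that arc and the two boundary vertices, with \cref{picdegree} guaranteeing degree-preservation and the residual cases checked against the appendix.

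One small caveat: the subtlety you flag is in the wrong place. For $\ctau=s_{1''}$ in type $(D_n,A_{n-1})$ the excised pair is $\up\,\up$, so the number of $\up$-arrows drops by $2$ and the even-parity constraint on coset diagrams is automatically preserved; there is nothing to confirm there. The genuine parity issue arises instead for $\ctau=s_i$ with $i\geq 1$ in type $(D_n,A_{n-1})$: the excised pair is $\down\,\up$, so the $\up$-count drops by $1$ and the resulting diagram has an \emph{odd} number of $\up$-arrows. The paper handles this by invoking \cref{isomorphic copies}, i.e.\ by identifying the image with the isomorphic copy of ${\rm TL}_{(D_{n-2},A_{n-3})}(q)$ built from the other type-$A_{n-3}$ parabolic, whose coset diagrams carry odd $\up$-count. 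Your proposal, as written, would produce diagrams that violate the parity convention set up in \cref{section4} without noticing; the fix is one sentence, but it is a real bookkeeping step your argument omits.
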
  

\begin{proof}
For types $(D_n, D_{n-1})^\ctau = (A_1,A_0)$, 
   $(E_6,D_5)^\ctau = (A_{5}, A_{4})$, and 
 $(E_7,E_6)^\ctau = (D_{6}, D_{5})$ the result can be checked directly by examining the light leaves matrices given in \cref{exceptional}. In types   $(A_n,A_{k}\times A_{n-k-1})$ with $\ctau  =  {\color{cyan} s_i} \in S_W$ arbitrary and in type 
$(D_n, A_{n-1})$ and for any $\ctau = {\color{cyan}s_i} \not = {\color{cyan} s_{1''}} $, note that $\la, \mu \in \mathscr{P}_{(W,P)}^\ctau$ if and only if the $i$-th and $(i+1)$-th vertex in their coset diagram are labelled by $\down$ $\up$. This implies that the
 diagrams  $\diag_{\SSTT} = {\varnothing  \diag_\mu \la }$ for $\SSTT\in \Path(\la,\underline{\mu})$ are precisely those which have an anti-clockwise oriented  northern  arc connecting $i$ and $i+1$. 
 In type   $(D_n, A_{n-1})$ with $\ctau =\color{cyan}s_{1''} $, note that $\la , \mu \in \mathscr{P}_{(W,P)}^\ctau$ if and only if the first and second vertex of their coset diagram are labelled by $\up \up$. This implies that the 
 diagrams  $\diag_\SSTT = {\varnothing  \diag_\mu \la }$ for  $\SSTT\in \Path(\la,\underline{\mu})$ are precisely those which have flip-oriented decorated northern arc connecting the first and second vertex, both labelled by $\up$. 
For the purposes of this proof, we will only consider the top half of the diagram $\varnothing  \diag_\mu \la$, for ease of exposition (the bottom half plays no significant role, see \cref{thething}). 
 
 In all cases, the northern arc identified above joins two adjacent vertices and has degree $0$. Now, if $(W,P) = (A_n, A_{k}\times A_{n-k-1})$, then removing this oriented arc produces an oriented Temperley--Lieb diagram of type $(A_{n-2}, A_{k-1}\times A_{n-k-2})$  of the same degree. If $(W,P) = (D_n, A_{n-1})$ and $\ctau = {\color{cyan} s_{1''}}$ then removing this flip-oriented decorated northern arc produces an oriented Temperley--Lieb diagram of type $(D_{n-2}, A_{n-3})$ of the same degree. Finally, if $(W,P) = (D_n, A_{n-1})$ and $\ctau = {\color{cyan} s_{i}}$ for $i\in \{1, \ldots , n-1\}$ then removing this oriented (undecorated) northern arc produces an oriented Temperley--Lieb diagram of the same degree for the isomorphic copy of $TL_{(D_{n-2}, A_{n-3})}(q)$ where each coset diagram has an odd number of $\up$ arrows (as discussed in \cref{isomorphic copies}).

This map is clearly a bijection, as starting from any oriented Temperley--Lieb diagram for the anti-spherical module in rank $n-2$, we can insert the degree zero arc in the position corresponding to $\ctau$ to obtain an oriented Temperley--Lieb diagram for the anti-spherical module in rank $n$ and these maps are clearly inverse to each other.

\end{proof}

         \bibliographystyle{amsalpha}   
\bibliography{master}

 \end{document}